\def\cleardoublepage{\clearpage\if@twoside \ifodd\c@page\else%
         \hbox{}%
     \thispagestyle{empty}
     \newpage%
     \if@twocolumn\hbox{}\newpage\fi\fi\fi}
\let\cleardoublepage\clearpage
\def\eq#1{(\ref{#1})}
\newtheorem{thm}{Theorem}[section]
\newtheorem{lem}[thm]{Lemma}
\newtheorem{pro}[thm]{Proposition}
\newtheorem{den}[thm]{Definition}
\numberwithin{equation}{section}
\newcommand{\eps}{\varepsilon}
\def\ds{\displaystyle}
\newcommand{\R}{\mathbb R}
\newcommand{\N}{\mathbb{N}}
\begin{document}

\title[Stability and qualitative properties of radial solutions]
{Stability and qualitative properties of radial solutions of the
Lane-Emden-Fowler equation on Riemannian models}

\author[Elvise Berchio]{Elvise Berchio}
\address{\hbox{\parbox{5.7in}{\medskip\noindent{Politecnico di Milano,\\
        Dipartimento di Ma\-t\-ema\-ti\-ca ``Francesco Brioschi'', \\
        Piazza Leonardo da Vinci 32, 20133 Milano, Italy. \\[3pt]
        \em{E-mail address: }{\tt elvise.berchio@polimi.it}}}}}
\author[Alberto Ferrero]{Alberto Ferrero}
\address{\hbox{\parbox{5.7in}{\medskip\noindent{Universit\`a degli Studi del Piemonte Orientale,\\
Dipartimento di Scienze e Innovazione Tecnologica, \\
        Viale Teresa Michel 11, 15121 Alessandria, Italy. \\[3pt]
        \em{E-mail addresses: }{\tt
          alberto.ferrero@mfn.unipmn.it}}}}}
\author[Gabriele Grillo]{Gabriele Grillo}
\address{\hbox{\parbox{5.7in}{\medskip\noindent{Politecnico di Milano,\\
        Dipartimento di Ma\-t\-ema\-ti\-ca ``Francesco Brioschi'', \\
        Piazza Leonardo da Vinci 32, 20133 Milano, Italy. \\[3pt]
        \em{E-mail address: }{\tt gabriele.grillo@polimi.it}}}}}


\keywords{Elliptic equations on Riemannian manifolds, Radial
solutions, Stability}

\subjclass[2010]{35J60, 35B35, 35B40}

\thanks{E. Berchio and
A. Ferrero were partially supported by the PRIN 2008 grant
  ``Aspetti geometrici delle equazioni alle derivate parziali e
  questioni connesse''. G. Grillo was partially supported by the PRIN 2009 grant ``Metodi di viscosit\`a,
  geometrici e di controllo per modelli diffusivi nonlineari".}

\begin{abstract} We study existence, uniqueness and stability
of radial solutions of the Lane-Emden-Fowler equation $-\Delta_g
u=|u|^{p-1}u$ in a class of Riemannian models $(M,g)$ of dimension
$n\ge 3$ which includes the classical hyperbolic space $\mathbb
H^n$ as well as manifolds with sectional curvatures unbounded
below. Sign properties and asymptotic behavior of solutions are
influenced by the critical Sobolev exponent while the so-called
Joseph-Lundgren exponent is involved in the stability of solutions.
\end{abstract}

\maketitle

\section{Introduction}
\noindent We study the Lane-Emden-Fowler equation
\begin{equation}\label{LEF}
-\Delta_g u=|u|^{p-1}u\ \ \ {\rm on}\ M,
\end{equation}
where $n\ge 3$ and $p>1$, posed on a \it Riemannian model $(M,g)$ \rm, namely on a manifold admitting a pole $o$
and whose metric is given, in polar or spherical coordinates around $o$, by
\begin{equation}\label{metric}
ds^2=dr^2+(\psi(r))^2d\Theta^2,\ \ r>0, \Theta\in {\mathbb S}^{n-1}
\end{equation}
for a given function $\psi$ satisfying appropriate conditions. We will denote by $g$ this metric.
Here $d\Theta^2$ denotes the canonical metric on the unit
sphere ${\mathbb S}^{n-1}$, $r$ is by construction the Riemannian
distance between a point whose coordinates are $(r,\Theta)$ and
$o$, the function $\psi$ is smooth and positive on $(0,R)$ for
some $R\in(0,+\infty]$. In principle $R$ can be finite and in such a case it identifies the \it cut
locus \rm of $o$ in $M$, but hereafter and without further comments we shall assume that $R=+\infty$.

The additional assumptions we shall make later on $\psi$ correspond
to considering manifolds which have infinite volume and, at least
outside a compact set, have \it strictly negative \rm sectional
curvatures. Hence, if such condition holds globally, we are
dealing with special classes of Cartan-Hadamard manifolds. The
motivating example we have in mind is, therefore, the hyperbolic
space ${\mathbb H}^n$, in which some of the problems that we shall
study here in greater generality have been recently investigated.
In fact, the Riemannian model associated to the choice
$\psi(r)=\sinh r$ in \eqref{metric} is a well-known representation
of ${\mathbb H}^n$.

In the seminal paper \cite{mancini} among other results it is
shown that, for $p\in \left(1,\frac{n+2}{n-2}\right)$, there is a
unique strictly positive radial solution $U$ of \eqref{LEF}
belonging to the Sobolev space $H^1({\mathbb H}^n):=\{u\in
L^2(V_g);\, \nabla_g u\in L^2(V_g)\}$, where $V_g$ is
the Riemannian measure and $\nabla_g$ the Riemannian gradient, and $U$ is radial in the sense that it depends only on $r$.
This is in sharp contrast with the Euclidean case, where no such
solution exists, and is strongly related to the fact that the
$L^2$ spectrum of $-\Delta_g$ is bounded away from zero, so that
an $L^2$-Poincar\'e inequality holds. The solution $U$ is rapidly
decaying at infinity, but infinitely many other radial positive
solutions exist. The precise asymptotics of such slowly decaying
solutions was given in \cite{BGGV} also for the case $p\ge
\frac{n+2}{n-2}$, together with a classification of radial solutions in terms of their sign properties, further investigated in \cite{BK}. In fact, sign changing solutions may also exist and are
studied in \cite{BS, BGGV}: they can have finite or infinite
$H^1_r$ norm, and their asymptotics depend on which of the two
cases holds. In \cite{GS}, the critical case $p=\frac{n+2}{n-2}$
is investigated in further details. See also \cite{cfms} for other
results concerning elliptic problems and \cite{BPT} for semilinear parabolic problems in ${\mathbb H}^n$.

Our results aim at discussing the cases corresponding to the
defining function $\psi$ being everywhere increasing and,
moreover, such that $l:=\liminf_{r\rightarrow
+\infty}\frac{\psi'(r)}{\psi(r)}>0$ (which is Assumption $(H_3)$
in Section \ref{s:main}). While clearly the hyperbolic space
satisfies such condition, Riemannian models which are
asymptotically hyperbolic satisfy it as well and, more
importantly, such a condition allows for \it unbounded \rm
negative sectional curvatures: a typical example in which this can
hold corresponds to the choice $\psi(r)=e^{r^a}$ for a given $a>1$
and $r$ large, a case for which (see Section \ref{notation}) sectional curvatures in the radial direction diverge as $-a^2r^{2(a-1)}$ as $r\to+\infty$. In addition it will be shown later that, under the
stated assumption, the $L^2$ spectrum of $-\Delta_g$ is still
bounded away from zero, whereas if $\lim_{r\rightarrow
+\infty}\frac{\psi'(r)}{\psi(r)}=0$ then there is no gap in the
$L^2$ spectrum of $-\Delta_g$. Hence, one hardly expects in such
situation to be able to construct a positive solution to the
equation at hand. It is worth noticing here that if the radial
sectional curvature goes to zero as $r\to+\infty$ then necessarily
$\lim_{r\rightarrow +\infty}\frac{\psi'(r)}{\psi(r)}=0$ (see Lemma
\ref{eq:lambda_1>0}) and the previous comment applies, whatever the rate of decay of the curvatures is. Hence, in
this case no spectral gap is present and the expected
picture is of Euclidean type, but we shall not address this issue
here.

Under the above mentioned assumptions on $\psi$, we prove in Theorem \ref{exist gr st} existence of a
finite energy radial solution to \eqref{LEF} in the subcritical
case $p\in\left(1,\frac{n+2}{n-2}\right)$. Uniqueness of such
solution holds under a further technical condition on $\psi$, see
Theorem \ref{subcr set}. In the supercritical range
$p\ge\frac{n+2}{n-2}$, we prove in Theorem \ref{teo} that if a
suitable power of the volume of geodesic balls is \it convex \rm
as a function of $r$, all local radial solutions to \eqref{LEF}
are everywhere positive and no solution to the Dirichlet problem
on geodesic balls exists. In particular, such results hold if
$\psi$ itself is convex.

In both subcritical and supercritical cases, we provide an exact
description of the asymptotic behavior of positive radial
solutions of \eqref{LEF}. In Theorem \ref{asym sub} we show that,
in the subcritical case, solutions in the energy space $H^1(M)$
have a fast decay to zero which can be characterized explicitly in
terms of the function $\psi$. An interesting phenomenon occurs for
solutions which do not belong to $H^1(M)$: they admit a limit as
$r\to +\infty$ which can be strictly positive or equal to zero
depending on the integrability at infinity of the function
$\psi/\psi'$.

The same phenomenon occurs in the supercritical case as shown in
Theorem \ref{asym}.

The second part of this paper is devoted to stability of
solutions. Here by stability we mean the so-called linearized
stability. Namely, we say that a solution $u$ of
\eqref{LEF} is stable if the quadratic form associated with the
linearized operator at $u$ is nonnegative definite. Stability of
solutions of nonlinear equations in the whole euclidean space is a
widely studied problem, especially in the case of the
Lane-Emden-Fowler equation and  of the Gelfand equation $-\Delta
u=e^u$, see e.g. \cite{cc,dafa,dufa,farina,farina2,fsv} and
references therein. See also \cite{bv} for results on
stability of the Lane-Emden-Fowler and Gelfand equations in
bounded domains.

In order to localize the instability of certain solutions we shall also
study the stability of solutions outside a compact set, see e.g. \cite{dafa,farina,farina2}.

Since the cut locus of the pole $o$ is empty by assumption, any
Riemannian model $M$ we are considering is diffeomorphic to
${\mathbb R}^n$, and the main purpose of the present paper is to
understand which is the role of the curvature properties of $M$ in
determining stability of solutions of \eqref{LEF}, in particular when sectional curvatures are \it negative\rm. We comment here that the existence of stable solutions to semilinear elliptic equations when Ricci curvature is \it positive \rm has consequences on the structure of the manifold itself (and on the solution as well), as shown recently in \cite{FMV}.

For completeness we first recall what happens when $M$ is the $n$-dimensional
euclidean space. From \cite{farina}, we know that no nontrivial stable solution
(also nonradial) exists if $n\leq 10$ or $n\geq 11$ and
$p<p_c(n)=\frac{(n-2)^2-4n+8\sqrt{n-1}}{(n-2)(n-10)}$, where
$p_c(n)>\frac{n+2}{n-2}$ is the so-called Joseph-Lundgren
exponent, see \cite{jl}. On the other hand, for $n\geq 11$ and
every $p\geq p_c(n)$ there exists a positive radial stable
solution, see \cite{farina,jl}.

Also we note that when $n\leq 10$ or $n\geq 11$ and $p< p_c(n)$,
with $p\neq \frac{n+2}{n-2}$, the euclidean equation admits no
nontrivial solution which is stable outside a compact set. On the
other hand, if $p=\frac{n+2}{n-2}$ then the euclidean equation
admits solutions in $H^1(\R^n)$ which are stable outside a
compact set. Among them there are the well-known
one-parameter family of solutions of \eqref{LEF} which achieve the
best Sobolev constant in $\R^n$.

As we said before, under suitable assumptions on $\psi$, a
Poincar\'e type inequality holds. The validity of this inequality is strictly related to the existence of stable
solutions. In Theorems \ref{stability-1}-\ref{stability-2} we
prove that stable radial solutions of \eqref{LEF} always exist in
any dimension and for any $p>1$ provided that their value at the
origin is small enough.

This phenomenon is deeply in contrast with the euclidean case
where the existence of nontrivial radial stable solutions only
depends on $n$ and $p$ but not on the value of the solution at the
origin. We also recall that, thanks to rescaling invariance
properties of the Lane-Emden-Fowler equation, in the euclidean case all
nontrivial radial solutions may be represented as a one-parameter
family of rescaled functions. This property explains why there is no dependence of the stability on the value at the
origin.

The next step is to understand if radial stable solutions also
exist for larger values at the origin. Our main results on
stability, Theorem \ref{stability-1}-\ref{stability-2}, state that
independently of the dimension $n$ and of the power $p$, the set
$\mathcal S$ of the values at the origin for which the
corresponding radial solution of \eqref{LEF} is stable, is a
closed interval containing $0$. One may ask if this interval
coincides with $[0,+\infty)$. In Theorem \ref{stability-1}
we show that, under the same assumptions on $n$ and $p$ for which
in the euclidean case we have nonexistence of nontrivial stable
solutions, in our Riemannian model the set $\mathcal S$ is a bounded closed interval.

This result, which shows instability of radial solutions with a
large value at the origin, is based on a blow-up argument which
has as a limit problem the Lane-Emden-Fowler equation in the
euclidean space. This justifies the relationship between assumptions of Theorem \ref{stability-1} and
the nonexistence result of stable solutions in the euclidean case.

It is left as an open question to understand if the assumptions of
Theorem \ref{stability-1} are also necessary for boundedness of
the set $\mathcal S$.

Stability properties are strictly related to ordering of radial
solutions of \eqref{LEF}. Indeed in Theorem \ref{t:ordering} we
prove that radial solutions of \eqref{LEF} corresponding to values
at the origin in the set $\mathcal S$ are ordered.

Finally, in Theorem \ref{outside-compact-set} we show that all
radial solutions of \eqref{LEF} are stable outside compact sets
independently of $n\ge 3$ and $p>1$, provided that
$\psi/\psi'\not\in L^(0,\infty)$.

This paper is organized as follows: in Section \ref{s:main} we put the assumptions and the statements of the main results while Sections \ref{super}-\ref{stabil} are devoted to the proofs.

\subsection{Notation and preliminaries}\label{notation} The $C^2$ smoothness of $M$ around $o$ implies
that $\psi$ must be extendible to $r=0$ with the extension, still
denoted by $\psi$, satisfying $\psi(0)=\psi^{\prime\prime}(0)=0$, $\psi^\prime(0)=1$, the
prime indicating right derivative. In greater generality, a power
series for $\psi$ near $r=0$ must contain only odd powers of $r$ should one require additional smoothness at $o$,
see e.g. \cite{Sto}, pp. 179-183, and also \cite{GW}.

The Riemannian Laplacian of a scalar function $f$ on $M$ is given, in the above coordinates, by
\[\begin{split}
\Delta_g
f(r,\theta_1,\ldots,\theta_{n-1})&=\frac1{(\psi(r))^{n-1}}\frac{\partial}{\partial
r}\left[(\psi(r))^{n-1} \frac{\partial f}{\partial
r}(r,\theta_1,\ldots,\theta_{n-1})
\right]\\
&+\frac1{(\psi(r))^2}\Delta_{{\mathbb
S}^{n-1}}f(r,\theta_1,\ldots,\theta_{n-1}),
\end{split}
\]
where $\Delta_{{\mathbb S}^{n-1}}$ is the Riemannian Laplacian on the unit sphere ${\mathbb S}^{n-1}$.
In particular, for \it radial \rm functions, namely functions depending only on $r$, one has
\[
\Delta_g f(r)=\frac1{(\psi(r))^{n-1}}\left[(\psi(r))^{n-1}
f^\prime(r)\right]^\prime=f^{\prime\prime}(r)+(n-1)\frac{\psi^\prime(r)}{\psi(r)}f^\prime(r),
\]
where from now on a prime will denote, for radial functions,
derivative w.r.t. $r$. Notice that the quantity $(n-1)\frac{\psi^\prime(r)}{\psi(r)}$ has a geometrical meaning,
namely it represents mean curvature of the geodesic sphere of radius $r$ in the radial direction.
Let $\omega_n$ be the volume of the $n$-dimensional unit sphere. Then
\[
S(r)=\omega_n (\psi(r))^{n-1},\ \ \ V(r)=\int_0^rS(t)\,{\rm d}t=\omega_n\int_0^r(\psi(t))^{n-1}\,{\rm d}t
\]
represent, respectively, the area of the geodesic sphere $\partial B(o,r)$ and the volume of the geodesic ball $B(o,r)$.
Moreover (see e.g. \cite{Besse}, \cite{GW}) one can show that
\[
\frac1{n-1}\rm Ric\it\, (\partial r,\partial
r)=K_\pi(r)=-\frac{\psi^{\prime\prime}(r)}{\psi(r)},
\]
where $\rm Ric\it\, (\partial r,\partial r)$ is the Ricci tensor in the radial direction, and $K_\pi(r)$ denotes
sectional curvatures w.r.t planes containing $\partial r$. One shows also that the sectional curvatures w.r.t.
planes orthogonal to $\partial r$ is given by $\frac{1-(\psi^\prime(r))^2}{(\psi(r))^2}$. Sectional curvatures equal -1 on the hyperbolic space, whereas they are still negative, but growing in modulus when for example one has, for large $r$, $\psi(r)=e^{r^a}$ for some $a>1$, a case which can be covered by most of our results.

We consider radial solutions to the
Lane-Emden-Fowler equation \eqref{LEF}. Radial \it local \rm solutions near
$r=0$ to \eqref{LEF} with $u(0)=\alpha\neq 0$ exist, are unique
and satisfy the Cauchy problem
\begin{equation}\label{cauchy}
\left \{ \begin{array}{ll}
-\displaystyle{\frac1{(\psi(r))^{n-1}}\left[(\psi(r))^{n-1}u^\prime(r)\right]^\prime}=|u(r)|^{p-1}u(r) & (r>0)\\ \\
u(0)=\alpha\quad u'(0)=0\,. &
\end{array}
\right.
\end{equation}
For any $r>0$, let us denote by $u_\alpha(r)$ or by $u(\alpha,r)$
the unique solution of the Cauchy problem \eqref{cauchy}.

\section{Assumptions and main results} \label{s:main}

Let $\psi$ be the function defined in the introduction. Let us
introduce the following assumptions on $\psi$:
\begin{itemize}
\item[$(H_1)$] $\psi\in C^2([0,+\infty))$: $\psi(0)=\psi^{\prime\prime}(0)=0$ and
$\psi'(0)=1$ ;

\item[$(H_2)$] $\psi'(r)\geq 0$ for every $r>0$ \, ,

\item[$(H_3)$] $l:=\displaystyle{\liminf_{r\rightarrow
+\infty}\frac{\psi'(r)}{\psi(r)}>0}$ \,.
\end{itemize}

Assumption $(H_1)$ is necessary to make the geometric setting
outlined in Section \ref{notation} consistent. Assumptions
$(H_2)-(H_3)$ are sufficient conditions to guarantee positivity of
bottom of the $L^2$ spectrum of $-\Delta_g$ in $M$, see Lemma
\ref{eq:lambda_1>0}. Throughout this paper we denote the bottom of
the $L^2$ spectrum of $-\Delta_g$ by $\lambda_1(M)$. Under
assumptions $(H_1)-(H_3)$ one can show easily that every solution of
\eqref{cauchy} is global.
\begin{pro}\label{global}
Let $p>1$ and assume that $\psi$ satisfies assumptions
$(H_1)-(H_3)$. Then, for any $\alpha\neq 0$ the local solution to
\eqref{cauchy} may be continued for all $r>0$,
$\displaystyle{\lim_{r\rightarrow +\infty} u'(r)=0}$ and
$\displaystyle{\lim_{r\rightarrow +\infty} u(r)}$ exists and is
finite. In particular \eqref{LEF} admits infinitely many
nontrivial radial solutions.
\end{pro}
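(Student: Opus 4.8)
The plan is to use the natural energy (Lyapunov) functional associated with \eqref{cauchy} and to exploit the sign of $\psi'$ granted by $(H_2)$ together with the spectral-gap-type lower bound $(H_3)$. For a local solution $u=u_\alpha$ I would set
$$E(r):=\tfrac12\,(u'(r))^2+\tfrac1{p+1}\,|u(r)|^{p+1},$$
and differentiate along the flow. Writing the radial equation as $u''+(n-1)\frac{\psi'}{\psi}u'+|u|^{p-1}u=0$, one gets
$$E'(r)=u'\big(u''+|u|^{p-1}u\big)=-(n-1)\,\frac{\psi'(r)}{\psi(r)}\,(u'(r))^2\le 0,$$
the inequality being exactly $(H_2)$ (recall $\psi>0$ on $(0,\infty)$). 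Hence $E$ is non-increasing, and since $u'(0)=0$ we have $E(r)\le E(0)=\frac{|\alpha|^{p+1}}{p+1}$, yielding the a priori bounds $|u(r)|\le M:=\big((p+1)E(0)\big)^{1/(p+1)}$ and $|u'(r)|\le\sqrt{2E(0)}$ on the maximal interval of existence. For $r>0$ the coefficients of the ODE are continuous ($\psi\in C^2$, $\psi>0$), so these uniform bounds forbid blow-up and the solution continues to all $r>0$; this settles global existence, the only singular point $r=0$ being handled by the already-granted local existence.

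For the asymptotics, note that $E$ is bounded below by $0$ and non-increasing, hence $E(r)\to E_\infty\ge 0$. Integrating the identity for $E'$ gives $(n-1)\int_0^\infty \frac{\psi'}{\psi}(u')^2\,dr=E(0)-E_\infty<\infty$. By $(H_3)$ there is $r_0$ with $\frac{\psi'}{\psi}\ge \frac{l}{2}>0$ for $r\ge r_0$, so $\int_0^\infty (u')^2\,dr<\infty$. I would then upgrade this integrability to the pointwise statement $u'(r)\to0$ by a one-sided Barbalat argument: differentiating,
$$\frac{d}{dr}(u')^2=2u'u''=-2(n-1)\frac{\psi'}{\psi}(u')^2-2u'|u|^{p-1}u\le 2M^p\sqrt{2E(0)},$$
the decisive point being that the only possibly unbounded term, the one carrying $\psi'/\psi$, has the favorable sign $\le 0$ and so is harmless for an upper bound. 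A nonnegative integrable function whose derivative is bounded above must tend to $0$ (otherwise it would stay above $\eps/2$ on infinitely many disjoint intervals of a fixed positive length, contradicting integrability), hence $(u')^2\to0$, i.e. $u'(r)\to 0$.

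It remains to produce the limit of $u$. Since $u'\to0$, the energy identity forces $\frac1{p+1}|u(r)|^{p+1}=E(r)-\frac12(u'(r))^2\to E_\infty$, so $|u(r)|\to M_\infty:=\big((p+1)E_\infty\big)^{1/(p+1)}$. If $M_\infty=0$ then $u(r)\to0$. If $M_\infty>0$, then $|u|$ is eventually bounded away from $0$, so $u$ cannot change sign for large $r$; being of constant sign with $|u|\to M_\infty$, it converges to $+M_\infty$ or $-M_\infty$. In all cases $\lim_{r\to\infty}u(r)$ exists and is finite. Finally, letting $\alpha$ range over $\R\setminus\{0\}$ yields a continuum of distinct global solutions, each nontrivial since $u_\alpha(0)=\alpha\ne0$, which produces the infinitely many radial solutions of \eqref{LEF}.

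I expect the main obstacle to be precisely the passage from $\int^\infty(u')^2<\infty$ to $u'\to0$: a textbook Barbalat argument would require $u''$ bounded, which fails whenever $\psi'/\psi$ is unbounded (e.g. $\psi(r)=e^{r^a}$ with $a>1$), so the argument must be arranged so that the unbounded damping term enters only through its sign. The sign-fixing of $u$ in the case $M_\infty>0$ is a secondary and easier point, and the continuation step is routine once the energy bound is in hand.
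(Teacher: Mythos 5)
Your proof is correct, and its skeleton --- the Lyapunov function $E$, monotonicity of $E$ from $(H_2)$, global continuation from the resulting a priori bounds on $u$ and $u'$, integrability of $(u')^2$ from $(H_3)$, and the final sign argument giving convergence of $u$ --- is the same energy argument the paper relies on: the paper in fact gives no proof of Proposition \ref{global}, but states that it can be obtained ``following the lines of'' \cite[Lemma 4.1]{BGGV}, which runs exactly along these lines. Where you genuinely depart from that template is the step you yourself single out. In the hyperbolic setting of \cite{BGGV} the damping coefficient is $(n-1)\coth r$, which is bounded on $[1,+\infty)$; hence $u''$ stays bounded there, $u'$ is uniformly continuous, and the passage from $u'\in L^2(0,+\infty)$ to $u'(r)\to 0$ is the textbook Barbalat argument. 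Under $(H_1)$--$(H_3)$ alone, $\psi'/\psi$ may be unbounded (the paper explicitly wants to cover $\psi(r)=e^{r^a}$ with $a>1$), so that argument does not transfer verbatim. Your one-sided Barbalat lemma --- noting that the only possibly unbounded term in $\frac{d}{dr}\,(u'(r))^2$ is the damping term, which is $\le 0$ by $(H_2)$, so that $(u')^2$ is a nonnegative integrable function whose derivative is bounded \emph{above}, and any such function must vanish at infinity --- is precisely the adaptation needed to make the reference to \cite{BGGV} legitimate in this generality. So your proof is not merely a reproduction: it supplies, in a self-contained way and at no extra cost, the one nontrivial modification that the paper's citation leaves implicit.
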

Since the proof of Proposition \ref{global} can be achieved following the lines of that of \cite[Lemma 4.1]{BGGV}, we omit it. The same proof does not work if $l=0$ in $(H_3)$. However, if $\psi$ satisfies
\begin{equation} \label{serrin}
\exists \,\beta, \beta'>0: \quad\frac{\beta}{r}\leq\frac{\psi'(r)}{\psi(r)}\leq \beta'\quad \forall r\geq r_0
\end{equation}
for some $r_0>0$ one may repeat the proof of
\cite[Theorem 5]{pucci} to show that
$\displaystyle{\lim_{r\rightarrow +\infty}
u(r)=0}=\displaystyle{\lim_{r\rightarrow +\infty} u'(r)}$. Clearly,
\eq{serrin} includes the euclidean case $\psi(r)=r$ but does not
hold if, for instance, $\psi(r)=\log(r)$.\bigskip\par

The results concerning existence and qualitative behavior of
solutions of \eqref{LEF} are strongly influenced by the range in
which the power $p$ varies. In the sequel we distinguish the
subcritical case $1<p<2^*-1=\frac{n+2}{n-2}$ and the supercritical
case $p\ge \frac{n+2}{n-2}$.

\bigskip

\textsc{$\bullet$ The subcritical case.}
Let start with the following existence result of a radial $H^1(M)$-solution
of \eqref{LEF}:

\begin{thm}\label{exist gr st}
Let $1<p< \frac{n+2}{n-2}$ and $\psi$ satisfy assumptions
$(H_1)-(H_3)$. Then \eqref{LEF} admits a positive radial solution
$u\in H^1(M)$.
\end{thm}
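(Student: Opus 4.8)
The plan is to produce $u$ by a constrained minimization carried out in the closed subspace $\mathcal H:=\{u\in H^1(M):u\text{ radial}\}$. First I would record the two functional-analytic ingredients. By Lemma \ref{eq:lambda_1>0}, assumptions $(H_1)$--$(H_3)$ give $\lambda_1(M)>0$, hence the Poincaré inequality $\int_M u^2\,dV_g\le\lambda_1(M)^{-1}\int_M|\nabla_g u|^2\,dV_g$ holds and the Dirichlet seminorm is a norm on $\mathcal H$ equivalent to $\|\cdot\|_{H^1(M)}$. Together with a continuous embedding $\mathcal H\hookrightarrow L^{p+1}(M)$ valid for $2<p+1<\tfrac{2n}{n-2}$ (which I obtain below from the local Sobolev embedding on a geodesic ball plus a tail estimate), this makes
\[
S:=\inf\Big\{\textstyle\int_M|\nabla_g u|^2\,dV_g:\ u\in\mathcal H,\ \int_M|u|^{p+1}\,dV_g=1\Big\}
\]
a strictly positive number. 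The whole scheme reduces to showing that $S$ is attained.

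The heart of the matter, and the step I expect to be the main obstacle, is the \emph{compactness} of the embedding $\mathcal H\hookrightarrow L^{p+1}(M)$. Writing the norms in the radial coordinate with weight $\psi^{n-1}$, using that $\psi$ is nondecreasing by $(H_2)$ so that $\int_r^\infty w^2\,ds\le\psi(r)^{-(n-1)}\int_r^\infty w^2\psi^{n-1}\,ds$ for $w\in\{u,u'\}$, a Strauss-type computation starting from $u(r)^2=-2\int_r^\infty uu'\,ds$ and Cauchy--Schwarz yields the pointwise decay
\[
|u(r)|^2\le \frac{C}{\psi(r)^{n-1}}\,\|u\|_{H^1(M)}^2\qquad(r\ge r_0).
\]
Since $(H_3)$ forces $\psi'/\psi\ge l/2$ for large $r$, hence $\psi(r)\to+\infty$ exponentially, this estimate controls the tails uniformly: for a bounded sequence $u_k\rightharpoonup u$ in $\mathcal H$,
\[
\int_{r>\rho}|u_k|^{p+1}\,dV_g\le\Big(\sup_{r>\rho}|u_k(r)|\Big)^{p-1}\int_M u_k^2\,dV_g\le C\,\psi(\rho)^{-(n-1)(p-1)/2}\xrightarrow[\rho\to+\infty]{}0
\]
uniformly in $k$. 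On the fixed ball $B(o,\rho)$ Rellich's theorem gives strong $L^{p+1}$ convergence, so $u_k\to u$ strongly in $L^{p+1}(M)$; the same two ingredients (local embedding plus the tail bound) also give the continuous embedding used above.

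With compactness established the rest is routine. A minimizing sequence is bounded in $\mathcal H$ by Poincaré, its weak limit $u$ satisfies $\int_M|u|^{p+1}\,dV_g=1$ by compactness (so $u\not\equiv0$) and $\int_M|\nabla_g u|^2\,dV_g\le S$ by weak lower semicontinuity, whence $u$ realizes $S$. The Euler--Lagrange equation is $-\Delta_g u=S\,|u|^{p-1}u$, and the rescaled function $S^{1/(p-1)}u$ solves \eqref{LEF}. Because $\int_M|\nabla_g|u||^2\le\int_M|\nabla_g u|^2$ while the constraint is unchanged, $|u|$ is also a minimizer, so I may take $u\ge0$; then $-\Delta_g u=S\,u^p\ge0$ with $u\not\equiv0$, and elliptic regularity together with the strong maximum principle give $u>0$ on $M$. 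Finally, restricting to radial functions is harmless: by the principle of symmetric criticality a critical point in $\mathcal H$ is a critical point of the energy on all of $H^1(M)$, so the function obtained is a genuine positive radial $H^1(M)$ solution of \eqref{LEF}.
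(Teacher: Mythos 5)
Your proposal is correct, and its overall skeleton coincides with the paper's: positivity of $\lambda_1(M)$ (the paper's Lemma \ref{eq:lambda_1>0}) gives the Poincar\'e inequality, a compact embedding of radial $H^1$ into $L^{p+1}$ gives attainment of the Sobolev-type quotient, and the minimizer is then rescaled into a positive radial solution. Where you genuinely diverge is in the proof of the key compactness lemma. The paper (Lemma \ref{Sob Kufner}) deduces both continuity and compactness of $H^1_r(M)\subset L^{p+1}(M)$ from the Opic--Kufner characterization of one-dimensional weighted Hardy inequalities: continuity holds iff $\sup_x f_{n,p}(x)<+\infty$ and compactness iff $f_{n,p}$ vanishes at both endpoints, which they verify using $(H_2)$--$(H_3)$ and the Cauchy mean value theorem. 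This buys sharp information at no extra cost --- continuity up to the critical exponent $p=\frac{n+2}{n-2}$, compactness strictly below it, and a criterion whose failure at $p=1$ is reused in the paper to show $\lambda_1(M)=0$ when $\psi'/\psi\to 0$. You instead prove compactness by hand via a Strauss-type radial lemma ($|u(r)|^2\le C\psi(r)^{-(n-1)}\|u\|_{H^1(M)}^2$, exploiting monotonicity of $\psi$ from $(H_2)$), a uniform tail estimate (using only $\psi(\rho)\to+\infty$, guaranteed by $(H_3)$), and Rellich on geodesic balls; this is more elementary and self-contained, though it yields no statement at the critical exponent. Two further minor differences, both legitimate: for positivity the paper argues by local uniqueness of the Cauchy problem \eqref{cauchy} (if $u(r_0)=0$ then also $u'(r_0)=0$, forcing $u\equiv 0$), while you use elliptic regularity plus the strong maximum principle; and to pass from the radially constrained minimizer to a solution on all of $M$ the paper reads the Euler--Lagrange equation directly as the radial ODE, while you invoke Palais' symmetric criticality for the $SO(n)$ action --- both routes work. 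One small point to make explicit in your write-up: the identity $u(r)^2=-2\int_r^\infty uu'\,ds$ should first be established for $C^\infty_{c}$ radial functions and extended by density of $C^\infty_{c,r}(M)$ in $H^1_r(M)$ (the paper cites Hebey for exactly this density), or justified via an absolutely continuous representative with $\liminf_{r\to\infty}|u(r)|=0$; this is routine and does not affect the argument.
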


One may wonder if \eqref{LEF}
admits a unique radial solution belonging to $H^1(M)$. This happens in the hyperbolic space, i.e. $\psi(r)=\sinh(r)$, see
\cite{mancini}. In order to guarantee uniqueness of radial
$H^1(M)$-solutions, we introduce a supplementary condition on the
function $\psi$. To this purpose we recall from \cite{kwong} the
following definition:
\begin{den}
A function $G:(0,+\infty)\rightarrow \R$ differentiable, satisfies
the $\Lambda-$property if there exists $0\leq r_1\leq +\infty$
such that $G'\geq 0$ in $(0,r_1)$ and $G'\leq 0$ in
$(r_1,+\infty)$ with $G'\not \equiv 0$.
\end{den}
Note that the definition includes the cases in which $G$ is always
nondecreasing or nonincreasing in $[0,+\infty)$.
We are ready to state the following uniqueness result:

\begin{thm}\label{subcr set}
Let $1<p<\frac{n+2}{n-2}$. Assume that $\psi$ satisfies
$(H_1)-(H_2)$ and that there exists
\begin{equation}\label{l}
\displaystyle{\lim_{r\rightarrow
+\infty}\frac{\psi'(r)}{\psi(r)}}=\displaystyle{\lim_{r\rightarrow
+\infty}\frac{\psi''(r)}{\psi'(r)}=l\in\left(0,+\infty\right]}\,.
\end{equation}
Furthermore, set $\delta:=\frac{2(n-1)}{p+3}$ and let the function
$$G(r):=\delta \,\psi^{\delta(p-1)-2}(r)\left[(\delta+2-n)\left(\psi'(r)\right)^{2}-\psi''(r)\psi(r) \right]\quad (r>0)$$
satisfy the $\Lambda-$property. \par Finally, if $l=+\infty$
assume that $\psi$ satisfies the extra condition
\begin{equation}\label{extra}
\frac{\psi'(r)}{\psi(r)}=o( \psi^{\delta}(r))\, , \qquad
\frac{\psi''(r)}{\psi'(r)}=o( \psi^{\delta}(r)) \quad \text{as
}r\rightarrow +\infty\,.
\end{equation}
Then, problem \eqref{cauchy} admits a unique positive solution $U$ belonging to
$H^1(M)$. Moreover, every solution to \eqref{cauchy} with
$0<\alpha<U(0)$ is of one sign, while any solution to
\eqref{cauchy} with $\alpha>U(0)$ is sign-changing.
\end{thm}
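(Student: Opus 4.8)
The plan is to reduce the radial Cauchy problem \eqref{cauchy} to a one-dimensional problem to which the $\Lambda$-property uniqueness machinery of \cite{kwong} applies. Concretely, I would perform an Emden--Fowler type change of variables
\[
v(t)=\psi(r)^{\delta}\,u(r),\qquad \frac{dt}{dr}=\psi(r)^{-\delta(p-1)/2},\quad t(0)=0,
\]
with $\delta=\frac{2(n-1)}{p+3}$ as in the statement. A direct computation shows that the algebraic identity $n-1-2\delta=\tfrac{\delta(p-1)}{2}$ holds, and it is exactly this identity that makes the new variable $t$ kill the first order term: \eqref{cauchy} then becomes the one-dimensional equation
\[
\ddot v(t)+G(r(t))\,v(t)+|v(t)|^{p-1}v(t)=0,
\]
where dots denote $t$-derivatives and $G$ is \emph{precisely} the function in the statement, now appearing as the (variable) linear coefficient. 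The subcriticality $p<\frac{n+2}{n-2}$ is equivalent to $\delta(p-1)/2<1$, which is what makes $\psi^{-\delta(p-1)/2}$ integrable near $r=0$ so that $t$ is well defined; it also forces $G$ to be singular like $t^{-2}$ at $t=0$, the singularity encoding the regularity of $u$ at the pole $o$.

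Second, I would translate the function spaces and the endpoint behavior across the transformation. Since $\psi(r)\to+\infty$, the image interval $(0,T)$ with $T=\int_0^{+\infty}\psi(r)^{-\delta(p-1)/2}\,dr$ is \emph{finite} (the integrand decays at least exponentially under \eqref{l}), and a symmetric analysis shows $G$ is singular like $-(T-t)^{-2}$ at $t=T$ as well. By Proposition \ref{global} every $u_\alpha$ is global with a finite limit; one then checks that a positive solution $u$ lies in $H^1(M)$ (\emph{fast decay}) if and only if the associated $v$ is a positive solution of the transformed equation on $(0,T)$ vanishing at \emph{both} singular endpoints, while slowly decaying positive solutions ($u\to\mathrm{const}>0$) correspond to $v$ that blow up at $T$. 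Thus the uniqueness of $U\in H^1(M)$ becomes the uniqueness of a positive solution of a singular Dirichlet problem on $(0,T)$.

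Third, I would invoke the uniqueness results of \cite{kwong}. The transformed equation has the form $\ddot v+G\,v+|v|^{p-1}v=0$ with $G$ as the linear coefficient, and the hypothesis that $G$ satisfies the $\Lambda$-property is exactly the structural condition under which the Kwong-type Wronskian/linearization argument shows that two distinct positive solutions cannot coexist. Here the two limits in \eqref{l} fix the asymptotic profile of $G$ near $t=T$: since $(\psi')^2,\psi''\psi\sim l^2\psi^2$, one gets $G\sim\delta(\delta+1-n)l^2\psi^{\delta(p-1)}\to-\infty$ (note $\delta+1-n<0$), i.e.\ an eventually decreasing, strongly negative coefficient compatible with the decreasing tail in the $\Lambda$-property; the extra conditions \eqref{extra} control the lower-order (polynomial) corrections in the borderline case $l=+\infty$, ensuring the problem still falls within the range of \cite{kwong}.

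Finally, for the sign classification I would run a shooting argument in $\alpha$, noting that $\alpha=u(0)$ corresponds monotonically to the amplitude of the regular solution of the transformed equation at the singular endpoint $t=0$. Using continuous dependence on $\alpha$ together with the non-crossing (separation) of positive solutions coming from the same $\Lambda$-property analysis, I would show that the number of sign changes of $u_\alpha$ is monotone in $\alpha$ with a single threshold $\alpha^{\ast}=U(0)$: for $0<\alpha<\alpha^{\ast}$ the solution stays positive, at $\alpha^{\ast}$ one recovers the unique $H^1$ ground state, and for $\alpha>\alpha^{\ast}$ the solution changes sign. I expect the main obstacle to lie in the third and fourth steps: verifying rigorously that the finite-interval problem, singular like $t^{-2}$ at \emph{both} ends, satisfies \emph{all} the hypotheses of the cited uniqueness theorem---in particular the precise asymptotics of $G$ and of the solutions at each endpoint under \eqref{l}, and the delicate case $l=+\infty$ requiring \eqref{extra}---and in upgrading the bare ``no two positive solutions'' statement into the full ordered, single-threshold picture.
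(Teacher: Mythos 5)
Your plan is, at its core, the same as the paper's: the substitution $v(t)=\psi^{\delta}(r)u(r)$ with $dt/dr=\psi^{-\delta(p-1)/2}(r)$ is exactly the Kwong--Li change of unknown $\widehat{v}=\psi^{\delta}u$ that the paper uses in Lemmas \ref{unique dir} and \ref{uniqueness} (the paper keeps the variable $r$ and compensates with the weight $\psi^{\delta(p-1)}$ in the auxiliary energy $E_{\widehat{v}}$, which is precisely your reparametrization in $t$ written in the original variable); the coefficient that appears is the same function $G$, and the $\Lambda$-property plays the same structural role. Your computations of the transformed equation and of the asymptotics of $G$ are correct.

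The genuine gap is in your third step. There is no off-the-shelf theorem in \cite{kwong} that you can invoke for a positive solution of $\ddot v+Gv+|v|^{p-1}v=0$ on a finite interval whose coefficient is $-C/t^{2}$-singular at \emph{both} endpoints; what the Kwong--Li method (via \cite{mancini}) yields after the substitution is only uniqueness for the \emph{regular} Dirichlet problem on balls, i.e. Lemma \ref{unique dir}. The global uniqueness in $H^1(M)$ must be proved by hand, and this is where the bulk of the paper's work lies: one first shows that two putative positive $H^1$ solutions intersect exactly once and that their ratio is monotone (Lemmas \ref{G} and \ref{Uv}), then writes the energy identity \eqref{E}, and the decisive point is that $E_{\widehat u}(R),E_{\widehat v}(R)\to 0$ as $R\to+\infty$, i.e. \eqref{R}. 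Proving \eqref{R} requires the precise decay rates of $H^1$ solutions (Lemmas \ref{asym U} and \ref{l:asym u'}), and this is exactly where \eqref{extra} enters when $l=+\infty$: it guarantees $|G|\,\widehat v^{2}\to 0$ and $\psi^{\delta(p-1)}(\widehat v')^{2}\to 0$, not (as you suggest) that the problem ``falls within the range'' of \cite{kwong}; without \eqref{R} the $\Lambda$-property alone does not close the argument. A similar remark applies to your last step: the sign classification for $\alpha>U(0)$ is not obtained by a soft monotone-shooting count of sign changes, but needs a blow-up argument for large $\alpha$ (Lemma \ref{blow1}) together with the intersection machinery of \cite{BGGV} (Lemma \ref{u(0)>U(0)}). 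Finally, your plan never produces the solution $U$ whose uniqueness is at stake: existence comes from Theorem \ref{exist gr st}, applicable since \eqref{l} implies $(H_3)$. You correctly flagged these points as the main obstacles; they are, in fact, the proof.
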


Concerning the validity of the $\Lambda-$property for the function
$G$ defined in Theorem \ref{subcr set} we observe that it is
satisfied when $\psi(r)=\sinh(r)$, i.e. $M=\mathbb H^n$. For more
general function $\psi$ we state the following
\begin{pro} \label{remark2.5}
If $\psi$ satisfies assumptions $(H_1)-(H_3)$, and in addition $\psi$ is four times differentiable with
$\psi'''(r)>0$ and $\left(\frac{\psi'(r)}{\psi'''(r)}\right)'\leq
0$ for every $r>0$, then the function $G$ defined in Theorem \ref{subcr set} satisfies the $\Lambda-$property for
every $\frac{2n+1}{2n-3}\leq p <\frac{n+2}{n-2}$.
\end{pro}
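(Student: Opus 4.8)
The plan is to reduce the $\Lambda$-property of $G$ to the statement that $G'$ changes sign at most once, and from positive to negative. Writing $G=\delta\,\psi^{\delta(p-1)-2}\big[B(\psi')^2-\psi''\psi\big]$ with $A:=\delta(p-1)-2$, $B:=\delta+2-n$ and $C:=2B-A-1$, I would differentiate and factor out the positive quantity $\delta\,\psi^{\delta(p-1)-3}$, which leaves the cubic expression
\[
P:=AB\,(\psi')^3+C\,\psi\psi'\psi''-\psi'''\psi^2,\qquad \operatorname{sign}G'=\operatorname{sign}P .
\]
Dividing by the positive factor $\psi^2\psi'$ and setting $h:=\psi'/\psi$ (so that $\psi''/\psi=h'+h^2$) turns this into
\[
\widehat P:=\frac{P}{\psi^2\psi'}=(AB+C)\,h^2+C\,h'-\frac{\psi'''}{\psi'},
\]
again of the same sign as $G'$.

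Then I would record the elementary signs of the constants on the admissible range. One computes $A=\frac{2[(n-2)p-(n+2)]}{p+3}$ and $B=\frac{2(p+2)-n(p+1)}{p+3}$, both negative for $1<p<\frac{n+2}{n-2}$, $n\ge3$; hence $AB>0$, so $P(0)=AB>0$ and $\widehat P\to+\infty$ as $r\to0^+$. One also checks $C=\frac{9-(4n-7)p}{p+3}<0$ on the range in question. Since $\widehat P$ is positive near the origin, the $\Lambda$-property will follow once I show that at every zero $r_*$ of $\widehat P$ one has $\widehat P'(r_*)<0$: this excludes up-crossings, so $\widehat P$ stays positive up to its first (and only) zero and is negative afterwards, the case of no zero giving $r_1=+\infty$.

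The core is this down-crossing estimate, and it is exactly here that the two extra assumptions are used. Differentiating $\widehat P$ produces the fourth-order term $-\big(\psi''''\psi'-\psi'''\psi''\big)/(\psi')^2$, which is $\le0$ precisely because $(\psi'/\psi''')'\le0$; discarding it with the correct sign gives $\widehat P'\le 2AB\,hh'+C\,\psi'''/\psi-C\,(\psi''/\psi)h$. At a zero of $\widehat P$ I would substitute the relation $\psi'''/\psi'=(AB+C)h^2+Ch'$ to eliminate $\psi'''$ and, crucially, the term $h'$ whose sign is not fixed (it is negative on $\mathbb H^n$ but positive for $\psi=e^{r^a}$). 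Using $\psi'''>0$ this collapses to
\[
\widehat P'\le \frac{2AB+C^2-C}{C}\,\frac{\psi'''}{\psi}-\frac{AB\,(2AB+C)}{C}\,h^3 .
\]
Now $2AB+C^2-C>0$ is automatic (all three summands are positive), so the first term is negative because $C<0$ and $\psi'''>0$; the coefficient of $h^3$ has the sign of $2AB+C$. Hence $\widehat P'<0$ at every zero as soon as $2AB+C\le0$.

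It remains to match $2AB+C\le0$ with the stated interval for $p$, which is the algebraic crux. Clearing denominators, $2AB+C$ has the sign of $-(2n-3)^2p^2+6(2n-3)p+(2n-5)(2n+1)$, whose discriminant collapses to the perfect square $16(2n-3)^2(n-1)^2$; the roots are $p=\frac{3\pm2(n-1)}{2n-3}$. As the leading coefficient is negative and the smaller root $\frac{5-2n}{2n-3}$ lies below $1$, this quadratic is $\le0$ exactly for $p\ge\frac{2n+1}{2n-3}$, which is the hypothesis (and one verifies en route that this bound also forces $C<0$). The main obstacle is the construction in the third paragraph: identifying the normalization $\widehat P$ and, above all, using the zero relation to remove the sign-indefinite $h'$ term, since a naive bound on $\widehat P'$ cannot otherwise be signed.
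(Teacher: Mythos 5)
Your proposal is correct and takes essentially the same route as the paper's own proof: there too one factors $G'(r)=\delta\,\psi^{\delta(p-1)-3}(r)\bigl[AB(\psi')^{3}+C\,\psi\psi'\psi''-\psi'''\psi^{2}\bigr]$ (your $P$), notes that this bracket equals $AB>0$ at $r=0$, and shows it is strictly decreasing at each of its zeros by substituting the zero relation, with the hypothesis $\bigl(\psi'/\psi'''\bigr)'\le 0$ controlling the fourth-order term and the threshold $p\ge\frac{2n+1}{2n-3}$ coming from the same quadratic $-(2n-3)^{2}p^{2}+6(2n-3)p+4n^{2}-8n-5\le 0$, i.e. your condition $2AB+C\le 0$ is exactly the paper's $A_{p,n}\le 0$. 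The only divergence is bookkeeping: you normalize by $\psi^{2}\psi'$ and use the zero relation to eliminate the sign-indefinite $(\psi'/\psi)'$, whereas the paper eliminates the $(\psi'')^{2}$ term and signs the leftover $(\psi')^{2}\psi''$ term via $\psi''>0$ (a consequence of $\psi''(0)=0$ and $\psi'''>0$), so your variant has the very minor advantage of never needing the sign of $\psi''$.
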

By Proposition \ref{remark2.5}, it follows that if
$\psi(r)=re^{r^{2\gamma}}$ then the corresponding function $G$
satisfies the $\Lambda-$property for every $\gamma \geq 1$.

Concerning condition \eqref{extra}, we observe that it holds, for instance, if $\frac{\psi'(r)}{\psi(r)}=P(r)$
eventually, where $P$ is a nonconstant polynomial.

Finally we state a result dealing with the asymptotic behavior of
radial positive solutions of \eqref{LEF}.

\begin{thm}\label{asym sub} Let $n\ge 3$ and $1<p < \frac{n+2}{n-2}$. Suppose that $\psi$ satisfies
assumptions $(H_1)-(H_2)$ and
\begin{equation} \label{eq:ex-lim}
\lim_{r\to +\infty} \frac{\psi'(r)}{\psi(r)}=l\in (0,+\infty] \, .
\end{equation}
Finally in the case $l=+\infty$ assume the supplementary condition
\begin{equation} \label{eq:supplementary}
\left[\log\left(\frac{\psi'(r)}{\psi(r)} \right)\right]'=O(1)
\qquad \text{as } r\to +\infty \, .
\end{equation}

Let $u$ be a radial positive solution of \eqref{LEF}.
\begin{itemize}
\item[(i)] If $u\in H^1(M)$ then there exists $L\in (-\infty,0)$
such that
\begin{equation*} 
\lim_{r\rightarrow +\infty}\psi^{n-1}(r) u'(r)=L \, .
\end{equation*}
Moreover
\begin{equation*} 
\lim_{r\rightarrow +\infty}\psi^{n-1}(r) u(r)=\frac{|L|}{(n-1)l}
\qquad \text{if } l<+\infty \, ,
\end{equation*}
and
\begin{equation*} 
\lim_{r\rightarrow
+\infty}\frac{u(r)}{\int_{r}^{+\infty}\psi^{1-n}(s)\,ds}=|L|
\qquad \text{if } l=+\infty \, .
\end{equation*}

\item[(ii)] If $u\not\in H^1(M)$ and $\frac{\psi}{\psi'}\in
L^1(0,\infty)$ then
$$
\lim_{r\to +\infty} u(r)\in (0,+\infty) \, .
$$
\item[(iii)] If $u\not\in H^1(M)$ and $\frac{\psi}{\psi'}\not\in
L^1(0,\infty)$ then $u$ vanishes at infinity with the following
rate
\begin{equation*}
\lim_{r\to +\infty} \left(\int_{0}^r \frac{\psi(s)}{\psi'(s)}\,
ds\right)^{1/(p-1)} u(r)=\left(\frac{n-1}{p-1}\right)^{1/(p-1)} \,
.
\end{equation*}
In particular when $l<+\infty$ we have
\begin{equation*}
\lim_{r\to +\infty} r^{1/(p-1)}
u(r)=\left(\frac{l(n-1)}{p-1}\right)^{1/(p-1)} \, .
\end{equation*}
\end{itemize}
\end{thm}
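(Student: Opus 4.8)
The plan is to work directly with the radial ODE in \eqref{cauchy}. Writing the equation for a positive solution as $(\psi^{n-1}u')'=-\psi^{n-1}u^p$ and setting $W:=\psi^{n-1}u'$, one has $W'=-\psi^{n-1}u^p<0$ and $W(0)=0$, so $W$ is strictly decreasing, $u'=W\psi^{1-n}<0$, and $u$ is strictly decreasing. By Proposition \ref{global} the limits $c:=\lim_{r\to+\infty}u(r)\in[0,+\infty)$ and $L:=\lim_{r\to+\infty}W(r)\in[-\infty,0)$ exist, and integrating $W'$ gives the representation
\[
L=-\int_0^{+\infty}\psi^{n-1}(s)\,u^p(s)\,ds .
\]
First I would record two elementary consequences of \eqref{eq:ex-lim}: since $\psi'/\psi\to l>0$, the function $\psi$ grows at least exponentially, whence $\psi^{1-n}\in L^1(0,+\infty)$, and by L'Hopital $\int_r^{+\infty}\psi^{1-n}\sim\frac{1}{(n-1)l}\psi^{1-n}(r)$ when $l<+\infty$.

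The second step is the characterisation $u\in H^1(M)\Leftrightarrow L\in(-\infty,0)$. If $L$ is finite then $u'\sim L\psi^{1-n}$, so $(u')^2\psi^{n-1}\sim L^2\psi^{1-n}\in L^1$, while $u(r)=\int_r^{+\infty}(-u')$ together with the estimate above yields $u^2\psi^{n-1}\in L^1$; hence $u\in H^1(M)$. Conversely, if $L=-\infty$ I would show $u\notin L^2(M)$: when $c>0$ this is immediate since $u\ge c/2$ for large $r$ forces $\int^{+\infty}\psi^{n-1}u^2=+\infty$, and when $c=0$ it will follow from the decay rate proved in the third step, because $\psi^{n-1}(r)u^2(r)\to+\infty$ ($\psi$ grows faster than any power of $\int_0^r\psi/\psi'$). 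This establishes $u\notin H^1(M)\Leftrightarrow L=-\infty$. Granting finiteness of $L$, part (i) is then a direct L'Hopital computation: $\lim\psi^{n-1}u'=L$, and applying L'Hopital to $\big(\int_r^{+\infty}(-W)\psi^{1-n}\big)/\psi^{1-n}$ (resp.\ to $u/\int_r^{+\infty}\psi^{1-n}$) gives $\psi^{n-1}u\to|L|/((n-1)l)$ if $l<+\infty$ and $u/\int_r^{+\infty}\psi^{1-n}\to|L|$ if $l=+\infty$.

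It remains to treat the slow regime $L=-\infty$, which underlies both (ii) and (iii). Here I would substitute $z:=u^{1-p}$ (so $z>0$ is increasing and, when $c=0$, $z\to+\infty$); a direct computation transforms the equation into
\[
z''+(n-1)\frac{\psi'}{\psi}\,z'=\frac{p}{p-1}\frac{(z')^2}{z}+(p-1).
\]
Writing $\zeta:=(\psi'/\psi)\,z'$, the plan is to show $\zeta\to\frac{p-1}{n-1}$, i.e.\ that the two terms $(z')^2/z$ and $z''$ are negligible against $(n-1)(\psi'/\psi)z'$ as $r\to+\infty$; the hypothesis \eqref{eq:supplementary} is exactly what controls $z''$ and the variation of $\psi'/\psi$ in the case $l=+\infty$. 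Once $\zeta\to\frac{p-1}{n-1}$ is known, $z'\sim\frac{p-1}{n-1}\,\psi/\psi'$ and a Ces\`aro/L'Hopital integration give $z(r)\sim\frac{p-1}{n-1}\int_0^r\psi/\psi'$, that is
\[
u(r)\Big(\int_0^r\tfrac{\psi}{\psi'}\Big)^{1/(p-1)}\longrightarrow\Big(\tfrac{n-1}{p-1}\Big)^{1/(p-1)} .
\]
If $\psi/\psi'\notin L^1$ the integral diverges and this is exactly the rate in (iii), with the stated specialisation for $l<+\infty$ obtained from $\int_0^r\psi/\psi'\sim r/l$; if instead $\psi/\psi'\in L^1$ the same relation forces $u$ to a finite positive limit, giving (ii) and ruling out $c=0$ by contradiction. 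The main obstacle is precisely this constant-pinning: establishing $\zeta\to\frac{p-1}{n-1}$ rigorously requires first deriving crude a priori bounds on $z,z'$ (from monotonicity and the sign of $W$) and then bootstrapping in the transformed equation, the delicate point being the control of the error terms when $l=+\infty$, where one must lean on \eqref{eq:supplementary}.
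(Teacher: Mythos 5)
Your setup and reformulation are sound: the dichotomy $L$ finite versus $L=-\infty$ for $W=\psi^{n-1}u'$, the easy implication ($L$ finite $\Rightarrow u\in H^1(M)$, with part (i) then following by l'H\^{o}pital), and the substitution $z=u^{1-p}$, whose target $\zeta=(\psi'/\psi)\,z'\to\frac{p-1}{n-1}$ is exactly equivalent to the paper's key limit \eqref{eq:lim-u'-up}, $\frac{u'}{u^p}\frac{\psi'}{\psi}\to-\frac{1}{n-1}$. But the proof of that limit is precisely where the entire difficulty of the theorem lies, and you leave it as an acknowledged ``obstacle'' with only a sketch; moreover, the sketch cannot work as described. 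You propose to obtain the negligibility of $(z')^2/z$ and $z''$ against $(n-1)(\psi'/\psi)z'$ from ``crude a priori bounds on $z,z'$ (from monotonicity and the sign of $W$)'' plus bootstrapping. Note that the first negligibility claim is equivalent to $\frac{u'}{u}\frac{\psi}{\psi'}\to 0$, and this is \emph{false} for $H^1(M)$ solutions: when $l<+\infty$, Lemma \ref{asym U} gives $u'/u\to-(n-1)l$, hence $\frac{u'}{u}\frac{\psi}{\psi'}\to-(n-1)\neq 0$. Since monotonicity of $u$ and the sign of $W$ hold for \emph{every} positive radial solution, no estimate derived from them alone can single out the slow regime; the hypothesis $L=-\infty$ (equivalently $u\notin H^1(M)$) must enter in an essential, quantitative way, and your proposal never says how.

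Concretely, the paper's argument needs three ingredients that you do not supply: (a) the exclusion of any power-type decay $u\le C\psi^{-\beta}$ for non-$H^1$ solutions (Lemma \ref{l:dec}), proved by an iteration that upgrades such a bound to $u=O(\psi^{-(n-1-\eps)})$ and then to $u\in H^1(M)$, a contradiction; (b) a pointwise differential inequality linking $u'$ to $u$ at the same radius --- inequality \eqref{eq:stima>0}, or the second alternative following the quadratic inequality \eqref{eq:2nd-ord} --- which comes not from monotonicity but from the Pohozaev-type function $P(r)$ of Theorem \ref{teo}: in the subcritical range $K(r)\to-\infty$, so $P$ is eventually monotone and admits a limit $\gamma$ (Lemma \ref{l:ex-lim-P}), and the cases $\gamma<0$ and $\gamma\ge 0$ are treated separately, the ``fast-decay'' root of \eqref{eq:2nd-ord} being excluded precisely by (a); and (c) a mechanism producing the \emph{existence} of the limit of $\frac{u'}{u^p}\frac{\psi'}{\psi}$, which the paper obtains by evaluating the equation at the local extrema of that quantity, using (b) and, when $l=+\infty$, assumption \eqref{eq:supplementary}. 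Without (a)--(c) your constant-pinning step, and with it parts (ii) and (iii), is unproven. Finally, since in your architecture the implication $u\in H^1(M)\Rightarrow L>-\infty$ is routed through the $L=-\infty$ asymptotics, the gap infects part (i) as well; the paper instead proves (i) independently of (ii)--(iii), via the comparison and decay estimates of Lemmas \ref{asym U} and \ref{l:asym u'}, which is the safer order of quantifiers.
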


\bigskip

\textsc{$\bullet$ The supercritical case.} Throughout this paper let us denote by $B_R$ the geodesic
ball centered at $o$ of radius $R$, i.e.
$$
B_R:=\{(r,\theta_1,\ldots,\theta_{n-1}): 0<r<R \text{ and } \theta_1,\ldots,\theta_{n-1}\in{\mathbb S}^{n-1}\}\,.
$$

\begin{thm}\label{teo}
Let $p\geq \frac{n+2}{n-2}$ and $\psi$ satisfy assumptions $(H_1)-(H_2)$. If $p=\frac{n+2}{n-2}$ assume
furthermore that $\psi$ is three times differentiable near $0$ with $\psi''(0)=0$ and $\psi'''(0)>0$. Finally, let the function
$$
A(r):=\left(\int_0^r(\psi(s))^{n-1}\,ds\right)^{\frac{p-1}{2(p+1)}}=c\,\left[{\rm Vol}\, B(o,r)\right]^{\frac{p-1}{2(p+1)}}
$$
be convex on $[0,+\infty)$. Then any solution $u(r)$ to \eqref{cauchy} does not change sign for all $r\in[0,+\infty)$. In particular, the Dirichlet problem
$$ \left \{
 \begin{array}{ll}
-\Delta_g u=|u|^{p-1}u & \text{in }B_{\overline{r}}\\
u=0 &  \text{on }\partial B_{\overline{r}}
\end{array}
\right.$$ with $0<\overline{r}<+\infty$ has no nontrivial radial
solutions.
\end{thm}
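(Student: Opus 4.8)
The plan is to encode the whole statement into the monotonicity of a single Pohozaev-type functional. Since $|u|^{p-1}u$ is odd I may assume $\alpha=u(0)>0$ and prove that $u(r)>0$ for all $r\ge0$; the claim on the Dirichlet problem is then immediate, because a nontrivial radial solution on $B_{\overline{r}}$ would solve \eqref{cauchy} for some $\alpha\neq0$ (the value $\alpha=0$ forcing $u\equiv0$ by uniqueness) and would have to vanish at $r=\overline{r}$, which positivity rules out. Writing $V(r)=\int_0^r\psi(s)^{n-1}\,ds$ and $F(u)=\frac1{p+1}|u|^{p+1}$, and recalling that \eqref{cauchy} reads $(\psi^{n-1}u')'=-\psi^{n-1}|u|^{p-1}u$, I would introduce
$$P(r):=(p+1)\,V(r)\Big[\tfrac12 u'(r)^2+F(u(r))\Big]+\psi(r)^{n-1}u(r)u'(r).$$

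First I would differentiate $P$ and substitute the equation. The two pairs of coefficients are tuned so that everything collapses: since $\big((p+1)V\big)'=(p+1)\psi^{n-1}$ the $|u|^{p+1}$ contributions cancel, and since $(\psi^{n-1})'=(n-1)\tfrac{\psi'}{\psi}\psi^{n-1}$ the $uu'$ contributions cancel, leaving only
$$P'(r)=C(r)\,u'(r)^2,\qquad C(r):=\tfrac{p+3}{2}\psi^{n-1}-(p+1)(n-1)\tfrac{\psi'}{\psi}V.$$

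The decisive step is the algebraic identity linking $C$ to the hypothesis. With $\theta:=\frac{p-1}{2(p+1)}$ one has $A=V^{\theta}$ and
$$A''=\theta\,V^{\theta-2}\psi^{n-2}\Big[(n-1)\psi'V-\tfrac{p+3}{2(p+1)}\psi^{n}\Big],$$
whence, comparing brackets, $C=-\dfrac{p+1}{\theta\,V^{\theta-2}\psi^{n-1}}\,A''$. As $\theta,V,\psi>0$ for $r>0$, convexity of $A$ (that is $A''\ge0$) is exactly $C\le0$, so $P'\le0$ and $P$ is nonincreasing on $(0,+\infty)$. Since $\psi(0)=V(0)=0$ and $u'(0)=0$ give $P(0^+)=0$, I conclude $P\le0$ everywhere. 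If $u$ had a first zero $r_0>0$, then $F(u(r_0))=0$ forces $P(r_0)=\frac{p+1}2V(r_0)\,u'(r_0)^2\ge0$; combined with $P(r_0)\le0$ this yields $u'(r_0)=0$, and then $u(r_0)=u'(r_0)=0$ gives $u\equiv0$ by uniqueness for the regular Cauchy problem on $(0,\infty)$, contradicting $\alpha\neq0$. Hence $u$ never vanishes and keeps its sign.

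The main obstacle is precisely discovering $P$ and establishing $C\propto-A''$: this is what singles out the exponent $\frac{p-1}{2(p+1)}$ in the definition of $A$, and once it is in hand the argument is essentially mechanical. The only remaining subtlety is the sign of $C$ near the origin, where $V^{\theta-2}$ blows up. In the supercritical range $A\sim c\,r^{n\theta}$ with $n\theta>1$, so $A''\ge0$ near $0$ automatically. In the critical case $p=\frac{n+2}{n-2}$ one has $n\theta=1$, the leading term of $A$ is linear, and the sign of $A''$ near $0$ is governed by the next term; expanding $\psi$ in odd powers and using $\psi''(0)=0$, $\psi'''(0)>0$ gives $A=c\big(r+\frac{(n-1)\psi'''(0)}{6(n+2)}r^3+\cdots\big)$, so $A''\ge0$ for small $r>0$, which is exactly the role of that extra hypothesis, while away from the origin $A''\ge0$ is the stated convexity.
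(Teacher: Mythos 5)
Your proposal is correct and its core is exactly the paper's argument: the same Pohozaev-type functional $P$, the same computation $P'(r)=C(r)\,u'(r)^2$ (the paper calls $C$ by the name $K$), and the same identification of the sign condition $C\le 0$ with convexity of $A=V^{\frac{p-1}{2(p+1)}}$ — the paper reaches that equivalence by rewriting $K\le 0$ as $\bigl[\log\bigl((\Psi^{c_p})'\bigr)\bigr]'\ge 0$ with $c_p=\frac{p-1}{2(p+1)}$, whereas you compute $A''$ directly, but these are the same manipulation. The one genuine difference is the endgame. The paper first establishes the \emph{strict} inequality $P<0$ near $r=0$, by expanding $K(r)$ as $r\downarrow 0$; this near-origin analysis is precisely where the paper invokes the extra critical-case hypothesis $\psi''(0)=0$, $\psi'''(0)>0$ (it makes $K(r)\sim -c\,\psi'''(0)r^{n+1}<0$ when $p=\frac{n+2}{n-2}$), and it then contradicts $P(\rho)>0$ at a zero $\rho$ where $u'(\rho)<0$. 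Your version closes more economically: monotonicity plus $P(0^+)=0$ gives only $P\le 0$, at a first zero $r_0$ this forces $u'(r_0)=0$, and Cauchy uniqueness at the regular point $r_0$ yields $u\equiv 0$, a contradiction. Both endgames rest on the same uniqueness fact (the paper uses it implicitly to rule out $u'(\rho)=0$), but yours never needs the strict sign of $C$ near the origin, so the expansion of $\psi$ at $0$ — and with it the explicit role of $\psi'''(0)>0$ — drops out of the deduction; your closing paragraph correctly identifies that hypothesis as a consistency condition for the convexity assumption in the critical case rather than as a step you actually use.
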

Concerning the convexity of the function $A$ defined in Theorem \ref{teo} we state the following
\begin{pro} \label{convexity-A(r)} Assume that $\psi$ satisfies $(H_1)-(H_2)$. Let $A$ be the function defined in Theorem \ref{teo}. Then we have:
\begin{itemize}
\item[(i)] if $\psi$ is convex, then $A$ is also convex;

\item[(ii)] if $\psi$ is such that \eq{eq:ex-lim} holds with $l<+\infty$, then $A$
is eventually convex at $+\infty$;

\item[(iii)] if $\psi$ is such that \eq{eq:ex-lim} holds with $l=+\infty$ and
\eqref{eq:supplementary} is satisfied, then $A$ is eventually
convex at $+\infty$.
\end{itemize}
\end{pro}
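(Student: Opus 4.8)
The plan is to reduce convexity of $A$ to a single differential inequality and then verify that inequality in each of the three regimes. Write $V(r):=\int_0^r(\psi(s))^{n-1}\,ds$, so that $A=V^{\theta}$ with $\theta:=\frac{p-1}{2(p+1)}$ and $1-\theta=\frac{p+3}{2(p+1)}$. Differentiating twice gives $A''=\theta V^{\theta-2}\big[VV''-(1-\theta)(V')^2\big]$, and since $\theta V^{\theta-2}>0$ this shows that $A$ is convex (on an interval) exactly where $VV''\ge(1-\theta)(V')^2$. Using $V'=\psi^{n-1}$ and $V''=(n-1)\psi^{n-2}\psi'$ and dividing by $\psi^{n-2}>0$, the inequality to be verified becomes
\[
(n-1)\,\psi'(r)\,V(r)\ \ge\ (1-\theta)\,\psi^{n}(r),
\]
which I denote $(\ast)$. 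I would record at the outset the arithmetic fact that makes everything work: since $A$ is the function of Theorem~\ref{teo}, we are in the range $p\ge\frac{n+2}{n-2}$, equivalently $\theta\ge\frac1n$, i.e. $1-\theta\le\frac{n-1}{n}$. That this threshold is sharp is already visible in the Euclidean model $\psi(r)=r$, where $A$ is proportional to $r^{n\theta}$ and fails to be convex when $n\theta<1$.

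For part (i) I would prove the stronger, $\theta$-free inequality $n\psi'(r)V(r)\ge\psi^n(r)$. Setting $P(r):=n\psi'(r)V(r)-\psi^n(r)$ one has $P(0)=0$ and, after cancelling the $n\psi'\psi^{n-1}$ terms,
\[
P'(r)=n\psi''(r)V(r)\ge0
\]
by convexity of $\psi$; hence $P\ge0$ everywhere. Combining with $1-\theta\le\frac{n-1}{n}$ upgrades this to $(\ast)$ for every $r$, so $A$ is globally convex.

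For parts (ii) and (iii) the strategy is to show that the ratio $R(r):=\dfrac{(n-1)\psi'(r)V(r)}{\psi^n(r)}$ tends to $1$, which is strictly larger than $1-\theta$, so $(\ast)$ holds for all large $r$. In case (ii), where \eqref{eq:ex-lim} holds with $l\in(0,\infty)$, a direct application of de l'H\^opital gives $V/\psi^{n-1}\to\frac1{(n-1)l}$, whence $R=(n-1)\frac{\psi'}{\psi}\cdot\frac{V}{\psi^{n-1}}\to1$. In case (iii), where $l=+\infty$, this product becomes an indeterminate $\infty\cdot0$ and must be handled by comparing $V$ with $q(r):=\frac{\psi^n}{(n-1)\psi'}$; a direct computation yields
\[
\frac{q'(r)}{V'(r)}=\frac1{n-1}\Big[n-\frac{\psi\psi''}{(\psi')^2}\Big].
\]
Here the supplementary hypothesis \eqref{eq:supplementary}, which reads $\frac{\psi''}{\psi'}-\frac{\psi'}{\psi}=O(1)$, together with $\psi'/\psi\to+\infty$, forces $\frac{\psi\psi''}{(\psi')^2}\to1$ and hence $q'/V'\to1$; integrating this from a fixed $r_0$ (using $V'>0$ and $V\to\infty$) gives $V/q\to1$, i.e. $R\to1$.

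I expect the main obstacle to be case (iii): unlike (i) and (ii) it is settled neither by monotonicity nor by a one-step de l'H\^opital, since the natural limit $V/\psi^{n-1}$ degenerates to $0$. The crux is the identity for $q'/V'$ together with the recognition that \eqref{eq:supplementary} is exactly the hypothesis needed to pin down $\psi\psi''/(\psi')^2\to1$; the remaining passage from $q'/V'\to1$ to $V/q\to1$ is a routine integration estimate.
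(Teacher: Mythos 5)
Your proposal is correct and follows essentially the same route as the paper: the paper likewise reduces convexity of $A$ to the positivity of $h(r):=2(n-1)(p+1)\psi'(r)\int_0^r\psi^{n-1}(s)\,ds-(p+3)\psi^n(r)$ (which is exactly your inequality $(\ast)$), proves (i) by a monotonicity argument on an auxiliary function vanishing at $r=0$ (implicitly using $p\ge\frac{n+2}{n-2}$, which you rightly make explicit), and settles (ii)--(iii) by showing $(n-1)\psi'(r)\int_0^r\psi^{n-1}(s)\,ds\,/\,\psi^n(r)\to 1$ via l'H\^opital, with \eqref{eq:supplementary} supplying the boundedness of $\left[\log\left(\psi'/\psi\right)\right]'$ in case (iii). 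Your treatment of (iii) through $q=\psi^n/((n-1)\psi')$ and $q'/V'\to 1$ is a mild repackaging of the paper's l'H\^opital computation (it avoids the separate remark that $\int_0^r\psi^{n-1}(s)\,ds=o(\psi^{n-1}(r))$), but the underlying argument is the same.
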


From Proposition \ref{convexity-A(r)} it follows that the assumptions
of Theorem \ref{teo} are satisfied either by the hyperbolic model (see \cite{BGGV}) or by models having unbounded negative sectional curvatures such as $\psi(r)=re^{r^{2\gamma}}$ with
$\gamma\ge 0$.

Similarly to the subcritical case, for the
asymptotic behavior of radial positive solutions of \eqref{LEF} we have

\begin{thm}\label{asym} Let $n\ge 3$ and $p\ge \frac{n+2}{n-2}$. Suppose that $\psi$ satisfies
assumptions $(H_1)-(H_2)$, \eqref{eq:ex-lim} and that the function
$A=A(r)$ defined in Theorem \ref{teo} is convex. Finally in the
case $l=+\infty$ we also assume \eqref{eq:supplementary}. Let $u$
be a radial (positive) solution of \eqref{LEF}.
\begin{itemize}
\item[(i)] If $\frac{\psi}{\psi'}\in L^1(0,\infty)$ then
$$
\lim_{r\to +\infty} u(r)\in (0,+\infty) \, .
$$
\item[(ii)] If $\frac{\psi}{\psi'}\not\in L^1(0,\infty)$ then $u$
vanishes at infinity with the following rate
\begin{equation*}
\lim_{r\to +\infty} \left(\int_{0}^r \frac{\psi(s)}{\psi'(s)}\,
ds\right)^{1/(p-1)} u(r)=\left(\frac{n-1}{p-1}\right)^{1/(p-1)} \,
.
\end{equation*}
In particular when $l<+\infty$ we have
\begin{equation*}
\lim_{r\to +\infty} r^{1/(p-1)}
u(r)=\left(\frac{l(n-1)}{p-1}\right)^{1/(p-1)} \, .
\end{equation*}
\end{itemize}
\end{thm}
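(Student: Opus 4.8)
The plan is to reduce both statements to the single asymptotic relation
\[
\frac{-u'(r)}{u(r)^p}\sim\frac{1}{n-1}\,\frac{\psi(r)}{\psi'(r)}\qquad(r\to+\infty),
\]
after which (i) and (ii) follow by integrating the identity $\frac{d}{dr}\frac{u^{1-p}}{p-1}=\frac{-u'}{u^p}$. First I would fix the setup. By Theorem \ref{teo} (here is where convexity of $A$ enters) the solution $u$ has constant sign, hence $u>0$; then $(\psi^{n-1}u')'=-\psi^{n-1}u^p<0$ while $\psi^{n-1}u'\to0$ as $r\to0^+$, so $\psi^{n-1}u'<0$ and $u$ is strictly decreasing. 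Proposition \ref{global} gives $u'\to0$ and $u\to\ell\in[0,\alpha]$. Integrating the equation from $0$ yields the basic identity
\[
-\psi^{n-1}(r)u'(r)=\int_0^r\psi^{n-1}u^p=:I(r),\qquad \frac{-u'}{u^p}=\frac{I(r)}{u(r)^p\,\psi^{n-1}(r)}.
\]
Writing $k:=(n-1)\psi'/\psi$ and $J(r):=\int_0^r\psi^{n-1}$, note that \eqref{eq:ex-lim} plus de l'Hopital give $\frac{J}{\psi^{n-1}}\sim\frac{\psi}{(n-1)\psi'}$, i.e. $k\,J/\psi^{n-1}\to1$.

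Next I would obtain the two one-sided bounds for $F:=-u'/u^p$. Since $u$ is decreasing, $I(r)\ge u(r)^p J(r)$, whence $F\ge J/\psi^{n-1}$ and therefore $\liminf_{r\to+\infty}kF\ge1$. In particular, if $\psi/\psi'\notin L^1(0,\infty)$ then integrating this lower bound forces $u^{1-p}\to+\infty$, so $u\to0$; in all cases $u\le\alpha$. For the matching upper bound I would use that $F$ satisfies the Riccati equation
\[
F'=1-kF+p\,u^{p-1}F^2,
\]
obtained by differentiating $F$ and substituting $-u''=(n-1)\tfrac{\psi'}{\psi}u'+u^p$. Near the equilibrium $F\asymp1/k$ the perturbation is negligible, because $\tfrac{u^{p-1}}{k^2}=\tfrac{u^{p-1}}{(n-1)^2}(\psi/\psi')^2\to0$: when $l<\infty$ one has $\psi/\psi'\to1/l$ and $u\to0$, while when $l=+\infty$ one has $\psi/\psi'\to0$ and $u\le\alpha$. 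A barrier/comparison argument applied to $\phi:=kF$, which solves $\phi'=k(1-\phi)+\frac{k'}{k}\phi+\frac{p\,u^{p-1}}{k}\phi^2$, then gives $\limsup_{r\to+\infty}\phi\le1$: the restoring term $k(1-\phi)$ dominates once $\phi\ge1+\eps$, provided $k$ varies slowly in the sense $k'/k^2\to0$. This slow variation is automatic when $l<\infty$ (then $k\to(n-1)l$ is a positive constant and the Riccati equation is asymptotically autonomous and stable), and it is exactly what \eqref{eq:supplementary} delivers when $l=+\infty$, since $[\log(\psi'/\psi)]'=k'/k=O(1)$ together with $k\to+\infty$ yields $k'/k^2\to0$. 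Combining the two bounds gives $kF\to1$, which is the displayed relation.

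It then remains to integrate. From $\frac{d}{dr}\frac{u^{1-p}}{p-1}=F\sim\frac{1}{n-1}\frac{\psi}{\psi'}$ I would integrate from a large $r_0$. If $\psi/\psi'\in L^1(0,\infty)$ the right-hand side stays bounded, so $u^{1-p}(r)$ (increasing in $r$) converges to a finite limit and hence $u(r)\to\ell\in(0,+\infty)$, proving (i). If $\psi/\psi'\notin L^1(0,\infty)$ then $u^{1-p}(r)\sim\frac{p-1}{n-1}\int_0^r\frac{\psi}{\psi'}$, which rearranges to
\[
\lim_{r\to+\infty}\left(\int_0^r\frac{\psi(s)}{\psi'(s)}\,ds\right)^{1/(p-1)}u(r)=\left(\frac{n-1}{p-1}\right)^{1/(p-1)},
\]
and the special case $l<\infty$ follows from $\int_0^r\psi/\psi'\sim r/l$, giving the $r^{-1/(p-1)}$ rate; this proves (ii).

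The main obstacle is the upper bound in the second paragraph, i.e. showing that the $u''$ term is asymptotically negligible (equivalently $\limsup kF\le1$); everything else is monotonicity and elementary integration. This is precisely the point at which the slow variation of $\psi'/\psi$ is needed, trivially for $l<\infty$ and through \eqref{eq:supplementary} for $l=+\infty$, explaining why the latter hypothesis is imposed only in the case $l=+\infty$. I note finally that the whole asymptotic analysis is identical to that underlying Theorem \ref{asym sub}(ii)-(iii), whose statements coincide with (i)-(ii) here; the only genuinely new ingredient in the supercritical range is that positivity of $u$, used to write $|u|^{p-1}u=u^p$ and to deduce monotonicity, is now furnished by Theorem \ref{teo} rather than by an $H^1$/sign discussion.
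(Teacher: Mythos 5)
Your target relation is exactly the paper's key limit \eqref{eq:lim-u'-up} (in your notation, $\phi:=(n-1)\tfrac{\psi'}{\psi}\cdot\tfrac{-u'}{u^p}\to 1$), and your final integration step is the same as the paper's; moreover your lower bound $\liminf_{r\to+\infty}\phi\ge 1$, obtained from monotonicity of $u$ via $\int_0^r\psi^{n-1}u^p\ge u(r)^p\int_0^r\psi^{n-1}$, is correct and even cleaner than the paper's route. The proof breaks down, however, at the upper bound. In the Riccati equation $\phi'=k(1-\phi)+\tfrac{k'}{k}\phi+\tfrac{pu^{p-1}}{k}\phi^2$ the claim that ``the restoring term dominates once $\phi\ge1+\eps$'' is false for large $\phi$: the quadratic term beats $-k\phi$ as soon as $\phi>k^2/(pu^{p-1})$, so your barrier only forbids \emph{upward crossings} of the level $1+\eps$; it cannot exclude the runaway branch on which $\phi$ stays above $1+\eps$ forever and tends to $+\infty$. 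That branch is not hypothetical: it is precisely the fast-decay regime $u\asymp\psi^{1-n}$, for which $F=-u'/u^p\asymp\psi^{(n-1)(p-1)}\to+\infty$. Note that your argument uses nothing about $p\ge\tfrac{n+2}{n-2}$ or about convexity of $A$ beyond the positivity supplied by Theorem \ref{teo}; but then it applies verbatim to the positive, decreasing $H^1$ ground state of Theorem \ref{exist gr st} in the subcritical case (say on $\mathbb H^n$), for which the conclusion of (ii) is false --- by Theorem \ref{asym sub}(i) that solution decays like $\psi^{1-n}$, not like $\bigl(\int_0^r\psi/\psi'\bigr)^{-1/(p-1)}$. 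So some ingredient beyond positivity must rule out fast decay, and your proposal does not contain one; your closing remark that positivity is ``the only genuinely new ingredient'' is exactly the misconception.

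In the paper that missing ingredient is the $P$-function inequality \eqref{eq:stima>0}, $u'+\tfrac{2\psi^{n-1}}{(p+1)\int_0^r\psi^{n-1}}\,u>0$, which holds because convexity of $A$ makes $P$ negative and nonincreasing; it gives the a priori lower bound $u\gtrsim\psi^{-2(n-1+\eps)/(p+1)}$, and since $2(n-1)/(p+1)<n-1$ this contradicts any hypothetical decay $u\le C\psi^{-\beta}$ (Lemma \ref{no exp dec}). Only after fast decay is excluded do the extremal-point arguments (Lemma \ref{l:u'/u} and \eqref{eq:lim-u'-up}) yield $\phi\to1$. So convexity of $A$ does quantitative work here, not merely the sign work of Theorem \ref{teo}, and your scheme can only be closed by importing \eqref{eq:stima>0} (or an equivalent upper bound on $-u'/u$) to kill the runaway branch. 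Two smaller inaccuracies: for $l<\infty$, $k\to(n-1)l$ does \emph{not} imply $k'/k^2\to0$ (convergence of $k$ says nothing about $k'$), so the crossing argument should be run on $F$ against the fixed level $(1+\eps)/((n-1)l)$ --- the equation $F'=1-kF+pu^{p-1}F^2$ contains no $k'$ --- rather than on $\phi=kF$; and for $l=+\infty$ the relation $k\int_0^r\psi^{n-1}/\psi^{n-1}\to1$ (the paper's \eqref{lim}) is not a plain l'H\^{o}pital consequence of \eqref{eq:ex-lim} but itself requires \eqref{eq:supplementary}, as in the proof of Proposition \ref{convexity-A(r)}.
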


\bigskip

\textsc{$\bullet$ Stability of radial solutions of \eqref{LEF}.}
We start by explaining what we mean by stability and stability outside a compact set, see also \cite{fsv}.

\begin{den}\label{def1}
A solution $u\in C^2(M)$ to $\eq{LEF}$ is stable if
\begin{equation}\label{stab}
\int_{M} |\nabla_g \varphi|_{g}^2\,dV_{g}- p\int_{M} |u|^{p-1}
\varphi^2\,dV_g\geq 0 \quad \forall \, \varphi\in C_c^{\infty}(M).
\end{equation}
A solution $u\in C^2(M)$ to $\eq{LEF}$ is stable outside the compact set $K$ if
\begin{equation}\label{stabK}
\int_{M\setminus K}|\nabla_g \varphi|_{g}^2\,dV_{g}-
p\int_{M\setminus K} |u|^{p-1} \varphi^2\,dV_g\geq 0 \quad \forall
\, \varphi\in C_c^{\infty}(M\setminus K).
\end{equation}
\end{den}
For any $n\ge 11$, let $p_c(n)=\frac{(n-2)^2-4n+8\sqrt{n-1}}{(n-2)(n-10)}$ be the Joseph-Lundgren
exponent. We can now state the first result concerning stability of radial solutions of \eqref{LEF}.

\begin{thm} \label{stability-1}
Let $3\le n\le 10$ and $p>1$ or $n\ge 11$ and $1<p<p_c(n)$. Assume
that $\psi$ satisfies $(H_1)-(H_3)$. For any $\alpha\ge 0$ denote
by $u_\alpha$ the unique solution of \eqref{cauchy}. There exists
$\alpha_0\in (0,+\infty)$ such that
\begin{itemize}
\item[(i)] if $\alpha\in [0,\alpha_0]$ then $u_\alpha$ is stable;

\item[(ii)] if $\alpha>\alpha_0$ then $u_\alpha$ is unstable.
\end{itemize}
Furthermore we also have $\alpha_0\ge \left(p^{-1} \lambda_1(M)
\right)^{1/(p-1)}$. The inequality is strict if one of the following alternatives hold
\begin{equation}\label{alternatives}\displaystyle{\limsup_{r\rightarrow +\infty}\frac{\psi'(r)}{\psi(r)}<+\infty}\qquad \text{or} \qquad l=+\infty \text{ in } (H_3) \text{ and } \psi \text{ satisfies } \eqref{eq:supplementary} \text{ and } \psi/\psi'\not\in L^1(0,\infty)\,.
\end{equation}
\end{thm}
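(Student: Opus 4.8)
The plan is to work with the equivalent \emph{radial} formulation of stability and to establish, in order, the lower bound on $\alpha_0$, the interval structure of the stable set, the finiteness of $\alpha_0$, and finally the strict inequality. First I reduce \eqref{stab} to radial test functions: expanding $\varphi\in C_c^\infty(M)$ in spherical harmonics and using that the eigenvalues of $-\Delta_{\SN}$ are nonnegative (the lowest one being $0$, on constants), the angular modes contribute only nonnegative terms, so $u_\alpha$ is stable if and only if $\int_0^{+\infty}\psi^{n-1}\big((\varphi')^2-p|u_\alpha|^{p-1}\varphi^2\big)\,dr\ge0$ for all radial $\varphi$. Next, multiplying \eqref{cauchy} by $u_\alpha'$ shows that $E(r):=\tfrac12(u_\alpha')^2+\tfrac1{p+1}|u_\alpha|^{p+1}$ obeys $E'=-(n-1)\tfrac{\psi'}{\psi}(u_\alpha')^2\le0$ by $(H_2)$; hence $E$ is nonincreasing and $|u_\alpha(r)|\le\alpha$ for every $r$. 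Consequently, if $\alpha\le(p^{-1}\lambda_1(M))^{1/(p-1)}$ then $p|u_\alpha|^{p-1}\le\lambda_1(M)$ pointwise, and the Poincar\'e inequality of Lemma \ref{eq:lambda_1>0} gives $\int_M|\nabla_g\varphi|^2\ge\lambda_1(M)\int_M\varphi^2\ge p\int_M|u_\alpha|^{p-1}\varphi^2$. Thus $u_\alpha$ is stable for all such $\alpha$, yielding at once $\alpha_0>0$ and the announced bound $\alpha_0\ge(p^{-1}\lambda_1(M))^{1/(p-1)}$.

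To identify $\mathcal S:=\{\alpha\ge0:u_\alpha\text{ stable}\}$ with an interval I use two comparison facts. If $u_\alpha$ has a first zero $Z(\alpha)<+\infty$, then $u_\alpha>0$ solves the Dirichlet problem on $B_{Z(\alpha)}$; testing \eqref{stab} with (an $H^1_0$-approximation of) $u_\alpha^+$ and using $\int_{B_{Z(\alpha)}}|\nabla_g u_\alpha|^2=\int_{B_{Z(\alpha)}}u_\alpha^{p+1}$ gives the value $(1-p)\int_{B_{Z(\alpha)}}u_\alpha^{p+1}<0$, so $u_\alpha$ is unstable; equivalently the first Dirichlet eigenvalue of $L_\alpha:=-\Delta_g-p|u_\alpha|^{p-1}$ on $B_{Z(\alpha)}$ is negative. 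In particular every stable $u_\alpha$ is strictly positive on $[0,+\infty)$. Now let $u_\alpha$ be stable and $0\le\beta<\alpha$. On any interval where $0<u_\beta<u_\alpha$ the difference $d:=u_\alpha-u_\beta$ solves a linear equation with potential $c(r)=\tfrac{u_\alpha^{p}-u_\beta^{p}}{u_\alpha-u_\beta}\le p\,u_\alpha^{p-1}$ (convexity of $t\mapsto t^p$), so the operator $-\Delta_g-c$ dominates the nonnegative $L_\alpha$ and is disconjugate; since $d(0)=\alpha-\beta>0$ and $d'(0)=0$, Sturm theory forces $d>0$, i.e. $u_\beta<u_\alpha$, as long as $u_\beta>0$. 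Were $u_\beta$ to vanish at some $Z(\beta)<+\infty$, the first step applied to $u_\beta$ would make the first Dirichlet eigenvalue of $L_\beta$ on $B_{Z(\beta)}$ negative, and $L_\alpha\le L_\beta$ there would force the same for $L_\alpha$, contradicting stability of $u_\alpha$. Hence $u_\beta>0$ everywhere and $u_\beta<u_\alpha$, so $L_\beta\ge L_\alpha\ge0$ and $u_\beta$ is stable. Thus $\mathcal S$ is downward closed, hence an interval; continuous dependence of $v_\alpha:=\partial_\alpha u_\alpha$ on $\alpha$, together with the impossibility of a double zero for a nonnegative solution of the linearized equation with $v(0)=1$, shows the limiting $v_{\alpha_0}$ stays positive, so $\alpha_0\in\mathcal S$ and $\mathcal S=[0,\alpha_0]$. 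In particular every $\alpha>\alpha_0$ is unstable.

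The crux, and the step I expect to be hardest, is the finiteness $\alpha_0<+\infty$, i.e. instability for large $\alpha$. I argue by contradiction via blow-up: if $u_{\alpha_k}$ were stable with $\alpha_k\to+\infty$, I rescale by $\mu_k:=\alpha_k^{-(p-1)/2}$ and set $w_k(s):=\alpha_k^{-1}u_{\alpha_k}(\mu_k s)$, which satisfies $w_k(0)=1$, $|w_k|\le1$ and $\tfrac1{\psi_k^{\,n-1}}(\psi_k^{\,n-1}w_k')'+|w_k|^{p-1}w_k=0$ with $\psi_k(s)=\mu_k^{-1}\psi(\mu_k s)$. Since $\psi(0)=0$ and $\psi'(0)=1$, one has $\psi_k(s)\to s$ and $(n-1)\psi_k'/\psi_k\to(n-1)/s$ locally uniformly, so by ODE estimates $w_k\to w$ in $C^2_{\mathrm{loc}}$, where $w$ is a nontrivial bounded radial solution of the euclidean equation $-\Delta w=|w|^{p-1}w$ on $\R^n$. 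The exponent of $\mu_k$ is chosen precisely so that the quadratic form \eqref{stab} is scale invariant; feeding rescaled cut-off test functions into the stability inequality for $u_{\alpha_k}$ and letting $k\to+\infty$ shows that $w$ is itself stable in $\R^n$. This contradicts the nonexistence of nontrivial stable solutions of the euclidean Lane--Emden--Fowler equation in the prescribed range $3\le n\le10$, $p>1$, or $n\ge11$, $p<p_c(n)$ \cite{farina}. The delicate points are the locally uniform flattening of the rescaled coefficients $\psi_k'/\psi_k$ and the transfer of the stability inequality, where one must control the volume density $\psi_k^{n-1}$ and the tails of the rescaled test functions so as to recover the euclidean weight $s^{n-1}$ in the limit.

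Finally, the strict inequality $\alpha_0>\alpha_*$, with $\alpha_*:=(p^{-1}\lambda_1(M))^{1/(p-1)}$, follows by showing that under either alternative in \eqref{alternatives} the threshold solution $u_{\alpha_*}$ is \emph{strictly} stable, so that stability persists for slightly larger $\alpha$. Since $u_{\alpha_*}$ is strictly decreasing for $r>0$, one has $p\,u_{\alpha_*}^{p-1}(r)<\lambda_1(M)$ for all $r>0$, whence, writing $W:=\lambda_1(M)-p\,u_{\alpha_*}^{p-1}\ge0$, $Q_{u_{\alpha_*}}(\varphi)=\big(\int_M|\nabla_g\varphi|^2-\lambda_1(M)\int_M\varphi^2\big)+\int_M W\varphi^2$, the first bracket nonnegative and $W>0$ off the origin. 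The two alternatives supply the quantitative control at infinity needed to upgrade this pointwise strict inequality into a genuine gap $\inf\mathrm{spec}(L_{\alpha_*})>0$: under the first, $\psi'/\psi$ is bounded, which keeps $W$ bounded away from $0$ at infinity and separates $0$ from $\mathrm{spec}(L_{\alpha_*})$; under the second, the asymptotics of Theorem \ref{asym sub}(iii) force $u_{\alpha_*}(r)\to0$, so $p\,u_{\alpha_*}^{p-1}\to0$ and the essential spectrum of $L_{\alpha_*}$ coincides with that of $-\Delta_g$, contained in $[\lambda_1(M),+\infty)$, while $0$ is not an eigenvalue because $Q_{u_{\alpha_*}}(\varphi)=0$ already forces $\varphi\equiv0$. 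In either case $\inf\mathrm{spec}(L_{\alpha_*})>0$, and since this bottom depends continuously on $\alpha$ through $u_\alpha$, it remains positive for $\alpha$ slightly above $\alpha_*$; those $u_\alpha$ are then still stable, giving $\alpha_0>\alpha_*$.
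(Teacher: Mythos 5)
Your overall architecture coincides with the paper's on the two main points: the radial reduction of the stability inequality, the Lyapunov bound $|u_\alpha|\le\alpha$ and the Poincar\'e inequality giving stability for $\alpha\le\left(p^{-1}\lambda_1(M)\right)^{1/(p-1)}$, and the blow-up with scale $\alpha^{-(p-1)/2}$ to a stable bounded radial solution of the euclidean Lane--Emden equation, contradicted by Farina's theorem (this is exactly Lemma \ref{blow} of the paper). Where you genuinely differ is the interval structure of $\mathcal S$. The paper propagates \emph{instability upward} (Lemma \ref{l:unstable}) by a functional-analytic argument built on $v_\sigma=\partial u/\partial\alpha$ (Lagrange theorem, extension by zero, the bound $\lambda_1(B_{\rho_k})\le p\alpha^{p-1}$, weak convergence and unique continuation), and gets closedness of $\mathcal S$ by the much softer observation that instability is an open condition in $\alpha$ (Lemma \ref{l:ordering-0}). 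You instead propagate \emph{stability downward} via the Sturm-type comparison with potential $c=(u_\alpha^p-u_\beta^p)/(u_\alpha-u_\beta)<p\,u_\alpha^{p-1}$, which is correct once "disconjugacy" is implemented as a first-Dirichlet-eigenvalue comparison (a first zero of $u_\alpha-u_\beta$ at $r_1$ would force $\lambda_1(-\Delta_g-p|u_\alpha|^{p-1},B_{r_1})<0$, contradicting stability), and you close $\mathcal S$ by positivity of $v_{\alpha_0}$ plus the ground-state (Allegretto--Piepenbrink) criterion, which silently relies on the differentiability and continuity statements of Lemma \ref{l:C^1_CDIV}. Your route is more elementary and yields the ordering of Theorem \ref{t:ordering} as a by-product; the paper's route to closedness is substantially shorter.

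The one genuine gap is in the strict inequality $\alpha_0>\alpha_*$, $\alpha_*:=\left(p^{-1}\lambda_1(M)\right)^{1/(p-1)}$. Your spectral-gap argument for $\inf\mathrm{spec}\left(-\Delta_g-p\,u_{\alpha_*}^{p-1}\right)>0$ is sound in spirit, but the final step, "since this bottom depends continuously on $\alpha$ through $u_\alpha$, it remains positive for $\alpha$ slightly above $\alpha_*$", is asserted without proof, and this is precisely the hard point. Continuous dependence for the Cauchy problem gives only \emph{locally uniform} convergence $u_\alpha\to u_{\alpha_*}$, and the bottom of the spectrum is not continuous under locally uniform convergence of potentials: potential mass escaping to infinity can destroy the gap without being seen on any compact set (this is also why stability fails to be an open condition in general). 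What rescues the argument --- and what constitutes the actual content of the paper's Lemma \ref{l:estimate-alpha0} --- is a uniform-in-$\alpha$ tail estimate: since the Lyapunov function $F_\alpha$ is nonincreasing and $F_{\alpha_k}(R_\varepsilon)\to F_{\alpha_*}(R_\varepsilon)<\varepsilon$ pointwise (here \eqref{alternatives} enters, to guarantee $u_{\alpha_*}(r)\to 0$ and hence $F_{\alpha_*}(r)\to0$), one gets $p|u_{\alpha_k}|^{p-1}\le p[(p+1)\varepsilon]^{(p-1)/(p+1)}$ on $[R_\varepsilon,+\infty)$ for all large $k$; combined with uniform convergence on $[0,R_\varepsilon]$ this gives global $L^\infty$ convergence of the potentials, whence $\left|\inf\mathrm{spec}(L_{\alpha_k})-\inf\mathrm{spec}(L_{\alpha_*})\right|\le p\,\big\| |u_{\alpha_k}|^{p-1}-|u_{\alpha_*}|^{p-1}\big\|_{L^\infty(M)}\to0$. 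Without this step your conclusion is unjustified. A secondary, minor point: under the second alternative your appeal to Weyl's theorem requires the relative compactness of multiplication by $p\,u_{\alpha_*}^{p-1}$ against $-\Delta_g$ on $M$, which deserves justification; the paper sidesteps spectral theory entirely by working with the minimization problem $\Lambda_1(M,\alpha)$ of Lemma \ref{l:ex-minimizer} and proving $\Lambda_1(M,\alpha_k)>1$ by the same uniform-tail estimate.
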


By comparing Theorem \ref{stability-1} with the stability result in the euclidean case, one sees that the existence of stable solutions in
dimension $n\le 10$ or in dimension $n\ge 11$ but with $p<p_c(n)$,
seems to be strictly related to the validity of the Poincar\'e
inequality (see Table 1 and Table 2 below). Indeed the existence
of the positive number $\alpha_0$ introduced in Theorem
\ref{stability-1} comes from the the positivity of the bottom of the $L^2$ spectrum $\lambda_1(M)$
of $-\Delta_g$ in $M$ as one can see
from the estimate $\alpha_0>\left(p^{-1} \lambda_1(M)
\right)^{1/(p-1)}$, see also Lemma \ref{eq:lambda_1>0}. On the contrary, in the euclidean case the
Poincar\'e inequality in $\R^n$ does not hold and, if $n\le 10$ or $n\ge 11$ but $p<p_c(n)$,
all nontrivial solutions of the Lane-Emden-Fowler equation are unstable.

We observe that the assumptions on the dimension $n$ and on the
power $p$ in Theorem \ref{stability-1} are at least sufficient to
show the existence of the switch between stability for small
values of $\alpha$ and instability for large values of $\alpha$
but it is not clear if they are also necessary. As a partial
result we state the validity of the following alternatives:

\begin{thm} \label{stability-2}
Let $n\ge 11$ and $p\ge p_c(n)$. Assume that $\psi$ satisfies
$(H_1)-(H_3)$. For any $\alpha\ge 0$ denote by $u_\alpha$ the
unique solution of \eqref{cauchy}. There exists $\alpha_0\in
(0,+\infty]$ such that either $\alpha_0=+\infty$ and $u_\alpha$ is
stable for any $\alpha\ge 0$ or $\alpha_0<+\infty$ and $u_\alpha$
is stable for any $\alpha\in [0,\alpha_0]$ and unstable for any
$\alpha>\alpha_0$.
\end{thm}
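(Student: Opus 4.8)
The plan is to reproduce the scheme already used for Theorem \ref{stability-1}, the crucial point being that the only ingredient of that proof which is \emph{not} available here is the final blow-up step producing instability for large $\alpha$ (its Euclidean limit problem is unstable precisely when $p<p_c(n)$, which now fails); hence the threshold $\alpha_0$ will only be located in $(0,+\infty]$. First I would reduce stability to a one-dimensional problem. Decomposing a test function $\varphi\in C_c^\infty(M)$ into spherical harmonics, $\varphi=\sum_{k\ge0}\varphi_k(r)Y_k(\Theta)$ with $-\Delta_{\SN}Y_k=\mu_kY_k$ and $0=\mu_0<\mu_1\le\cdots$, the quadratic form in \eqref{stab} splits into the terms
\[
\int_0^{+\infty}\Big[(\varphi_k')^2+\tfrac{\mu_k}{\psi^2}\varphi_k^2-p|u_\alpha|^{p-1}\varphi_k^2\Big]\psi^{n-1}\,dr,
\]
and since $\mu_k\ge0$ while the potential $p|u_\alpha|^{p-1}$ is radial, the $k=0$ mode is the smallest. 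Thus $u_\alpha$ is stable if and only if the \emph{radial} form $Q_\alpha(\phi):=\int_0^{+\infty}\big[(\phi')^2-p|u_\alpha|^{p-1}\phi^2\big]\psi^{n-1}\,dr$ is nonnegative on radial test functions.

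Next I would characterise radial stability through $w_\alpha:=\partial_\alpha u_\alpha$. Differentiating \eqref{cauchy} in $\alpha$ shows that $w_\alpha$ solves the linearised equation $-(\psi^{n-1}w_\alpha')'=p|u_\alpha|^{p-1}\psi^{n-1}w_\alpha$ with $w_\alpha(0)=1$, $w_\alpha'(0)=0$; being a nontrivial solution of a second-order linear ODE, all its zeros are simple. By the standard disconjugacy (Allegretto--Piepenbrink) theory, $Q_\alpha\ge0$ iff the linearised equation has a positive solution on $(0,+\infty)$, and $w_\alpha$—the solution carrying the Neumann condition at the origin forced by smoothness of radial functions—is the natural candidate. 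Hence $u_\alpha$ is \emph{stable iff} $w_\alpha>0$ on $(0,+\infty)$, and \emph{unstable as soon as} $w_\alpha$ has a first (simple) zero $R_\alpha<+\infty$: on $(0,R_\alpha)$ the operator has first Neumann--Dirichlet eigenvalue $0$ with positive eigenfunction $w_\alpha$, so slightly enlarging the interval yields a compactly supported $\phi$ with $Q_\alpha(\phi)<0$. With this criterion the set $\mathcal S:=\{\alpha\ge0:\ w_\alpha>0\text{ on }(0,+\infty)\}$ contains a neighbourhood of $0$: since $\|u_\alpha\|_\infty\to0$ as $\alpha\to0$ (indeed $\|u_\alpha\|_\infty=\alpha$) and $\lambda_1(M)>0$ by Lemma \ref{eq:lambda_1>0}, the Poincaré inequality gives $Q_\alpha(\phi)\ge(\lambda_1(M)-p\alpha^{p-1})\int\phi^2\psi^{n-1}\ge0$ whenever $p\alpha^{p-1}\le\lambda_1(M)$, so $\alpha_0:=\sup\mathcal S>0$. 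Closedness of $\mathcal S$ is then immediate from continuous dependence of $w_\alpha$ on $\alpha$: a limit $w_{\alpha^*}\ge0$ cannot touch $0$ in the interior without creating a double zero, which by uniqueness would force $w_{\alpha^*}\equiv0$, contradicting $w_{\alpha^*}(0)=1$.

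The heart of the argument is that instability propagates upward in $\alpha$, equivalently that $\mathcal S$ is downward closed, hence an interval. Given $\alpha_1<\alpha_2$, I would study $z:=u_{\alpha_2}-u_{\alpha_1}$, which solves $-(\psi^{n-1}z')'=c(r)\,\psi^{n-1}z$ with $z(0)=\alpha_2-\alpha_1>0$, $z'(0)=0$, where $c(r)=\big(f(u_{\alpha_2})-f(u_{\alpha_1})\big)/(u_{\alpha_2}-u_{\alpha_1})\ge0$ for $f(t)=|t|^{p-1}t$ increasing; by the mean value theorem $c(r)=p|\xi(r)|^{p-1}$ with $\xi(r)$ between $u_{\alpha_1}(r)$ and $u_{\alpha_2}(r)$. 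Assuming $u_{\alpha_1}$ unstable, so $w_{\alpha_1}(R_1)=0$ with $R_1<+\infty$, there are two cases. If $u_{\alpha_1}$ and $u_{\alpha_2}$ never cross they stay ordered, whence $p|u_{\alpha_2}|^{p-1}\ge p|u_{\alpha_1}|^{p-1}$, and Sturm comparison of $w_{\alpha_2}$ (larger potential, same Neumann start) against $w_{\alpha_1}$ (vanishing at $R_1$) forces a zero of $w_{\alpha_2}$ in $(0,R_1]$. If instead they first cross at $\rho<+\infty$, then on $(0,\rho)$ one has $|\xi|\le|u_{\alpha_2}|$, hence $c\le p|u_{\alpha_2}|^{p-1}$, and Sturm comparison of $w_{\alpha_2}$ against $z$ (vanishing at $\rho$) again forces a zero of $w_{\alpha_2}$ in $(0,\rho]$. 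In either case $u_{\alpha_2}$ is unstable; consequently $\mathcal S=[0,\alpha_0]$ if $\alpha_0<+\infty$ and $\mathcal S=[0,+\infty)$ if $\alpha_0=+\infty$, which is exactly the claimed dichotomy, and as a by-product the solutions with $\alpha\in\mathcal S$ are pairwise ordered, in agreement with Theorem \ref{t:ordering}. \textbf{The hard part} is precisely this monotonicity step: the Sturm comparisons require that the ordering of the $u_\alpha$'s genuinely orders the potentials $p|u_\alpha|^{p-1}$, so the delicate issue is to control the \emph{sign} of $u_\alpha$ (where $|\cdot|^{p-1}$ ceases to be monotone in $u$). This I would handle by localising each comparison to the interval preceding the first sign change of $u_\alpha$ and by showing, via the same $w_\alpha$-criterion, that a stable solution cannot change sign before $R_\alpha$.
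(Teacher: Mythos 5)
Your overall architecture (radial reduction, stability for small $\alpha$ via the Poincar\'e inequality and $|u_\alpha|\le\alpha$, closedness of $\mathcal S$, upward propagation of instability, and the correct observation that only the blow-up step of Theorem \ref{stability-1} is unavailable when $p\ge p_c(n)$) does deliver the claimed dichotomy, and your criterion ``$u_\alpha$ stable iff $w_\alpha:=\partial_\alpha u_\alpha>0$ on $(0,+\infty)$'' is a legitimate, more ODE-theoretic substitute for the paper's variational machinery. However, the heart of your proof, the two Sturm comparisons in the monotonicity step, has a genuine gap which you flag but do not repair. In case (a) the inference ``$u_{\alpha_2}>u_{\alpha_1}$ implies $p|u_{\alpha_2}|^{p-1}\ge p|u_{\alpha_1}|^{p-1}$'', and in case (b) the bound $|\xi|\le|u_{\alpha_2}|$, both fail wherever $u_{\alpha_1}<0$ with $|u_{\alpha_1}|>|u_{\alpha_2}|$: writing $f(t)=|t|^{p-1}t$, the quotient $c(r)=\bigl(f(u_{\alpha_2})-f(u_{\alpha_1})\bigr)/(u_{\alpha_2}-u_{\alpha_1})$ grows like $|u_{\alpha_1}|^{p-1}$ when $u_{\alpha_1}$ is very negative and $u_{\alpha_2}$ is fixed, so the potentials need not be ordered. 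Your proposed patch, ``a stable solution cannot change sign before $R_\alpha$'', controls the sign of the \emph{wrong} function: for stable solutions $R_\alpha=+\infty$ and constant sign is just Lemma \ref{derivative}; what your comparisons require is sign control of the \emph{unstable} solution $u_{\alpha_1}$ on the comparison interval $(0,R_1)$, respectively $(0,\rho)$.

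The gap is fixable inside your framework, but it needs a lemma you never state: since $u_{\alpha_1}$ itself solves the linearized-type equation with potential $|u_{\alpha_1}|^{p-1}$, while $w_{\alpha_1}$ solves it with the strictly larger potential $p|u_{\alpha_1}|^{p-1}$ (here $p>1$ enters), a Wronskian/Sturm argument with the common Neumann condition at $r=0$ shows that the first zero $R_1$ of $w_{\alpha_1}$ occurs no later than the first sign change of $u_{\alpha_1}$. Hence $u_{\alpha_1}>0$ on $(0,R_1)$, and then the trichotomy ``first crossing $\rho\ge R_1$'' versus ``$\rho<R_1$'' makes both of your comparisons legitimate (on $(0,R_1)$ one has $0<u_{\alpha_1}<u_{\alpha_2}$, on $(0,\rho)$ one has $0<\xi<u_{\alpha_2}$). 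Alternatively one can import the intersection-comparison Lemma \ref{l:intersection}. It is worth noting that the paper's own proof of the key step (Lemma \ref{l:unstable}) avoids the sign problem entirely: in the crossing case it never compares potentials, but uses differentiability in $\alpha$ (Lemma \ref{l:C^1_CDIV}) and Lagrange's theorem to produce functions $v_{\sigma_k}$, $\sigma_k\uparrow\alpha$, vanishing at points $\rho_k$ bounded below via $\lambda_1(B_{\rho_k})\le p\alpha^{p-1}$ (Lemma \ref{l:lambda_1}), and then a weak-compactness argument to extract a nontrivial kernel element of the linearized operator at $u_\alpha$, contradicting stability through ODE uniqueness; the sign issue is dispatched beforehand by Lemma \ref{derivative} and Lemma \ref{l:intersection}. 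So your route is genuinely different and, once the missing comparison lemma is added, considerably more elementary; as stated, though, the decisive step does not close.
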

Concerning stability of solutions in the energy space, we state
the following

\begin{pro} \label{p:liouville-type}
Let $n\ge 3$ and $p>1$. Assume that $\psi$ satisfies
$(H_1)-(H_3)$. Let $u$ be a radial stable solution of \eqref{LEF}.
If $u\in L^2(M)$ then $u\equiv 0$.
\end{pro}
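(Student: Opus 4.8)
The plan is to run the standard cutoff argument for stable solutions, exploiting the fact that the $L^2$ hypothesis makes the remainder produced by the cutoff vanish. The starting point is the algebraic identity obtained by feeding the two natural test functions into the two pieces of information we have. Let $\eta$ be any Lipschitz, compactly supported radial function. Testing the equation $-\Delta_g u=|u|^{p-1}u$ against $u\eta^2$ and integrating by parts (legitimate since $u\eta^2$ has compact support and $u\in C^2(M)$) gives
$$\int_M |\nabla_g u|_g^2\,\eta^2\,dV_g+2\int_M \eta\,u\,\nabla_g u\cdot\nabla_g\eta\,dV_g=\int_M |u|^{p+1}\eta^2\,dV_g.$$
On the other hand, after extending the stability inequality \eqref{stab} from $C_c^\infty(M)$ to compactly supported $H^1$ functions, I would take $\varphi=u\eta$; expanding $|\nabla_g(u\eta)|_g^2=\eta^2|\nabla_g u|_g^2+2\eta\,u\,\nabla_g u\cdot\nabla_g\eta+u^2|\nabla_g\eta|_g^2$ and substituting the identity above, the two cross terms cancel against the equation and \eqref{stab} reduces to
$$(p-1)\int_M |u|^{p+1}\eta^2\,dV_g\le \int_M u^2\,|\nabla_g\eta|_g^2\,dV_g.$$

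Second, I would insert a radial cutoff. Taking $\eta=\eta_R$ with $\eta_R\equiv 1$ on $B_R$, $\eta_R\equiv 0$ outside $B_{2R}$ and $|\eta_R'|\le C/R$, and using that for a radial function $|\nabla_g\eta_R|_g=|\eta_R'(r)|$ in the model metric \eqref{metric}, the right-hand side is bounded by $\frac{C^2}{R^2}\int_{B_{2R}\setminus B_R}u^2\,dV_g\le \frac{C^2}{R^2}\|u\|_{L^2(M)}^2$, which tends to $0$ as $R\to+\infty$ precisely because $u\in L^2(M)$. Since $\eta_R^2\uparrow 1$ pointwise and $|u|^{p+1}\ge 0$, monotone convergence on the left gives $(p-1)\int_M |u|^{p+1}\,dV_g\le 0$; as $p>1$ this forces $\int_M |u|^{p+1}\,dV_g=0$, hence $u\equiv 0$.

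The only genuinely technical point, and the one I would write out carefully, is the density step needed to use the non-smooth test function $\varphi=u\eta_R$ in \eqref{stab}: one approximates $u\eta_R$ in $H^1(M)$ by $C_c^\infty$ functions and checks that both terms of \eqref{stab} pass to the limit, the potential term being controlled because $|u|^{p-1}$ is bounded on the fixed compact support of $\eta_R$. Everything else is bookkeeping of compactly supported integrals, whose finiteness is automatic from $u\in C^2(M)$. It is worth noting that assumptions $(H_1)$--$(H_3)$ enter only through the existence and global regularity of $u$ (cf. Proposition \ref{global}); the entire weight of the argument is carried by the stability inequality together with the $L^2$ integrability.
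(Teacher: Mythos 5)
Your proof is correct and follows essentially the same route as the paper: both derive the inequality $(p-1)\int_M|u|^{p+1}\eta^2\,dV_g\le\int_M u^2|\nabla_g\eta|_g^2\,dV_g$ by testing the stability inequality with $\varphi=u\eta$ and cancelling the cross terms via the equation tested against $u\eta^2$, then conclude with the cutoff $\eta_R$, the bound $|\nabla_g\eta_R|_g\le C/R$, and the $L^2$ hypothesis. The only cosmetic difference is that the paper phrases the computation in the radial variable $r$ with the measure $\psi^{n-1}(r)\,dr$, while you work intrinsically on $M$; the substance is identical.
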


Stability properties of solutions are related to ordering and
intersection properties of radial solutions of \eqref{LEF}:

\begin{thm} \label{t:ordering}
Let $n\ge 3$ and $p>1$. Assume that $\psi$ satisfies
$(H_1)-(H_3)$. Let $\alpha,\beta\ge 0$ and let $u_\alpha,u_\beta$
be the corresponding solutions of \eqref{cauchy}. If $u_\alpha$
and $u_\beta$ are stable then they do not intersect. In particular
stable solutions are strictly positive (or strictly negative) and
if $\alpha_0\in (0,+\infty]$ is as in Theorems
\ref{stability-1}-\ref{stability-2} then all solutions in the set
$\{u_\alpha:\alpha\in [0,\alpha_0)\}$ are ordered.
\end{thm}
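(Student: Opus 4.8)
The plan is to prove all three assertions by constructing, in each situation, an explicit compactly supported competitor that makes the quadratic form in \eqref{stab} strictly negative. I work throughout with the radial profiles solving \eqref{cauchy}, which are global by Proposition \ref{global}, so every function below is defined on $[0,+\infty)$.

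First I would settle the sign assertion, which also serves as a warm-up for the main estimate. Suppose $\alpha>0$, $u_\alpha$ is stable, yet $u_\alpha$ has a first zero $r_1\in(0,+\infty)$, so that $u_\alpha>0$ on $[0,r_1)$ and $u_\alpha(r_1)=0$. Using $\varphi=u_\alpha$ on $B_{r_1}$ extended by $0$ (admissible after the usual $H^1_0(B_{r_1})$ approximation, all integrals sitting on the compact set $\overline{B_{r_1}}$), and integrating by parts with $-\Delta_g u_\alpha=u_\alpha^p$ and $u_\alpha|_{\partial B_{r_1}}=0$, one finds
\[ \int_{B_{r_1}}|\nabla_g u_\alpha|_g^2\,dV_g-p\int_{B_{r_1}}u_\alpha^{p-1}u_\alpha^2\,dV_g=(1-p)\int_{B_{r_1}}u_\alpha^{p+1}\,dV_g<0, \]
contradicting \eqref{stab} since $p>1$. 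Hence $u_\alpha>0$ on $[0,+\infty)$; the case $\alpha<0$ follows from the oddness $u_{-\alpha}=-u_\alpha$, yielding the claim that stable solutions have a strict sign.

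The main step is non-intersection. Take $0\le\alpha<\beta$ with $u_\alpha,u_\beta$ stable; if $\alpha=0$ then $u_\alpha\equiv 0<u_\beta$ and there is nothing to prove, so assume $\alpha>0$, whence $u_\alpha,u_\beta>0$ everywhere by the previous step. Set $w:=u_\beta-u_\alpha$, so $w(0)>0$, and suppose by contradiction that $w$ has a first zero $r_0>0$; thus $0<u_\alpha<u_\beta$ on $(0,r_0)$ and $w(r_0)=0$. Subtracting the equations, $-\Delta_g w=c(r)\,w$ on $B_{r_0}$ with $c(r):=(u_\beta^p-u_\alpha^p)/(u_\beta-u_\alpha)\ge 0$. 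Now I test the stability of the \emph{larger} solution $u_\beta$ against $w$ extended by $0$ (again admissible, and $w\in H^1_0(B_{r_0})$ since $w(r_0)=0$); integrating by parts as before gives
\[ \int_{B_{r_0}}|\nabla_g w|_g^2\,dV_g-p\int_{B_{r_0}}u_\beta^{p-1}w^2\,dV_g=\int_{B_{r_0}}\big(c(r)-p\,u_\beta^{p-1}\big)w^2\,dV_g. \]
Because $t\mapsto t^p$ is strictly convex on $(0,+\infty)$, its tangent line at $u_\beta$ gives $u_\beta^p-u_\alpha^p<p\,u_\beta^{p-1}(u_\beta-u_\alpha)$ on $(0,r_0)$, i.e. $c(r)<p\,u_\beta^{p-1}$ there; hence the right-hand side is strictly negative, contradicting the stability of $u_\beta$. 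Therefore $w$ never vanishes and the two solutions do not intersect.

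Finally, the ordering statement is immediate: by Theorems \ref{stability-1}-\ref{stability-2} each $u_\alpha$ with $\alpha\in[0,\alpha_0)$ is stable, so for $\alpha<\beta$ in this range the difference $u_\beta-u_\alpha$ is positive at $r=0$ and never vanishes, giving $u_\alpha<u_\beta$ on $[0,+\infty)$ and hence total ordering of the family. I expect the one genuinely delicate point to be the direction of the inequality in the main step: one must test the stability of the larger solution, since only then does the convexity comparison produce $c(r)<p\,u_\beta^{p-1}$ and a negative form (testing $u_\alpha$ yields the opposite inequality $c(r)>p\,u_\alpha^{p-1}$ and no contradiction). This is also precisely why the sign step must come first—the convexity estimate needs $u_\alpha\ge 0$ on $[0,r_0)$—whereas the approximation legitimizing the Lipschitz competitors $u_\alpha\chi_{B_{r_1}}$ and $w\,\chi_{B_{r_0}}$ is routine, all integrals being supported on compact sets where $|u|^{p-1}$ is bounded.
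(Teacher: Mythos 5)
Your proof is correct, but it takes a genuinely different route from the paper's. Your sign step coincides exactly with the paper's Lemma \ref{derivative} (same competitor $u_\alpha\chi_{[0,r_1]}$, same integration by parts). The non-intersection step is where you diverge: the paper differentiates the solution with respect to the initial datum (Lemma \ref{l:C^1_CDIV}), applies the Lagrange theorem to produce an intermediate $\sigma\in(\beta,\alpha)$ whose variation $v_\sigma$ is negative at the intersection point yet equals $1$ at $r=0$, hence vanishes at some $\rho$; this makes $v_\sigma$ a Dirichlet solution of the linearized problem on $B_\rho$, and the contradiction is then extracted from the stability of the \emph{intermediate} solution $u_\sigma$ --- which requires knowing that $\mathcal S$ is an interval (Lemma \ref{l:ordering-0}) --- via the minimizer, regularity and ODE unique-continuation argument of Lemma \ref{l:unstable}. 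You instead test the stability of the larger solution $u_\beta$ directly with $w=u_\beta-u_\alpha$ cut off at its first zero, and use strict convexity of $t\mapsto t^p$ to force the quadratic form strictly negative. Your approach is shorter and more elementary: it avoids differentiability in $\alpha$, the compactness/minimizer machinery, and the interval structure of $\mathcal S$ altogether, using only the stability of the two given solutions --- exactly the hypothesis of the theorem --- and it handles tangential (non-crossing) intersections with no case distinction, since only $w(r_0)=0$ is needed. What the paper's heavier route buys is economy at the level of the whole section: Lemma \ref{l:C^1_CDIV} and the minimizer/unique-continuation argument are developed anyway for the instability result (Lemma \ref{l:unstable}), so Theorem \ref{t:ordering} comes almost for free once that machinery exists. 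Finally, the one delicate point you flagged is real and correctly resolved: the convexity inequality $u_\beta^p-u_\alpha^p<p\,u_\beta^{p-1}(u_\beta-u_\alpha)$ only goes the right way when the stability of the \emph{larger} solution is tested, and it needs $u_\alpha\ge 0$, which is why the sign step must precede it.
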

We conclude the section by dealing with stability
outside a compact set.

\begin{thm} \label{outside-compact-set}Let $n\ge 3$ and $p>1$. Assume that $\psi$ satisfies
$(H_1)-(H_3)$. Then any radial solution of \eqref{LEF} is
stable outside a compact set provided that \eqref{alternatives} holds.
\end{thm}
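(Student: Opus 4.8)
The plan is to reduce the statement to a single analytic fact, namely that the potential $p|u|^{p-1}$ tends to $0$ at infinity, and then conclude by the spectral gap. Let $u=u_\alpha$ be a radial solution of \eqref{LEF}; we may assume $\alpha\neq 0$, since $u\equiv 0$ is trivially stable. By Proposition \ref{global} the limit $\ell:=\lim_{r\to+\infty}u(r)$ exists and is finite and $\lim_{r\to+\infty}u'(r)=0$. The core claim is that $\ell=0$ whenever \eqref{alternatives} holds. Granting this, $p|u(r)|^{p-1}\to 0$, so by Lemma \ref{eq:lambda_1>0} we may fix $R$ so large that $p|u(r)|^{p-1}<\lambda_1(M)$ for all $r\geq R$. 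Taking $K=\overline{B_R}$ and any $\varphi\in C_c^{\infty}(M\setminus K)$, extended by zero to $M$, the Poincar\'e inequality $\int_M|\nabla_g \varphi|_{g}^2\,dV_{g}\geq\lambda_1(M)\int_M\varphi^2\,dV_g$ yields
\[
p\int_{M\setminus K}|u|^{p-1}\varphi^2\,dV_g\leq\Big(\sup_{r\geq R}p|u(r)|^{p-1}\Big)\int_{M\setminus K}\varphi^2\,dV_g<\lambda_1(M)\int_{M\setminus K}\varphi^2\,dV_g\leq\int_{M\setminus K}|\nabla_g \varphi|_{g}^2\,dV_g,
\]
which is exactly \eqref{stabK}. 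Thus everything rests on showing $\ell=0$.

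To prove $\ell=0$ I would argue by contradiction, assuming $\ell\neq 0$; then $u$ has constant sign for $r$ large, so $|u|^{p-1}u\to|\ell|^{p-1}\ell\neq 0$. Writing the radial equation in divergence form and integrating from $0$ (using $u'(0)=0$ and $\psi(0)=0$) gives the identity
\[
\psi^{n-1}(r)\,u'(r)=-\int_0^r\psi^{n-1}(s)\,|u(s)|^{p-1}u(s)\,ds,
\]
whence, since $|u|^{p-1}u\to|\ell|^{p-1}\ell\neq 0$,
\[
u'(r)=-|\ell|^{p-1}\ell\,\frac{\int_0^r\psi^{n-1}(s)\,ds}{\psi^{n-1}(r)}\,(1+o(1))\qquad(r\to+\infty).
\]

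Under the first alternative in \eqref{alternatives}, $(H_3)$ together with $\limsup_{r\to+\infty}\psi'/\psi<+\infty$ forces $\psi'/\psi$ to lie between two positive constants for $r$ large; a direct comparison then shows that the ratio $\psi^{1-n}(r)\int_0^r\psi^{n-1}$ stays bounded away from $0$. Hence $|u'(r)|$ is bounded below by a positive constant, contradicting $u'(r)\to 0$. This case is immediate.

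The main obstacle is the second alternative, where $l=+\infty$. Here $\psi^{1-n}(r)\int_0^r\psi^{n-1}\to 0$ by de l'H\^opital, so one integration produces no contradiction and a finer analysis is needed. The key step is to observe that $H(r):=\psi^{1-n}(r)\int_0^r\psi^{n-1}$ satisfies the linear relation $H'=1-(n-1)(\psi'/\psi)H$, and that, because the coefficient $(n-1)\psi'/\psi\to+\infty$ while condition \eqref{eq:supplementary} forces the quasi-equilibrium $\psi/((n-1)\psi')$ to vary slowly (indeed \eqref{eq:supplementary} with $l=+\infty$ gives $\psi\psi''/(\psi')^2\to 1$), the solution tracks its equilibrium: $H(r)\sim\psi(r)/((n-1)\psi'(r))$. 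Consequently $u'(r)\sim-|\ell|^{p-1}\ell\,\psi(r)/((n-1)\psi'(r))$, and integrating over $[R,+\infty)$ together with $\psi/\psi'\not\in L^1(0,\infty)$ forces $u(r)\to\mp\infty$, contradicting $u\to\ell$. This establishes $\ell=0$ and completes the proof. I expect the slow-variation (tracking) estimate for $H$ to be the only delicate point; for positive $u$ it can alternatively be read off from the decay rates already established in Theorems \ref{asym sub} and \ref{asym}.
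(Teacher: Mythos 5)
Your proposal is correct, and its overall skeleton coincides with the paper's: once one knows $u(r)\to 0$ as $r\to+\infty$, the spectral gap of Lemma \ref{eq:lambda_1>0} and the variational characterization \eqref{lambda1} give $p|u(r)|^{p-1}<\lambda_1(M)$ outside a geodesic ball, and \eqref{stabK} follows exactly as you write. The genuine difference lies in how the decay $u(r)\to 0$ is obtained. The paper simply cites prior results: under the first alternative in \eqref{alternatives} it invokes \eqref{serrin} and the Pucci--Serrin continuation theorem, and under the second alternative it invokes the asymptotic Theorems \ref{asym sub} and \ref{asym}. You instead prove the decay directly by contradiction: integrating the equation once, using $|u|^{p-1}u\to|\ell|^{p-1}\ell\neq 0$, and analyzing $H(r)=\psi^{1-n}(r)\int_0^r\psi^{n-1}$ --- bounded below in case (a) (contradicting $u'\to0$), and $H(r)\sim\psi/((n-1)\psi')$ in case (b) (contradicting, after integration and $\psi/\psi'\notin L^1$, the finiteness of $\ell$). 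This buys something real: your argument applies verbatim to arbitrary, possibly sign-changing, radial solutions and does not need the positivity hypothesis nor the global convexity of $A$ that Theorems \ref{asym sub} and \ref{asym} formally require, so it is more self-contained and arguably tightens the paper's own citation chain. The one soft spot is your ``tracking'' justification of $H(r)\sim\psi/((n-1)\psi')$, which is heuristic as written; but this is precisely the limit \eqref{lim}, which the paper proves by l'H\^{o}pital in the proof of Proposition \ref{convexity-A(r)} under exactly the hypotheses of the second alternative ($l=+\infty$ plus \eqref{eq:supplementary}), so you may cite it rather than re-derive it; your final remark that it could instead be read off from the decay rates of Theorems \ref{asym sub}--\ref{asym} is the only inaccurate aside, since those rates concern $u$ rather than $H$ and would reintroduce the positivity assumption you otherwise avoid.
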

Differently from the euclidean case, see Table 1 below, Theorem
\ref{outside-compact-set} states that under assumptions
$(H_1)-(H_3)$ and \eq{alternatives}, all solutions
of \eqref{cauchy} are stable outside a compact independently of
the value of the power $p$. We note that assumption \eq{alternatives} assures that solutions of \eqref{cauchy} vanish as $r\to +\infty$ (see Proposition \ref{global}, formula \eq{serrin}, Theorems \ref{asym sub} and \ref{asym}). Hence, the difference from the euclidean case once more comes from the
fact that, under assumptions $(H_1)-(H_3)$, the bottom of the
$L^2$ spectrum of $-\Delta_g$ in $M$ is strictly positive.

\begin{table}[h] 
\begin{center}
\begin{tabular}{|c|c|c|}
\hline & $n\leq 10 \ {\rm or} \ (n\geq 11$ and $p<p_c(n))$ & $
n\geq 11$ and $p\geq p_c(n)$ \\ \hline $u_{\alpha}$ stable
$\forall$ $\alpha \neq 0$  & NO                     & YES
\\ \hline $u_{\alpha}$ unstable $\forall$
$\alpha \neq 0$  & YES                     & NO
 \\ \hline
$u_{\alpha}$ stable outside a compact $\forall$ $\alpha$  & NO if
$p\neq \frac{n+2}{n-2}$ YES if $p= \frac{n+2}{n-2}$ & YES   \\
\hline
\end{tabular}
\caption{Stability of solutions $u_{\alpha}$ to \eqref{cauchy}
when $\psi(r)=r$ (Euclidean case).}
\end{center}
\end{table}

\begin{table}[h] 
\begin{center}
\begin{tabular}{|c|c|c|}
\hline & $n\leq 10 \ {\rm or} \ (n\geq 11$ and $p<p_c(n))$ & $
n\geq 11$ and $p\geq p_c(n)$ \\ \hline $u_{\alpha}$ stable
$\forall$ $0<|\alpha|\leq \alpha_0$  & YES                     & YES
if $|\alpha|< \alpha_0$  \\ \hline $u_{\alpha}$ unstable $\forall$
$|\alpha|> \alpha_0$  & YES                     & ?  \\ \hline
$u_{\alpha}$ stable outside a compact $\forall$ $\alpha$  & YES if \eq{alternatives} holds &
YES if \eq{alternatives} holds
\\ \hline
\end{tabular}
\caption{Stability of solutions $u_{\alpha}$ to \eqref{cauchy} for
$\psi$ satisfying $(H_1)-(H_3)$.}
\end{center}
\end{table}

\section{Proof of the results in the supercritical case}\label{super}

\subsection{Proof of Theorem \ref{teo}}
Let $u$ be a nontrivial solution of \eqref{cauchy}, up to
replace $u$ with $-u$, we may assume $\alpha>0$. For $r\geq 0$, we set
$$
P(r):=\left[(p+1)\int_0^r(\psi(s))^{n-1}\,ds\right]\left(\frac{(u^\prime(r))^2}2+\frac{|u(r)|^{p+1}}{p+1}\right)+(\psi(r))^{n-1}u(r)u^\prime(r)\,.
$$
Then, for $u$ solving \eq{cauchy} we get
$$
P^\prime(r)=\left[\frac{p+3}2(\psi(r))^{n-1}-(n-1)(p+1)\frac{\psi^\prime(r)}{\psi(r)}\int_0^r(\psi(s))^{n-1}\,ds\right](u^\prime(r))^2:=K(r)(u^\prime(r))^2,
$$
the latter equality being meant as a definition of $K(r)$. We
notice that, as $r\downarrow0$, the known asymptotics of $\psi(r)$
as $r\to0$ implies that $K(r)\sim r^{n-1}[(n+2)-(n-2)p]/(2n)$ if
$p>\frac{n+2}{n-2}$ and $K(r)\sim
r^{n+1}[-2(n-1)/(n^2-4)]\,\psi'''(0)$, if $p=\frac{n+2}{n-2}$,
where we exploit the assumptions $\psi''(0)=0$ and $\psi'''(0)>0$.
\par
This clearly shows that, in such range of $p$, $K(r)<0$ for $r$
sufficiently small, and hence that $P^\prime(r)<0$ for the same
values of $r$. The strict inequality follows from the fact
that $u^\prime(r)\not=0$ for $r\in(0,\varepsilon)$ for a suitable
$\varepsilon>0$, a fact which holds since $u$ is different from
zero in a right neighborhood of zero and by \eqref{cauchy} we have
\begin{equation*}
u'(r)=-\frac{1}{(\psi(r))^{n-1}} \int_0^r
(\psi(s))^{n-1}|u(s)|^{p-1}u(s)\, ds \, .
\end{equation*}
Hence, since $P(0)=0$, we have proven that $P(r)<0$ in a
sufficiently small right neighborhood of zero. We claim that
$K(r)\le 0$ for any $r>0$ which implies $P$ nonincreasing in
$(0,+\infty)$; being $P(r)<0$ for $r>0$ small enough this yields
$P(r)<0$ for any $r>0$.

Let us prove the claim. Let
$\Psi(r):=\int_0^r(\psi(s))^{n-1}\,ds$. One computes
\[
\frac{\Psi^{\prime\prime}(r)}{\Psi^\prime(r)}=(n-1)\frac{\psi^\prime(r)}{\psi(r)}.
\]
Hence, requiring that $K(r)\le0$ is equivalent to ask that
\[
\frac{\Psi^{\prime\prime}(r)}{\Psi^\prime(r)}\ge
\frac{p+3}{2(p+1)}\frac{\Psi^\prime(r)}{\Psi(r)},
\]
where we have used the fact that $\Psi(r)>0$ for all
$r\in(0,\infty)$. Recall that, by construction, $\Psi^\prime(r)>0$
for all $r>0$. Setting $a_p=\frac{p+3}{2(p+1)}$ we can then
rewrite the latter formula as
\[
\left[\log\left(\frac{\Psi^\prime(r)}{(\Psi(r))^{a_p}}\right)\right]^\prime\ge0.
\]
or equivalently, setting $c_p=1-a_p=\frac{p-1}{2(p+1)}$, as
\[
\left[\log\left(\left((\Psi(r))^{c_p}\right)^\prime\right)\right]^\prime\ge0.
\]
The latter condition is clearly equivalent to
$\left[(\Psi(r))^{c_p}\right]^{\prime\prime}\ge0$, namely to the
fact that $(\Psi(r))^{c_p}$ is convex (recall that $\psi$ is at
least $C^2$). This completes the proof of the claim. Since
$u(0)>0$, if we assume that there exists $\rho>0$ such that
$u(\rho)=0$ then we have $u'(\rho)<0$ and hence $P(\rho)>0$, a
contradiction.

\subsection{Proof of Proposition \ref{convexity-A(r)}} A simple computation yields that $A(r)$ is convex if and only if
the function
$h(r):=2(n-1)(p+1)\psi^\prime(r)\int_0^r\psi^{n-1}(s)\,ds-(p+3)\psi^n(r)$
is positive in $(0,+\infty)$. This readily follows if $\psi$ is a
convex function too. Indeed, we have $h(0)=0$ and  $$
h'(r)=(p(n-2)-(n+2))\psi'(r)\psi^{n-1}(r)+2(n-1)(p+1)\psi''(r)\int_0^r(\psi(s))^{n-1}\,ds\,.$$
Hence, statement $(i)$ follows.
\par
Then we turn to the proofs of $(ii)$ and $(iii)$. First we claim that
\begin{equation}\label{lim}
\displaystyle{\lim_{r\rightarrow +\infty}
(n-1)\,\psi^\prime(r)\,\frac{\int_0^r\psi^{n-1}(s)\,ds}{\psi^n(r)}=1}\,.
\end{equation}
By this,
$$\displaystyle{\lim_{r\rightarrow +\infty} h(r)}= \displaystyle{\lim_{r\rightarrow +\infty} \psi^{n}(r)\left[2(n-1)(p+1)\psi^\prime(r)\,\frac{\int_0^r\psi^{n-1}(s)\,ds}{\psi^n(r)}-(p+3)\,\right]=+\infty}$$
and we conclude.\par Next we prove $\eq{lim}$. If
$\displaystyle{\lim_{r\rightarrow
+\infty}\frac{\psi'(r)}{\psi(r)}=l}$ for some $0<l<+\infty$, the
claim readily follows by the l'H\^{o}pital rule. Indeed, we have
$$\displaystyle{\lim_{r\rightarrow +\infty} \frac{\int_0^r\psi^{n-1}(s)\,ds}{\psi^{n-1}(r)}=\lim_{r\rightarrow +\infty} \frac{\psi(r)}{(n-1)\psi'(r)}=\frac{1}{(n-1)l}}\,.$$
Let now $l=+\infty$. Again, by the l'H\^{o}pital rule we deduce
$$\displaystyle{\lim_{r\rightarrow +\infty} (n-1)\,\psi^\prime(r)\,\frac{\int_0^r\psi^{n-1}(s)\,ds}{\psi^n(r)}=(n-1)\lim_{r\rightarrow +\infty}\frac{\left[ \frac{\psi'(r)}{\psi(r)}\int_0^r\psi^{n-1}(s)\,ds\right]'}{[\psi^{n-1}(r)]'}}$$
$$\displaystyle{=1+\lim_{r\rightarrow +\infty}\left[\log\left(\frac{\psi'(r)}{\psi(r)} \right)\right]'\frac{\int_0^r\psi^{n-1}(s)\,ds}{\psi^{n-1}(r)}}\,.$$
Then, since $\int_0^r\psi^{n-1}(s)\,ds=o(\psi^{n-1}(r))$ as
$r\rightarrow +\infty$, $\eq{lim}$ holds for every function $\psi$
such that $\left[\log\left(\frac{\psi'(r)}{\psi(r)}
\right)\right]'$ remains bounded.

\subsection{Proof of Theorem \ref{asym}} We start with the
following estimate from below on solutions of \eqref{cauchy}:

\begin{lem}\label{no exp dec}
Let the assumptions of Theorem \ref{asym} hold and $u$ be a
positive solution to \eqref{cauchy}. There exist no strictly
positive constants $C, \beta$ such that $u(r)\leq C
(\psi(r))^{-\beta}$ for all $r\geq 0$.
\end{lem}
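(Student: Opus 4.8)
The plan is to argue by contradiction, exploiting the Pohozaev-type function $P$ introduced in the proof of Theorem \ref{teo}. Up to replacing $u$ by $-u$ we may take $u>0$; since $\psi^{n-1}u'(r)=-\int_0^r(\psi(s))^{n-1}u^p(s)\,ds$, the function $u$ is strictly decreasing and $\psi^{n-1}u'$ is itself nonincreasing. Because $(H_2)$ and \eqref{eq:ex-lim} force $\psi'/\psi\ge l/2>0$ for large $r$, hence $\psi(r)\to+\infty$ and $\int^{+\infty}\psi^{1-n}<\infty$, the assumed bound $u\le C\psi^{-\beta}$ would in particular give $u(r)\to0$. The crucial structural fact I would use is that, under the hypotheses of Theorem \ref{asym} (in particular $A$ convex), the proof of Theorem \ref{teo} shows that $P(r)<0$ for all $r>0$ and $P'=K(u')^2\le0$; consequently $\ell:=\lim_{r\to+\infty}P(r)$ exists and satisfies $\ell\le P(r_1)<0$ for any fixed $r_1>0$, i.e. $\ell<0$ (possibly $-\infty$). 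The whole point is to show that the decay bound would force $\ell=0$.

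Assume then $u\le C\psi^{-\beta}$ for some $C,\beta>0$, and first upgrade the exponent. From $\psi^{n-1}|u'|=\int_0^r\psi^{n-1}u^p\le C_0+C^p\int_{r_0}^r\psi^{\,n-1-p\beta}$ and the de l'H\^opital asymptotics $\int_{r_0}^r\psi^{a}\sim(al)^{-1}\psi^{a}$ for $a>0$ and $\int_r^{+\infty}\psi^{a}\sim(-al)^{-1}\psi^{a}$ for $a<0$, which hold under \eqref{eq:ex-lim} exactly as in the derivation of \eqref{lim}, one gets: if $n-1-p\beta>0$ then $|u'|\lesssim\psi^{-p\beta}$ and hence, integrating on $(r,+\infty)$, $u\lesssim\psi^{-p\beta}$. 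Iterating the map $\beta\mapsto p\beta$ (the borderline value $n-1=p\beta$ only produces a harmless polynomial factor) and using $p>1$, after finitely many steps the exponent exceeds $(n-1)/p$, so that $\int_0^{+\infty}\psi^{n-1}u^p<+\infty$. Therefore $\psi^{n-1}u'(r)\to L$ for some finite $L\le0$, and the same asymptotics yield $u(r)\sim\frac{|L|}{(n-1)l}\,\psi^{1-n}(r)$ (or $u=o(\psi^{1-n})$ if $L=0$). When $l=+\infty$ the same conclusions hold because \eqref{eq:supplementary} guarantees the validity of the asymptotic relations used above, exactly as in the proof of Proposition \ref{convexity-A(r)}, with $\int_0^r\psi^{n-1}=o(\psi^{n-1})$.

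With these asymptotics in hand I would evaluate the three pieces of $P(r)=(p+1)\Psi(r)\big[\tfrac12(u')^2+\tfrac{u^{p+1}}{p+1}\big]+\psi^{n-1}uu'$, where $\Psi(r)=\int_0^r\psi^{n-1}\sim(n-1)^{-1}l^{-1}\psi^{n-1}$. Using $(u')^2\sim L^2\psi^{2(1-n)}$, $u^{p+1}\sim a^{p+1}\psi^{(1-n)(p+1)}$ with $a=|L|/((n-1)l)$, and $\psi^{n-1}uu'=u\,(\psi^{n-1}u')$, each term vanishes as $r\to+\infty$: the first behaves like $\psi^{1-n}$, the second like $\psi^{-(n-1)p}$, and the last tends to $0$ because $u\to0$ while $\psi^{n-1}u'\to L$ is finite. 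Hence $\lim_{r\to+\infty}P(r)=0$, contradicting $\ell<0$, and no such $C,\beta$ can exist.

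The main obstacle is the asymptotic analysis of the second paragraph: one must control $u'$ (not merely $u$) and run the bootstrap uniformly, handling the borderline exponents $n-1-p\beta_k=0$ and, above all, the case $l=+\infty$, where the clean power asymptotics degrade into $o(\cdot)$ relations and one must lean on \eqref{eq:supplementary} precisely as in the proof of Proposition \ref{convexity-A(r)}. Once the sharp behavior $u\asymp\psi^{1-n}$ and $\psi^{n-1}u'\to L$ is secured, the contradiction via $P$ is immediate; by contrast, attempting to estimate $P$ directly from $u\le C\psi^{-\beta}$ without first establishing the convergence of $\int_0^{+\infty}\psi^{n-1}u^p$ does not work, since for small $\beta$ that integral may a priori diverge.
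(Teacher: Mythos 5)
Your proof is correct, and although it shares the paper's two main ingredients --- the decay bootstrap and the nonincreasing, negative Pohozaev-type function $P$ --- it reaches the contradiction by a different mechanism. The paper stops the bootstrap at $u\le C_\eps\psi^{-(n-1-\eps)}$ (its \eqref{eq:dec-M}), then converts $P<0$ into the differential inequality \eqref{eq:stima>0}, whose integration gives the lower bound $u\ge\overline C\,\psi^{-2(n-1+\eps)/(p+1)}$; since $p>1$ this exponent is smaller than $n-1-\eps$, contradicting the upper bound. You instead push the bootstrap only past the exponent $(n-1)/p$, which already yields $\int_0^{+\infty}\psi^{n-1}u^p\,dr<+\infty$, deduce $\psi^{n-1}u'\to L\in(-\infty,0)$ together with $u\sim|L|\int_r^{+\infty}\psi^{1-n}\,ds$, and then check term by term that $P(r)\to 0$, against $\lim_{r\to+\infty}P\le P(r_1)<0$. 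Your route produces the sharp fast-decay asymptotics as a by-product (a power-decaying solution would have to decay like an energy-class solution) and needs a weaker bootstrap endpoint; the paper's route needs no existence of the limit $L$ and, more importantly, works with the non-strict inequality $P\le 0$, i.e.\ with the convexity of $A$ alone, whereas your argument genuinely requires $P(r_1)<0$ at some point. That strictness is supplied by the near-origin analysis of $K$ in the proof of Theorem \ref{teo} (which for $p=\frac{n+2}{n-2}$ uses $\psi'''(0)>0$, a hypothesis not restated in Theorem \ref{asym}); if you prefer not to lean on it, observe that under \eqref{eq:ex-lim} with $l>0$ one has $K\not\equiv 0$ (otherwise $\left(\int_0^r\psi^{n-1}\,ds\right)^{c_p}$ would be affine, forcing $\psi$ to be a pure power of $r$), so $K<0$ on some open interval where $u'<0$, hence $P$ is eventually strictly negative --- which is all your contradiction needs.
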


\begin{proof}
Assume by contradiction that there exist $C,\beta$ such that
$u(r)\leq C (\psi(r))^{-\beta}$ for all $r\geq 0$. It is not
restrictive assuming that $\beta<(n-1)/p$.

After integration in $(0,r)$ we get
\begin{equation*}
u'(r)\ge -C^p (\psi(r))^{1-n}\int_0^r (\psi(s))^{n-1-\beta p} \,
ds  \qquad \text{for any } r>0 \, .
\end{equation*}
Integrating now in $(r,+\infty)$ we obtain
\begin{equation*}
u(r)\le C^p \int_r^{+\infty} \left( (\psi(s))^{1-n} \int_0^s
(\psi(t))^{n-1-\beta p} \, dt \right) ds \qquad \text{for any }
r>0 \, .
\end{equation*}
Then, by \eqref{eq:ex-lim} we have
\begin{equation*}
u(r)=O((\psi(r))^{-\beta p}) \qquad \text{as } r\to +\infty \, .
\end{equation*}
Iterating this procedure as in the proof of \cite[Lemma 5.2]{BGGV}
we deduce that for any $\eps>0$ there exists $C_\eps>0$ such that
\begin{equation} \label{eq:dec-M}
u(r)\le C_\eps (\psi(r))^{-(n-1-\eps)} \qquad \text{for any } r>0
\, .
\end{equation}
The next purpose is to obtain a lower bound on $u$ in order to
reach a contradiction with \eqref{eq:dec-M}.

Let now $P=P(r)$ be the function defined in the proof of Theorem
\ref{teo}. Since we are assuming $A=A(r)$ convex, by the proof of
Proposition \ref{convexity-A(r)} we deduce that $P$ is negative
and nonincreasing in $(0,+\infty)$.

Therefore
$$
\frac{\int_0^r (\psi(s))^{n-1} \, ds}{(\psi(r))^{n-1}}
\left(\frac{(u'(r))^2}2+\frac{(u(r))^{p+1}}{p+1}\right)+\frac{u(r)u'(r)}{p+1}<0
\qquad \text{for any } r>0 \, .
$$
In particular we obtain
\begin{equation} \label{eq:stima>0}
u'(r)+ \frac{2(\psi(r))^{n-1}}{(p+1)\int_0^r (\psi(s))^{n-1} \,
ds}\, u(r)>0 \qquad \text{for any } r>0 \, .
\end{equation}
By \eqref{lim} we deduce that
$$
\frac{(\psi(r))^{n-1}}{\int_0^r (\psi(s))^{n-1} \, ds}\sim
(n-1)\frac{\psi'(r)}{\psi(r)} \qquad \text{as } r\to +\infty
$$
and hence for any $\eps>0$ there exists $r_\eps>0$ such that
\begin{equation*}
u'(r)+ \frac{2(n-1+\eps)}{p+1}\frac{\psi'(r)}{\psi(r)} \, u(r)>0
\qquad \text{for any } r>r_\eps
\end{equation*}
and after integration it follows that there exists $\overline C>0$
such that
\begin{equation*}
u(r)>\overline C(\psi(r))^{-\frac{2(n-1+\eps)}{p+1}} \qquad
\text{for any } r>r_\eps \, .
\end{equation*}
Since $p>1$, this contradicts \eqref{eq:dec-M} and completes the
proof of the lemma.
\end{proof}
Next we prove

\begin{lem} \label{l:u'/u}
 Let the assumptions of Theorem \ref{asym} hold and $u$ be a
positive solution to \eqref{cauchy}. Then
\begin{equation*}
\frac{u'(r)}{u(r)}=o\left(\frac{\psi'(r)}{\psi(r)}\right) \qquad
\text{as } r\to +\infty \, .
\end{equation*}
\end{lem}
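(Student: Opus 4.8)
The plan is to control the logarithmic derivative $v:=-u'/u$, which is positive since $u>0$ forces $u'<0$ through the integral representation in \eqref{cauchy}, against the quantity $m:=\psi'/\psi$, and in fact to prove the stronger statement that $v(r)\to 0$. Since $m(r)\to l\in(0,+\infty]$ by \eqref{eq:ex-lim}, so that $m$ is bounded below by a positive constant for large $r$, the convergence $v\to 0$ immediately yields $v/m\to 0$, which is exactly the assertion $u'/u=o(\psi'/\psi)$.

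First I would record a crude linear bound. The assumptions of Theorem \ref{asym} force $A$ to be convex, so $P$ is negative and nonincreasing and inequality \eqref{eq:stima>0} holds; dividing it by $u$ and using \eqref{lim} to replace $\frac{(\psi(r))^{n-1}}{\int_0^r(\psi(s))^{n-1}\,ds}$ by its asymptotic equivalent $(n-1)m(r)$ gives
\[
\limsup_{r\to+\infty}\frac{v(r)}{m(r)}\le\frac{2(n-1)}{p+1}.
\]
Because $p>1$ the right-hand side is strictly less than $n-1$, so there are $C<n-1$ and $r_0>0$ with $v(r)\le C\,m(r)$ for all $r\ge r_0$; set $\gamma:=(n-1)-C>0$.

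The heart of the argument is a Riccati inequality. Differentiating $v=-u'/u$ and using $u''+(n-1)m\,u'+u^p=0$ gives the identity $v'=v^2-(n-1)m\,v+u^{p-1}$, and on $[r_0,+\infty)$ the bound $v\le Cm$ yields $v-(n-1)m\le-\gamma m$, hence $v'\le-\gamma m\,v+u^{p-1}$. Multiplying by the integrating factor $(\psi(r))^{\gamma}$, for which $((\psi)^{\gamma})'=\gamma(\psi)^{\gamma}m$, and integrating on $[r_0,r]$ produces
\[
v(r)\le(\psi(r))^{-\gamma}(\psi(r_0))^{\gamma}v(r_0)+(\psi(r))^{-\gamma}\int_{r_0}^r(\psi(s))^{\gamma}u^{p-1}(s)\,ds.
\]
The first term tends to $0$ since $\psi(r)\to+\infty$. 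To dispose of the integral I would show $u^{p-1}/m\to 0$ and write $(\psi)^{\gamma}u^{p-1}=\frac{u^{p-1}}{\gamma m}((\psi)^{\gamma})'$: given $\eps>0$, choosing $r_1$ with $u^{p-1}/(\gamma m)\le\eps$ on $[r_1,+\infty)$ makes the tail of the integral at most $\eps\big((\psi(r))^{\gamma}-(\psi(r_1))^{\gamma}\big)$, so after multiplication by $(\psi(r))^{-\gamma}$ it is $\le\eps$, while the fixed piece on $[r_0,r_1]$ is killed by $(\psi(r))^{-\gamma}$. Letting $r\to+\infty$ and then $\eps\to 0$ gives $v(r)\to 0$.

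It remains to establish $u^{p-1}/m\to 0$, and this is the step I expect to be the main obstacle. When $l=+\infty$ it is immediate, since then $m\to+\infty$ while $u$, hence $u^{p-1}$, is bounded (recall $u$ is decreasing); when $l<+\infty$ it reduces to the assertion that $u$ vanishes at infinity, because $m\to l>0$. Proving $\lim_{r\to+\infty}u(r)=0$ in the finite-$l$ case is the genuinely delicate point, and it cannot be taken from Theorem \ref{asym}, which is what we are in the process of proving. I would argue by contradiction: if $\lim_{r\to+\infty}u(r)=\ell>0$, then applying l'H\^{o}pital's rule to $u'(r)=-(\psi(r))^{1-n}\int_0^r(\psi(s))^{n-1}u^p(s)\,ds$ yields
\[
\lim_{r\to+\infty}u'(r)=-\lim_{r\to+\infty}\frac{u^p(r)}{(n-1)m(r)}=-\frac{\ell^p}{(n-1)l}<0,
\]
contradicting $u'(r)\to 0$ from Proposition \ref{global}. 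Hence $\ell=0$, $u^{p-1}/m\to 0$ in all cases, and the proof is complete.
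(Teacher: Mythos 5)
Your argument is correct, but it follows a genuinely different route from the paper's. The paper proceeds in two steps: it first excludes $\limsup_{r\to+\infty}\frac{u'(r)}{u(r)}\frac{\psi(r)}{\psi'(r)}<0$, since integrating such a bound would give $u\le C_2(\psi)^{-C_1}$ and contradict Lemma \ref{no exp dec}; it then proves that $\frac{u'}{u}\frac{\psi}{\psi'}$ actually admits a limit, by evaluating the equation along a sequence of interior extrema of this quotient and using \eqref{eq:stima>0}, \eqref{lim} and the boundedness of $\left[\log(\psi'/\psi)\right]'$, and it only details the case $l=+\infty$, deferring $l<+\infty$ to \cite{BGGV}. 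You instead work directly with $v=-u'/u$: the same two ingredients \eqref{eq:stima>0} and \eqref{lim} give your crude bound $\limsup v/m\le\frac{2(n-1)}{p+1}<n-1$, and then the exact Riccati identity $v'=v^2-(n-1)mv+u^{p-1}$ together with the integrating factor $\psi^\gamma$ reduces the whole lemma to the claim $u^{p-1}/m\to 0$, which is trivial for $l=+\infty$ and which, for $l<+\infty$, you obtain from the l'H\^{o}pital computation combined with $u'(r)\to 0$ from Proposition \ref{global} (which indeed applies, since \eqref{eq:ex-lim} implies $(H_3)$). Your route buys several things: it bypasses Lemma \ref{no exp dec} and the existence-of-the-limit dichotomy entirely; it treats the cases $l<+\infty$ and $l=+\infty$ uniformly and self-containedly rather than citing \cite{BGGV}; it invokes \eqref{eq:supplementary} only through \eqref{lim}; and it delivers the stronger conclusion $u'(r)/u(r)\to 0$, which when $l=+\infty$ is strictly stronger than the stated $o(\psi'/\psi)$. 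What the paper's route buys is economy within its own architecture: Lemma \ref{no exp dec} is needed anyway (its proof is reused for Lemma \ref{l:dec} in the subcritical part), so the authors' argument has little marginal cost there. One small remark: you only need the non-strict form of \eqref{eq:stima>0}, which already follows from $P(0)=0$ and $P'=K(u')^2\le 0$ under the convexity of $A$, so your first step is safe independently of the strict negativity of $P$.
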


\begin{proof} As a first step, we may exclude the case in which
\begin{equation} \label{eq:limite}
\limsup_{r\to +\infty}
\frac{u'(r)}{u(r)}\frac{\psi(r)}{\psi'(r)}<0
\end{equation}
since otherwise we would have
$$
\frac{u'(r)}{u(r)}< -C_1\frac{\psi'(r)}{\psi(r)} \qquad \text{for
any } r>\overline r
$$
for some $C_1>0$ and $\overline r>0$, and after integration it
follows
$$
u(r)<C_2 (\psi(r))^{-C_1} \qquad \text{for any } r>\overline r
$$
for some constant $C_2>0$, in contradiction with Lemma \ref{no exp
dec}. Now it is sufficient to prove existence of the limit in
\eqref{eq:limite}.

Suppose by contradiction that such a limit does not exist. For
simplicity, here we consider only the case $l=+\infty$ since the
case $l$ finite can be treated exactly as in \cite[Lemma
5.3]{BGGV}. Let $r_m\to +\infty$ be the sequence of local maxima
and minima points for
$\frac{u'(r)}{u(r)}\frac{\psi(r)}{\psi'(r)}$. Then for any $m$ we
have
$$
u''(r_m)u(r_m)-(u'(r_m))^2=u(r_m)u'(r_m)\left[\log\left(\frac{\psi'(r_m)}{\psi(r_m)}
\right) \right]'
$$
By \eqref{cauchy}, \eqref{lim}, \eqref{eq:stima>0} and $p>1$, it
follows that
\begin{align*}
&
u'(r_m)>-\frac{(u(r_m))^{p+1}}{u(r_m)\left\{(n-1)\frac{\psi'(r_m)}{\psi(r_m)}
-\frac{2}{p+1}\frac{(\psi(r_m))^{n-1}}{\int_0^{r_m}
(\psi(s))^{n-1}ds}+\left[\log\left(\frac{\psi'(r_m)}{\psi(r_m)}
\right) \right]' \right\}} \, .
\end{align*}
By \eqref{lim}, the fact that $l=+\infty$ and that
$\left[\log\left(\frac{\psi'(r_m)}{\psi(r_m)} \right) \right]'$ is
bounded we obtain
\begin{align*}
& u'(r_m)>-\frac{(u(r_m))^{p}}{\frac{(n-1)(p-1)}{p+1}
\frac{\psi'(r_m)}{\psi(r_m)}+o\left(\frac{\psi'(r_m)}{\psi(r_m)}
\right)} \, .
\end{align*}
This shows that
$\frac{u'(r_m)}{u(r_m)}\frac{\psi(r_m)}{\psi'(r_m)}\to 0$ as $m\to
+\infty$ and by the definition of $\{r_m\}$ we infer
$$
\lim_{r\to +\infty} \frac{u'(r)}{u(r)}\frac{\psi(r)}{\psi'(r)}=0
\, ,
$$
a contradiction. This completes the proof of the lemma.
\end{proof}

\begin{lem} Let the assumptions of Theorem \ref{asym} hold and $u$ be a
positive solution to \eqref{cauchy}. Then
\begin{equation} \label{eq:lim-u'-up}
\lim_{r\to +\infty}
\frac{u'(r)}{u^p(r)}\frac{\psi'(r)}{\psi(r)}=-\frac{1}{n-1} \, .
\end{equation}
\end{lem}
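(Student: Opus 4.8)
The plan is to start from the first integral of \eqref{cauchy}. Since $u>0$, integrating the identity $[\psi^{n-1}u']'=-\psi^{n-1}u^p$ over $(0,r)$ and using $u'(0)=0$, $\psi(0)=0$ yields $\psi^{n-1}(r)u'(r)=-\int_0^r\psi^{n-1}(s)u^p(s)\,ds$. Dividing suitably, I would rewrite the quantity to be estimated as
$$-\frac{u'(r)}{u^p(r)}\frac{\psi'(r)}{\psi(r)}=\frac{\psi'(r)\int_0^r\psi^{n-1}(s)u^p(s)\,ds}{\psi^n(r)u^p(r)}\,,$$
so that \eqref{eq:lim-u'-up} is equivalent to showing that the right-hand side tends to $\frac{1}{n-1}$.

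The key idea is to factor this right-hand side as the product of $\dfrac{\psi'(r)\int_0^r\psi^{n-1}(s)\,ds}{\psi^n(r)}$ and $R(r):=\dfrac{\int_0^r\psi^{n-1}(s)u^p(s)\,ds}{u^p(r)\int_0^r\psi^{n-1}(s)\,ds}$. The first factor already converges to $\frac{1}{n-1}$ by \eqref{lim}, which is available here since the hypotheses of Theorem \ref{asym} include \eqref{eq:ex-lim} (and \eqref{eq:supplementary} when $l=+\infty$). Hence the whole statement reduces to proving $R(r)\to 1$. The virtue of this splitting is that it confines the ``curvature part'' of the problem to \eqref{lim}, which is already proved, and leaves a ratio in which $u$ is controlled entirely by Lemma \ref{l:u'/u}. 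A direct application of de l'H\^opital to the original expression would instead generate the term $\psi\psi''/(\psi')^2$, whose limit I do not expect to be controllable when $l<+\infty$; avoiding this term is precisely what the factorization is designed to achieve, and it is the conceptual obstacle of the proof.

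To prove $R(r)\to1$ I would set $Q(r):=u^p(r)\int_0^r\psi^{n-1}(s)\,ds$ and apply de l'H\^opital, which first requires $Q(r)\to+\infty$. For this I compute the logarithmic derivative $\frac{Q'}{Q}=p\frac{u'}{u}+\frac{\psi^{n-1}}{\int_0^r\psi^{n-1}}$; by \eqref{lim} the last summand is asymptotic to $(n-1)\frac{\psi'}{\psi}$, while Lemma \ref{l:u'/u} makes $p\frac{u'}{u}$ negligible with respect to $\frac{\psi'}{\psi}$, so $\frac{Q'}{Q}=(n-1)\frac{\psi'}{\psi}\,(1+o(1))$. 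Since assumption $(H_3)$ gives $\liminf_{r\to+\infty}\psi'/\psi=l>0$, this logarithmic derivative is bounded below by a positive constant for large $r$, forcing $Q\to+\infty$. This step is uniform in $l\in(0,+\infty]$ and is the only place where the positivity of $l$, i.e. the spectral gap, is essential; I expect verifying $Q\to+\infty$ to be the main technical point.

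With $Q\to+\infty$ available, de l'H\^opital applied to $R=\big(\int_0^r\psi^{n-1}(s)u^p(s)\,ds\big)/Q$ gives
$$\frac{\big(\int_0^r\psi^{n-1}(s)u^p(s)\,ds\big)'}{Q'(r)}=\frac{1}{1+p\,\dfrac{u'(r)}{u(r)}\,\dfrac{\int_0^r\psi^{n-1}(s)\,ds}{\psi^{n-1}(r)}}\,.$$
By \eqref{lim} the ratio $\int_0^r\psi^{n-1}/\psi^{n-1}$ is asymptotic to $\psi/((n-1)\psi')$, so the second term in the denominator behaves like $\frac{p}{n-1}\frac{u'}{u}\frac{\psi}{\psi'}$, which tends to $0$ by Lemma \ref{l:u'/u}. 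Therefore this quotient tends to $1$, whence $R(r)\to1$, and combining with the limit of the first factor completes the proof of \eqref{eq:lim-u'-up}.
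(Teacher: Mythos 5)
Your proof is correct, and it takes a genuinely different route from the paper's. The paper (which writes out only the case $l=+\infty$, deferring $l<+\infty$ to \cite{BGGV}) proceeds in two stages: it first proves that the limit in \eqref{eq:lim-u'-up} \emph{exists}, by a contradiction argument on a sequence of local maxima and minima of $\frac{u'}{u^p}\frac{\psi'}{\psi}$, and only then identifies its value, by combining the identity $\frac{u''}{u'}\frac{\psi}{\psi'}+n-1+\frac{u^p}{u'}\frac{\psi}{\psi'}=0$ coming from \eqref{cauchy} with a de l'H\^opital argument showing $\frac{u''}{u'}\frac{\psi}{\psi'}\to 0$. You bypass the existence step entirely: the first integral $\psi^{n-1}(r)u'(r)=-\int_0^r\psi^{n-1}(s)u^p(s)\,ds$ converts the quantity into the product of $\frac{\psi'\int_0^r\psi^{n-1}}{\psi^n}$, which tends to $\frac{1}{n-1}$ by \eqref{lim}, and the ratio $R$, which a single $\cdot/\infty$ application of de l'H\^opital sends to $1$ --- legitimate because you verify $Q=u^p\int_0^r\psi^{n-1}\to+\infty$ (and, implicitly, $Q'>0$ eventually) via the logarithmic derivative, and because that version of l'H\^opital requires only the denominator to diverge, so you never need to know whether $\int_0^r\psi^{n-1}u^p$ is bounded. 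This yields existence and the value of the limit in one stroke. Your ingredients are exactly the paper's two, \eqref{lim} and Lemma \ref{l:u'/u}, neither of which depends on the present lemma, so there is no circularity. What your argument buys: it is uniform in $l\in(0,+\infty]$, hence it also covers the case $l<+\infty$ that the paper only cites from \cite{BGGV}, and it replaces the oscillation argument with an elementary computation. What the paper's argument buys is methodological continuity with \cite{BGGV}, whose extrema-sequence technique is reused in Lemma \ref{l:u'/u} and in the neighboring lemmas.
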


\begin{proof} We omit the proof in the case $l$ finite since it is
completely similar to the proof obtained in \cite[Section
5]{BGGV}. Let $l=+\infty$. First we
prove the existence of the limit in \eqref{eq:lim-u'-up}. Suppose
by contradiction that the limit in \eqref{eq:lim-u'-up} does not
exist. Then there exists a sequence $r_m\to +\infty$ of local
maxima and minima points for the function
$\frac{u'(r)}{u^p(r)}\frac{\psi'(r)}{\psi(r)}$. Then we have
\begin{equation*}
u''(r_m)u(r_m)=p(u'(r_m))^2-u(r_m)u'(r_m)\left[\log\left(\frac{\psi'(r_m)}{\psi(r_m)}\right)\right]'
\, .
\end{equation*}
Inserting this identity in \eqref{cauchy} multiplied by $u$ we
obtain
\begin{equation*}
u'(r_m)=-\frac{u^{p}(r_m)}{(n-1)\frac{\psi'(r_m)}{\psi(r_m)}+p\frac{u'(r_m)}{u(r_m)}-
\left[\log\left(\frac{\psi'(r_m)}{\psi(r_m)}\right)\right]'} \, .
\end{equation*}
Therefore we have
\begin{align*}
\frac{u'(r_m)}{u^p(r_m)}\frac{\psi'(r_m)}{\psi(r_m)} =
-\left\{n-1+p\frac{u'(r_m)}{u(r_m)}\frac{\psi(r_m)}{\psi'(r_m)}-\frac{\psi(r_m)}{\psi'(r_m)}
\left[\log\left(\frac{\psi'(r_m)}{\psi(r_m)}\right)\right]'\right\}^{-1}
\, .
\end{align*}
By Lemma \ref{l:u'/u}, the fact that $l=+\infty$ and that
\eqref{eq:supplementary} holds true, we obtain
\begin{align*}
\lim_{m\to +\infty}
\frac{u'(r_m)}{u^p(r_m)}\frac{\psi'(r_m)}{\psi(r_m)}=-\frac{1}{n-1}
\, .
\end{align*}
By definition of the sequence $\{r_m\}$, this gives the existence
of the limit in \eqref{eq:lim-u'-up}, a contradiction. It remains
to compute explicitly the limit in \eqref{eq:lim-u'-up}.

By \eqref{cauchy} we obtain
\begin{equation*}
\frac{u''(r)}{u'(r)}\frac{\psi(r)}{\psi'(r)}+n-1+\frac{u^p(r)}{u'(r)}\frac{\psi(r)}{\psi'(r)}=0
\end{equation*}
and hence there exists the limit
\begin{equation} \label{eq:lim-(1-n)}
\lim_{r\to +\infty}
\frac{u''(r)}{u'(r)}\frac{\psi(r)}{\psi'(r)}=1-n-\lim_{r\to
+\infty} \frac{u^p(r)}{u'(r)}\frac{\psi(r)}{\psi'(r)}\,.
\end{equation}
On the other hand, by de
l'H\^{o}pital rule and Lemma \ref{l:u'/u} we have
\begin{align*}
0=\lim_{r\to +\infty}
\frac{u'(r)}{u(r)}\frac{\psi(r)}{\psi'(r)}=\lim_{r\to +\infty}
\frac{[u'(r)\psi(r)(\psi'(r))^{-1}]'}{u'(r)}\\
=\lim_{r\to +\infty} \frac{\psi(r)}{\psi'(r)}
\left\{\frac{u''(r)}{u'(r)}-\left[\log\left(\frac{\psi'(r)}{\psi(r)}\right)\right]'\right\}
=\lim_{r\to +\infty} \frac{\psi(r)}{\psi'(r)}\frac{u''(r)}{u'(r)}
\, .
\end{align*}
Combining this with \eqref{eq:lim-(1-n)} we arrive to the
conclusion of the proof.
\end{proof}

\noindent\textit{End of the proof of Theorem \ref{asym}.} Using
\eqref{eq:lim-u'-up} we have that for any $\eps>0$ there exists
$r_\eps>0$ such that
\begin{equation*}
u^{1-p}(r_\eps)+\left(\frac{p-1}{n-1}-\eps\right)\int_{r_\eps}^r
\frac{\psi(s)}{\psi'(s)}\, ds<u^{1-p}(r)<u^{1-p}(r_\eps)+
\left(\frac{p-1}{n-1}+\eps\right)\int_{r_\eps}^r
\frac{\psi(s)}{\psi'(s)} \, ds \, .
\end{equation*}
If the function $\frac{\psi}{\psi'}$ is integrable in a
neighborhood of infinity then $\lim_{r\to +\infty} u(r)>0$. If
$\frac{\psi}{\psi'}$ is not integrable in a neighborhood of
infinity then $u$ vanishes at infinity and
\begin{equation*}
\lim_{r\to +\infty} \left(\int_{r_\eps}^r
\frac{\psi(s)}{\psi'(s)}\, ds\right)^{1/(p-1)}
u(r)=\left(\frac{n-1}{p-1}\right)^{1/(p-1)} \, .
\end{equation*}
This completes the proof of the theorem.

\section{Proof of the results in the subcritical case}

\subsection{Proof of Theorem \ref{exist gr st}}
By standard arguments we deduce that the bottom of the $L^2$ spectrum of
$-\Delta_g$ in $M$ admits the following variational
characterization:
\begin{equation}\label{lambda1}
\lambda_1(M):= \inf_{\varphi \in C^{\infty}_{c}(M)\setminus
\{0\}}\frac{\int_{M} |\nabla_g \varphi |_{g}^2\,dV_{g}}{\int_{M}
\varphi^2\,dV_{g}}\,.
\end{equation}
We start by proving the positivity of $\lambda_1(M)$, and by observing that if instead $\psi^\prime/\psi$ tends to zero, such positivity is false.

\begin{lem} \label{eq:lambda_1>0}
Let $n\ge 3$ and assume that $\psi$ satisfies assumptions
$(H_1)-(H_3)$. Then $\lambda_1(M)>0$. If instead $(H_3)$ does not hold and one has in addition $\psi^\prime(r)/\psi(r)\to0$ as $r\to+\infty$, then $\lambda_1(M)=0$. In particular, the latter fact holds if the radial mean curvature at $r$, or the radial sectional curvature at $r$, tend to zero as $r\to+\infty$.
\end{lem}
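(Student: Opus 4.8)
The plan is to reduce the computation of $\lambda_1(M)$ to a one–dimensional weighted problem and then treat the two regimes separately. First I would expand a test function $\varphi\in C_c^\infty(M)$ into spherical harmonics, $\varphi(r,\Theta)=\sum_k\varphi_k(r)Y_k(\Theta)$ with $-\Delta_{\SN}Y_k=\mu_k Y_k$ and $0=\mu_0\le\mu_1\le\cdots$. Using the expression of $\Delta_g$ recalled in Section~\ref{notation},
\[
\int_M|\nabla_g\varphi|_g^2\,dV_g=\sum_k\|Y_k\|^2\int_0^\infty\Big[(\varphi_k')^2+\tfrac{\mu_k}{\psi^2}\varphi_k^2\Big]\psi^{n-1}\,dr,\qquad \int_M\varphi^2\,dV_g=\sum_k\|Y_k\|^2\int_0^\infty\varphi_k^2\,\psi^{n-1}\,dr .
\]
Since $\mu_k\ge0$, every angular mode raises the Rayleigh quotient, so by \eqref{lambda1} the infimum is attained in the radial sector and
\[
\lambda_1(M)=\inf_{\phi}\frac{\int_0^\infty(\phi')^2\,\psi^{n-1}\,dr}{\int_0^\infty\phi^2\,\psi^{n-1}\,dr},
\]
the infimum running over nonzero radial $\phi\in C_c^\infty(M)$. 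Set $w:=\psi^{n-1}$, so $w'/w=(n-1)\psi'/\psi=:H$, the radial mean curvature: by $(H_2)$ one has $H\ge0$, and by $(H_3)$ there are $H_1>0$ and $R_0>0$ with $H\ge H_1$ on $[R_0,+\infty)$.

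To prove positivity I would combine a divergence identity at infinity with a Hardy bound near the pole. Since $H=w'/w$, integrating $\int_0^\infty\phi^2w'\,dr$ by parts and using $w(0)=0$ together with the compact support of $\phi$ gives $\int_0^\infty H\phi^2 w\,dr=-2\int_0^\infty\phi\,\phi'\,w\,dr$. Writing $A_1:=\int_0^{R_0}\phi^2w$, $A_2:=\int_{R_0}^\infty\phi^2w$, $A:=A_1+A_2$, $B:=\int_0^\infty(\phi')^2w$, the inequality $2|\phi\phi'|\le\frac{H_1}2\phi^2+\frac2{H_1}(\phi')^2$ and the bounds on $H$ yield $\frac{H_1}2A_2\le\frac{H_1}2A_1+\frac2{H_1}B$. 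To control $A_1$ I use $\phi(r)=-\int_r^\infty\phi'$, whence by Cauchy--Schwarz $\phi(r)^2\le B\int_r^\infty\psi^{1-n}$ and
\[
A_1\le B\int_0^{R_0}\psi^{n-1}(r)\Big(\int_r^\infty\psi^{1-n}(s)\,ds\Big)dr=:C_0B .
\]
The constant $C_0$ is finite because $\int_r^\infty\psi^{1-n}$ converges (by $(H_3)$, $\psi(r)\ge\psi(R_0)e^{(l/2)(r-R_0)}$, so $\psi^{1-n}$ decays exponentially), while near $r=0$ the integrand behaves like $r$. Combining the two estimates gives $A\le(2C_0+4H_1^{-2})B$, i.e. $\lambda_1(M)\ge(2C_0+4H_1^{-2})^{-1}>0$.

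For the vanishing statement, when $\psi'/\psi\to0$ I would exhibit an explicit minimizing sequence. For $k\in\N$ pick $R_k\uparrow+\infty$ with $\psi'/\psi\le1/k$ on $[R_k,+\infty)$, and take the (Lipschitz, hence admissible by density) function $\phi_k=1$ on $[0,R_k+k]$, affine from $1$ to $0$ on $[R_k+k,R_k+2k]$, and $0$ afterwards. On $[R_k,R_k+2k]$ one has $\psi(R_k+2k)\le e^2\psi(R_k)$, so $\int_0^\infty(\phi_k')^2w\,dr\le k^{-1}e^{2(n-1)}\psi(R_k)^{n-1}$, whereas $\int_0^\infty\phi_k^2w\,dr\ge\int_{R_k}^{R_k+k}w\ge k\,\psi(R_k)^{n-1}$. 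The Rayleigh quotient of $\phi_k$ is therefore at most $e^{2(n-1)}k^{-2}\to0$, giving $\lambda_1(M)=0$.

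Finally, the two geometric corollaries reduce to the previous case. Vanishing of the radial mean curvature is exactly $\psi'/\psi\to0$. If instead the radial sectional curvature $-\psi''/\psi\to0$, I would set $v:=\psi'/\psi\ge0$, note $v'=\psi''/\psi-v^2$, and show $v\to0$: fixing $\eta>0$ and $r_0$ with $|\psi''/\psi|\le\eta^2$ for $r\ge r_0$, at any point where $v\ge2\eta$ one has $v'\le\eta^2-4\eta^2=-3\eta^2<0$, so $v$ can neither cross the level $2\eta$ upwards nor stay above it (that would force $v\to-\infty$), hence eventually $v<2\eta$; letting $\eta\to0$ gives $v\to0$. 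The delicate point of the whole argument is the positivity part: the divergence identity only controls the mass on $[R_0,+\infty)$, where $H$ is bounded below, so the mass on the geodesic ball $B_{R_0}$—on which $H$ may vanish under the sole assumption $(H_2)$—must be estimated separately, and it is precisely the exponential growth of $\psi$ forced by $(H_3)$, together with the behaviour of $\psi$ at the pole, that keeps the Hardy constant $C_0$ finite.
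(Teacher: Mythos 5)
Your proof is correct, and it takes a genuinely different route from the paper's. For positivity, the paper does not argue variationally from scratch: it invokes Grigor'yan's estimate $\lambda_1(B_R)\geq \frac{1}{4F(R)}$, where $F(R)=\sup_{0<r<R}\left(\int_0^r\psi^{n-1}(s)\,ds\right)\left(\int_r^R\psi^{1-n}(s)\,ds\right)$, together with the monotone limit $\lambda_1(M)=\lim_{R\to+\infty}\lambda_1(B_R)$, and then bounds $F(R)$ uniformly in $R$ by a local-maximum argument and Cauchy's mean value theorem. Your spherical-harmonic reduction to the radial Rayleigh quotient, followed by the identity $\int_0^\infty H\phi^2w\,dr=-2\int_0^\infty\phi\phi'w\,dr$, Young's inequality on the region where $H\geq H_1$, and the Cauchy--Schwarz/Hardy bound $\phi(r)^2\leq B\int_r^\infty\psi^{1-n}(s)\,ds$ near the pole, proves the same weighted Hardy-type inequality by hand; your constant $C_0$ is an integrated cousin of the paper's $F(R)$ (and of the Muckenhoupt-type quantity the paper later imports from Opic--Kufner in Lemma \ref{Sob Kufner}), and your argument has the bonus of producing the explicit bound $\lambda_1(M)\geq(2C_0+4H_1^{-2})^{-1}$. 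For the vanishing statement, the paper shows that $\psi'/\psi\to0$ forces subexponential volume growth, i.e. $\log\psi(r)=o(r)$, and then cites Brooks' theorem, whereas your plateau-and-ramp cutoffs $\phi_k$ give an explicit minimizing sequence with Rayleigh quotient at most $e^{2(n-1)}k^{-2}$; note that your lower bound on the denominator uses $(H_2)$ (monotonicity of $\psi$), which is indeed still in force in the regime where only $(H_3)$ fails. For the sectional-curvature corollary, the paper splits into the cases where $\psi'/\psi$ has or does not have a limit (l'H\^{o}pital, respectively a stationary-point sequence on which $(\psi'/\psi)^2=\psi''/\psi$), while your barrier argument for $v=\psi'/\psi$, $v'=\psi''/\psi-v^2$, reaches the conclusion in one stroke. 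In short, the paper's proof is shorter because it leans on two cited results (Grigor'yan and Brooks); yours trades brevity for a self-contained, quantitative argument, and all of its steps check out.
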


\begin{proof} Let $\lambda_1(B_R)$ be the infimum of the
functional in \eq{lambda1} with test functions in
$C^{\infty}_{c}(B_R)$, namely $\lambda_1(B_R)$ is the first
eigenvalue of the Laplace-Beltrami operator on $B_R$ under the
Dirichlet boundary condition. From \cite{Gr} we recall the
estimate
\begin{equation}\label{estimate}
\lambda_1(B_R)\geq \frac{1}{4 F(R)}\,,
\end{equation}
where $F(R):=\displaystyle{ \sup_{0<r<R} H_R(r)}$ for any $R\in
(0,+\infty)$ and
$$
\displaystyle{H_R(r):=\left[\left(\int_0^r (\psi(s))^{n-1}\,ds
\right) \left(\int_{r}^{R}(\psi(s))^{1-n}\,ds\right) \right]} \, .
$$
Since the map $R\mapsto \lambda_1(B_R)$ is decreasing and
$\lambda_1(M)=\displaystyle{\lim_{R\rightarrow +\infty}
\lambda_1(B_R)}$, one has
$$\lambda_1(M)\geq \displaystyle{\lim_{R\rightarrow +\infty} \frac{1}{4 F(R)}}\,.$$
In particular, the claim can be proved by showing that $F(R)$
stays bounded.\par
 We have that $\displaystyle{\lim_{r\rightarrow R^-} H_R(r)=0}$ and, by applying twice
 the l'H\^{o}pital rule, that
$$\lim_{r\rightarrow 0^+} H_R(r)=\lim_{r\rightarrow 0^+} \left(\frac{\int_0^r (\psi(s))^{n-1}\,ds}{(\psi(r))^{n-1}}\right)^2
= \lim_{r\rightarrow 0^+} \left(\frac{
\psi(r)}{(n-1)\psi'(r)}\right)^2=0\,.$$ On the other hand, for
$0<r<R$, we have
$$
H'_R(r)=(\psi(r))^{n-1}\left(\int_{r}^{R}(\psi(s))^{1-n}\,ds\right)-(\psi(r))^{1-n}\left(\int_0^r
(\psi(s))^{n-1}\,ds \right) \, .
$$
Since $\displaystyle{\lim_{r\rightarrow 0^+} H_R(r)=\lim_{r\to
R^-} H_R(r)=0}$ and $H_R(r)>0$ for any $r\in (0,R)$, then $H_R$
admits a local maximum point $r_0\in (0,R)$. This yields,
$$
H_R(r)\leq H_R(r_0)=\left(
\frac{\int_{0}^{r_0}(\psi(s))^{n-1}\,ds}{(\psi(r_0))^{n-1}}\right)^2\quad
\text{for every }
 r\in(0,R)\,.
$$
Then, condition $(H_3)$ assures the boundedness of the latter
quotient and, in turn, proves the claim. To see this, note that
$(H_2)-(H_3)$ yield $\displaystyle{\lim_{r\rightarrow +\infty}
\psi(r)=+\infty}$. In particular, by the Cauchy Theorem
$$
\limsup_{r\rightarrow
+\infty}\frac{\int_{0}^{r}(\psi(s))^{n-1}\,ds}{(\psi(r))^{n-1}}\leq
\limsup_{r\rightarrow +\infty}
\frac{\psi(r)}{(n-1)\psi'(r)}<+\infty\,.
$$
To prove the second part of the statement, we notice that, denoting by $V(r)$ the volume of the geodesic balls of radius $r$ centered at $o$, we have by l'H\^{o}pital rule, since the last limit below exists:
\[
\lim_{r\to+\infty}\frac{V^\prime(r)}{V(r)}=\lim_{r\to+\infty}\frac{(\psi(r))^{n-1}}{\int_0^r(\psi(s))^{n-1}\,
ds}=(n-1)\lim_{r\to+\infty}\frac{\psi^\prime(r)}{\psi(r)}=0.
\]
Hence for all $\varepsilon>0$ there is $r_\varepsilon$ such that $0\le V^\prime(r)/V(r)
=\left(\log V(r)\right)^\prime\le \varepsilon$ for all $r\ge r_\varepsilon$. Integrating between $r_\varepsilon$ and $r$ we
easily get that
\[
\lim_{r\to+\infty}\frac{\log(\psi(r))}r=0.
\]
By a classical result of Brooks (see \cite{Brooks}) this implies
that $\lambda_1(M)=0$. We mention by the sake of completeness that
the same conclusion can be obtained by verifying that the
necessary and sufficient condition \eqref{Sob rad}  for the
validity of the Poincar\'e-Sobolev type inequality below, is not
satisfied when $p=1$ under the running assumptions.

The fact that the claim holds when the radial mean curvature at $r$ tends to zero when $r\to+\infty$ is obvious from its expression given in Section \ref{notation}. The radial sectional curvature at $r$ tends to zero when $r\to+\infty$ if and only if $\psi^{\prime\prime}(r)/\psi(r)\to0$. This implies that $\psi^{\prime}(r)/\psi(r)\to0$ as well. In fact, if $\psi^{\prime}(r)/\psi(r)$ has a limit, l'H\^{o}pital rule implies that it must be zero, as claimed. Should $\psi^{\prime}(r)/\psi(r)$ not have a limit, it must have a sequence of stationary points $r_k\to+\infty$ as $k\to+\infty$, so that $\psi^{\prime\prime}(r_k)/\psi^\prime(r_k)=\psi^{\prime}(r_k)/\psi(r_k)$, so that in particular $\psi^{\prime\prime}(r_k)\not=0$. Hence
\[\frac{\psi^\prime(r_k)}{\psi(r_k)}=\frac{\psi^{\prime\prime}(r_k)}{\psi(r_k)}\,\frac{\psi^{\prime}(r_k)}{\psi^{\prime\prime}(r_k)}=\frac{\psi^{\prime\prime}(r_k)}
{\psi(r_k)}\,\frac{\psi(r_k)}{\psi^\prime(r_k)},\ \ {\rm or} \
\left(\frac{\psi^\prime(r_k)}{\psi(r_k)}\right)^2
=\frac{\psi^{\prime\prime}(r_k)}{\psi(r_k)}\] which tends to zero
by assumption if $k\to+\infty$, contradiction.
\end{proof}

Next we show the validity of a Sobolev embedding for the space
$H^1_{r}(M)$ of radial functions in $H^1(M)$.

\begin{lem}\label{Sob Kufner}
Let $n\ge 3$ and assume that $\psi$ satisfies $(H_1)-(H_3)$. If $1<p\le \frac{n+2}{n-2}$ then the embedding
$H^1_{r}(M)\subset L^{p+1}(M)$ is continuous and if
$1<p<\frac{n+2}{n-2}$ then the embedding is also compact.
\end{lem}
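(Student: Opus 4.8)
The plan is to reduce the statement to a one–dimensional weighted inequality and to treat separately the region near $r=0$, where the weight behaves like the Euclidean one, and the region $r\to+\infty$, where the growth of $\psi$ forced by $(H_3)$ produces fast decay of radial functions. For radial $u=u(r)$ one has $\|u\|_{H^1_r(M)}^2=\omega_n\int_0^{+\infty}\big((u'(r))^2+u^2(r)\big)\psi^{n-1}(r)\,dr$ and $\|u\|_{L^{p+1}(M)}^{p+1}=\omega_n\int_0^{+\infty}|u(r)|^{p+1}\psi^{n-1}(r)\,dr$. Near the origin $(H_1)$ gives $\psi(r)\sim r$, so on the geodesic ball $B_1$ the Riemannian measure is comparable to the Euclidean one and $B_1$ is a smooth relatively compact domain; the classical Sobolev and Rellich--Kondrachov theorems then give that $H^1(B_1)\hookrightarrow L^{p+1}(B_1)$ is continuous for $p+1\le 2^*$, i.e. $p\le\frac{n+2}{n-2}$, and compact for $p+1<2^*$. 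Restriction to radial functions transfers these facts to $H^1_r$.

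The key ingredient for large $r$ is a Strauss--type pointwise estimate. First I would verify that every $u\in H^1_r(M)$ tends to $0$ at infinity: since $\int_0^{+\infty}u^2\psi^{n-1}\,dr<+\infty$ while $\int^{+\infty}dr=+\infty$, one must have $\liminf_{r\to+\infty}u^2(r)\psi^{n-1}(r)=0$, and as $\psi$ is positive and nondecreasing this yields $u(r_k)\to0$ along some $r_k\to+\infty$. Integrating $(u^2)'=2uu'$ from $r$ to $r_k$ and passing to the limit gives $u^2(r)=-2\int_r^{+\infty}uu'\,ds$. Inserting the weight, using $\psi^{1-n}(s)\le\psi^{1-n}(r)$ for $s\ge r$ (which is exactly $(H_2)$) and Cauchy--Schwarz, I obtain a constant $C=C(n)$ with
\[
|u(r)|\le C\,\psi^{-(n-1)/2}(r)\,\|u\|_{H^1_r(M)}\qquad\text{for all }r>0.
\]

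With this decay the tail is controlled for every $p>1$: writing $|u|^{p+1}\psi^{n-1}=|u|^{p-1}u^2\psi^{n-1}$ and bounding $|u|^{p-1}$ by the estimate above, one gets, for $R\ge1$,
\[
\int_R^{+\infty}|u|^{p+1}\psi^{n-1}\,dr\le C\,\|u\|_{H^1_r}^{p-1}\!\int_R^{+\infty}\! u^2\,\psi^{(n-1)(3-p)/2}\,dr\le C\,\psi(R)^{(n-1)(1-p)/2}\,\|u\|_{H^1_r}^{p+1},
\]
where the last step uses $\psi^{(n-1)(3-p)/2}=\psi^{n-1}\psi^{(n-1)(1-p)/2}$, the sign $(1-p)/2<0$ and the monotonicity of $\psi$. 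Taking $R=1$ and adding the near--origin bound from the first paragraph proves the continuous embedding for all $1<p\le\frac{n+2}{n-2}$.

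Finally, for compactness when $1<p<\frac{n+2}{n-2}$, take $u_j$ bounded in $H^1_r(M)$, so that $u_j\rightharpoonup u$ up to a subsequence. On each $B_R$ the embedding into $L^{p+1}(B_R)$ is compact (this is where strict subcriticality $p+1<2^*$ is used), so $u_j\to u$ strongly in $L^{p+1}(B_R)$. The tail estimate is uniform in $j$ and of size $C\psi(R)^{(n-1)(1-p)/2}\to0$ as $R\to+\infty$, since $(H_2)$--$(H_3)$ force $\psi(R)\to+\infty$; hence the mass of $|u_j|^{p+1}$ and of $|u|^{p+1}$ outside $B_R$ is uniformly small, and a standard $\eps/3$ argument upgrades convergence on balls to strong convergence in $L^{p+1}(M)$. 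The one genuinely delicate point of the whole argument is the justification of $u(r)\to0$ and the resulting pointwise decay estimate; once it is in hand, the role of the critical exponent is confined to the Rellich step near the origin, while the tail behaves subcritically for every $p>1$.
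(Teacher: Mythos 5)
Your proof is correct, and it takes a genuinely different route from the paper's. The paper reduces the lemma to a one--dimensional weighted inequality and invokes the Opic--Kufner theory: by \cite[Theorem 6.2]{kufner}, the inequality \eqref{Sob rad} (note: with only the gradient term on the right) holds if and only if $\sup_{x>0}\bigl(\int_0^x\psi^{n-1}\,dr\bigr)^{1/(p+1)}\bigl(\int_x^{+\infty}\psi^{1-n}\,dr\bigr)^{1/2}<+\infty$, and the embedding is compact if and only if this quantity tends to $0$ at both ends; the proof then checks these conditions using $\psi(r)\sim r$ near the origin (where the restriction $p\le\frac{n+2}{n-2}$, resp.\ $p<\frac{n+2}{n-2}$, enters) and Cauchy's mean value theorem with $(H_2)$--$(H_3)$ at infinity, together with the density of $C^\infty_{c,r}(M)$ in $H^1_r(M)$. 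You instead split $M$ into a ball around the pole, where the measure is comparable to the Euclidean one and classical Sobolev/Rellich--Kondrachov applies, and the exterior region, which you control by a Strauss--type radial decay estimate $|u(r)|\le C\,\psi^{-(n-1)/2}(r)\,\|u\|_{H^1_r(M)}$; your derivation of that estimate (vanishing along a sequence from $u^2\psi^{n-1}\in L^1$, then $u^2(r)=-2\int_r^{+\infty}uu'\,ds$, monotonicity of $\psi$ and Cauchy--Schwarz) is sound, as are the tail bound and the $\eps/3$ compactness argument, since $(H_2)$--$(H_3)$ force $\psi(R)\to+\infty$. Your approach is more elementary and self-contained, and it makes transparent that criticality matters only near the origin while the tail is subcritical for every $p>1$ --- the same structure that emerges from the paper's asymptotic analysis of $f_{n,p}$ at $0$ and $+\infty$. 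What the paper's route buys is a sharper statement: inequality \eqref{Sob rad} controls $\|u\|_{L^{p+1}}$ by the gradient term alone (a Poincar\'e--Sobolev inequality, which is what the minimization in the proof of Theorem \ref{exist gr st} actually uses), and the Kufner conditions are necessary as well as sufficient, a fact the authors reuse at the end of Lemma \ref{eq:lambda_1>0}. Your weaker, full-$H^1$ form of the embedding still implies the gradient-only version a posteriori via $\lambda_1(M)>0$, but that extra step would need to be made explicit if your argument were substituted for the paper's.
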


\begin{proof}
Following \cite{kufner}, we define $AC_{R}(0,+\infty)$ the set of
all functions absolutely continuous on every compact subinterval
$[a,b]\subset (0,+\infty)$ which tend to zero as $r\to +\infty$.
Then, according to \cite[Theorem 6.2]{kufner}, the inequality
\begin{equation}\label{Sob rad}
\left(\int_{0}^{+\infty}|u(r)|^{p+1}\psi^{n-1}(r)\,dr\right)^{\frac{2}{p+1}}\leq
C_{n,p}\int_{0}^{+\infty} (u'(r))^2\psi^{n-1}(r)\,dr \quad
\text{for all }u\in AC_{R}(0,+\infty)\,,
\end{equation}
holds for some $C_{n,p}>0$, if and only if
\[\label{ns}\sup_{0<x<+\infty}f_{n,p}(x):=\sup_{0<x<+\infty}\left( \int_0^{x} \psi^{n-1}(r)\,dr\right)^{\frac{1}{p+1}}\left( \int_x^{+\infty} \psi^{1-n}(r)\,dr\right)^{\frac{1}{2}}<+\infty\,.\]
The known asymptotics for $\psi$ as $x\rightarrow 0^+$ yield
\begin{equation} \label{eq:asymp}
f_{n,p}(x)\sim \frac{1}{n^{1/(p+1)} (n-2)^{1/2}} \,
x^{\frac{n+2-p(n-2)}{2(p+1)}} \qquad \text{as } x\rightarrow 0^+,
\end{equation}
where the integrability of $\psi^{1-n}(r)$ in $(x,+\infty)$ comes
from $(H_3)$.\par On the other hand, we claim that
\begin{equation}\label{claim}
\limsup_{x\rightarrow +\infty}\left( \int_0^{x}
\psi^{n-1}(r)\,dr\right)\left( \int_x^{+\infty}
\psi^{1-n}(r)\,dr\right)<+\infty
\end{equation}
from which we easily conclude that for $p>1$
$$\lim_{x \rightarrow +\infty}f_{n,p}(x)= \lim_{x \rightarrow +\infty}\left( \int_0^{x} \psi^{n-1}(r)\,dr\right)^{\frac{1}{p+1}-\frac{1}{2}}\left( \int_0^{x} \psi^{n-1}(r)\,dr\right)^{\frac{1}{2}}\left( \int_x^{+\infty} \psi^{1-n}(r)\,dr\right)^{\frac{1}{2}}  =0\,.$$
To prove \eq{claim}, we first note that $(H_2)-(H_3)$ and the
Cauchy's Theorem yield
$$
\limsup_{x\rightarrow +\infty}\frac{ \int_0^{x}
\psi^{n-1}(r)\,dr}{(\psi(x))^{n-1}}\leq \limsup_{x\rightarrow
+\infty} \frac{\psi(x)}{(n-1)\psi'(x)}<+\infty\,.$$ and
$$
\limsup_{x\rightarrow +\infty}\frac{\int_x^{+\infty}
\psi^{1-n}(r)\,dr}{(\psi(x))^{1-n}}\leq \limsup_{x\rightarrow
+\infty} \frac{\psi(x)}{(n-1)\psi'(x)}<+\infty\,.
$$
Then,
$$
\limsup_{x\rightarrow +\infty} \left( \int_0^{x}
\psi^{n-1}(r)\,dr\right)\left( \int_x^{+\infty}
\psi^{1-n}(r)\,dr\right)$$
$$
= \limsup_{x\rightarrow +\infty}\left(\frac{ \int_0^{x}
\psi^{n-1}(r)\,dr}{(\psi(x))^{n-1}}\right)
\left(\frac{\int_x^{+\infty}
\psi^{1-n}(r)\,dr}{(\psi(x))^{1-n}}\right)<+\infty\,.
$$
Let us denote by $C^{\infty}_{c,r}(M)$ the space of radial
functions in $C^{\infty}_{c}(M)$. By \eqref{Sob
rad},\eqref{eq:asymp}, \eqref{claim} we deduce that if $1<p\le
\frac{n+2}{n-2}$ then
$$
\|\varphi\|_{L^{p+1}(M)}^2 \le C_{n,p} \int_M |\nabla_g
\varphi|_g^2 \, dV_g \, ,
$$
for any function $\varphi\in C^\infty_{c,r}(M)$.

Therefore, by density of $C^{\infty}_{c,r}(M)$ in $H^1_{r}(M)$
(see \cite[Theorem 3.1]{hebey}) we obtain the continuous embedding
$H^1_{r}(M)\subset L^{p+1}(M)$ for $1<p\leq \frac{n+2}{n-2}$. On
the other hand \cite[Theorem 7.4]{kufner} yields that the same
embedding is compact if and only if
$\displaystyle{\lim_{x\rightarrow 0^+}f_{n,p}(x)=0}$
$\displaystyle{= \lim_{x \rightarrow +\infty}f_{n,p}(x)}$. This
condition is satisfied when $1<p<\frac{n+2}{n-2}$.
\end{proof}

\textit{End of the proof of Theorem \ref{exist gr st}.}  The
existence of a nonnegative minimizer to
\begin{equation} \label{eq:Sob-Sub}
\inf_{v\in H^1_{r}(M)\setminus \{0\}} \frac{\int_{M} |\nabla_g
v|_g^2 \, dV_g}{\left(\int_{M} |v|^{p+1} \,
dV_g\right)^{\frac{2}{p+1}}} \, ,
\end{equation}
follows in a standard way by Lemmas \ref{eq:lambda_1>0}-\ref{Sob
Kufner}. Up to a constant multiplier a nonnegative minimizer $u$
of \eqref{eq:Sob-Sub} is actually a radial solution of \eqref{LEF}
 and hence a nonnegative solution of \eqref{cauchy}. Furthermore
 $u(r)>0$ for any $r>0$ by local uniqueness for a Cauchy
 problem.

\subsection{Proof of Theorem \ref{subcr set}}
The proof follows the line of \cite[Theorem 1.3]{mancini} where
the case $\psi(r)=\sinh(r)$ is dealt. Hence, in the sequel we will
only quote which are the main differences.\par First we have
uniqueness for Dirichlet problems on bounded domains.
\begin{lem}\label{unique dir}
Let $1<p<\frac{n+2}{n-2}$ and $\psi$ satisfy assumptions
$(H_1)-(H_2)$. Furthermore, let $G$ as defined in Theorem
\ref{subcr set} satisfying the $\Lambda-$property as required
there. Then the Dirichlet problem
\begin{equation}\label{Dir}
\left \{ \begin{array}{ll}
-\displaystyle{\frac1{(\psi(r))^{n-1}}\left[(\psi(r))^{n-1} v^\prime(r)\right]^\prime}=|v(r)|^{p-1}v(r) & r\in (0,R)\\ \\
v'(0)=0\quad v(R)=0\,. &
\end{array}
\right.
\end{equation}
 has at most one positive solution.
\end{lem}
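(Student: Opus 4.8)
The plan is to reduce the uniqueness statement to a strict monotonicity property of the shooting (first-zero) map and to establish that monotonicity via a linearization in which the $\Lambda$-property of $G$ is the decisive ingredient, following \cite[Theorem 1.3]{mancini} and the method of \cite{kwong}. First I would record that any positive solution of \eqref{Dir} is strictly decreasing: integrating the equation gives $v'(r)=-\psi^{1-n}(r)\int_0^r\psi^{n-1}(s)v^p(s)\,ds<0$ for $r\in(0,R]$, so $R$ is exactly the first zero of the solution $u_\alpha$ of the Cauchy problem \eqref{cauchy} with $\alpha:=v(0)$. By uniqueness for \eqref{cauchy}, two distinct positive solutions of \eqref{Dir} would correspond to two distinct initial heights $\alpha$ sharing the same first zero $R$. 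Hence it suffices to show that the first-zero map $\alpha\mapsto R(\alpha)$ is injective, which I will get from strict monotonicity.

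Next I would linearize. Setting $\phi:=\partial_\alpha u_\alpha$ and differentiating \eqref{cauchy} in $\alpha$ gives the linearized problem
\begin{equation*}
\frac{1}{\psi^{n-1}}\left(\psi^{n-1}\phi'\right)'+p\,u_\alpha^{p-1}\phi=0,\qquad \phi(0)=1,\quad \phi'(0)=0.
\end{equation*}
Differentiating the identity $u_\alpha(R(\alpha))=0$ yields $R'(\alpha)=-\phi(R(\alpha))/u_\alpha'(R(\alpha))$, and since $u_\alpha'(R(\alpha))<0$ the sign of $R'(\alpha)$ coincides with the sign of $\phi(R(\alpha))$. Thus the whole problem reduces to showing that the linearized solution $\phi$, which starts positive, changes sign \emph{exactly once} on $(0,R(\alpha))$ and is strictly negative at $R(\alpha)$; this forces $R'(\alpha)$ to have a constant sign and hence $R(\cdot)$ to be strictly monotone.

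The heart of the matter, and the step I expect to be the main obstacle, is this oscillation/sign analysis of $\phi$. Following Kwong's method as adapted in \cite{mancini}, I would perform the change of variables associated with the exponent $\delta=\frac{2(n-1)}{p+3}$ appearing in the definition of $G$, under which the linearized equation and the equation satisfied by the Pohozaev/scaling generator of $u_\alpha$ become two linear second-order equations whose zeroth-order coefficients differ precisely by a multiple of $G$. Forming the Wronskian $\mathcal{W}(r):=\psi^{n-1}(r)\bigl(w'(r)\phi(r)-w(r)\phi'(r)\bigr)$ of $\phi$ with this generator $w$, the first-order terms cancel and one is left, up to normalization, with $\mathcal{W}'(r)=\psi^{n-1}(r)\,G(r)\,w(r)\phi(r)$. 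The $\Lambda$-property of $G$ — nonnegative on $(0,r_1)$ and nonpositive on $(r_1,+\infty)$ — is exactly what controls the sign of $\mathcal{W}'$ across $(0,R)$; combined with a Sturm-type comparison and the boundary/initial behavior of $w$ and $\phi$, it yields that $\phi$ has a single zero in $(0,R(\alpha))$ and is negative at $R(\alpha)$. The delicate points are the precise form of the generator $w$ in the Riemannian model and the verification that the Wronskian source term is exactly $G$, together with the asymptotic analysis near $r=0$ (using $\psi(0)=0$, $\psi'(0)=1$ from $(H_1)$) and at the first zero; these are where the computation specific to a general $\psi$, rather than to $\psi(r)=\sinh r$, must be carried out.

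Finally, from $\phi(R(\alpha))<0$ I would conclude $R'(\alpha)<0$, so $\alpha\mapsto R(\alpha)$ is strictly decreasing and in particular injective. Consequently two positive solutions of \eqref{Dir} on the same interval $(0,R)$ must have $R(\alpha_1)=R(\alpha_2)=R$ with $\alpha_1=\alpha_2$, and uniqueness for the Cauchy problem \eqref{cauchy} then forces $v_1\equiv v_2$, which proves the lemma.
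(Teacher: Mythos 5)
Your reduction to strict monotonicity of the first-zero map is legitimate in principle (modulo a small patch: the domain of $\alpha\mapsto R(\alpha)$ is a priori only an open set, so strict monotonicity where defined gives injectivity only after a short continuity argument ruling out intermediate values of $\alpha$ whose solutions never vanish). But the heart of your proof --- the claim that the linearized solution $\phi=\partial_\alpha u_\alpha$ changes sign exactly once on $(0,R(\alpha))$ and is negative at $R(\alpha)$ --- is not proved, and the mechanism you propose for it does not match the structure in which $G$ and the $\Lambda$-property actually operate. First, you misquote the hypothesis: the $\Lambda$-property in Theorem \ref{subcr set} constrains $G'$, not $G$ (it says $G$ is nondecreasing on $(0,r_1)$ and nonincreasing on $(r_1,+\infty)$); in fact $G(r)\sim\delta(\delta+2-n)\,r^{\delta(p-1)-2}\to-\infty$ as $r\to0^+$, so $G$ is certainly not ``nonnegative on $(0,r_1)$''. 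Consequently a Wronskian identity of the form $\mathcal W'=\psi^{n-1}G\,w\,\phi$, whose usefulness would rest on sign control of $G$ itself, cannot be run on the stated hypothesis. Second, the existence of your ``Pohozaev/scaling generator'' $w$ with precisely this Wronskian source is unsubstantiated: the Riemannian model has no scaling invariance, and you yourself defer ``the precise form of the generator $w$'' and ``the verification that the Wronskian source term is exactly $G$'' --- but that computation, together with the Sturm-type oscillation analysis you then invoke, \emph{is} the content of the lemma; without it the proposal is a plan, not a proof.

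For comparison, the paper's proof (following \cite[Proposition 4.4]{mancini} and \cite{kwong}) does not linearize at all: it compares two putative solutions directly. One sets $\widehat v:=\psi^\delta v$, under which \eqref{Dir} becomes $\psi^{\delta(p-1)}\widehat v''+\tfrac12\bigl(\psi^{\delta(p-1)}\bigr)'\widehat v'+G\widehat v+\widehat v^p=0$ --- this is where the specific $G$ of Theorem \ref{subcr set} comes from, as the zeroth-order coefficient of the transformed \emph{nonlinear} equation --- and introduces the energy $E_{\widehat v}=\tfrac12\psi^{\delta(p-1)}(\widehat v')^2+\tfrac{|\widehat v|^{p+1}}{p+1}+\tfrac12 G\widehat v^2$, which satisfies $\tfrac{d}{dr}E_{\widehat v}=\tfrac12 G'\widehat v^2$. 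It is here, through $G'$, that the $\Lambda$-property enters: combined with $G\to-\infty$ at $0^+$ it forces either $G'\ge0$ everywhere or a single sign change of $G'$ from nonnegative to nonpositive, and the Kwong--Li comparison of the energies of two solutions (via their ratio, as carried out in Lemma \ref{uniqueness}) then yields the contradiction. If you wish to salvage your shooting approach, you would need either to carry out honestly the oscillation analysis of $\phi$ (a substantial piece of work in the absence of scaling invariance), or to reroute the monotonicity of $R(\alpha)$ through this energy identity.
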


\begin{proof}
The proof follows plainly the lines of \cite[Proposition
4.4]{mancini}. The main difference is that the auxiliary energy
considered there, here has to be replaced by
$$E_{\widehat{v}}(r):=\frac{1}{2} (\psi(r))^{\delta(p-1)}(\widehat{v}'(r))^2+\frac{|\widehat{v}(r)|^{p+1}}{p+1}+\frac{1}{2}\,G(r)(\widehat{v}(r))^2\,,$$
where $\delta$ and $G$ are as in the statement of Theorem
\ref{subcr set} and $\widehat{v}(r):=\psi^{\delta}(r)v(r)$. See
also \cite{kwong} where this substitution was originally
introduced. In particular, if $v$ solves \eqref{Dir} then
$\widehat{v}$ solves
$$\psi^{\delta(p-1)}(r)\,\widehat{v} ''(r)+\frac{1}{2}(\psi^{\delta(p-1)}(r))'\,\widehat{v}'(r)+G(r)\,\widehat{v}(r)+\widehat{v}^p(r)=0\quad (r>0)$$
and
$$\frac{d}{dr}\,E_{\widehat{v}}(r)=\frac{1}{2}\,G'(r)(\widehat{v}(r))^2\,.$$
We have $G(r)\sim \delta(\delta+2-n)r^{\delta(p-1)-2}$ as
$r\rightarrow 0^+$, where $\delta+2-n<0$ and $\delta(p-1)-2<0$.
Namely, $G(r)\rightarrow -\infty$ for $r\rightarrow 0^+$. This,
combined with the assumptions required on $G$ yields that either
$G'(r)\geq 0$ for every $r>0$ or there exists $r_1>0$ such that
$G'(r_1)=0$, $G'(r)\geq 0$ for every $r\in (0,r_1)$ and $G'(r)\leq
0$ for every $r>r_1$. Then, all the arguments of \cite[Proposition
4.4]{mancini} work. See also the proof of Lemma \ref{uniqueness}
below.
\end{proof}

Let $u\in H^1(M)$ be a positive radial solution of \eqref{LEF} as
given in Theorem \ref{exist gr st} (possibly not unique). The next
two lemmas show that every solution $v$ to \eqref{cauchy} with
$0<v(0)<u(0)$, is necessarily of one sign. Furthermore, $v$
intersects $u$ exactly once. First, by exploiting Lemma
\ref{unique dir}, we have
\begin{lem}\label{G}
Let $1<p<\frac{n+2}{n-2}$ and $\psi$ satisfy assumptions
$(H_1)-(H_3)$. Furthermore, let $G$ as defined in Theorem
\ref{subcr set} satisfying the $\Lambda-$property as required
there. If $u$ and $v$ are two solutions to \eqref{cauchy} with
$u(r)>0$ for every $r\geq 0$ and $0<v(0)<u(0)$, then $v(r)>0$ for
every $r\geq 0$.
\end{lem}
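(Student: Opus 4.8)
The plan is to argue by contradiction. Suppose $v$ is not everywhere positive and let $R\in(0,+\infty)$ be its first zero, so that $v>0$ on $[0,R)$ and $v(R)=0$. Integrating the equation in \eqref{cauchy} exactly as in the introduction gives $v'(R)=-(\psi(R))^{1-n}\int_0^R(\psi(s))^{n-1}(v(s))^p\,ds<0$, since $v>0$ on $(0,R)$. In particular $v$ restricted to $(0,R)$ is a positive solution of the Dirichlet problem \eqref{Dir}, so Lemma \ref{unique dir} applies and $v$ is the unique such solution. The goal is then to show that the coexistence of this $v$, vanishing at $R$, with the everywhere positive $u$ satisfying $u(0)>v(0)$ is impossible.

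First I would introduce the Wronskian-type quantity $W(r):=(\psi(r))^{n-1}\big(u'(r)v(r)-u(r)v'(r)\big)$. Using \eqref{cauchy} for both $u$ and $v$ and the positivity of both on $(0,R)$, a direct computation gives $W'(r)=(\psi(r))^{n-1}u(r)v(r)\big((v(r))^{p-1}-(u(r))^{p-1}\big)$ on $(0,R)$, together with $W(0)=0$ (because $\psi(0)=0$ and $u'(0)=v'(0)=0$). Since $u(0)>v(0)$ and $u'(0)=v'(0)=0$, one has $u>v$ on a right neighbourhood of $0$, hence $W'<0$ and $W<0$ there; on the other hand, evaluating at $R$ with $v(R)=0$ yields $W(R)=-(\psi(R))^{n-1}u(R)v'(R)>0$, because $u(R)>0$ and $v'(R)<0$. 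Thus $W$ must return from negative values to a positive value, which, since $W'$ has the sign of $v-u$ and every contact point of $u$ and $v$ is a transversal crossing by Cauchy uniqueness for \eqref{cauchy}, forces $u$ and $v$ to cross at least once in $(0,R)$; let $r_1$ denote the first such crossing, so that $u>v$ on $(0,r_1)$ and $v>u$ immediately after.

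The heart of the matter is to prove that $u$ and $v$ cross \emph{at most once} as long as both remain positive. Granting this, after $r_1$ one has $v>u$ on the whole of $(r_1,R)$, whence $v(R)\ge u(R)$, i.e. $0\ge u(R)$, contradicting $u(R)>0$; combined with the fact, already read off from $W$, that $u>v$ cannot persist on all of $(0,R)$, this gives the contradiction and proves $v>0$ on $[0,+\infty)$. To bound the number of crossings I would argue precisely as in the proof of Lemma \ref{unique dir} (and as in \cite{kwong,mancini}): passing to $\widehat u=\psi^{\delta}u$ and $\widehat v=\psi^{\delta}v$ with $\delta=\frac{2(n-1)}{p+3}$, both satisfy the transformed equation displayed there, and the auxiliary energy obeys $\frac{d}{dr}E_{\widehat w}=\frac12 G'(\widehat w)^2$; since $G$ enjoys the $\Lambda$-property (nondecreasing then nonincreasing, with $G(0^+)=-\infty$), one compares the values of $E_{\widehat u}$ and $E_{\widehat v}$ at two putative consecutive crossing points, where the potential contributions to the energies coincide, and reaches a contradiction from the definite sign of $G'$ on the relevant subintervals. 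This crossing estimate, which rests on the delicate sign structure supplied by the $\Lambda$-property, is the main obstacle; the Wronskian bookkeeping above and its reduction to that estimate are comparatively routine.
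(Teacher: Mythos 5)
Your Wronskian computation is correct, but it is the routine half of the argument, and it actually shows that in the contradiction scenario ($v(R)=0$, $u>0$ on $[0,R]$, $u(0)>v(0)$) the two solutions must cross \emph{at least twice}: once because $W$ must climb from negative values to $W(R)>0$, and a second time because $v>u$ after $r_1$ while $v(R)=0<u(R)$. So your deferred claim --- no two crossings while both solutions are positive --- is not a technical step; it is literally equivalent to the lemma, and your sketch of it does not go through. At a crossing point $r_i$ the potential parts of the energies coincide, but then $E_{\widehat v}(r_i)-E_{\widehat u}(r_i)=\tfrac12\psi^{\delta(p-1)}(r_i)\bigl[(\widehat v'(r_i))^2-(\widehat u'(r_i))^2\bigr]$, and nothing fixes the sign of this quantity: transversality only gives $\widehat v'(r_1)>\widehat u'(r_1)$ and $\widehat v'(r_2)<\widehat u'(r_2)$, which says nothing about the squares. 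The Kwong--Li mechanism behind Lemma \ref{unique dir} and Lemma \ref{uniqueness} is not a direct comparison of $E_{\widehat u}$ and $E_{\widehat v}$ at crossings: it compares $E_{\widehat v}$ with $\gamma^2 E_{\widehat u}$, where $\gamma=v/u$, and it hinges on $\gamma$ being \emph{monotone}, a fact extracted from a Wronskian of one sign on the whole interval --- available there only because both solutions obey the same boundary condition at both ends (both vanish at $R$ in Lemma \ref{unique dir}, both are decaying $H^1(M)$ solutions in Lemma \ref{uniqueness}). In your setting this is exactly what fails: by your own computation $W<0$ near $0$ and $W(R)>0$, so the Wronskian changes sign, $\gamma$ first increases and then decreases, and the monotone-separation argument (including the choice of the weight $\gamma^2(r_1)$ at the turning point of $G$, needed when the two crossings straddle the point where $G'$ changes sign) is unavailable. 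Note also that an unconditional ``at most one crossing while both positive'' would be stronger than the paper's own Lemma \ref{Uv}, which requires $u\in H^1(M)$ and is proved \emph{after} this lemma, so one cannot quietly borrow intersection results without circularity.

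The paper in fact proves Lemma \ref{G} by a genuinely different route, imported from \cite[Lemma 4.1 and Corollary 4.6]{mancini}: a continuation (shooting) argument in the initial value $\alpha$, in which Lemma \ref{unique dir} is used as a true uniqueness statement --- the first-zero map $\alpha\mapsto R(\alpha)$ is injective, hence strictly monotone on connected components of the set of initial values whose solutions vanish --- while the Poincar\'e--Sobolev inequality of Lemmas \ref{eq:lambda_1>0}--\ref{Sob Kufner} supplies the quantitative control at the endpoints of such components (for instance, testing \eqref{Dir} with the solution itself shows that any solution with a first zero must have $\alpha^{p-1}\ge\lambda_1(M)$, so components stay away from $\alpha=0$). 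Your proposal engages neither tool: the Poincar\'e--Sobolev inequality never appears, and Lemma \ref{unique dir} is invoked only to remark that $v$ is ``the'' positive Dirichlet solution on $B_R$, a remark never used afterwards. To repair the proof you would have to genuinely establish the two-crossing impossibility --- which is the lemma itself --- or switch to the continuation argument the paper relies on.
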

The proof of Lemma \ref{G} is the same of \cite[Lemma 4.1 and
Corollary 4.6]{mancini}. The main tools exploited there are
uniqueness for Dirichlet problems on bounded domains and the
Poincar\'e-Sobolev inequality in the hyperbolic space. In our
case, they are given, respectively, by Lemma \ref{unique dir} and
by Lemmas \ref{eq:lambda_1>0}-\ref{Sob Kufner}.

On the other hand, exactly as in \cite[Corollary 4.6]{mancini},
one shows
\begin{lem}\label{Uv}
Let $1<p<\frac{n+2}{n-2}$ and $\psi$ satisfy assumptions
$(H_1)-(H_3)$. Let $u$ and $v$ be two positive solutions to
\eqref{cauchy} with $0<v(0)<u(0)$. If $u\in H^1(M)$, then $u-v$
has exactly one zero.
\end{lem}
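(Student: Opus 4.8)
The plan is to follow the scheme of \cite[Corollary 4.6]{mancini}, replacing the two hyperbolic-space ingredients used there (uniqueness for the Dirichlet problem on balls and the Poincar\'e--Sobolev inequality) by their counterparts in the present setting, namely Lemma \ref{unique dir} and Lemmas \ref{eq:lambda_1>0}--\ref{Sob Kufner}, together with the asymptotic description of Theorem \ref{asym sub}. Set $w:=u-v$, so that $w(0)=u(0)-v(0)>0$ and $w'(0)=0$; by Lemma \ref{G} both $u$ and $v$ are positive on $[0,+\infty)$. The statement splits into two parts: (a) $w$ vanishes at least once, and (b) $w$ vanishes at most once.

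For part (a) I would argue by contradiction, assuming $w>0$, i.e. $u>v>0$, on all of $[0,+\infty)$, and exploit the Wronskian-type quantity
\[
W(r):=\psi^{n-1}(r)\big(u'(r)v(r)-u(r)v'(r)\big),
\]
which satisfies $W(0)=0$ (since $\psi(0)=0$ and $u'(0)=v'(0)=0$) and, by \eqref{cauchy},
\[
W'(r)=\psi^{n-1}(r)\,u(r)v(r)\big(v^{p-1}(r)-u^{p-1}(r)\big).
\]
Because $u>v>0$ and $p>1$, the right-hand side is strictly negative, so $W$ is strictly decreasing and $W(r)<0$ for every $r>0$. If $v\notin H^1(M)$, then by Theorem \ref{asym sub}(ii)--(iii) the function $v$ either tends to a positive constant or decays only polynomially, hence far more slowly than the rapidly decaying $u\in H^1(M)$; thus $v>u$ for large $r$, contradicting $w>0$. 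If instead $v\in H^1(M)$, Theorem \ref{asym sub}(i) applies to both $u$ and $v$, giving $u,v\to 0$ and $\psi^{n-1}u',\psi^{n-1}v'$ convergent to finite limits; consequently $W(r)\to 0$ as $r\to+\infty$, which is incompatible with $W$ being negative and strictly decreasing. Either way we obtain a contradiction, so $w$ has a first zero $r_1>0$; there $w'(r_1)<0$, since $(\psi^{n-1}w')'=-\psi^{n-1}(u^p-v^p)<0$ on $(0,r_1)$ while $\psi^{n-1}w'\to0$ as $r\to0^+$, so the crossing is transversal.

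For part (b) I would rule out a second zero. First, \emph{every} intersection of $u$ and $v$ at a point $r_*>0$ is transversal: if $u(r_*)=v(r_*)$ and $u'(r_*)=v'(r_*)$, then uniqueness for the Cauchy problem \eqref{cauchy} started at $r_*$ would force $u\equiv v$, contradicting $u(0)\neq v(0)$. Hence the zeros of $w$ are isolated and simple, and it suffices to exclude a configuration with two consecutive zeros $r_1<r_2$ and $v>u$ on $(r_1,r_2)$. This is exactly the point where the Kwong-type energy of Lemma \ref{unique dir} enters: passing to $\widehat u=\psi^\delta u$, $\widehat v=\psi^\delta v$ and to the energy $E_{\widehat y}$ with $E_{\widehat y}'=\frac{1}{2}G'\widehat y^2$, the $\Lambda$-property of $G$ forces $E_{\widehat y}$ to be monotone before and after the sign change of $G'$, and a comparison of the values of this energy — equivalently, an application of the Dirichlet uniqueness of Lemma \ref{unique dir} on the interval delimited by two consecutive intersections — is incompatible with the bump of $v-u$ on $(r_1,r_2)$, exactly as in \cite[Corollary 4.6]{mancini} and \cite{kwong}. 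Combining (a) and (b) yields exactly one zero.

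The main obstacle is part (b). A purely local Sturm comparison is powerless, because $w=u-v$ solves the linear equation $(\psi^{n-1}w')'+\psi^{n-1}c(r)w=0$ with $c(r)=(u^p-v^p)/(u-v)$ strictly larger than both $u^{p-1}$ and $v^{p-1}$; thus $w$ may in principle oscillate faster than either $u$ or $v$, and nothing local forbids a double crossing. Excluding it genuinely requires the global monotonicity furnished by the $\Lambda$-property of $G$ (the Kwong energy) together with Lemma \ref{unique dir}, precisely as in the hyperbolic case; the only genuinely new work is to carry this energy bookkeeping through for a general $\psi$ rather than for $\psi(r)=\sinh r$.
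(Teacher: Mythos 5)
Your proposal does not prove Lemma \ref{Uv} as stated, and the decisive problem is your part (b). Lemma \ref{Uv} assumes only $(H_1)$--$(H_3)$, positivity of \emph{both} $u$ and $v$, and $u\in H^1(M)$; the $\Lambda$-property of $G$ is \emph{not} among its hypotheses. The paper's architecture makes this separation deliberate: Lemma \ref{G}, where positivity of $v$ has to be \emph{proved}, does assume the $\Lambda$-property, while Lemma \ref{Uv}, where positivity is \emph{assumed}, does not, and the paper disposes of its proof by invoking the intersection-counting argument of \cite[Corollary 4.6]{mancini}, which must therefore run under $(H_1)$--$(H_3)$ alone. Your part (b) instead imports exactly the unavailable structure (the Kwong energy, the $\Lambda$-property, Lemma \ref{unique dir}), so even if completed it would establish a strictly weaker lemma. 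Worse, the key step is not actually an argument: Lemma \ref{unique dir} is a uniqueness statement for positive solutions of \eqref{Dir} on $(0,R)$, with the Neumann condition $v'(0)=0$ at the pole and the Dirichlet condition $v(R)=0$; it cannot be ``applied on the interval delimited by two consecutive intersections''. On $(r_1,r_2)$ neither $u$ nor $v$ vanishes at the endpoints, and the function that does vanish there, $u-v$, is not a solution of the nonlinear equation, so the Kwong energy $E_{\widehat{v}}$ and its monotonicity law $\frac{d}{dr}E_{\widehat{v}}=\frac12 G'\widehat{v}^2$ are simply not available for it. In the paper that energy identity (see \eqref{E}) is anchored at $r=0$, where the initial values order the energies, and at $R\to+\infty$, where \eqref{R} makes both energies vanish; no version anchored at two interior points exists, and you do not supply one. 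So part (b) is a genuine gap, not a compressed proof. (The same hypothesis-mismatch occurs twice more: you invoke Lemma \ref{G}, which needs the $\Lambda$-property and is unnecessary here since positivity of $v$ is hypothesized, and you invoke Theorem \ref{asym sub}(ii)--(iii), which requires \eqref{eq:ex-lim} --- and \eqref{eq:supplementary} when $l=+\infty$ --- whereas $(H_3)$ only gives a $\liminf$.)

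Your part (a), by contrast, is the right idea (it is the Wronskian argument behind \cite[Corollary 4.6]{mancini}) and can be repaired so that it uses only tools the paper states under $(H_1)$--$(H_3)$. Do not split on $v\in H^1(M)$ versus $v\notin H^1(M)$; split on $v_\infty:=\lim_{r\to+\infty}v(r)$, which exists and is finite by Proposition \ref{global}. If $v_\infty>0$, then since $u$ is positive, nonincreasing and in $L^2(M)$ it must tend to $0$, so eventually $v>u$, contradicting the assumption $u>v$ everywhere. If $v_\infty=0$, then from \eqref{cauchy} one has $\psi^{n-1}v'\le 0$, hence $W(r)=\psi^{n-1}u'v-\psi^{n-1}v'u\ \ge\ \psi^{n-1}(r)u'(r)\,v(r)\to L\cdot 0=0$ by Lemma \ref{l:asym u'}, so $\liminf_{r\to+\infty}W(r)\ge 0$; this contradicts the fact that $W$ is strictly decreasing with $W(0)=0$, which forces $W(r)\le W(1)<0$ for $r\ge 1$. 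This argument needs only Proposition \ref{global} and Lemma \ref{l:asym u'}. The substantive missing item in your proposal is thus entirely the ``at most one zero'' half: the paper's route handles it within the Wronskian/asymptotics framework of \cite[Corollary 4.6]{mancini}, with no appeal to the $\Lambda$-property, whereas your route both exceeds the lemma's hypotheses and leaves its central step undefined.
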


Next we discuss the asymptotic behavior and uniqueness of radial
ground states.

\begin{lem}\label{asym U}
Assume that $\psi$ satisfies assumptions $(H_1)-(H_2)$ and
$\eq{l}$ holds. Furthermore, let $u\in H^1(M)$ be a positive
solution to \eqref{cauchy} with $p>1$. If $l<+\infty$ in $\eq{l}$,
then
\begin{equation}\label{lim1}
\lim_{r\rightarrow +\infty}\frac{\log
(u(r))}{r}=-(n-1)l=\lim_{r\rightarrow +\infty}\frac{\log
|u'(r)|}{r}
\end{equation}
and
\begin{equation}\label{lim2}
\lim_{r\rightarrow +\infty}\frac{u'(r)}{u(r)}=-(n-1)l\,.
\end{equation}
If $l=+\infty$ in $\eq{l}$, we have
\begin{equation}\label{lim3}
\lim_{r\rightarrow +\infty}\frac{\log
(u(r))}{r}=\lim_{r\rightarrow +\infty}\frac{\log |u'(r)|}{r}=
-\infty\, , \qquad \lim_{r\rightarrow
+\infty}\frac{u'(r)}{u(r)}=-\infty \, ,
\end{equation}
and
\begin{equation} \label{lim4}
\lim_{r\rightarrow
+\infty}\frac{\log|u'(r)|}{\log(\psi(r))}=-(n-1) \,.
\end{equation}
\end{lem}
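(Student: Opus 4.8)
The plan is to reduce all of \eqref{lim1}--\eqref{lim4} to one statement: the function $\phi(r):=\psi^{n-1}(r)\,u'(r)$ admits a \emph{finite, strictly negative} limit $\phi_\infty$ as $r\to+\infty$. First I would record the elementary facts. Integrating \eqref{cauchy} and using $\psi(0)=u'(0)=0$ gives $\phi(r)=-\int_0^r\psi^{n-1}(s)\,u^p(s)\,ds$, so $\phi$ is strictly negative and strictly decreasing; in particular $u'<0$ and $u$ is strictly decreasing. By Proposition \ref{global} the limit $\lim_{r\to+\infty}u(r)$ exists, and since $u\in L^2(\psi^{n-1}\,dr)$ while $\psi(r)\to+\infty$ (note \eqref{l} forces $(H_3)$, so Lemma \ref{eq:lambda_1>0} applies), this limit must be $0$; hence $u\downarrow0$ and $u^{p-1}\to0$. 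Because $\phi$ is decreasing with $\phi(r_0)<0$ for each $r_0>0$, automatically $\phi_\infty\in[-\infty,0)$, so the entire issue is to prove $\phi_\infty>-\infty$, i.e. $\int_0^\infty\psi^{n-1}u^p\,dr<+\infty$. Conceptually this is where $u\in H^1(M)$ selects the fast-decaying solution: in the Riccati equation for $w:=u'/u$, namely $w'=-(n-1)\tfrac{\psi'}{\psi}\,w-w^2-u^{p-1}$, the frozen equilibria are near $0$ (stable) and near $-(n-1)l$ (unstable), and only the separatrix converging to the unstable one is square-integrable against $\psi^{n-1}$.

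Proving $\phi_\infty>-\infty$ is the crux and the main obstacle. When $p\ge2$ it is immediate: since $u\downarrow0$ we have $u\le1$, hence $u^p\le u^2$, for $r\ge r_0$, and $\int\psi^{n-1}u^2\,dr<\infty$. For $1<p<2$ this crude comparison fails (now $u^p\ge u^2$ for small $u$), and interpolating the two available bounds $u\in L^2(\psi^{n-1}dr)$ and $u\in L^{p+1}(\psi^{n-1}dr)$ (the latter from Lemma \ref{Sob Kufner}) does not reach $L^p$, since $p$ lies below both exponents $2$ and $p+1$. Here I would run a decay bootstrap: an initial algebraic bound $u(r)\le C\psi^{-\beta}(r)$ for some $\beta>0$, coming from $u\in L^2(\psi^{n-1}dr)$ and the monotonicity of $u$, fed into $u'(r)=-\psi^{1-n}(r)\int_0^r\psi^{n-1}u^p$ and iterated exactly as in the proof of Lemma \ref{no exp dec} (following \cite[Lemma 5.2]{BGGV}) to reach $u(r)\le C_\varepsilon\psi^{-(n-1-\varepsilon)}(r)$ for every $\varepsilon>0$. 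Then $\psi^{n-1}u^p\le C\psi^{-[(n-1)(p-1)-\varepsilon p]}$, which is integrable near $+\infty$ for small $\varepsilon$ because $p>1$ and $\psi$ grows at least exponentially (as $\psi'/\psi\to l>0$). The genuinely delicate sub-case is $l=+\infty$, where $\psi$ may change by unbounded factors over bounded intervals; making the initial bound and the iteration go through there, using the full strength of \eqref{l} on $\psi,\psi',\psi''$, is the heart of the argument.

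Granted $\phi_\infty\in(-\infty,0)$, I would read off the asymptotics of $u$ and $u'$ by integration. From $u'(r)=\phi(r)\psi^{1-n}(r)\sim\phi_\infty\psi^{1-n}(r)$ and $u(r)=\int_r^\infty|u'(s)|\,ds$ we get $u(r)\sim|\phi_\infty|\int_r^\infty\psi^{1-n}(s)\,ds$. A single de l'H\^{o}pital shows $\int_r^\infty\psi^{1-n}\big/\psi^{1-n}(r)\to\tfrac{\psi}{(n-1)\psi'}\to\tfrac1{(n-1)l}$, so $u(r)\sim\tfrac{|\phi_\infty|}{(n-1)l}\psi^{1-n}(r)$ when $l<+\infty$, while the ratio tends to $0$ when $l=+\infty$. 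Dividing, $\tfrac{u'(r)}{u(r)}\to-(n-1)l$, which is \eqref{lim2}, and $\to-\infty$ when $l=+\infty$, the third limit in \eqref{lim3}. Moreover $\log|u'(r)|=\log|\phi(r)|+(1-n)\log\psi(r)$ with $\log|\phi|\to\log|\phi_\infty|$ bounded, whence $\log|u'|/\log\psi\to-(n-1)$, i.e. \eqref{lim4}.

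Finally the logarithmic rates in \eqref{lim1} and the remaining two limits in \eqref{lim3} follow by Ces\`aro--Stolz (de l'H\^{o}pital for the $-\infty/\infty$ forms). Since $(\log u)'=u'/u$ has the limit just computed, $\log u(r)/r\to-(n-1)l$ (resp. $-\infty$). For $\log|u'|/r$ I would use the equation: \eqref{cauchy} gives $\tfrac{u''}{u'}=-(n-1)\tfrac{\psi'}{\psi}-\tfrac{u^p}{u'}$, and $\tfrac{u^p}{u'}=u^{p-1}\big/\tfrac{u'}{u}\to0$ because $u^{p-1}\to0$ while $u'/u$ has a nonzero (possibly infinite) limit; hence $u''/u'\to-(n-1)l$ (resp. $-\infty$), and $(\log|u'|)'=u''/u'$ then yields $\log|u'|/r\to-(n-1)l$ (resp. $-\infty$). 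This closes all four identities, the only substantial work being the bootstrap of the second paragraph.
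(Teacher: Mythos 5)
Your overall architecture is sound and genuinely different from the paper's: you reduce everything to showing that $\phi(r):=\psi^{n-1}(r)u'(r)$ has a finite negative limit and then read off \eqref{lim1}--\eqref{lim4}, whereas the paper proves the decay rates first (by comparison with constant-coefficient linear ODEs) and only afterwards, in Lemma \ref{l:asym u'}, deduces that $\psi^{n-1}u'$ has a finite limit --- indeed the fact you take as your crux \emph{is} Lemma \ref{l:asym u'}, whose proof in the paper uses the present lemma, so no shortcut through the paper's later results is available. Your third and fourth paragraphs (all the limit computations, given $\phi_\infty\in(-\infty,0)$) are correct, and your treatment of $p\ge2$ is complete. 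But there is a genuine gap at the crux, and you flag it yourself: in the case $1<p<2$, $l=+\infty$ you never prove $\int_0^\infty\psi^{n-1}u^p\,dr<+\infty$. The mechanism you sketch cannot even start there: $L^2$-integrability plus monotonicity of $u$ gives only $u(r)\le C\bigl(\int_0^r\psi^{n-1}ds\bigr)^{-1/2}$, and to convert this into your ``initial algebraic bound'' $u\le C\psi^{-\beta}$ you would need $\int_0^r\psi^{n-1}ds\ge c\,\psi^{2\beta}(r)$ for some $\beta>0$. When $l=+\infty$ this does not follow from $(H_1)$--$(H_2)$ and \eqref{l}: those hypotheses constrain only the \emph{limits} of $\psi'/\psi$ and $\psi''/\psi'$, and permit $\psi$ to have arbitrarily steep stretches along which $\int_0^r\psi^{n-1}ds$ (which is $o(\psi^{n-1}(r))$ in this regime) lags behind every fixed power of $\psi(r)$. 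Without the initial bound, the iteration borrowed from Lemma \ref{no exp dec} never gets off the ground, so the hardest case of the lemma is simply missing.

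Two ways to close it. The paper's way avoids any lower bound on $\int_0^r\psi^{n-1}ds$: for every $k>0$ one has, eventually, $u''+(n-1)k\,u'+k^{-1}u\ge0$ (using $u'\le 0$, $\psi'/\psi\ge k$ and $u^{p-1}\le 1/k$); factoring with the roots $\lambda_\pm(k)$ of the characteristic polynomial and integrating twice, the hypothesis $u\in L^2(\psi^{n-1}dr)$, with $\psi$ superexponential, forces the coefficient of the slowly decaying mode $e^{\lambda_+(k)r}$ to be nonpositive, i.e. \eqref{bound}: $u'\le\lambda_-(k)u$ eventually; letting $k\to+\infty$ gives $u'/u\to-\infty$, and the equation plus l'H\^{o}pital then yield \eqref{lim3}--\eqref{lim4}. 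Alternatively, you can save your route by running the bootstrap in powers of $\Psi(r):=\int_0^r\psi^{n-1}ds$ instead of powers of $\psi$: by the Cauchy mean-value argument already used in Lemmas \ref{eq:lambda_1>0} and \ref{Sob Kufner}, $\Psi/\Psi'$ is bounded on $(0,+\infty)$ for every $l\in(0,+\infty]$; monotonicity gives the initial bound $u\le\|u\|_{L^2}\Psi^{-1/2}$, and from $u\le C\Psi^{-\beta}$ with $p\beta<1$ the identity $u'=-\Psi'^{-1}\int_0^r\Psi'u^p\,ds$ gives $|u'|\le C'\Psi^{1-p\beta}/\Psi'$, whence, integrating on $(r,+\infty)$ and using the boundedness of $\Psi/\Psi'$ twice, $u\le C''\Psi^{-p\beta}$; the exponent multiplies by $p>1$ at each step, and as soon as $p\beta>1$ the integral $\int_0^\infty\Psi'u^p\,dr$ itself is finite, which is exactly your $\phi_\infty>-\infty$. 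This version works uniformly in $l$ and needs neither $\psi''$ nor any pointwise regularity of $\psi$ beyond $(H_1)$--$(H_2)$ and $\liminf\psi'/\psi>0$.
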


\medskip\par

\begin{proof} We omit the proof in the case $l<+\infty$ since it
can be deduced by arguing as in \cite[Lemma 3.4]{mancini}.
Suppose now that $l=+\infty$. For every $k>0$ there exists
$r_{k}>0$ such that
$$u''(r)+(n-1)k u'(r)+\,\frac{1}{k} u(r)\geq 0 \quad \text{for all } r\geq r_{k}\,. $$
Namely,
$$\left( e^{-\lambda_{-}(k) r}z(r)\right)'\geq 0 \quad \text{for all } r\geq r_{k}\,,$$
where $z:=u'-\lambda_{+}(k)u$ and
$\lambda_{\pm}(k):=\frac{-(n-1)k\pm\sqrt{(n-1)^2k^2-4/k}}{2}$.
Then, two integrations in $[\tau,r]$, with $r_{k}\leq \tau \leq
r$, yield
$$u(r)\geq B_k(\tau)\, e^{\lambda_{+}(k)r}-\frac{A_k(\tau)}{\lambda_{+}(k)-\lambda_{-}(k)}\,e^{\lambda_{-}(k)r}\quad \text{for all } r\geq r_{k}\,,$$
where $A_k(\tau):=e^{-\lambda_{-}(k)\tau}z(\tau)$ and
$B_k(\tau):=u(\tau)e^{-\lambda_{+}(k)\tau}+\frac{A_k(\tau)}{\lambda_{+}(k)-\lambda_{-}(k)}\,e^{-(\lambda_{+}(k)-\lambda_{-}(k))\tau}$.
We claim that $B_k(\tau) \leq 0$ for $\tau\geq r_k$. Otherwise,
$B_k(\tau)>0$ eventually.  We recall that
$$
B'_k(\tau)=\frac{A'_k(\tau)}{\lambda_{+}(k)-\lambda_{-}(k)}\,e^{-(\lambda_{+}(k)-\lambda_{-}(k))\tau}\geq
0 \qquad \text{for any } \tau\geq r_{k} \, .
$$
Here and in the sequel $C_k$ denotes a positive constant
sufficiently large which may vary from line to line.  Then,
$u(r)\geq  B_k(\tau) \,
e^{\lambda_{+}(k)r}+o(e^{\lambda_{+}(k)r})$ as $r\to +\infty$. But
this, combined with $\eq{l}$, yields $\int_0^{+\infty}
\psi^{n-1}(r)u^2(r)\,dr \geq C_k\int_{r_k}^{+\infty}
e^{\sqrt{(n-1)^2k^2-4/k}\,r}\,dr$ for some $C_k>0$ and contradicts
the fact that $u\in H^1(M)$. Hence, $B_k(\tau) \leq 0$ for
$\tau\geq r_k$ and we conclude that
\begin{equation}\label{bound}
u'(\tau)\leq \lambda_{-}(k) u(\tau) \quad \text{for all } \tau
\geq r_{k}\,.
\end{equation}
Then,
$$
\limsup_{r\rightarrow +\infty} \frac{u'(r)}{u(r)}\le \lambda_-(k),
\qquad \limsup_{r\rightarrow +\infty} \frac{\log (u(r))}{r}\leq
\lambda_{-}(k)\quad \text{for every } k >0$$
 and the first and the third limit in \eqref{lim3} follow since $\displaystyle{\lim_{k \rightarrow +\infty}
\lambda_{-}(k)=-\infty}$. On the other hand, by \eqref{cauchy}, the
third limit in \eqref{lim3}, the fact that $\displaystyle{\lim_{r\to +\infty}
u(r)=0}$ and that $l=+\infty$, we have
$$
\lim_{r\to +\infty}
\frac{u''(r)}{u'(r)}\frac{\psi(r)}{\psi'(r)}=\lim_{r\to +\infty}
\left(-(n-1)-\frac{u^p(r)}{u'(r)}\frac{\psi(r)}{\psi'(r)}\right)=1-n\,.
$$
By this, the second limit in \eqref{lim3} and \eqref{lim4} easily
follow from the l'H\^{o}pital rule.
\end{proof}

\begin{lem} \label{l:asym u'}
Let $n\ge 3$ and $1<p<\frac{n+2}{n-2}$. Assume that
$\psi$ satisfies $(H_1)-(H_3)$. Then for any radial positive
solution $u\in H^1(M)$ of \eqref{LEF}, there exists $L\in
(-\infty,0)$ such that
\begin{equation}\label{asim u'}
\lim_{r\rightarrow +\infty}\psi^{n-1}(r) u'(r)=L \, .
\end{equation}
Moreover
\begin{equation}\label{asim u1} \lim_{r\rightarrow
+\infty}\psi^{n-1}(r) u(r)=\frac{|L|}{(n-1)l} \qquad \text{if }
l<+\infty \, ,
\end{equation}
and
\begin{equation}\label{asim u2}
\lim_{r\rightarrow
+\infty}\frac{u(r)}{\int_{r}^{+\infty}\psi^{1-n}(s)\,ds}=|L|
\qquad \text{if } l=+\infty \, .
\end{equation}
\end{lem}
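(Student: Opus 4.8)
The plan is to study the auxiliary function $W(r):=\psi^{n-1}(r)u'(r)$, which linearizes the analysis since \eqref{cauchy} gives $W'(r)=-(\psi(r))^{n-1}u^p(r)$. First I would record that $W(0)=0$ (because $\psi(0)=0$) and $W'(r)<0$ for $r>0$, so that $W$ is strictly decreasing and strictly negative on $(0,+\infty)$; consequently $L:=\lim_{r\to+\infty}W(r)$ exists in $[-\infty,0)$. Establishing \eqref{asim u'} thus amounts to proving that $L$ is \emph{finite}, equivalently that $\int_0^{+\infty}(\psi(s))^{n-1}u^p(s)\,ds<+\infty$, since integrating $W'$ gives $-W(r)=\int_0^r(\psi(s))^{n-1}u^p(s)\,ds$.

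The main obstacle is exactly this integrability. For $p\ge 2$ it is immediate, because $u\to 0$ forces $u^p\le u^2$ for large $r$, and $\int_0^{+\infty}(\psi)^{n-1}u^2\,dr<+\infty$ by $u\in H^1(M)$; but for $1<p<2$ the factor $u^{p}$ dominates $u^2$ near infinity and a pointwise decay estimate is genuinely needed. Here I would invoke the asymptotics of Lemma \ref{asym U}. When $l<+\infty$, \eqref{lim1} gives $\log u(r)/r\to-(n-1)l$ while $\psi'/\psi\to l$ yields $\log\psi(r)/r\to l$, so $\log[(\psi)^{n-1}u^p]/r\to (n-1)l(1-p)<0$ and $(\psi)^{n-1}u^p$ decays exponentially. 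When $l=+\infty$ I would combine the super-exponential decay \eqref{lim3} with the rate \eqref{lim4} for $u'$ to control $(\psi)^{n-1}u^p$; in both cases the integral converges, so $L\in(-\infty,0)$ and \eqref{asim u'} holds.

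It then remains to transfer the information from $W$ to $u$. Since $u$ is positive, decreasing and lies in $H^1(M)$ while $M$ has infinite volume, its finite limit at infinity (Proposition \ref{global}) must be $0$; integrating $u'=W\psi^{1-n}$ over $(r,+\infty)$ therefore gives the clean identity $u(r)=\int_r^{+\infty}|W(s)|(\psi(s))^{1-n}\,ds$. As $|W(s)|\to|L|$, in the case $l=+\infty$ dividing by $\int_r^{+\infty}(\psi)^{1-n}\,ds$ exhibits the left-hand side of \eqref{asim u2} as a weighted average of $|W|$ over $[r,+\infty)$, which converges to $|L|$. In the case $l<+\infty$ I would instead multiply by $(\psi(r))^{n-1}$ and apply de l'H\^opital to
\[
(\psi(r))^{n-1}\int_r^{+\infty}(\psi(s))^{1-n}\,ds=\frac{\int_r^{+\infty}(\psi(s))^{1-n}\,ds}{(\psi(r))^{1-n}}\longrightarrow\frac{1}{(n-1)l},
\]
using $\psi/\psi'\to 1/l$; splitting $|W(s)|=|L|+o(1)$ then yields $(\psi(r))^{n-1}u(r)\to|L|/((n-1)l)$, which is \eqref{asim u1}. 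The only delicate point is the integrability in the second paragraph, the $l=+\infty$ regime being the most demanding since there both $\psi$ and $u$ vary faster than any exponential.
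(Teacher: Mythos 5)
Your proposal is correct and follows essentially the same route as the paper: both deduce the existence and negativity of $L=\lim_{r\to+\infty}\psi^{n-1}(r)u'(r)$ directly from \eqref{cauchy}, and both reduce the finiteness of $L$ to the pointwise decay estimates of Lemma \ref{asym U} — \eqref{lim1} when $l<+\infty$, and \eqref{lim3}--\eqref{lim4} (equivalently \eqref{bound}) when $l=+\infty$ — which yield integrability of $\psi^{n-1}u^p$ exactly as in the paper's integration of $(\psi^{n-1}u')'$. Your two refinements — dispatching $p\ge 2$ directly from the $H^1$ bound, and making explicit the transfer to \eqref{asim u1}--\eqref{asim u2} via the weighted-average and l'H\^opital arguments that the paper leaves implicit — are correct but do not change the substance of the argument.
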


\begin{proof} The existence and the negativity of the limit in \eqref{asim u'} simply
follows by \eqref{cauchy}. It remains to prove that $L>-\infty$.
If $l<+\infty$ from \eqref{lim1} the bound
$$
u(r)\leq C_{\delta}e^{-((n-1)l-\delta) r} \quad \text{for all
}r\geq 0\,,
$$
holds for every $\delta>0$. By this, \eqref{l} and \eqref{cauchy},
we deduce that
$$(\psi^{n-1}(r)u'(r))'\geq -C(\varepsilon, \delta)e^{-[((n-1)l-\delta)p-(l+\varepsilon)(n-1)] r} \quad \text{for all }r\geq 0\,,$$
for every $\varepsilon>0$ and $\delta>0$, where $C(\varepsilon,
\delta)>0$. Next we fix $\varepsilon=\frac{l(p-1)}{2}$ and we
assume $\delta=\delta(\varepsilon)$ to be such that $\delta
p<\frac{l(p-1)(n-1)}{2}$. Then, an integration in $[0,r]$ yields
$$\psi^{n-1}(r)u'(r)\geq C[e^{-[\frac{l(p-1)(n-1)}{2}-\delta p]r}-1]\,.$$
Namely,
$$\psi^{n-1}(r)u'(r)\geq - C  \quad \text{for all }r\geq 0\,$$
and $L>-\infty$.\par Next we assume $l=+\infty$. From \eqref{lim3}
we know that for every $\varepsilon>0$ there exists
$R_{\varepsilon}>0$ such that
$$
\log|u'(r)|\leq -((n-1)-\varepsilon)\log \psi(r) \quad \text{for
all }r\geq R_{\varepsilon}\,.
$$
Furthermore, from \eqref{bound}, for every $k>0$ there exists
$r_k>0$ such that
$$\log u(r)\leq \log|u'(r)|-\log|\lambda_{-}(k)|\quad \text{for all }r\geq r_k\,,$$
where $\displaystyle{\lim_{k \rightarrow +\infty}
\lambda_{-}(k)=-\infty}$. Fix $\varepsilon=\frac{(p-1)(n-1)}{2p}$
in order to obtain after integration
$$
u(r)\leq C \psi^{-(n-1)(p+1)/2p}(r)\quad \text{for all }r\geq 0 \,
,
$$
for some $C>0$. By this and integrating the equation in
$[0,r]$, we conclude that
$$\psi^{n-1}(r)u'(r)\geq - C^p\int_{0}^{r}\psi^{-(n-1)(p-1)/2}(s)\,ds\geq -K\,,$$
for some finite $K$ and for all $r\geq 0$. Hence, again we infer
that $L>-\infty$.
\end{proof}

\begin{lem}\label{uniqueness}
Let $1<p<\frac{n+2}{n-2}$. Assume that $\psi$ satisfies the
assumptions of Theorem \ref{subcr set}.\par Then \eqref{LEF}
admits a unique radial positive solution $U\in H^1(M)$.
\end{lem}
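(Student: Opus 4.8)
The existence of a positive radial $U\in H^1(M)$ is already furnished by Theorem~\ref{exist gr st}, so the only point is uniqueness, and the plan is to follow the scheme of \cite[Theorem~1.3]{mancini}, replacing the ingredients specific to $\mathbb H^n$ by their counterparts proved above: uniqueness for the Dirichlet problem on balls (Lemma~\ref{unique dir}), the one-sign and single-intersection properties (Lemmas~\ref{G} and~\ref{Uv}), and the sharp decay of finite-energy solutions (Lemmas~\ref{asym U} and~\ref{l:asym u'}). I would argue by contradiction, assuming two distinct positive solutions $U_1,U_2\in H^1(M)$ of \eqref{cauchy}. By local uniqueness for the Cauchy problem they cannot share the same value at the origin, so I may suppose $U_1(0)<U_2(0)$. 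Since $U_2>0$ everywhere and $U_1(0)<U_2(0)$, Lemma~\ref{G} gives $U_1>0$ everywhere, so Lemma~\ref{Uv} applies with the finite-energy solution $U_2$ in the role of $u$ and $U_1$ in the role of $v$: the difference $U_2-U_1$ has exactly one zero $r_0\in(0,+\infty)$, whence $U_2>U_1$ on $[0,r_0)$ and $U_2<U_1$ on $(r_0,+\infty)$.

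The engine of the argument is the Kwong change of variables already used in Lemma~\ref{unique dir}: set $\widehat U_i:=\psi^\delta U_i$ with $\delta=\tfrac{2(n-1)}{p+3}$ and consider the auxiliary energy $E_{\widehat U_i}(r)=\tfrac12\psi^{\delta(p-1)}(\widehat U_i')^2+\tfrac{\widehat U_i^{p+1}}{p+1}+\tfrac12 G\,\widehat U_i^2$, which satisfies $E_{\widehat U_i}'=\tfrac12 G'\widehat U_i^2$. Here $G$ is the function of Theorem~\ref{subcr set}; as recorded in the proof of Lemma~\ref{unique dir}, $G(r)\to-\infty$ as $r\to0^+$ and, by the $\Lambda$-property, there is $r_1$ with $G'\ge0$ on $(0,r_1)$ and $G'\le0$ on $(r_1,+\infty)$, so each $E_{\widehat U_i}$ is nondecreasing up to $r_1$ and nonincreasing afterwards. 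The first substantial step is to check that $E_{\widehat U_i}(r)\to0$ both as $r\to0^+$ (which follows from the known behaviour of $\psi$ at the origin together with $\delta(p+1)>2$, valid for all $n\ge3$, $p>1$) and as $r\to+\infty$; the latter is where I would invoke Lemmas~\ref{asym U} and~\ref{l:asym u'} to show that the kinetic, potential and $G$-terms all vanish in the limit, and, when $l=+\infty$, this is precisely where the hypotheses \eqref{extra} and \eqref{eq:supplementary} are needed to control $G$ and the decay of $U_i$.

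Granting these endpoint values, the monotone profile of each $E_{\widehat U_i}$ gives $E_{\widehat U_i}>0$ on $(0,+\infty)$, and I would extract the contradiction by comparing the two energies across the crossing. Since $\widehat U_1(r_0)=\widehat U_2(r_0)$, the potential and nonlinear parts of $E_{\widehat U_1}(r_0)$ and $E_{\widehat U_2}(r_0)$ coincide and the two energies differ only through $\tfrac12\psi^{\delta(p-1)}\big((\widehat U_2')^2-(\widehat U_1')^2\big)$, whose sign is dictated by the ordering of the derivatives at the transversal intersection $r_0$ (where $(U_2-U_1)'(r_0)<0$, since equality of both Cauchy data would force $U_1\equiv U_2$). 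Writing $E_{\widehat U_i}(r_0)$ once as $\tfrac12\int_0^{r_0}G'\widehat U_i^2$ and once as $-\tfrac12\int_{r_0}^{+\infty}G'\widehat U_i^2$, and feeding in the sign of $\widehat U_2^2-\widehat U_1^2$ (positive before $r_0$, negative after) against the sign of $G'$ determined by $r_1$, one is led to two integral inequalities that cannot hold simultaneously. This is exactly the mechanism of \cite[Proposition~4.4]{mancini} and \cite{kwong}, now carried out on $(0,+\infty)$ rather than on a bounded interval.

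I expect the genuinely delicate points to be two. First, the vanishing of $E_{\widehat U_i}$ at infinity: the finite-energy decay is only pinned down by Lemmas~\ref{asym U}--\ref{l:asym u'}, and in the regime $l=+\infty$ one must combine those rates with \eqref{extra}--\eqref{eq:supplementary} to be sure that $G\,\widehat U_i^2$ and $\psi^{\delta(p-1)}(\widehat U_i')^2$ really tend to $0$. Second, making the sign analysis at the crossing airtight: one has to locate $r_0$ relative to the sign change $r_1$ of $G'$ and control the combined sign of $\widehat U_2'(r_0)+\widehat U_1'(r_0)$, which is precisely the step that forces the use of the $\Lambda$-property. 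This is the reason the argument closes, whereas the bare Wronskian $\psi^{n-1}(U_1'U_2-U_1U_2')$ for the pair $U_1,U_2$ vanishes at both endpoints and yields only the identity $\int_0^{+\infty}\psi^{n-1}U_1U_2(U_2^{p-1}-U_1^{p-1})\,dr=0$, which is consistent with a single crossing and hence gives no contradiction on its own.
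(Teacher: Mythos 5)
Your scaffolding matches the paper up to a point: existence via Theorem \ref{exist gr st}, reduction to two positive finite-energy solutions crossing exactly once (Lemmas \ref{G} and \ref{Uv}), the Kwong energy $E_{\widehat U_i}$ with $E_{\widehat U_i}'=\tfrac12 G'\widehat U_i^2$, and the endpoint limits $E_{\widehat U_i}\to 0$ at $0^+$ and at $+\infty$ (the paper's claim \eqref{R}, proved exactly as you indicate from Lemmas \ref{asym U}--\ref{l:asym u'} and, when $l=+\infty$, from \eqref{extra}). But the contradiction mechanism you propose does not close. You compare the \emph{unscaled} energies at the crossing point $r_0$, writing $E_{\widehat U_i}(r_0)=\tfrac12\int_0^{r_0}G'\widehat U_i^2\,ds=-\tfrac12\int_{r_0}^{+\infty}G'\widehat U_i^2\,ds$ and feeding the sign of $\widehat U_2^2-\widehat U_1^2$ against the sign of $G'$. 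The obstruction is that the two sign changes happen at \emph{different} points: $\widehat U_2^2-\widehat U_1^2$ changes sign at $r_0$, while $G'$ changes sign at $r_1$, and nothing forces $r_0=r_1$. Concretely, if $r_0\le r_1$ then $\int_0^{r_0}G'(\widehat U_2^2-\widehat U_1^2)\,ds\ge 0$ cleanly, but in $-\int_{r_0}^{+\infty}G'(\widehat U_2^2-\widehat U_1^2)\,ds$ the integrand is $\le 0$ on $(r_0,r_1)$ and $\ge 0$ on $(r_1,+\infty)$, so that representation has no sign; symmetrically if $r_0>r_1$ only the tail representation is signed. You therefore obtain one inequality, not two contradictory ones. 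The extra information $\widehat U_2'(r_0)<\widehat U_1'(r_0)$ does not rescue this: the sign of $(\widehat U_2'(r_0))^2-(\widehat U_1'(r_0))^2$ hinges on the sign of $\widehat U_2'(r_0)+\widehat U_1'(r_0)$, i.e.\ on whether $r_0$ sits before or after the maxima of the $\widehat U_i$, and that is uncorrelated with the position of $r_0$ relative to $r_1$; the $\Lambda$-property (a statement about $G'$ only) cannot control it, contrary to what you suggest.

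The missing idea, which is the heart of the paper's proof, is to scale one energy by the \emph{ratio} of the solutions. First one shows that $\gamma(r):=U_1(r)/U_2(r)$ is strictly increasing on $(0,+\infty)$; this comes precisely from the Wronskian you dismiss as useless. Indeed $[\psi^{n-1}(U_1'U_2-U_1U_2')]'=\psi^{n-1}U_1U_2(U_2^{p-1}-U_1^{p-1})$ is positive before $r_0$ and negative after, and since $\psi^{n-1}(U_1'U_2-U_1U_2')$ vanishes at $r=0$ and, by Lemma \ref{l:asym u'} and the decay of the solutions, also as $r\to+\infty$, it must be positive on all of $(0,+\infty)$; so the Wronskian is used not to produce a contradiction but to order the ratio. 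Then one considers $E_{\widehat U_1}(R)-\gamma^2(r_1)E_{\widehat U_2}(R)$, anchoring the scaling factor at $r_1$ (the sign change of $G'$), \emph{not} at $r_0$: the identity \eqref{E} gives, after $\eps\to 0^+$,
\begin{equation*}
E_{\widehat U_1}(R)-\gamma^2(r_1)E_{\widehat U_2}(R)
=\frac12\int_0^R G'(s)\,\widehat U_2^2(s)\left[\gamma^2(s)-\gamma^2(r_1)\right]ds ,
\end{equation*}
and monotonicity of $\gamma$ makes the bracket change sign exactly at $r_1$, in step with $G'$, so the integrand is $\le 0$ (and $\not\equiv 0$) on the whole half-line; letting $R\to+\infty$ and using \eqref{R} gives $0<0$, the desired contradiction (when $G'$ has no sign change one anchors at $\gamma(0)$ instead and the integrand is $\ge 0$). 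Without the monotone-ratio step your sign analysis at the crossing cannot be made airtight, so as written the proposal has a genuine gap at its decisive moment.
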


\begin{proof}
We follow the proof of \cite[Theorem 1.3]{mancini}. By
contradiction, assume that $u$ and $v$ are two positive solutions
to \eqref{cauchy} such that $u,v\in H^1(M)$ and $v(0)<u(0)$. By
Lemma \ref{Uv}, $u$ and $v$ intersect exactly once at $r_0$.\par
We claim that $\gamma(r):=v(r)/u(r)$ is strictly increasing in
$(0,+\infty)$. From the equation we know that
$$[(\psi(r))^{n-1}(v'(r)u(r)-v(r)u'(r))]'=(\psi(r))^{n-1}u(r)v(r)((u(r))^{p-1}-(v(r))^{p-1})\,.$$
Hence,
$$
[(\psi(r))^{n-1}(v'(r)u(r)-v(r)u'(r))]'(r_0-r)>0 \quad \forall
r\neq r_0 \, .
$$
By \eq{asim u'} and the fact that $\displaystyle{\lim_{r\to +\infty}
u(r)=\lim_{r\to +\infty} v(r)=0}$,  we deduce that
\begin{equation}\label{v'u}
\lim_{r\rightarrow +\infty}
(\psi(r))^{n-1}(v'(r)u(r)-v(r)u'(r))=0\,.
\end{equation}
Hence, $v'(r)u(r)-v(r)u'(r)>0$ for $r>0$ and $\gamma'(r)>0$.\par
Now, we set $\widehat{u}(r):=(\psi(r))^{\delta}u(r)$ and
$\widehat{v}(r):=(\psi(r))^{\delta}v(r)$, where $\delta$ is as in
the statement of Theorem \ref{subcr set}. Then, for
$E_{\widehat{v}}$ as in the proof of Lemma \ref{unique dir}, for
any $0<\varepsilon<R$ and $r\in (0,R)$, we get
\begin{equation}\label{E}
E_{\widehat{v}}(R)-\gamma^2(r)E_{\widehat{u}}(R)=E_{\widehat{v}}(\varepsilon)-\gamma^2(r)E_{\widehat{u}}(\varepsilon)+\frac{1}{2}\int_{\varepsilon}^R
G'(s)[(\widehat{v}(s))^2-\gamma^2(r)(\widehat{u}(s))^2]\,ds\,.
\end{equation}
Since $G(r)\to -\infty$ as $r\to 0^+$ (see the proof of Lemma
\ref{unique dir}), by assumption we have that either $G'\geq 0$ in
$(0,+\infty)$ or there exists $r_1>0$ such that $G'(r_1)=0$,
$G'\geq 0$ in $(0,r_1)$ and $G'\leq 0$ in $(r_1,+\infty)$. We
claim that
\begin{equation}\label{R}
E_{\widehat{v}}(R)\rightarrow 0\quad \text{and} \quad
E_{\widehat{u}}(R)\rightarrow 0 \quad \text{as } R\rightarrow
+\infty\,.
\end{equation}
We now show that with the help of \eqref{R} we arrive to the
conclusion of the proof.

If $G'$ does not change sign, take $r=\varepsilon$ in \eq{E}.
Letting $\varepsilon \rightarrow 0^+$ we get
$$
E_{\widehat{v}}(R)-\gamma^2(0)E_{\widehat{u}}(R)=\frac{1}{2}\int_{0}^R
G'(s) [(\widehat{v}(s))^2-\gamma^2(0)(\widehat{u}(s))^2]\,ds>0 \,
.
$$
Letting $R\to +\infty$, \eqref{R} leads to a contradiction.

If $G'$ changes sign, take $r=r_1$ in \eq{E}. Letting $\varepsilon
\rightarrow 0^+$, we get
$$
E_{\widehat{v}}(R)-\gamma^2(r_1)E_{\widehat{u}}(R)
$$
$$
=\frac{1}{2}\int_{0}^{r_1}
G'(s)[(\widehat{v}(s))^2-\gamma^2(r_1)(\widehat{u}(s))^2]\,ds+\frac{1}{2}\int_{r_1}^R
G'(s)[(\widehat{v}(s))^2-\gamma^2(r_1)(\widehat{u}(s))^2]\,ds<0
\,.
$$
Letting $R\to +\infty$, \eq{R} leads again to a contradiction.

It remaind to prove \eq{R}. First we note that, from \eqref{asim
u1} and \eqref{asim u2}, if $l<+\infty$ we have
$$\psi^{\delta}(r)v(r)\sim \frac{|L|}{(n-1)l}\,\psi^{-\frac{(p+1)\delta}{2}}(r)\quad \text{as } r\rightarrow +\infty$$
and if $l=+\infty$ we have
$$\psi^{\delta}(r)v(r)\sim |L|\psi^{\delta}(r)\int_{r}^{+\infty} \psi^{1-n}(s)\,ds\quad \text{as } r\rightarrow +\infty\,.$$
Hence, in both the cases we conclude that
$\widehat{v}(r)\rightarrow 0$ as $r\rightarrow +\infty$. Then we
consider
$$G(r)(\widehat{v}(r))^2=\delta(\delta+2-n)\psi^{\delta(p+1)}(r)\left(\frac{\psi'(r)}{\psi(r)}\right)^2v^2(r)
-\delta\psi^{\delta(p+1)}(r)\frac{\psi'(r)}{\psi(r)}\frac{\psi''(r)}{\psi'(r)}v^2(r)\,.$$
If $l<+\infty$ \eqref{l} and \eqref{asim u1} give
$$G(r)(\widehat{v}(r))^2\sim \frac{\delta(\delta+1-n)|L|^2}{(n-1)^2}\,\psi^{-2\delta}(r)\quad \text{as } r\rightarrow +\infty$$
and $|G(r)|(\widehat{v}(r))^2\rightarrow 0$ as $r\rightarrow
+\infty$. If $l=+\infty$, \eqref{bound} and \eqref{asim u'} give
$$\psi^{\delta(p+1)}(r)\left(\frac{\psi'(r)}{\psi(r)}\right)^2v^2(r)\leq \left[\psi^{\frac{\delta(p+1)}{2}}(r)\frac{\psi'(r)}{\psi(r)}\frac{|v'(r)|}{|\lambda_{-}(k)|}\right]^2\sim \left[\frac{|L|\,\psi^{-\delta}(r)}{|\lambda_{-}(k)|}\frac{\psi'(r)}{\psi(r)}\right]^2\,,$$
as $r\rightarrow +\infty$. Hence, by \eqref{extra},
$\psi^{\delta(p+1)}(r)\left(\frac{\psi'(r)}{\psi(r)}\right)^2v^2(r)
\rightarrow 0$ as $r\rightarrow +\infty$. Similarly,\par
$\psi^{\delta(p+1)}(r)\frac{\psi'(r)}{\psi(r)}\frac{\psi''(r)}{\psi'(r)}$
$v^2(r)\rightarrow 0$ as $r\rightarrow +\infty$ and, in turn,
$|G(r)|(\widehat{v}(r))^2\rightarrow 0$ as $r\rightarrow
+\infty$.\par Finally, we compute
$$\psi^{\delta(p-1)}(r)(\widehat{v}'(r))^2$$
$$=\delta^2\psi^{\delta(p+1)}(r)\left(\frac{\psi'(r)}{\psi(r)}\right)^2v^2(r)
+2\delta
\psi^{\delta(p+1)}(r)\frac{\psi'(r)}{\psi(r)}v(r)v'(r)+\psi^{\delta(p+1)}(r)
(v'(r))^2\,.$$ If $l<+\infty$ \eqref{l}, \eqref{asim u'} and
\eqref{asim u1} give
$$
\psi^{\delta(p-1)}(r)(\widehat{v}'(r))^2\sim L^2\left(
\frac{\delta^2}{(n-1)^2}-\frac{2\delta}{n-1}+1\right)\,\psi^{-2\delta}(r)\quad
\text{as } r\rightarrow +\infty\,.
$$
Namely, $\psi^{\delta(p-1)}(r)(\widehat{v}'(r))^2 \rightarrow 0$
as $r\rightarrow +\infty$.
 When $l=+\infty$, the same conclusion can be reached by exploiting \eqref{extra}, \eqref{bound} and \eqref{asim u'} as shown above. The limits so far proved yield \eqref{R}.
\end{proof}

\medskip\par

When $\alpha$ is large, the same proof of \cite[Lemma 7.1]{BGGV}
gives
\begin{lem}\label{blow1}
Let $\psi$ satisfy assumptions $(H_1)-(H_2)$. Furthermore, let $u$
be a solution to \eqref{cauchy} with $1<p<\frac{n+2}{n-2}$ and
$\alpha> \alpha_0$ sufficiently large. Then $u$ changes sign.
\end{lem}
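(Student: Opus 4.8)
The plan is to run a blow-up (rescaling) argument whose limit problem is the Euclidean Lane--Emden--Fowler equation, for which the subcritical radial Cauchy problem is classically known to produce sign-changing solutions. Set $\mu=\mu(\alpha):=\alpha^{(p-1)/2}$ and introduce the rescaled function
\[
v_\alpha(s):=\frac{1}{\alpha}\,u_\alpha\!\left(\frac{s}{\mu}\right),\qquad s\ge 0,
\]
so that $v_\alpha(0)=1$ and $v_\alpha'(0)=0$. Starting from the integrated form of \eqref{cauchy}, namely $(\psi(r))^{n-1}u_\alpha'(r)=-\int_0^r(\psi(t))^{n-1}|u_\alpha|^{p-1}u_\alpha\,dt$, a direct computation that uses the identity $\mu^2=\alpha^{p-1}$ shows that $v_\alpha$ solves the weighted integral equation
\[
w_\alpha(s)^{n-1}v_\alpha'(s)=-\int_0^s w_\alpha(\sigma)^{n-1}|v_\alpha(\sigma)|^{p-1}v_\alpha(\sigma)\,d\sigma,\qquad w_\alpha(s):=\mu\,\psi\!\left(\frac{s}{\mu}\right).
\]

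The first step is to let $\alpha\to+\infty$, hence $\mu\to+\infty$. Assumption $(H_1)$ gives $\psi(r)=r+O(r^3)$ near $r=0$, so that $w_\alpha(s)=s+O(s^3/\mu^2)\to s$ and $w_\alpha'(s)=\psi'(s/\mu)\to 1$, both locally uniformly on $[0,+\infty)$; in particular the blow-up localizes the problem at the pole, where the geometry is Euclidean. From the integral equation one extracts uniform $C^1$ bounds for $v_\alpha$ on compact sets, and then, by Ascoli--Arzel\`a together with continuous dependence on the locally uniformly convergent weights $w_\alpha$, one gets $v_\alpha\to v$ in $C^1_{\mathrm{loc}}([0,+\infty))$, the limit being unique (and hence the whole family convergent) by uniqueness for the Euclidean Cauchy problem. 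Here $v$ solves
\[
s^{n-1}v'(s)=-\int_0^s \sigma^{n-1}|v(\sigma)|^{p-1}v(\sigma)\,d\sigma,\qquad v(0)=1,\ v'(0)=0,
\]
i.e. $v$ is the radial solution of $-\Delta v=|v|^{p-1}v$ in $\R^n$ with $v(0)=1$.

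The second step invokes the classical Emden--Fowler dichotomy: for $1<p<\frac{n+2}{n-2}$ this Euclidean profile has a finite first zero $s_0>0$, and the crossing is transversal, $v'(s_0)<0$ (if $v'(s_0)=0$ then $v\equiv 0$ by uniqueness for the regular system satisfied by $(v,s^{n-1}v')$ on $(0,+\infty)$, contradicting $v(0)=1$). Hence there exists $\bar s>s_0$ with $v(\bar s)<0$. By the $C^0_{\mathrm{loc}}$ convergence $v_\alpha\to v$, for all $\alpha$ large enough one has $v_\alpha(\bar s)<0$, and therefore $u_\alpha(\bar s/\mu)=\alpha\,v_\alpha(\bar s)<0$. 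Since $u_\alpha(0)=\alpha>0$, the solution $u_\alpha$ changes sign, which is the claim.

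The main obstacle is the convergence step: the weighted equation is singular at $s=0$, the coefficient behaving like $(n-1)/s$, so the passage to the limit must be carried out through the integral formulation rather than the differential one, and one must secure $\alpha$-independent local bounds on $v_\alpha$ and $v_\alpha'$ in order to apply compactness up to the origin. A secondary but essential point is the transversality of the first zero of the limit profile $v$, which is what guarantees that the strict sign change persists under the merely $C^0$ perturbation produced by the blow-up limit.
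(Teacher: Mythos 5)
Your proof is correct and follows essentially the same route as the paper: the paper proves Lemma \ref{blow1} by invoking the blow-up argument of \cite[Lemma 7.1]{BGGV}, which is precisely your rescaling $v_\alpha(s)=\alpha^{-1}u_\alpha(s/\mu)$, $\mu=\alpha^{(p-1)/2}$ (the paper writes it as $v_\lambda(s)=\lambda^{-2/(p-1)}u_\lambda(s/\lambda)$ with $\alpha=\lambda^{2/(p-1)}$ in its Lemma \ref{blow}), followed by $C^1_{\mathrm{loc}}$ convergence to the Euclidean radial Lane--Emden profile and the classical fact that this profile changes sign in the subcritical range. The only cosmetic slip is writing $\psi(r)=r+O(r^3)$, which would need $C^3$ regularity; $(H_1)$ only gives $\psi(r)=r+o(r^2)$, but that weaker expansion already suffices for the locally uniform convergence $w_\alpha(s)\to s$.
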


Finally, following the proofs of \cite[Lemmas 7.2, 7.3, 7.4,
7.5]{BGGV}, we conclude.

\begin{lem}\label{u(0)>U(0)}
Let $1<p<\frac{n+2}{n-2}$, $\psi$ satisfy the assumptions of
Theorem \ref{subcr set} and $U$ be the unique ground state as
given in Lemma \ref{uniqueness}. Then, any solution to
\eqref{cauchy} with $\alpha>U(0)$ is sign-changing.
\end{lem}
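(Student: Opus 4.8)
The plan is to argue by contraposition: I fix $\alpha>U(0)$ and suppose, for contradiction, that $u:=u_\alpha$ is a positive solution of \eqref{cauchy} on all of $[0,+\infty)$, aiming to contradict the uniqueness of the ground state. First I would record that $u\notin H^1(M)$: otherwise, being positive, $u$ would be a radial positive $H^1$-solution of \eqref{LEF} with $u(0)=\alpha\neq U(0)$, against Lemma \ref{uniqueness}. Consequently Theorem \ref{asym sub}, cases (ii)--(iii), applies to $u$: either $u$ tends to a positive limit or it vanishes at the slow rate $(\int_0^r \psi/\psi')^{-1/(p-1)}$. In either case the asymptotics of $u$ are strictly slower than those of the fast-decaying ground state $U$ (Lemma \ref{l:asym u'}), whence $u(r)/U(r)\to+\infty$ as $r\to+\infty$; in particular $u>U$ for all large $r$, while $u(0)=\alpha>U(0)$ gives $u>U$ also near $r=0$.

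Next I would compare $u$ and $U$ through the Wronskian-type quantity
$$
W(r):=\psi^{n-1}(r)\big(u'(r)U(r)-u(r)U'(r)\big),
$$
which satisfies $W(0)=0$ and, using \eqref{cauchy} for both $u$ and $U$,
$$
W'(r)=\psi^{n-1}(r)\,u(r)U(r)\big(U^{p-1}(r)-u^{p-1}(r)\big).
$$
Since $W=\psi^{n-1}uU\,(\log(u/U))'$, the sign of $W$ is that of $(\log(u/U))'$. If $u>U$ held throughout $(0,+\infty)$, then $W'<0$ and hence $W<0$ on $(0,+\infty)$, so $u/U$ would be strictly decreasing; this is incompatible with $u/U\to+\infty$. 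Therefore $u$ and $U$ must intersect. I would then show that they intersect \emph{exactly once}: since $u>U$ near $0$, a single crossing forces $u<U$ for all large $r$, contradicting $u>U$ at infinity established above. Thus the whole argument reduces to the single-intersection property.

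To control the number of intersections I would adapt the energy method used in the proof of Lemma \ref{uniqueness}. Setting $\widehat u:=\psi^{\delta}u$ and $\widehat U:=\psi^{\delta}U$ with $\delta=\frac{2(n-1)}{p+3}$, and using the auxiliary energies $E_{\widehat u},E_{\widehat U}$ together with the identity $\frac{d}{dr}E_{\widehat w}=\tfrac12 G'\,\widehat w^{\,2}$ and the $\Lambda$-property of $G$, one shows that the ratio $U/u$ is strictly monotone, so that $u$ and $U$ cross at most once; combined with the forced crossing above this yields exactly one intersection. This is precisely the analogue of Lemma \ref{Uv} with the roles of the larger and smaller initial data exchanged, and it is what the arguments following Lemmas 7.2--7.5 of \cite{BGGV} provide.

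The delicate point — and the reason this is not a verbatim copy of Lemma \ref{uniqueness} — is the behaviour at infinity in the energy identity. In Lemma \ref{uniqueness} both solutions lie in $H^1(M)$, so the boundary contributions $E_{\widehat u}(R)$ and $E_{\widehat U}(R)$ vanish as $R\to+\infty$; here $u\notin H^1(M)$, so the term coming from $u$ must instead be estimated from its slow decay (or positive limit) furnished by Theorem \ref{asym sub} and from condition \eqref{extra} of Theorem \ref{subcr set}. Carrying out these estimates so that the energy identity still forces monotonicity of the ratio is the main obstacle; once it is in place, the contradiction with the comparison between the asymptotics of $u$ and of $U$ shows that no positive solution exists for $\alpha>U(0)$, i.e. every such $u_\alpha$ changes sign.
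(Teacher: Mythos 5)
Your skeleton is sound as far as it goes: $u\notin H^1(M)$ does follow from Lemma \ref{uniqueness}, your Wronskian computation is correct, and (granting the asymptotics) it forces at least one intersection between $u$ and $U$; since tangencies are excluded by ODE uniqueness and $u>U$ near both $0$ and $+\infty$, everything in your scheme hinges on showing that $u$ and $U$ cross \emph{at most once}. But that step --- which you yourself label ``the main obstacle'' --- is the entire content of the lemma in your approach, and you do not prove it. Worse, the tool you propose for it cannot be carried over as stated. The energy identity \eqref{E} in Lemma \ref{uniqueness} is only useful because of the boundary information \eqref{R}, namely $E_{\widehat u}(R),E_{\widehat U}(R)\to 0$ as $R\to+\infty$, and that is proved there from the fast decay of \emph{both} solutions (Lemmas \ref{asym U} and \ref{l:asym u'}), i.e.\ from their membership in $H^1(M)$. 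For your $u\notin H^1(M)$ the weighted function $\widehat u=\psi^{\delta}u$ does not tend to zero at all: in case (ii) of Theorem \ref{asym sub} one has $u\to c>0$, so $\widehat u\to+\infty$; and in case (iii) the decay of $u$ is of polynomial type (by $(H_3)$, $\int_0^r\psi/\psi'\,ds\le Cr$, so $u(r)\ge c\,r^{-1/(p-1)}$) while $\psi^{\delta}$ grows at least exponentially, so again $\widehat u\to+\infty$ and the term $|\widehat u|^{p+1}/(p+1)$ in $E_{\widehat u}(R)$ diverges. There is therefore no analogue of \eqref{R}, the sign argument built on \eqref{E} collapses, and no substitute estimate ``from the slow decay of $u$'' is supplied. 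As it stands, the proposal proves only the easy half (at least one intersection, hence at least two by parity), not the lemma.

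Two further remarks. First, under the hypotheses of Theorem \ref{subcr set} you are not entitled to invoke Theorem \ref{asym sub}(ii)--(iii) when $l=+\infty$: that theorem requires \eqref{eq:supplementary}, which is neither assumed in Theorem \ref{subcr set} nor implied by \eqref{l} and \eqref{extra} (take $\psi'/\psi=e^{r^2}$ eventually: then \eqref{l} and \eqref{extra} hold, but $[\log(\psi'/\psi)]'=2r$ is unbounded), so even your ``$u/U\to+\infty$'' step needs repair in that regime. Second, the paper itself does not attempt a reversed-roles version of Lemma \ref{Uv}: it concludes by following the intersection--comparison machinery of \cite[Lemmas 7.2--7.5]{BGGV} (monotonicity of first zeros and of first intersection points, in the spirit of Lemma \ref{l:intersection}, combined with Lemmas \ref{G}, \ref{Uv} and \ref{blow1}), precisely the kind of argument that avoids the degenerate energy identity your plan would require.
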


The proof of Theorem \ref{subcr set} now follows from Lemmas \ref{uniqueness}-\ref{u(0)>U(0)}.

\medskip

\subsection{Proof of Proposition \ref{remark2.5}.}
Note that $G'(r)=\delta \psi^{\delta(p-1)-3}(r)h(r)$, where
$$ h(r):=(\delta(p-1)-2)(\delta+2-n)(\psi'(r))^{3}-\psi'''(r)\psi^2(r)+(\delta(3-p)+5-2n)\psi'(r)\psi''(r)\psi(r)\,.$$
Clearly, $h(0)=(\delta(p-1)-2)(\delta+2-n)>0$ for every
$1<p<\frac{n+2}{n-2}$. We prove that $h'(\bar r)<0$ for every
$\bar r>0$ such that $h(\bar r)=0$, then $h$ admits at most one
zero and the $\Lambda-$property follows.

For such $\bar r$, a few computations yield
$$h'(\bar r)=A_{p,n}(\psi'(\bar r))^{2}\psi''(\bar r)+B_{p,n}\psi(\bar r)\psi'(\bar r)\psi'''(\bar r)+\psi^2(\bar r)\left(\frac{\psi''(\bar r)\psi'''(\bar r)}{\psi'(\bar r)}-\psi^{iv}(\bar r) \right)\,,$$
$$A_{p,n}=2\delta^2(p-1)+\delta((3-2n)p+2n-5)+2n-3$$
$$=\frac{-(2n-3)^2p^2+6(2n-3)p+4n^2-8n-5}{(p+3)^2}<0 \quad\text{for every} \quad p\geq \frac{2n+1}{2n-3}$$
and $B_{p,n}:=\delta(3-p)+3-2n<0$ for every $p>1$. Note that
$\frac{2n+1}{2n-3}\in (1,\frac{n+2}{n-2})$.\par Summing up, if
$\psi$ satisfies assumptions $(H_1)-(H_3)$, $\psi''(0)=0,$
$\psi'''(r)>0$ and $\left(\frac{\psi'(r)}{\psi'''(r)}\right)'\leq
0$ for every $r>0$, then $G$ satisfies the $\Lambda-$property for
every $\frac{2n+1}{2n-3}\leq p <\frac{n+2}{n-2}$.

\subsection{Proof of Theorem \ref{asym sub}}
The statement of (i) is contained in Lemma \ref{l:asym u'}.

\begin{lem} \label{l:dec}
Let the assumptions of Theorem \ref{asym sub} hold and let $u \not
\in H^1(M)$ be a positive solution to \eqref{cauchy}. There exist
no strictly positive constants $C, \beta$ such that $u(r)\leq
C(\psi(r))^{-\beta}$ for all $r\geq 0$.
\end{lem}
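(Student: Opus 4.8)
The plan is to argue by contradiction, splitting the proof into two parts. The first part is a bootstrap giving fast decay of $u$, which runs verbatim as in the first half of Lemma~\ref{no exp dec}; the second part is new and uses the hypothesis $u\notin H^1(M)$ in place of the lower bound that, in the supercritical Lemma~\ref{no exp dec}, was produced by the functional $P$ and the convexity of $A$ (neither of which is available here).

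So I would suppose that there exist $C,\beta>0$ with $u(r)\le C(\psi(r))^{-\beta}$ for all $r\ge 0$; exactly as in Lemma~\ref{no exp dec} there is no loss in assuming $\beta<(n-1)/p$. Integrating \eqref{cauchy} first on $(0,r)$ and then on $(r,+\infty)$, and invoking \eqref{eq:ex-lim} (together with \eqref{eq:supplementary} when $l=+\infty$, which controls the ratios $\psi/\psi'$ that arise from the l'H\^{o}pital estimates), one obtains $u(r)=O((\psi(r))^{-\beta p})$ as $r\to+\infty$. Since $p>1$, iterating this as in the proof of \cite[Lemma~5.2]{BGGV} improves the decay rate at each step and yields, for every $\eps>0$, a constant $C_\eps>0$ with
\begin{equation*}
u(r)\le C_\eps(\psi(r))^{-(n-1-\eps)}\qquad\text{for all }r\ge 0.
\end{equation*}
The only thing to check in this first half is that the bootstrap is driven solely by \eqref{eq:ex-lim} and \eqref{eq:supplementary}, and not by the convexity of $A$, which is indeed the case.

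The new ingredient is to deduce from this fast decay that $u\in H^1(M)$, contradicting the assumption $u\notin H^1(M)$. First I note that $l>0$ forces $(\log\psi)'\ge l/2$ for large $r$, so $\psi$ grows at least exponentially and hence $\int^{+\infty}(\psi(r))^{-s}\,dr<+\infty$ for every $s>0$. Choosing $\eps<(n-1)/2$, the bound above gives $\psi^{n-1}u^2\le C_\eps^2\,(\psi)^{-(n-1-2\eps)}$, whose integral over $(0,+\infty)$ converges; since $dV_g=\omega_n\,(\psi(r))^{n-1}\,dr$ on radial functions, this means $u\in L^2(M)$. For the gradient I would use \eqref{cauchy} in the form $(\psi(r))^{n-1}u'(r)=-\int_0^r(\psi(s))^{n-1}u^p(s)\,ds$: the integrand is bounded by $C_\eps^p\,(\psi)^{\,n-1-p(n-1-\eps)}$, whose exponent $(n-1)(1-p)+p\eps$ is negative for $\eps$ small (here $p>1$ is essential), so the integral stays bounded, say $|\psi^{n-1}u'|\le K$. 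Therefore
\begin{equation*}
\int_0^{+\infty}(\psi(r))^{n-1}(u'(r))^2\,dr=\int_0^{+\infty}(\psi(r))^{-(n-1)}\big((\psi(r))^{n-1}u'(r)\big)^2\,dr\le K^2\int_0^{+\infty}(\psi(r))^{-(n-1)}\,dr<+\infty,
\end{equation*}
so that $\nabla_g u\in L^2(M)$ as well and $u\in H^1(M)$, the desired contradiction.

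I expect the main obstacle to be purely bookkeeping in the bootstrap step when $l=+\infty$, where $\psi/\psi'\to 0$ and one must verify that \eqref{eq:supplementary} keeps the $\eps$-losses uniform through the iteration; this is exactly what was already arranged in \cite[Lemma~5.2]{BGGV}, so I would simply quote it. By contrast, the closing $H^1$ computation is comparatively soft, relying only on the exponential growth of $\psi$ guaranteed by $l>0$ and on $p>1$.
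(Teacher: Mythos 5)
Your proposal is correct and takes essentially the same route as the paper: argue by contradiction, run the bootstrap of Lemma \ref{no exp dec} (which indeed uses only \eqref{eq:ex-lim}, not the convexity of $A$) to reach \eqref{eq:dec-M}, and then upgrade to $u\in H^1(M)$, contradicting $u\notin H^1(M)$ --- with only the cosmetic difference that you get $u\in L^2(M)$ directly from \eqref{eq:dec-M} with $\eps<(n-1)/2$, whereas the paper performs one more integration to obtain $u=O(\psi^{1-n})$ first. One small bookkeeping correction: your displayed integrals such as $\int_0^{+\infty}(\psi(r))^{-(n-1)}\,dr$ actually diverge at the origin (where $\psi(r)\sim r$), so the decay bounds should be applied only on a neighborhood of infinity, integrability near $r=0$ being trivial since $u$ is smooth with $u'(0)=0$; this is why the paper states its estimates asymptotically as $r\to+\infty$.
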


\begin{proof} Suppose by contradiction that there exist $C,\beta>0$ such
that $u(r)\leq C(\psi(r))^{-\beta}$ for all $r>0$. Proceeding
exactly as in the proof of Lemma \ref{no exp dec} we arrive to the
estimate \eqref{eq:dec-M}. Integrating \eqref{cauchy} and
exploiting \eqref{eq:dec-M}, we infer $u'(r)\ge -C(\psi(r))^{1-n}$
and any $r>0$, for some constant $C>0$. This shows that $u'\in
L^2(M)$. Another integration then yields $u(r)\le
C\int_r^{+\infty} (\psi(s))^{1-n} ds$ for any $r>0$ and, in turn,
by $(H_3)$ we obtain $u(r)=O((\psi(r))^{1-n})$ as $r\to +\infty$.
This implies $u\in L^2(M)$. We have shown that $u\in H^1(M)$, a
contradiction. The proof of the lemma is complete.
\end{proof}

\begin{lem} \label{l:ex-lim-P}
Let the assumptions of Theorem \ref{asym sub} hold and let $u \not
\in H^1(M)$ be a positive solution to \eqref{cauchy}. Let $P=P(r)$
be defined as in the proof of Theorem \ref{teo}. Then $P(r)$
admits a limit as $r\rightarrow +\infty$.
\end{lem}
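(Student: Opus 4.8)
The plan is to deduce the existence of the limit from the eventual monotonicity of $P$. The key structural fact, established in the proof of Theorem \ref{teo}, is the identity $P'(r)=K(r)(u'(r))^2$ together with the equivalence, valid pointwise in $r$, between the inequality $K(r)\le 0$ and the convexity at $r$ of the function $A=(\Psi(r))^{c_p}$, where $\Psi(r)=\int_0^r(\psi(s))^{n-1}\,ds$ and $c_p=\frac{p-1}{2(p+1)}$. Thus everything reduces to showing that $A$ is convex for large $r$ and then reading off monotonicity.

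First I would invoke Proposition \ref{convexity-A(r)}. The hypotheses of Theorem \ref{asym sub}---namely $(H_1)$--$(H_2)$ with \eqref{eq:ex-lim}, supplemented by \eqref{eq:supplementary} when $l=+\infty$---are exactly those of parts (ii) and (iii) of that proposition, so $A$ is convex on some half-line $[r_0,+\infty)$. By the equivalence recalled above this gives $K(r)\le 0$, hence $P'(r)=K(r)(u'(r))^2\le 0$, for every $r\ge r_0$. Therefore $P$ is non-increasing on $[r_0,+\infty)$ and consequently admits a limit as $r\to+\infty$ in $[-\infty,+\infty)$; this already settles the statement in the extended sense.

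The hard part will be to rule out the value $-\infty$, i.e.\ to bound $P$ from below. Since $u>0$ solves \eqref{cauchy} we have $u'<0$, and Proposition \ref{global} (applicable because \eqref{eq:ex-lim} forces $(H_3)$) gives $u'(r)\to 0$ together with a finite $\lim_{r\to+\infty}u(r)=:c\ge 0$; the difficulty is that, writing $P=\frac{p+1}{2}\Psi(u')^2+\Psi u^{p+1}+\psi^{n-1}uu'$ and using $\psi^{n-1}u'=-\int_0^r\psi^{n-1}u^p\,ds$, the two leading terms $\Psi u^{p+1}$ and $\psi^{n-1}uu'$ both behave like $\pm c^{p+1}\Psi\to\pm\infty$ and cancel to leading order, so a crude estimate is useless. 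My approach would be to complete the square in $u'$,
\begin{equation*}
\frac{p+1}{2}\Psi(u')^2+\psi^{n-1}uu'=\frac{p+1}{2}\Psi\left(u'+\frac{\psi^{n-1}}{(p+1)\Psi}\,u\right)^2-\frac{\psi^{2(n-1)}u^2}{2(p+1)\Psi}\ge -\frac{\psi^{2(n-1)}u^2}{2(p+1)\Psi},
\end{equation*}
so that $P\ge \Psi u^{p+1}-\frac{\psi^{2(n-1)}u^2}{2(p+1)\Psi}$, and then to use \eqref{lim} (which gives $\psi^{n-1}/\Psi\sim(n-1)\psi'/\psi$) to compare the subtracted term with $\Psi u^{p+1}$, the residual comparison being closed by the lower bound on the decay of $u$ furnished by Lemma \ref{l:dec}. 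I expect precisely this comparison to be the main obstacle. Alternatively, the finiteness of the limit can be read off a posteriori, once the asymptotic dichotomy (ii)--(iii) of Theorem \ref{asym sub} has been established.
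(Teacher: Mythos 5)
Your route to the existence of the limit is essentially the paper's own proof, and it is correct. The paper recalls $P'(r)=K(r)(u'(r))^2$ and computes directly from \eqref{lim} that
$$\lim_{r\to+\infty}K(r)=\lim_{r\to+\infty}(\psi(r))^{n-1}\left[\frac{p+3}{2}-(n-1)(p+1)\,\psi'(r)\,\frac{\int_0^r(\psi(s))^{n-1}\,ds}{(\psi(r))^{n}}\right]=-\infty\,,$$
since the bracket tends to $\frac{p+3}{2}-(p+1)=\frac{1-p}{2}<0$ while $\psi^{n-1}\to+\infty$; hence $P$ is eventually nonincreasing and the limit exists. Your detour through Proposition \ref{convexity-A(r)} and the pointwise equivalence between $K(r)\le 0$ and convexity of $(\Psi(r))^{c_p}$ reaches the same conclusion and is really the same computation repackaged, since the proof of that proposition rests on \eqref{lim} as well.

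The half of your proposal devoted to ruling out the value $-\infty$ addresses something the paper neither proves nor needs: ``admits a limit'' is meant in the extended sense, with values in $[-\infty,+\infty)$. This is confirmed by how the lemma is used at the end of the proof of Theorem \ref{asym sub}: the authors set $\gamma:=\lim_{r\to+\infty}P(r)$ and only distinguish the cases $\gamma<0$ (which covers $\gamma=-\infty$) and $\gamma\ge 0$; finiteness of $\gamma$ plays no role. So the comparison you correctly flag as the main obstacle --- the leading-order cancellation between $\Psi u^{p+1}$ and $\psi^{n-1}uu'$ --- can simply be dropped. Note also that your fallback, reading finiteness off the dichotomy (ii)--(iii) of Theorem \ref{asym sub}, would be circular, since the present lemma is an ingredient in the proof of that very theorem.
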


\begin{proof}
From the proof of Theorem $\ref{teo}$ we recall that
$P^\prime(r):=K(r)(u^\prime(r))^2$. Hence, by \eq{lim}
$$\displaystyle{\lim_{r\rightarrow +\infty} K(r)}= \displaystyle{\lim_{r\rightarrow +\infty} (\psi(r))^{n-1}\left[\frac{p+3}2-(n-1)(p+1)\psi^\prime(r)\,\frac{\int_0^r(\psi(s))^{n-1}\,ds}{(\psi(r))^{n}}\,\right]=-\infty}$$
and the statement follows.
\end{proof}

\textit{End of the proof of Theorem \ref{asym sub}.} Thanks to
Lemma \ref{l:ex-lim-P} we may put $\gamma:=\lim_{r\to +\infty}
P(r)$. If $\gamma<0$ then $P$ is obviously eventually negative. In
such a case we may proceed exactly as in the proof of Theorem
\ref{asym} and arrive to the estimates (ii) and (iii) of Theorem
\ref{asym sub}.

Suppose now that $\gamma\ge 0$. Since $P$ is eventually
nonincreasing then $P$ is eventually nonnegative. Therefore there
exists $\overline r>0$ such that
\begin{equation} \label{eq:2nd-ord}
(u'(r))^2+\frac{2}{p+1}\frac{(\psi(r))^{n-1}}{\int_0^r
(\psi(s))^{n-1} \, ds} \, u(r)u'(r)+\frac{2}{p+1}(u(r))^{p+1}\ge 0
\qquad \text{for any } r>\overline r \, .
\end{equation}

Suppose now that $\lim_{r\to +\infty} u(r)=0$. Since $l>0$ up to
enlarging $\overline r$, we have that
$$
\frac{1}{(p+1)^2}\left(\frac{(\psi(r))^{n-1}}{\int_0^r
(\psi(s))^{n-1} \, ds}\right)^2 \,
(u(r))^2-\frac{2}{p+1}(u(r))^{p+1}>0 \qquad \text{for any }
r>\overline r \, .
$$
Solving the second order equation in \eqref{eq:2nd-ord} with
respect to $u'(r)$, we arrive to the following alternatives:
either
$$
u'(r)\le -\frac{1}{p+1} \frac{(\psi(r))^{n-1}}{\int_0^r
(\psi(s))^{n-1} \, ds}\,
u(r)-\left[\frac{1}{(p+1)^2}\left(\frac{(\psi(r))^{n-1}}{\int_0^r
(\psi(s))^{n-1} \, ds}\right)^2 \,
(u(r))^2-\frac{2}{p+1}(u(r))^{p+1}\right]^{\frac 12}
$$
or
$$
u'(r)\ge -\frac{1}{p+1} \frac{(\psi(r))^{n-1}}{\int_0^r
(\psi(s))^{n-1} \, ds}\,
u(r)+\left[\frac{1}{(p+1)^2}\left(\frac{(\psi(r))^{n-1}}{\int_0^r
(\psi(s))^{n-1} \, ds}\right)^2 \,
(u(r))^2-\frac{2}{p+1}(u(r))^{p+1}\right]^{\frac 12} .
$$
The first alternative may be excluded since otherwise by
\eqref{lim} we would have
\begin{equation*}
\frac{u'(r)}{u(r)}\le -\frac{n-1-\eps}{p+1}
\frac{\psi'(r)}{\psi(r)} \qquad \text{for any } r>r_\eps
\end{equation*}
for some $\eps\in (0,n-1)$ and $r_\eps>0$. Integration of this
inequality provides a contradiction with Lemma \ref{l:dec}.

Therefore the second alternative holds true. Proceeding similarly
to the proof of \cite[Theorem 2.3]{BGGV} we then obtain
\begin{equation*}
u'(r)\ge -2(u(r))^p \frac{\int_0^r \psi^{n-1}(s) \,
ds}{\psi^{n-1}(r)} \, .
\end{equation*}
Exploiting \eqref{lim} and \eqref{eq:ex-lim}, this implies
\begin{equation*}
\lim_{r\to +\infty} \frac{u'(r)}{u(r)}=0 \, .
\end{equation*}
In particular using again \eqref{eq:ex-lim}, this gives the
validity of Lemma \ref{l:u'/u}. Now one can follow exactly all the
steps of Theorem \ref{asym} and arrive to the proof of part (iii)
if $\frac{\psi}{\psi'}\not\in L^1(0,\infty)$.

Otherwise we arrive to a contradiction with the fact that $u$
vanishes at infinity. This gives the proof of part (ii). We recall
that the existence of $\lim_{r\to +\infty} u(r)$ and the fact that
it is finite follows from Proposition \ref{global}.

\section{Stability}\label{stabil}

\subsection{Proof of Theorems \ref{stability-1}-\ref{stability-2}}
We start with a simpler characterization of stability for radial
solutions of \eqref{LEF}.

\begin{lem} Let $\psi$ satisfy $(H_1)-(H_3)$ and let $u$ be a radial solution of \eqref{LEF}. Then $u$
is stable if and only if
\begin{equation} \label{eq:stab-rad}
\int_{0}^{+\infty} (\chi'(r))^2\,\psi^{n-1}(r)\, dr-
p\,\int_{0}^{+\infty} |u(r)|^{p-1} \chi^2(r)\,\psi^{n-1}(r)\,
dr\geq 0\,,
\end{equation}
for every radial function $\chi\in C_c^{\infty}(M)$.
\end{lem}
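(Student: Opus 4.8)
The plan is to prove the two implications separately: the ``only if'' direction is immediate, while the ``if'' direction rests on separating variables through a spherical harmonic expansion, the key mechanism being that the radial potential $p|u|^{p-1}$ couples only modes of the same angular frequency.

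First I would dispatch the necessity. If $u$ is stable in the sense of Definition \ref{def1}, then \eqref{stab} holds for every $\varphi\in C_c^\infty(M)$; restricting to radial test functions $\varphi=\chi$ and using that for such functions $|\nabla_g\chi|_g^2=(\chi'(r))^2$, that $|u|^{p-1}$ is radial, and that integrating out the angular variables in $dV_g$ produces the factor $\int_M f\,dV_g=\omega_n\int_0^{+\infty}f(r)\psi^{n-1}(r)\,dr$, inequality \eqref{stab} reduces, after dividing by $\omega_n$, to exactly \eqref{eq:stab-rad}. Hence radial stability is necessary.

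For the converse I would fix an arbitrary $\varphi\in C_c^\infty(M)$ and expand it in spherical harmonics. Let $\{Y_k\}_{k\ge 0}$ be an $L^2(\SN)$-orthonormal basis of eigenfunctions of $-\Delta_{\SN}$, with eigenvalues $0=\mu_0<\mu_1\le\mu_2\le\cdots$, all nonnegative, and write $\varphi(r,\Theta)=\sum_k \varphi_k(r)Y_k(\Theta)$ with $\varphi_k(r)=\int_{\SN}\varphi(r,\Theta)Y_k(\Theta)\,d\Theta$; smoothness and compact support of $\varphi$ make this series converge in $H^1(M)$. Using the decomposition $|\nabla_g\varphi|_g^2=(\partial_r\varphi)^2+\psi^{-2}|\nabla_{\SN}\varphi|^2$ recalled in Section \ref{notation}, the identity $\int_{\SN}|\nabla_{\SN}\varphi|^2\,d\Theta=\sum_k\mu_k\varphi_k^2$, and orthonormality, the quadratic form separates as
\begin{equation*}
\int_M |\nabla_g\varphi|_g^2\,dV_g-p\int_M |u|^{p-1}\varphi^2\,dV_g
=\sum_{k\ge 0}\int_0^{+\infty}\!\Big[(\varphi_k')^2+\frac{\mu_k}{\psi^2}\varphi_k^2-p\,|u|^{p-1}\varphi_k^2\Big]\psi^{n-1}\,dr .
\end{equation*}
Since $\mu_k\ge 0$ and $\psi>0$, the angular term $\frac{\mu_k}{\psi^2}\varphi_k^2\,\psi^{n-1}$ is nonnegative and may be discarded, so each summand is bounded below by $\int_0^{+\infty}[(\varphi_k')^2-p|u|^{p-1}\varphi_k^2]\psi^{n-1}\,dr$, which is nonnegative by \eqref{eq:stab-rad} applied to $\chi=\varphi_k$. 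Summing over $k$ yields \eqref{stab}, giving stability of $u$.

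The one step demanding care, and the main obstacle, is the admissibility of the radial profiles $\varphi_k$ as test functions in \eqref{eq:stab-rad}: each $\varphi_k$ is smooth in $r$ and supported in a fixed compact $r$-interval, but for $k\ge 1$ the radial function $r\mapsto\varphi_k(r)$ need not be $C^\infty$ at the pole $o$. I would resolve this by recalling that smoothness of $\varphi$ at $o$ forces $\varphi_k(r)=O(r^k)$ as $r\to 0^+$, since in normal coordinates a homogeneous harmonic of degree $k$ vanishes to order $k$; consequently $\varphi_k$ belongs to the natural weighted space in which $C_{c,r}^\infty(M)$ is dense and to which \eqref{eq:stab-rad} extends by continuity (alternatively one truncates $\varphi_k$ near $r=0$, the $O(r^k)$ behavior ensuring convergence of both weighted integrals and vanishing of the truncation error). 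With this justification the term-by-term estimate is legitimate and the equivalence follows.
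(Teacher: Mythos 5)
Your proof is correct, and its sufficiency half takes a genuinely different route from the paper's. The paper avoids any spherical harmonic expansion: given $\varphi\in C_c^{\infty}(M)$ it freezes the angular variable, sets $\chi_{\Theta}(r):=\varphi(r,\Theta)$ for each fixed $\Theta\in\SN$, applies \eqref{eq:stab-rad} to each slice $\chi_{\Theta}$, integrates over $\SN$, and simply discards the nonnegative term $|\nabla_{\SN}\varphi|^2\,\psi^{-2}$ coming from the gradient decomposition --- a pure Fubini/slicing argument with no series to sum. Your expansion $\varphi=\sum_k\varphi_k Y_k$ exploits exactly the same two structural facts (the splitting $|\nabla_g\varphi|_g^2=(\partial_r\varphi)^2+\psi^{-2}|\nabla_{\SN}\varphi|^2$ and the fact that the radial potential $p|u|^{p-1}$ does not mix angular modes), but pays for it with the convergence of the expansion and the admissibility of each profile $\varphi_k$, which you rightly flag and handle via the $O(r^k)$ vanishing at the pole together with a density/truncation argument. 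What your route buys is the finer, mode-by-mode statement: the quadratic form restricted to the $k$-th angular sector dominates the purely radial one, which is the standard way of seeing that instability of a radial solution is always detected by radial test functions. Note also that the paper's slices $\chi_{\Theta}$ suffer from the very defect you identify for your $\varphi_k$: for fixed $\Theta$ the map $r\mapsto\varphi(r,\Theta)$, viewed as a radial function on $M$, is in general not smooth at the pole, so the paper implicitly relies on the same extension of \eqref{eq:stab-rad} by density that you spell out; on this point your write-up is the more careful of the two.
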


\begin{proof} Clearly stability of any solution $u$ of \eqref{LEF}
is equivalent to
\begin{equation}\label{stabRad}
\int_{{\mathbb S}^{n-1}}\int_{0}^{+\infty} [(\varphi_r(r,\Theta))^2+ |\nabla_{{\mathbb S}^{n-1}} \varphi(r,\Theta)|^2\, \psi^{-2}(r)]\,\psi^{n-1}(r)\, dr\,d\Theta
\end{equation}
$$
- p\int_{{\mathbb S}^{n-1}}\int_{0}^{+\infty} |u(r,\Theta)|^{p-1} \varphi^2(r,\Theta)\,\psi^{n-1}(r)\, dr\,d\Theta\geq 0 \quad \forall \, \varphi\in C_c^{\infty}(M).
$$
In particular if $u$ is radial then \eqref{eq:stab-rad} follows
immediately. On the other hand if assume \eqref{eq:stab-rad} we
obtain
\begin{align} \label{eq:radial-stability}
& \int_{{\mathbb S}^{n-1}}\int_{0}^{+\infty}
[(\varphi_r(r,\Theta))^2+ |\nabla_{{\mathbb S}^{n-1}}
\varphi(r,\Theta)|^2\, \psi^{-2}(r)]\,\psi^{n-1}(r)\, dr\,{\rm
d}\Theta \\
\notag & \qquad \geq \int_{{\mathbb S}^{n-1}}\int_{0}^{+\infty}
(\chi_{\Theta}'(r))^2\,\psi^{n-1}(r)\, dr\,d\Theta\geq
\int_{{\mathbb S}^{n-1}}p\,\int_{0}^{+\infty} |u(r)|^{p-1}
\chi_{\Theta}^2(r)\,\psi^{n-1}(r)\, dr\,d\Theta \\
\notag &\qquad = p\int_{{\mathbb S}^{n-1}}\int_{0}^{+\infty}
|u(r)|^{p-1} \varphi^2(r,\Theta)\,\psi^{n-1}(r)\, dr\,{\rm
d}\Theta \, ,
\end{align}
where we have settled $\chi_{\Theta}(r):=\varphi(r,\Theta)$.
\end{proof}

From the next two lemmas it follows that any solution
\eqref{cauchy} with $\alpha>0$ small enough is stable.

\begin{lem} \label{l:stima-alpha}
 Let $\psi$ satisfy assumptions $(H_1)-(H_3)$ and let $u_\alpha$ be a solution of \eqref{cauchy} with $\alpha>0$.
 Then $|u_\alpha(r)|\le \alpha$ for any $r\in [0,+\infty)$.
 \end{lem}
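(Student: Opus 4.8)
The plan is to exploit the radial ODE structure of \eqref{cauchy} through the natural energy functional
\[
E(r):=\frac{1}{2}\bigl(u_\alpha'(r)\bigr)^2+\frac{1}{p+1}\,|u_\alpha(r)|^{p+1}\,,
\]
and to show that assumption $(H_2)$ forces $E$ to be nonincreasing on $[0,+\infty)$. To this end I would first rewrite the equation in \eqref{cauchy} in the expanded form
\[
u_\alpha''(r)+(n-1)\frac{\psi'(r)}{\psi(r)}\,u_\alpha'(r)=-|u_\alpha(r)|^{p-1}u_\alpha(r)\,,
\]
then differentiate $E$ and substitute $u_\alpha''$ from this identity. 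The term $|u_\alpha|^{p-1}u_\alpha\,u_\alpha'$ cancels, leaving
\[
E'(r)=-(n-1)\frac{\psi'(r)}{\psi(r)}\bigl(u_\alpha'(r)\bigr)^2\,.
\]
Since $\psi>0$ and $\psi'\ge 0$ by $(H_2)$, this yields $E'(r)\le 0$ for every $r>0$, so that $E(r)\le E(0)$ for all $r\ge 0$.

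The second step is purely algebraic: the initial conditions $u_\alpha(0)=\alpha$ and $u_\alpha'(0)=0$ give $E(0)=\tfrac{1}{p+1}\alpha^{p+1}$, whence
\[
\frac{1}{p+1}\,|u_\alpha(r)|^{p+1}\le E(r)\le \frac{1}{p+1}\,\alpha^{p+1}\qquad\text{for all }r\ge 0\,,
\]
and the desired estimate $|u_\alpha(r)|\le\alpha$ follows at once. This reasoning is valid on all of $[0,+\infty)$ because the solutions are global by Proposition \ref{global}, and the map $t\mapsto |t|^{p+1}$ is $C^1$ since $p>1$, so $E$ is genuinely differentiable even at points where $u_\alpha$ vanishes.

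I do not expect any serious obstacle here: the whole argument rests on identifying the correct Lyapunov functional and recognizing that the dissipative term in $E'$ carries exactly the sign dictated by the monotonicity hypothesis $(H_2)$ on $\psi$. The only points meriting a word of care are the regularity of $|u_\alpha|^{p+1}$ at zeros of $u_\alpha$ (handled by $p>1$) and the fact that the bound is asserted on the whole half-line rather than merely up to the first zero of $u_\alpha$; but since $E$ decreases monotonically irrespective of the sign of $u_\alpha$, the estimate is preserved across any sign change.
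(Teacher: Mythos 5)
Your proof is correct and coincides with the paper's own argument: the paper introduces exactly the same Lyapunov function $F_\alpha(r)=\tfrac12|u_\alpha'(r)|^2+\tfrac{1}{p+1}|u_\alpha(r)|^{p+1}$, notes it is nonincreasing by \eqref{cauchy} (which is precisely your computation $E'(r)=-(n-1)\tfrac{\psi'(r)}{\psi(r)}(u_\alpha'(r))^2\le 0$ under $(H_2)$), and concludes from $F_\alpha(0)=\tfrac{1}{p+1}\alpha^{p+1}\ge F_\alpha(r)\ge \tfrac{1}{p+1}|u_\alpha(r)|^{p+1}$. You merely spell out the differentiation step that the paper leaves implicit.
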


 \begin{proof} Let $F_\alpha(r)=\frac 12 |u_\alpha'(r)|^2+\frac{1}{p+1}
 |u_\alpha(r)|^{p+1}$ be the Lyapunov function corresponding to
 the solution $u_\alpha$. From \eqref{cauchy} one gets that $F_\alpha$ is nonincreasing in $[0,+\infty)$ and hence
 for any $r>0$
 $$
 \frac 1{p+1} \alpha^{p+1}=F_\alpha(0)\ge F_\alpha(r)\ge \frac{1}{p+1} |u_\alpha(r)|^{p+1} \, .
 $$
 This completes the proof.
 \end{proof}

\begin{lem}\label{stabile}
 Let $\psi$ satisfy assumptions $(H_1)-(H_3)$. Furthermore, let $u_{\alpha}$ be a solution to \eqref{cauchy}
 with $|\alpha|\leq \left( \frac{\lambda_1(M)}{p}\right)^{1/(p-1)}$ . Then, $u_{\alpha}$ is stable.
\end{lem}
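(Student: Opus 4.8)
The plan is to combine the radial characterization of stability in \eqref{eq:stab-rad} with the $L^2$-Poincar\'e inequality encoded in the variational characterization \eqref{lambda1} of $\lambda_1(M)$. First I would reduce to the case $\alpha\ge 0$: since $|u|^{p-1}u$ is odd, one has $u_{-\alpha}=-u_\alpha$, so both $|u_\alpha|$ and the potential $|u_\alpha|^{p-1}$ are unchanged under $\alpha\mapsto-\alpha$, and the quadratic form in \eqref{eq:stab-rad} is insensitive to the sign of $\alpha$. Hence it suffices to treat $\alpha=|\alpha|\ge0$.

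The key pointwise input is Lemma \ref{l:stima-alpha}, which gives $|u_\alpha(r)|\le\alpha$ for every $r\ge0$, and therefore $|u_\alpha(r)|^{p-1}\le\alpha^{p-1}$ for all $r$. Substituting this bound into the potential term of \eqref{eq:stab-rad}, for every radial $\chi\in C_c^\infty(M)$ one obtains
\begin{equation*}
\int_0^{+\infty}(\chi'(r))^2\psi^{n-1}(r)\,dr - p\int_0^{+\infty}|u_\alpha(r)|^{p-1}\chi^2(r)\psi^{n-1}(r)\,dr \ge \int_0^{+\infty}(\chi'(r))^2\psi^{n-1}(r)\,dr - p\,\alpha^{p-1}\int_0^{+\infty}\chi^2(r)\psi^{n-1}(r)\,dr.
\end{equation*}

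Next I would invoke the Poincar\'e inequality. Using a radial $\chi$ as a test function in \eqref{lambda1} and the fact that for radial functions $\int_M|\nabla_g\chi|_g^2\,dV_g=\omega_n\int_0^{+\infty}(\chi'(r))^2\psi^{n-1}(r)\,dr$ and $\int_M\chi^2\,dV_g=\omega_n\int_0^{+\infty}\chi^2(r)\psi^{n-1}(r)\,dr$, the definition of $\lambda_1(M)$ yields
\begin{equation*}
\int_0^{+\infty}(\chi'(r))^2\psi^{n-1}(r)\,dr \ge \lambda_1(M)\int_0^{+\infty}\chi^2(r)\psi^{n-1}(r)\,dr.
\end{equation*}
Combining the two displays, the left-hand side of \eqref{eq:stab-rad} is bounded below by $(\lambda_1(M)-p\,\alpha^{p-1})\int_0^{+\infty}\chi^2(r)\psi^{n-1}(r)\,dr$. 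Under the hypothesis $\alpha\le(\lambda_1(M)/p)^{1/(p-1)}$ we have $p\,\alpha^{p-1}\le\lambda_1(M)$, so this lower bound is nonnegative and \eqref{eq:stab-rad} holds for all radial test functions. By the stability characterization this proves that $u_\alpha$ is stable.

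There is no serious obstacle here; the only point requiring a word of care is that the Poincar\'e inequality with constant $\lambda_1(M)$ is valid on the class of radial test functions, which holds at once because radial functions form a subset of $C_c^\infty(M)$ while the infimum in \eqref{lambda1} is taken over all of $C_c^\infty(M)\setminus\{0\}$. Positivity of $\lambda_1(M)$ itself is guaranteed by Lemma \ref{eq:lambda_1>0} under $(H_1)-(H_3)$, and it is precisely this positivity that makes the threshold $(\lambda_1(M)/p)^{1/(p-1)}$ strictly positive, so that a nontrivial range of $\alpha$ yields stable solutions.
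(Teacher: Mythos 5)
Your proof is correct and takes essentially the same approach as the paper: the pointwise bound $|u_\alpha|\le\alpha$ from Lemma \ref{l:stima-alpha}, fed into the stability quadratic form and controlled by the Poincar\'e inequality \eqref{lambda1}. The only cosmetic difference is that the paper combines \eqref{lambda1} directly with the general condition \eqref{stab}, while you route through the radial characterization \eqref{eq:stab-rad}; both are immediate and equivalent here.
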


\begin{proof}
For simplicity, let $\alpha>0$. By Lemma \ref{l:stima-alpha}
$|u_{\alpha}(r)|\le \alpha$ for every $r\ge 0$. The statement
follows by combining \eq{lambda1} with \eq{stab}.
\end{proof}

Next, under suitable assumptions, we show that stable solutions cannot be sign-changing.
\begin{lem}\label{derivative}
Let $\psi$ satisfy assumptions $(H_1)-(H_2)$. Then, any stable
solution to \eqref{cauchy} has constant sign.
\end{lem}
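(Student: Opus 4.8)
The plan is to argue by contradiction: if a solution $u$ of \eqref{cauchy} changes sign, I will exhibit an explicit radial test function making the quadratic form in \eqref{eq:stab-rad} strictly negative, contradicting stability. Up to replacing $u$ by $-u$ — which leaves both \eqref{cauchy} and \eqref{eq:stab-rad} invariant, since $|u|^{p-1}u$ is odd and $|u|^{p-1}$ is even — I may assume $\alpha>0$. If $u$ does not have constant sign then, by continuity and $u(0)=\alpha>0$, it vanishes; I let $r_0\in(0,+\infty)$ be its first zero, so that $u>0$ on $[0,r_0)$ and $u(r_0)=0$.

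First I would take as test function the truncation $\chi:=u$ on $[0,r_0]$ and $\chi:=0$ on $(r_0,+\infty)$. Since $u$ is smooth with $u'(0)=0$, this $\chi$ is a Lipschitz, compactly supported radial function on $M$ lying in $H^1$, and it is continuous at $r_0$ because $u(r_0)=0$. I would then compute the radial Dirichlet integral by integrating by parts on $(0,r_0)$ and using the equation $(\psi^{n-1}u')'=-\psi^{n-1}|u|^{p-1}u$ from \eqref{cauchy}. The two boundary contributions vanish: at $r_0$ because $u(r_0)=0$, and at the origin because $\psi^{n-1}(r)\to0$ as $r\to0^+$ (assumption $(H_1)$ gives $\psi(0)=0$) while $u'u$ stays bounded. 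This yields
\[
\int_0^{r_0}(\chi'(r))^2\,\psi^{n-1}(r)\,dr=\int_0^{r_0}\psi^{n-1}(r)\,|u(r)|^{p+1}\,dr,
\]
so that the quadratic form in \eqref{eq:stab-rad} evaluated at $\chi$ equals $(1-p)\int_0^{r_0}\psi^{n-1}|u|^{p+1}\,dr$, which is strictly negative because $p>1$ and $u\not\equiv0$ on $(0,r_0)$.

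Finally I would invoke stability. Since $u$ is stable, \eqref{eq:stab-rad} holds for every radial $\chi\in C_c^{\infty}(M)$, and by a standard density argument it extends to the Lipschitz, compactly supported $\chi$ just constructed: the potential $p|u|^{p-1}$ is bounded on the compact support, so the quadratic form is continuous under $H^1$-approximation by smooth functions supported in a fixed ball. This forces the form at $\chi$ to be nonnegative, contradicting the strict negativity above. Hence $u$ cannot vanish, i.e. it has constant sign.

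I expect the only genuinely delicate point to be the density/approximation step licensing the use of the non-smooth $\chi$ in \eqref{eq:stab-rad}; the integration by parts, including the vanishing of the boundary term at the origin via $(H_1)$, is routine. I note that the argument uses only that $u$ possesses a finite zero, so it simultaneously shows that \emph{any} solution of \eqref{cauchy} with an interior zero is unstable.
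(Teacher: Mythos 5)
Your proposal is correct and follows essentially the same route as the paper: truncate the solution at a zero (the paper at an arbitrary zero $R$, you at the first zero $r_0$), use it as a test function in \eqref{eq:stab-rad} via a density argument, and integrate by parts with the equation to show the quadratic form equals $(1-p)\int_0^{r_0}|u|^{p+1}\psi^{n-1}\,dr<0$, contradicting stability. Your explicit treatment of the boundary terms at $0$ and $r_0$ fills in details the paper leaves implicit, but the argument is the same.
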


\begin{proof}
By contradiction, let $u$ be a stable solution to \eqref{cauchy} such that $u(R)=0$ for some $R>0$. Next, we set $v_R(r):=u(r)\chi_{[0,R]}(r)\in H_0^1(B_R),$ where $\chi_{[0,R]}(r)$ denotes the characteristic function of the set $[0,R]$ and $B_R$ is the geodesic ball of center $o$ and radius $R$. Standard density arguments yield that $v_R$ is a valid test function in \eq{stabRad}, namely
\begin{equation}\label{test}
\int_{0}^{+\infty} (v_R'(r))^2\,\psi^{n-1}(r)\, dr\,- p\int_{0}^{+\infty} |u(r)|^{p-1} (v_R(r))^2\,\psi^{n-1}(r)\, dr\,\geq 0 \,.
\end{equation}
On the other hand, multiplying the equation in \eqref{cauchy} by $v_R(r) \psi^{n-1}(r)$ and integrating, we get
$$\int_{0}^{+\infty}  (v_R'(r))^2\,\psi^{n-1}(r)\, dr\,=\int_{0}^{+\infty} |u(r)|^{p-1}u\, v_R(r)\,\psi^{n-1}(r)\, dr\,.$$
Recalling the definition of $v_R$, this yields
$$\int_{0}^{+\infty} (v_R'(r))^2\,\psi^{n-1}(r)\, dr\,- p\int_{0}^{+\infty} |u(r)|^{p-1} (v_R(r))^2\,\psi^{n-1}(r)\, dr\,$$
$$=(1-p)\int_{0}^{R} |u(r)|^{p+1}\,\psi^{n-1}(r)\, dr<0\,.$$
The above inequality contradicts \eq{test} and concludes the proof.
\end{proof}

Next we exploit well-know results for the euclidean case to deduce
the following lemma.

\begin{lem}\label{blow}
Let $\psi$ satisfy assumptions $(H_1)-(H_2)$. Let $n\leq 10$ and
$p>1$ or $n\geq 11$ and $1<p<
p_c(n)=\frac{(n-2)^2-4n+8\sqrt{n-1}}{(n-2)(n-10)}$. Then there
exists $\overline \alpha>0$ such that for any $\alpha>\overline
\alpha$, the solution $u_\alpha$ of \eqref{cauchy} is unstable.
\end{lem}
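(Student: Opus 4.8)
The plan is to argue by contradiction through a blow-up at the pole $o$ whose limit problem is the Lane-Emden-Fowler equation on Euclidean space $\mathbb{R}^n$; the contradiction then comes from Farina's nonexistence of nontrivial stable solutions in $\mathbb{R}^n$ \cite{farina}, which holds exactly in the range of $n$ and $p$ assumed here. So I would suppose the statement fails and pick a sequence $\alpha_k\to +\infty$ with each $u_{\alpha_k}$ stable. Exploiting the invariance of the Euclidean equation under $u\mapsto \lambda^{2/(p-1)}u(\lambda\,\cdot)$, I set $\mu_k:=\alpha_k^{(p-1)/2}$ and define
\[
v_k(s):=\frac{1}{\alpha_k}\,u_{\alpha_k}\!\left(\frac{s}{\mu_k}\right),
\]
so that $v_k(0)=1$, $v_k'(0)=0$ and, by \eqref{cauchy},
\[
v_k''(s)+\frac{n-1}{\mu_k}\,\frac{\psi'(s/\mu_k)}{\psi(s/\mu_k)}\,v_k'(s)+|v_k(s)|^{p-1}v_k(s)=0 .
\]

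Next I would pass to the limit in this ODE. By $(H_1)$ one has $\psi(t)/t\to 1$ and $\psi'(t)\to 1$ as $t\to 0^+$, so the coefficient $\tfrac{1}{\mu_k}\tfrac{\psi'(s/\mu_k)}{\psi(s/\mu_k)}$ converges to $\tfrac1s$ locally uniformly on $(0,+\infty)$. The uniform bound $|v_k|\le 1$, which follows from Lemma \ref{l:stima-alpha} (whose proof uses only $(H_1)$-$(H_2)$), together with the equation gives uniform $C^2$ bounds on compact subsets of $(0,+\infty)$, while the integral form of the Cauchy problem controls $v_k$ near $s=0$. By Arzel\`a-Ascoli I would extract $v_k\to v_\infty$ in $C^1_{loc}([0,+\infty))$, with
\[
v_\infty''(s)+\frac{n-1}{s}\,v_\infty'(s)+|v_\infty(s)|^{p-1}v_\infty(s)=0,\qquad v_\infty(0)=1,\ v_\infty'(0)=0 ;
\]
that is, $v_\infty$ is a nontrivial radial $C^2$ solution of $-\Delta v=|v|^{p-1}v$ on all of $\mathbb{R}^n$.

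The heart of the matter is transferring stability to $v_\infty$. I would insert the radial test function $\chi(r)=\phi(\mu_k r)$, with $\phi$ a fixed radial element of $C_c^\infty(\mathbb{R}^n)$, into the radial stability inequality \eqref{eq:stab-rad} for $u_{\alpha_k}$. After the change of variables $s=\mu_k r$ and using $\alpha_k^{p-1}=\mu_k^2$, both the Dirichlet and the potential terms carry the common factor $\mu_k$; multiplying the inequality by $\mu_k^{n-2}$ recasts the weight as $(\mu_k\psi(s/\mu_k))^{n-1}$ in both terms, yielding
\[
\int_0^{+\infty}(\phi'(s))^2\big(\mu_k\psi(s/\mu_k)\big)^{n-1}\,ds - p\int_0^{+\infty}|v_k(s)|^{p-1}\phi^2(s)\big(\mu_k\psi(s/\mu_k)\big)^{n-1}\,ds\ge 0 .
\]
Since $\mu_k\psi(s/\mu_k)=s\,\tfrac{\psi(s/\mu_k)}{s/\mu_k}\to s$ locally uniformly by $(H_1)$, while $|v_k|\le 1$ and $v_k\to v_\infty$ uniformly on the compact support of $\phi$, dominated convergence would let me send $k\to +\infty$ and obtain
\[
\int_0^{+\infty}(\phi'(s))^2 s^{n-1}\,ds - p\int_0^{+\infty}|v_\infty(s)|^{p-1}\phi^2(s)\,s^{n-1}\,ds\ge 0
\]
for every radial $\phi\in C_c^\infty(\mathbb{R}^n)$. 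This is precisely the radial stability inequality for $v_\infty$; by the Euclidean analogue of the reduction producing \eqref{eq:stab-rad} (non-radial directions only add the nonnegative term $|\nabla_{\mathbb{S}^{n-1}}\varphi|^2\psi^{-2}$) it upgrades to full stability of $v_\infty$. As $v_\infty$ is nontrivial, this contradicts \cite{farina}, and the contradiction yields the desired $\overline\alpha$.

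The main obstacle I anticipate is the rigorous justification of the blow-up limit: obtaining enough compactness for $v_k\to v_\infty$ despite the coefficient $\tfrac{n-1}{s}$ being singular at $s=0$ (this is exactly where the a priori bound of Lemma \ref{l:stima-alpha} and the Cauchy-problem structure at the origin are indispensable), and checking that the rescaled quadratic forms converge to the Euclidean one so that stability genuinely survives in the limit. Once these two convergences are secured, the reduction to the radial stability inequality and the appeal to Farina's theorem are routine.
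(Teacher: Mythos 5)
Your proposal is correct and takes essentially the same route as the paper's own proof: the identical blow-up rescaling (your $\mu_k$ plays the role of the paper's $\lambda$, with $\alpha=\lambda^{2/(p-1)}$), the same passage via Arzel\`a--Ascoli to a nontrivial radial solution of the Euclidean Lane--Emden equation, the same transfer of stability by inserting rescaled radial test functions into \eqref{eq:stab-rad} and passing to the limit in the weighted quadratic form, and the same final contradiction with Farina's nonexistence theorem. The only cosmetic difference is that you handle the weight by writing $(\mu_k\psi(s/\mu_k))^{n-1}\to s^{n-1}$ locally uniformly, whereas the paper uses a Lagrange/Taylor expansion of $(\psi(s/\lambda))^{n-1}$; the two are equivalent.
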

\begin{proof}
We argue by contradiction. Let $u_{\lambda}$ be a stable solution
to \eqref{cauchy} with $\alpha=\lambda^{2/(p-1)}$. As in
\cite[Lemma 7.1]{BGGV}, we define
$$v_{\lambda}(s)=\lambda^{-2/(p-1)} u_{\lambda}\left(\frac{s}{\lambda}\right)\,.$$
Hence, $v_{\lambda}(0)=1$ and $v_{\lambda}$ satisfies
$$v_{\lambda}''(s)+\frac{n-1}{s}\frac{\psi'(s/\lambda)}{\psi(s/\lambda)}\,\frac{s}{\lambda}\,v_{\lambda}'(s)+|v_{\lambda}(s)|^{p-1}v_{\lambda}(s)=0\,.$$
By $(H_1)$ and Ascoli-Arzel\`a Theorem we have that
$v_{\lambda}\rightarrow  \overline{v}$ in $C^1([0,S])$ as
$\lambda\rightarrow +\infty$, for any $0<S<+\infty$, where
$\overline{v}$ solves the equation
$$\overline{v}''(s)+\frac{n-1}{s}\, \overline{v}'(s)+| \overline{v}(s)|^{p-1}\overline{v}(s)=0\,, \qquad \overline{v}(0)=1\,.$$
On the other hand, by assumption $u_{\lambda}$ is stable and from
\eq{stabRad} we have
\begin{equation*}
\int_{0}^{+\infty} (\chi'(r))^2\,(\psi(r))^{n-1}\, dr- p\,
\lambda^2\,\int_{0}^{+\infty} |v_{\lambda}(\lambda r)|^{p-1}
\chi^2(r)\,(\psi(r))^{n-1}\, dr\geq 0\,,
\end{equation*}
for every radial function $\chi\in C_c^{\infty}(M)$. Next, we set
$\eta_{\lambda}(r):=\eta (r\lambda)\in C_c^{\infty}(M)$, for some
$\eta\in C_c^{\infty}(M)$ radial. Choosing $\eta_{\lambda}$ as
test function in the above inequality and performing the change of
variable $s=\lambda r$, we deduce
$$
\int_{0}^{+\infty}
(\eta'(s))^2\,\left(\psi\left(\frac{s}{\lambda}\right)\right)^{n-1}\,
ds- p\,\int_{0}^{+\infty} |v_{\lambda}(s)|^{p-1}
\eta^2(s)\,\left(\psi\left(\frac{s}{\lambda}\right)\right)^{n-1}\,
ds\geq 0\,,$$ for every radial function $\eta\in C_c^{\infty}(M)$.
Let us fix $S\geq 0$ in such a way that ${\rm supp \, }
\eta\subset B_S$. By Lagramge Theorem, for every $s\in [0,S]$
there exist $0<\xi<\frac{s}{\lambda}$ and
$0<|\sigma|<\frac{|\psi''(\xi)|}{2}\frac{s}{\lambda}$ such that
\begin{equation}\label{asin}
\left(\psi\left(\frac{s}{\lambda}\right)\right)^{n-1}=\left(\frac{s}{\lambda}\right)^{n-1}+g(\xi,\sigma)\left(\frac{s}{\lambda}\right)^{n}\, \quad \text{as }\lambda\rightarrow +\infty\,,
\end{equation}
where $g(\xi,\sigma)=(n-1)\left(1+\sigma\right)^{n-2}\frac{\psi''(\xi)}{2}$. This yields
$$
\int_{0}^{+\infty} (\eta'(s))^2\,s^{n-1}\, ds+\int_{0}^{+\infty}
(\eta'(s))^2\,\frac{g(\xi,\sigma)}{\lambda}\,s^{n}\, ds$$
$$- p\,\int_{0}^{+\infty} |v_{\lambda}(s)|^{p-1} \eta^2(s)\,s^{n-1}\, ds- p\,\int_{0}^{+\infty} |v_{\lambda}(s)|^{p-1} \eta^2(s)\,\,\frac{g(\xi,\sigma)}{\lambda}\,s^{n}\, ds\geq 0 \,.$$
Hence, as $\lambda\rightarrow +\infty$, we conclude that
$$
\int_{0}^{+\infty} (\eta'(s))^2\,s^{n-1}\, ds-
p\,\int_{0}^{+\infty} |\overline{v}(s)|^{p-1} \eta^2(s)\,s^{n-1}\,
ds\geq 0\,,$$ for every radial function $\eta\in C_c^{\infty}(M)$
or, equivalently, for every radial function $\eta\in
C_c^{\infty}(\R^n)$. Namely, $\overline{v}$ is a stable solution
to the euclidean equation. Since, by assumption, $n\leq 10$ and
$p>1$ or $n\geq 11$ and $1<p<
p_c(n)=\frac{(n-2)^2-4n+8\sqrt{n-1}}{(n-2)(n-10)}$, this
contradicts \cite[Theorem 1]{farina}.
\end{proof}

Let us introduce some notations which will be used in the sequel.
For any $r>0$, let us denote by $v_\alpha(r)$ the derivative with
respect to the initial value $\alpha$, i.e.
$v(\alpha,r):=\frac{\partial u}{\partial \alpha} (\alpha,r)$. We
will show in Lemma \ref{l:C^1_CDIV} that the function $v_\alpha$
is well-defined. For any $\alpha>\beta$ let us define
\begin{equation*}
\zeta_{\alpha\beta}:=\sup\{r\in (0,\infty):u_\alpha(s)>u_\beta(s)
\ \text{for any } s\in(0,r)\}\in (0,+\infty] \, .
\end{equation*}
When $\zeta_{\alpha,\beta}<+\infty$ then $\zeta_{\alpha,\beta}$ is
the first zero of $u_\alpha-u_\beta$.

\begin{lem} \label{l:C^1_CDIV}
Let $\psi$ a function satisfying $(H_1)-(H_3)$. Let $a,b,R\in\R$ be such that $b>a>0$, $R>0$ and $u_\alpha(r)>0$ for any $r\in [0,R]$ and $\alpha\in [a,b]$. Then for any $r\in [0,R]$, the map $\alpha\mapsto u(\alpha,r)$ is differentiable in $[a,b]$ and moreover for any $\alpha_0\in [a,b]$
\begin{equation} \label{eq:uniform-conv}
\lim_{\alpha\to \alpha_0} \
\sup_{r\in [0,R]} \left| \frac{\partial u}{\partial \alpha}(\alpha,r)-\frac{\partial u}{\partial \alpha}(\alpha_0,r)\right|=0 \, .
\end{equation}
Furthermore for any $\alpha\in [a,b]$ the function $v_\alpha(r):=\frac{\partial
u}{\partial\alpha}(\alpha,r)$, $r\in [0,R]$, is a radial solution of the equation
\begin{equation*}
-\Delta_g v_\alpha=p|u_\alpha|^{p-1} v_\alpha \qquad \text{in } B_R
\, .
\end{equation*}
\end{lem}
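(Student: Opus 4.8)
The plan is to circumvent the singularity of the coefficient $(n-1)\psi'/\psi$ of the linearized equation at $r=0$ (it behaves like $(n-1)/r$, since $\psi(r)\sim r$ by $(H_1)$) by working with an integral reformulation of \eqref{cauchy} rather than with the differential equation itself. Integrating the divergence-form equation twice and using $u'(0)=0$ together with $(\psi(0))^{n-1}=0$, one has for every $\alpha\in[a,b]$
\[
u(\alpha,r)=\alpha-\int_0^r(\psi(s))^{1-n}\int_0^s(\psi(t))^{n-1}|u(\alpha,t)|^{p-1}u(\alpha,t)\,dt\,ds\,.
\]
The vanishing weight tames the singular factor: since $(\psi(t))^{n-1}\sim t^{n-1}$ and $(\psi(s))^{1-n}\sim s^{1-n}$ near the origin, the double kernel is integrable and the linear operator
\[
(\mathcal K\phi)(r):=\int_0^r(\psi(s))^{1-n}\int_0^s(\psi(t))^{n-1}\phi(t)\,dt\,ds
\]
is bounded on $C([0,R])$, with operator norm on $C([0,\rho])$ tending to $0$ as $\rho\to 0^+$. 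I would first record that $(\alpha,r)\mapsto u(\alpha,r)$ is continuous on $[a,b]\times[0,R]$ (a Gronwall estimate based on the same integral identity); combined with the standing hypothesis $u_\alpha>0$ and compactness, this gives a uniform two-sided bound $0<m\le u_\alpha(r)\le M$ there, the upper bound being Lemma \ref{l:stima-alpha}. The lower bound $m>0$ is crucial, as it makes $u\mapsto|u|^{p-1}u=u^p$ smooth along the solutions and so allows the mean value theorem below.

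Next I would examine the difference quotient $w_h(r):=h^{-1}\big(u(\alpha+h,r)-u(\alpha,r)\big)$. Subtracting the two integral identities and applying the mean value theorem to $g(u)=u^p$ (legitimate by the uniform lower bound $m$) yields
\[
w_h(r)=1-\int_0^r(\psi(s))^{1-n}\int_0^s(\psi(t))^{n-1}\,p\,\xi_h^{p-1}(t)\,w_h(t)\,dt\,ds\,,
\]
with $\xi_h(t)$ between $u_\alpha(t)$ and $u_{\alpha+h}(t)$, so that $p\,\xi_h^{p-1}\to p\,u_\alpha^{p-1}$ uniformly on $[0,R]$ as $h\to0$. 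The natural candidate for the limit is the unique solution $v_\alpha$ of the linear Volterra equation
\[
v_\alpha(r)=1-p\int_0^r(\psi(s))^{1-n}\int_0^s(\psi(t))^{n-1}\,u_\alpha^{p-1}(t)\,v_\alpha(t)\,dt\,ds\,,
\]
whose existence and uniqueness on $[0,R]$ follow from the Volterra structure of $\mathcal K$ and the boundedness of $p\,u_\alpha^{p-1}$: one contracts on a short initial interval where $\|\mathcal K\|$ is small and iterates, or equivalently sums the Neumann series, whose iterated kernels decay factorially. A Gronwall argument for $w_h-v_\alpha$, splitting the right-hand side into a term controlled by $\mathcal K$ and an error $p(\xi_h^{p-1}-u_\alpha^{p-1})v_\alpha$ that is uniformly small, then gives $\sup_{[0,R]}|w_h-v_\alpha|\to0$. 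Hence $\alpha\mapsto u(\alpha,r)$ is differentiable with $\partial_\alpha u(\alpha,r)=v_\alpha(r)$.

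For the uniform convergence \eqref{eq:uniform-conv}, I would observe that the coefficient $p\,u_\alpha^{p-1}$ depends continuously on $\alpha$ in $C([0,R])$ (by continuity of $u$ in $\alpha$ and the bound $m$), and that the solution of the linear Volterra equation depends continuously on this coefficient via the very same contraction estimate; this makes $\alpha\mapsto v_\alpha$ continuous into $C([0,R])$, which is precisely \eqref{eq:uniform-conv}. Finally, reading the integral equation for $v_\alpha$ back in differential form shows that $[(\psi)^{n-1}v_\alpha']'=-p(\psi)^{n-1}u_\alpha^{p-1}v_\alpha$ with $v_\alpha(0)=1$, $v_\alpha'(0)=0$, i.e.\ $v_\alpha$ is a radial solution of $-\Delta_g v_\alpha=p|u_\alpha|^{p-1}v_\alpha$ in $B_R$.

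I expect the main obstacle to be exactly this singularity of the linearized problem at $r=0$: the textbook theorem on $C^1$ dependence of solutions on initial data does not apply, because the coefficient $(n-1)\psi'/\psi$ is unbounded there. The integral reformulation is the device that resolves it, turning the singular factor $(\psi(s))^{1-n}$ into a bounded Volterra operator $\mathcal K$ whose norm vanishes as the interval shrinks to the origin, so that all the contraction and Gronwall estimates are uniform up to and including $r=0$.
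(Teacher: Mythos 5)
Your proof is correct, and its skeleton is the same as the paper's: compare the difference quotient with the solution $v_\alpha$ of the linearized Cauchy problem and close the estimate with a Gronwall argument in which the weight $\psi^{n-1}$ neutralizes the singular coefficient $(n-1)\psi'/\psi$ at $r=0$. The execution differs in three ways worth recording. First, the paper sets $w$ equal to the difference quotient minus $v_{\alpha_0}$, works with $z=w'$ and the first-order integrating factor $\psi^{n-1}$, so its Gronwall constant is $K=\sup_{r\in(0,R]}\int_0^r\psi^{n-1}(s)\,ds\,/\,\psi^{n-1}(r)$; you integrate once more and work with the second-kind Volterra operator $\mathcal K$ --- the two devices are equivalent, both resting on the boundedness of that same quotient. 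Second, and more substantively, the paper simply declares $v_{\alpha_0}$ to be ``the unique solution'' of the singular Cauchy problem \eqref{cauchy-v} and defers \eqref{eq:uniform-conv} to ``a standard continuous dependence result''; your contraction/Neumann-series construction of $v_\alpha$ and your continuity-of-the-solution-map argument supply proofs of exactly these two points, so your write-up is more self-contained than the paper's. Third, you linearize by the mean value theorem ($p\,\xi_h^{p-1}\to p\,u_\alpha^{p-1}$ uniformly), whereas the paper uses a quadratic remainder bound $C(u_\alpha-u_{\alpha_0})^2/(\alpha-\alpha_0)$; for $1<p<2$ that quadratic bound genuinely needs the solutions to stay away from zero (the second derivative of $u\mapsto|u|^{p-1}u$ blows up at the origin), i.e.\ it uses the positivity hypothesis, while your MVT step does not actually require the lower bound $m>0$ you call ``crucial'' --- the map $u\mapsto|u|^{p-1}u$ is $C^1$ on all of $\R$ when $p>1$, so that remark is overstated, though harmless since positivity is hypothesized anyway.
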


\begin{proof} For any $r\in [0,R]$ and $\alpha\in [a,b]$ let us define
\begin{equation*}
w(r)=\frac{u_\alpha(r)-u_{\alpha_0}(r)}{\alpha-\alpha_0}-v_{\alpha_0}(r) \qquad \text{and} \qquad z(r)=w'(r)
\end{equation*}
where by $v_{\alpha_0}$ we mean the unique solution of the Cauchy problem
\begin{equation} \label{cauchy-v}
\begin{cases}
{\ds v''(r)+(n-1)\frac{\psi'(r)}{\psi(r)}\, v'(r)=-p|u_{\alpha}(r)|^{p-1}v(r) }\\
v(0)=1\, \qquad v'(0)=0
\end{cases}
\end{equation}
corresponding to $\alpha=\alpha_0$.
With this notation the following identity holds
\begin{equation*}
z'(r)+(n-1)\frac{\psi'(r)}{\psi(r)}\, z(r)=-\left(\frac{|u_\alpha(r)|^{p-1}u_\alpha(r)-|u_{\alpha_0}(r)|^{p-1}u_{\alpha_0}(r)}{\alpha-\alpha_0}
-p|u_{\alpha_0}(r)|^{p-1}v_{\alpha_0}(r)\right) \, .
\end{equation*}
By elementary estimates and continuous dependence with respect to $\alpha$, we deduce that there exist $\overline \delta>0$ and $C>0$ such that for any $\delta\in (0,\overline \delta)$, $\alpha\in (\alpha_0-\delta,\alpha_0+\delta)\cap [a,b]$ and $r\in [0,R]$
\begin{align} \label{eq:sopra}
& \left|\frac{|u_\alpha(r)|^{p-1}u_\alpha(r)-|u_{\alpha_0}(r)|^{p-1}u_{\alpha_0}(r)}{\alpha-\alpha_0}
-p|u_{\alpha_0}(r)|^{p-1}v_{\alpha_0}(r)\right| \\
\notag & \qquad  \le
\left|\frac{|u_\alpha(r)|^{p-1}u_\alpha(r)-|u_{\alpha_0}(r)|^{p-1}u_{\alpha_0}(r)}{\alpha-\alpha_0}
-p|u_{\alpha_0}(r)|^{p-1} \frac{u_\alpha(r)-u_{\alpha_0}(r)}{\alpha-\alpha_0}   \right|\\
\notag & \qquad \qquad  +
\left|p|u_{\alpha_0}(r)|^{p-1} \frac{u_\alpha(r)-u_{\alpha_0}(r)}{\alpha-\alpha_0}
-p|u_{\alpha_0}(r)|^{p-1}v_{\alpha_0}(r)\right| \\
\notag & \qquad \le C\frac{(u_\alpha(r)-u_{\alpha_0}(r))^2}{\alpha-\alpha_0}+p|u_{\alpha_0}(r)|^{p-1}|w(r)| \, .
\end{align}
By continuous dependence, for any $\eps>0$ there exists $\delta\in(0,\overline\delta)$ such that for any $\alpha\in (\alpha_0-\delta,\alpha_0+\delta)\cap [a,b]$ and $r\in [0,R]$ we have ${\ds \sup_{r\in [0,R]} |u_\alpha(r)-u_{\alpha_0}(r)|<\eps}$ and hence by \eqref{eq:sopra} and the fact that $u_{\alpha_0}\le \alpha_0$ and $v_{\alpha_0}\le 1$, we also obtain
\begin{align*}
& \left|\frac{|u_\alpha(r)|^{p-1}u_\alpha(r)-|u_{\alpha_0}(r)|^{p-1}u_{\alpha_0}(r)}{\alpha-\alpha_0}
-p|u_{\alpha_0}(r)|^{p-1}v_{\alpha_0}(r)\right| \\
& \qquad \le (p\alpha_0^{p-1}+C\eps)|w(r)|+C\eps  \qquad \text{for any } r\in[0,R] \ \text{and} \ \alpha\in (\alpha_0-\delta,\alpha_0+\delta)\cap [a,b] \, .
\end{align*}
Since $w(0)=0$, by the previous inequality we also have
\begin{align*}
& \left|\frac{|u_\alpha(r)|^{p-1}u_\alpha(r)-|u_{\alpha_0}(r)|^{p-1}u_{\alpha_0}(r)}{\alpha-\alpha_0}
-p|u_{\alpha_0}(r)|^{p-1}v_{\alpha_0}(r)\right| \\
& \qquad \le (p\alpha_0^{p-1}+C\eps)\int_0^r |z(s)|ds+C\eps  \qquad \text{for any } r\in[0,R] \ \text{and} \ \alpha\in (\alpha_0-\delta,\alpha_0+\delta)\cap [a,b] \, .
\end{align*}
Simple estimates then yield
\begin{equation*}
|z(r)|\le K(p\alpha_0^{p-1}+C\eps)\int_0^r |z(s)|ds+KC\eps
\end{equation*}
for any $r\in [0,R]$ and $\alpha\in
(\alpha_0-\delta,\alpha_0+\delta)\cap [a,b]$ where ${\ds
K:=\sup_{r\in (0,R]} \frac{\int_0^r
\psi^{n-1}(s)ds}{\psi^{n-1}(r)}}$. Standard Gronwall-type
estimates then yield ${\ds \lim_{\alpha\to \alpha_0} \sup_{r\in
[0,R]} |z(r)|=0}$ and, in turn,
$$
\lim_{\alpha\to \alpha_0} \sup_{r\in [0,R]} |w(r)|=0   \, .
$$
This proves the differentiability with respect to $\alpha$ of the
map $\alpha\mapsto u(\alpha,r)$ and shows that the derivative with
respect to $\alpha$ is a solution of \eqref{cauchy-v}. The proof
of \eqref{eq:uniform-conv} is a consequence of a standard
continuous dependence result for the Cauchy problem
\eqref{cauchy-v}.
\end{proof}

\begin{lem} \label{l:intersection}
Let $\psi$ satisfy $(H_1)-(H_3)$. Let
$\alpha_1>\alpha_2\ge\alpha_3>\alpha_4\ge 0$ be such that
$u_{\alpha_1}(r)>0,u_{\alpha_2}(r)>0,u_{\alpha_3}(r)>0,u_{\alpha_4}(r)\ge
0$ for any $r\in[0,R_0)$ for some $0<R_0\le +\infty$. If
$\zeta_{\alpha_3\alpha_4}\le R_0$ is the first zero of
$u_{\alpha_3}-u_{\alpha_4}$ then $\zeta_{\alpha_1\alpha_2}$, the
first zero  of $u_{\alpha_1}-u_{\alpha_2}$, is finite and it
satisfies $\zeta_{\alpha_1\alpha_2}\leq \zeta_{\alpha_3\alpha_4}$.
\end{lem}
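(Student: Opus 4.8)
The plan is to sandwich both crossing radii between first zeros of the linearized solutions $v_\alpha=\partial u/\partial\alpha$ produced in Lemma \ref{l:C^1_CDIV}. For $\alpha$ in the relevant range write $\rho_\alpha\in(0,+\infty]$ for the first zero of $v_\alpha$, where $v_\alpha$ solves \eqref{cauchy-v} with $v_\alpha(0)=1$, $v_\alpha'(0)=0$. First I would record that the nonlinear difference $w_P:=u_{\alpha_1}-u_{\alpha_2}$ solves a linear radial equation $(\psi^{n-1}w_P')'+\psi^{n-1}c_P\,w_P=0$ with coefficient $c_P(r)=\frac{|u_{\alpha_1}|^{p-1}u_{\alpha_1}-|u_{\alpha_2}|^{p-1}u_{\alpha_2}}{u_{\alpha_1}-u_{\alpha_2}}$, and likewise $w_Q:=u_{\alpha_3}-u_{\alpha_4}$ with a coefficient $c_Q$. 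Since $t\mapsto|t|^{p-1}t$ is convex on nonnegative values, on $(0,\zeta_{\alpha_1\alpha_2})$ one has $u_{\alpha_1}>u_{\alpha_2}>0$ and the difference quotient dominates the derivative at the lower point, so $c_P\ge p\,u_{\alpha_2}^{p-1}$, the potential of $v_{\alpha_2}$; symmetrically on $(0,\zeta_{\alpha_3\alpha_4})$ one has $0\le u_{\alpha_4}\le u_{\alpha_3}$, hence $c_Q\le p\,u_{\alpha_3}^{p-1}$, the potential of $v_{\alpha_3}$. The crucial point is that these two coefficient comparisons use only the ordering \emph{internal} to each pair, which is free from the definition of $\zeta$.

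The two comparisons then follow from a Wronskian computation applied to each pair separately. For the upper bound, set $\Omega:=\psi^{n-1}(w_P'v_{\alpha_2}-w_P v_{\alpha_2}')$; then $\Omega(0)=0$ and $\Omega'=\psi^{n-1}(p\,u_{\alpha_2}^{p-1}-c_P)w_P v_{\alpha_2}\le0$ wherever $w_P,v_{\alpha_2}>0$. If one had $\zeta_{\alpha_1\alpha_2}>\rho_{\alpha_2}$ then at $r=\rho_{\alpha_2}$ we would get $w_P>0$, $v_{\alpha_2}=0$, $v_{\alpha_2}'<0$, forcing $\Omega(\rho_{\alpha_2})=-\psi^{n-1}w_P v_{\alpha_2}'>0$, a contradiction; hence $\zeta_{\alpha_1\alpha_2}\le\rho_{\alpha_2}$. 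The mirror-image argument comparing $w_Q$ with $v_{\alpha_3}$ (now $c_Q\le p\,u_{\alpha_3}^{p-1}$, so $v_{\alpha_3}$ oscillates at least as fast) yields $\rho_{\alpha_3}\le\zeta_{\alpha_3\alpha_4}$.

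It then remains to link $\rho_{\alpha_2}$ and $\rho_{\alpha_3}$, and for this I would prove that the conjugate radius $s\mapsto\rho_s$ is nonincreasing; together with $\alpha_2\ge\alpha_3$ this gives $\rho_{\alpha_2}\le\rho_{\alpha_3}$ and hence $\zeta_{\alpha_1\alpha_2}\le\rho_{\alpha_2}\le\rho_{\alpha_3}\le\zeta_{\alpha_3\alpha_4}$, with finiteness inherited from $\zeta_{\alpha_3\alpha_4}\le R_0$. To obtain the monotonicity I would differentiate \eqref{cauchy-v} in the parameter $s$: since $\partial_s u_s=v_s$, the function $\phi:=\partial_s v_s$ solves the same linear equation with the sign-definite forcing $-p(p-1)\psi^{n-1}u_s^{p-2}v_s^2\le0$ and with $\phi(0)=\phi'(0)=0$. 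The Lagrange identity with $v_s$ gives $[\psi^{n-1}(\phi'v_s-\phi v_s')]'=-p(p-1)\psi^{n-1}u_s^{p-2}v_s^3\le0$ on $(0,\rho_s)$, so $\phi/v_s$ is nonincreasing and vanishes at $0$, whence $\phi\le0$ there; differentiating $v_s(\rho_s)=0$ and using $v_s'(\rho_s)<0$ then yields $\rho_s'\le0$.

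The main obstacle is precisely this last monotonicity step. Comparing the two pairs against each other directly, or comparing $v_{\alpha_2}$ with $v_{\alpha_3}$ through a pointwise ordering of $u_{\alpha_2}$ and $u_{\alpha_3}$, fails: intermediate solutions generally \emph{do} cross before $\zeta_{\alpha_3\alpha_4}$, so no inequality $u_{\alpha_2}\ge u_{\alpha_3}$ is available on the whole interval. The parameter-differentiation argument circumvents exactly this, because its forcing term has a definite sign independently of any crossing. I would finally check the minor technical points that $\rho_s$ is differentiable in $s$ (implicit function theorem, using $v_s'(\rho_s)\neq0$ together with the $C^1$ dependence of Lemma \ref{l:C^1_CDIV}) and that $u_s>0$ on the relevant interval for all $s\in[\alpha_3,\alpha_2]$, which holds in the configuration at hand.
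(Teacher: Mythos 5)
Your three-step skeleton is mathematically sound, and it is a genuinely different route from the paper's. The paper proves this lemma by a one-line reference to \cite[Lemma 7.3]{BGGV}, and that argument never leaves the nonlinear solutions: both differences $w_P=u_{\alpha_1}-u_{\alpha_2}$ and $w_Q=u_{\alpha_3}-u_{\alpha_4}$ solve linear equations with difference-quotient potentials, and the required potential comparison is obtained from convexity by \emph{chaining through pairs that share a solution}. Since $c_P\ge p\,u_{\alpha_2}^{p-1}$ while the potential of $u_{\alpha_2}-u_{\alpha_3}$ is $\le p\,u_{\alpha_2}^{p-1}$, one Wronskian comparison gives $\zeta_{\alpha_1\alpha_2}\le\zeta_{\alpha_2\alpha_3}$, and a second one (shared solution $u_{\alpha_3}$) gives $\zeta_{\alpha_2\alpha_3}\le\zeta_{\alpha_3\alpha_4}$. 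This circumvents exactly the obstacle you flagged — the possible crossing of $u_{\alpha_2}$ and $u_{\alpha_3}$ — because no ordering \emph{across} pairs is ever used, only the ordering internal to each pair. So the difficulty that drives you to the linearized flow has an elementary fix; note that \cite{BGGV} does not even have the linearization $v_\alpha=\partial u/\partial\alpha$ at its disposal (that machinery is introduced only in the present paper, for the stability results). What your route buys is a statement of independent interest, the monotonicity of the first zero $\rho_s$ of $v_s$; what it costs is a much heavier middle step.

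That middle step is also where your write-up has a genuine gap. To differentiate \eqref{cauchy-v} in $s$, and to apply the implicit function theorem to $v_s(\rho_s)=0$, you need $(s,r)\mapsto v_s(r)$ to be $C^1$, i.e.\ existence and continuity of $\phi=\partial_s v_s$ — this is \emph{second}-order differentiability of $u(\alpha,r)$ in $\alpha$, one order beyond what Lemma \ref{l:C^1_CDIV} provides. Your appeal to ``the implicit function theorem together with the $C^1$ dependence of Lemma \ref{l:C^1_CDIV}'' is therefore circular: that lemma gives differentiability of $u$, not of $v$. The gap is repairable (the nonlinearity is smooth where $u>0$, so one can redo the Gronwall argument of Lemma \ref{l:C^1_CDIV} at one order higher on intervals where $u_s$ stays positive, with the usual care at the singular point $r=0$ and, for $1<p<2$, at points where $u_s$ vanishes), but that is an additional lemma, not a minor check. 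Likewise, your assertion that $u_s>0$ on $[0,\rho_s)$ for all intermediate $s\in[\alpha_3,\alpha_2]$ is not among the hypotheses, which concern only the four given values of $\alpha$; it is true, but needs its own Sturm comparison — $v_s$ has potential $p|u_s|^{p-1}\ge |u_s|^{p-1}$, the potential solved by $u_s$ itself, so the first zero of $v_s$ precedes the first zero of $u_s$ — and without it the forcing term $-p(p-1)|u_s|^{p-2}\,\mathrm{sgn}(u_s)\,v_s^2$ has no definite sign. In short: correct and interesting skeleton, genuinely different from the paper, but the parameter-differentiation step carries real technical debt that the elementary chaining argument of \cite{BGGV} avoids entirely.
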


\begin{proof} The proof can be obtained proceeding exactly as in the proof of \cite[Lemma 7.3]{BGGV}.
\end{proof}

We now show that $\lambda_1(B_r)$ diverges as $r\to 0^+$.

\begin{lem} \label{l:lambda_1}
Let $\psi$ satisfy $(H_1)-(H_2)$. Then
$$
\lim_{r\to 0^+} \lambda_1(B_r)=+\infty \, .
$$
\end{lem}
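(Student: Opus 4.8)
The plan is to reuse the Grigor'yan-type lower bound on the first Dirichlet eigenvalue of geodesic balls that already appears in the proof of Lemma \ref{eq:lambda_1>0}. Recall from \cite{Gr} that for every $R\in(0,+\infty)$ one has
\[
\lambda_1(B_R)\geq \frac{1}{4F(R)},\qquad F(R)=\sup_{0<r<R}H_R(r),
\]
where
\[
H_R(r)=\left(\int_0^r(\psi(s))^{n-1}\,ds\right)\left(\int_r^R(\psi(s))^{1-n}\,ds\right).
\]
Since $\lambda_1(B_R)\geq 1/(4F(R))$ and $F(R)>0$, it suffices to prove that $F(R)\to 0^+$ as $R\to 0^+$; the conclusion $\lim_{R\to 0^+}\lambda_1(B_R)=+\infty$ will then follow at once.

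To control $F(R)$ for small $R$ I would use only the local behavior of $\psi$ at the pole encoded in $(H_1)$. Since $\psi(0)=0$ and $\psi'(0)=1$, for every $\eps\in(0,1)$ there is $\delta>0$ such that $(1-\eps)s\leq \psi(s)\leq (1+\eps)s$ for all $s\in(0,\delta]$. Hence, for $R\leq\delta$ and any $r\in(0,R)$,
\[
\int_0^r(\psi(s))^{n-1}\,ds\leq \frac{(1+\eps)^{n-1}}{n}\,r^n,\qquad \int_r^R(\psi(s))^{1-n}\,ds\leq \frac{(1-\eps)^{1-n}}{n-2}\,r^{2-n}.
\]
Multiplying these two estimates, the powers $r^n$ and $r^{2-n}$ cancel and we are left with
\[
H_R(r)\leq \frac{(1+\eps)^{n-1}(1-\eps)^{1-n}}{n(n-2)}\,r^2\leq \frac{(1+\eps)^{n-1}(1-\eps)^{1-n}}{n(n-2)}\,R^2.
\]
Taking the supremum over $r\in(0,R)$ gives $F(R)\leq C_\eps R^2$ with $C_\eps:=\frac{(1+\eps)^{n-1}(1-\eps)^{1-n}}{n(n-2)}$, so that in fact $\lambda_1(B_R)\geq 1/(4C_\eps R^2)\to +\infty$, which is the expected Euclidean blow-up rate.

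I expect the only delicate point to be the uniformity of the bound on $H_R(r)$ in $r\in(0,R)$, and its vanishing as $R\to 0^+$: one must check that the two-sided control of $\psi$ near the pole collapses the $r$-dependence to the single factor $r^2\leq R^2$ after cancellation of the opposite powers. The display above does exactly this, so the estimate is routine once it is set up correctly. I emphasize that, in contrast with Lemma \ref{eq:lambda_1>0}, where integrability of $\psi^{1-n}$ at infinity required $(H_3)$, here $R$ is finite and small, so no global assumption on $\psi$ enters and the whole argument rests on the normalization $\psi'(0)=1$ coming from $(H_1)$.
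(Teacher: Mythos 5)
Your proof is correct, but it follows a genuinely different route from the paper. The paper proves Lemma \ref{l:lambda_1} by a direct Rayleigh-quotient comparison: using $(H_1)$--$(H_2)$ it gets two-sided linear bounds $C_1 r\le \psi(r)\le C_2 r$ on $[0,\overline r]$, compares $\int_{B_r}|\nabla_g\varphi|_g^2\,dV_g / \int_{B_r}\varphi^2\,dV_g$ for an arbitrary (not necessarily radial) test function $\varphi$ with the corresponding Euclidean quotient for $\widetilde\varphi(x)=\varphi(|x|,x/|x|)$, obtaining $\lambda_1(B_r)\ge \frac{\min\{C_1^{n-1},C_1^{n-3}\}}{C_2^{n-1}}\,\lambda_1(B_r^E)$, and then invokes the classical scaling fact $\lambda_1(B_r^E)\to+\infty$. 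You instead recycle the Grigor'yan bound $\lambda_1(B_R)\ge 1/(4F(R))$ already quoted in the proof of Lemma \ref{eq:lambda_1>0}, and show $F(R)\le C_\eps R^2$ from the local normalization $\psi(s)\sim s$ at the pole; your integral estimates are correct for $n\ge 3$, and your observation that $(H_3)$ plays no role is sound, since the Grigor'yan functional for a finite ball only sees $\psi|_{[0,R]}$ (indeed you only use $(H_1)$, plus positivity of $\psi$, which is part of the standing setup). The trade-off: your argument is shorter and more self-contained relative to what the paper has already set up, and it delivers the explicit Euclidean rate $\lambda_1(B_R)\ge c_\eps R^{-2}$ rather than a mere divergence statement; on the other hand, it delegates to the cited Grigor'yan estimate the reduction from arbitrary test functions to the radial one-dimensional Hardy-type quantity, whereas the paper's comparison argument handles non-radial test functions explicitly and only needs the elementary scaling property of the Euclidean eigenvalue. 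Both proofs rest on the same geometric fact, namely that near the pole the model is uniformly comparable to flat $\R^n$.
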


\begin{proof} By $(H_1)-(H_2)$, for any $\overline r$ there exist $0<C_1<C_2$ depending on $\overline r$ such that
\begin{equation*}
C_1 r\le \psi(r)\le C_2 r \qquad \text{for any } r\in [0,\overline
r] \, .
\end{equation*}
Fix $\overline r$ and for any $r\in [0,\overline r]$ let us
consider $\varphi\in C^\infty_c(B_r)$ and the quotient
$$
\frac{\int_{B_r} |\nabla_g \varphi|_g^2 dV_g}{\int_{B_r} \varphi^2
dV_g}\ge\frac{\min\{C_1^{n-1},C_1^{n-3}\}}{C_2^{n-1}}
\frac{\int_{B_r^E} |\nabla\widetilde\varphi(x)|^2 dx}
{\int_{B_r^E} \widetilde\varphi^2(x) dx} \ge
\frac{\min\{C_1^{n-1},C_1^{n-3}\}}{C_2^{n-1}} \, \lambda_1(B_r^E)
$$
where $B_r^E\subset \R^n$ denotes the euclidean ball of radius $r$
centered at the origin, $\widetilde\varphi \in C^\infty_c(B_r^E)$
the function defined by $\widetilde\varphi(x)=\varphi(|x|,x/|x|)$
for any $x\in B_r^E$ and $\lambda_1(B_r^E)$ the first eigenvalue
of $-\Delta$ with Dirichlet boundary conditions in the euclidean
ball $B_r^E$. Since the previous inequality holds for any
$\varphi\in C^\infty_c(B_r)$ then
\begin{equation} \label{eq:ine-eig}
\lambda_1(B_r)\ge \frac{\min\{C_1^{n-1},C_1^{n-3}\}}{C_2^{n-1}} \,
\lambda_1(B_r^E) \, .
\end{equation}
It is well known that thanks to a rescaling argument one has
$\lim_{r\to 0^+} \lambda_1(B_r^E)=+\infty$. Therefore passing to
the limit in \eqref{eq:ine-eig} as $r\to 0^+$ we arrive to the
conclusion of the proof.
\end{proof}

\begin{lem} \label{l:unstable}
Let $\psi$ satisfy $(H_1)-(H_3)$.
Let $\alpha>\beta>0$ and let $u_\alpha$, $u_\beta$ be the corresponding solutions
of \eqref{cauchy}. If $u_\beta$ is unstable then $u_\alpha$ is
unstable.
\end{lem}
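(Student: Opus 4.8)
The plan is to reduce stability of a radial solution to the sign of the associated Jacobi field $v_\gamma=\partial_\gamma u_\gamma$, which by Lemma~\ref{l:C^1_CDIV} is well defined and solves the linearised problem \eqref{cauchy-v} with $v_\gamma(0)=1$, $v_\gamma'(0)=0$. First I would dispose of the sign-changing case: if $u_\alpha(R)=0$ for some $R\in(0,+\infty)$, then the test function $v_R:=u_\alpha\chi_{[0,R]}$ used in the proof of Lemma~\ref{derivative} violates the radial stability criterion \eqref{eq:stab-rad}, so $u_\alpha$ is unstable and there is nothing to prove. Hence I may assume $u_\alpha>0$ on $[0,+\infty)$, and (by the intersection mechanism discussed below) that $u_\beta>0$ on $[0,+\infty)$ as well, so that $|u_\gamma|^{p-1}=u_\gamma^{p-1}$ and $v_\gamma$ is available globally for $\gamma\in\{\beta,\alpha\}$.

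The key is a characterisation that I would isolate as a preliminary step: for a solution positive on all of $[0,+\infty)$, $u_\gamma$ is stable if and only if $v_\gamma>0$ on $[0,+\infty)$. For the ``if'' direction one performs the classical ground-state substitution $\chi=v_\gamma\xi$ in \eqref{eq:stab-rad}: since $v_\gamma$ solves the linearised equation, an integration by parts yields
\[
\int_0^{+\infty}(\chi'(r))^2\psi^{n-1}(r)\,dr-p\int_0^{+\infty}u_\gamma^{p-1}(r)\chi^2(r)\psi^{n-1}(r)\,dr=\int_0^{+\infty}v_\gamma^2(r)(\xi'(r))^2\psi^{n-1}(r)\,dr\ge0,
\]
which is legitimate for every radial $\chi\in C_c^{\infty}(M)$ because $v_\gamma>0$ makes $\xi=\chi/v_\gamma$ an admissible compactly supported function. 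For the ``only if'' direction, if $v_\gamma$ has a first zero $R_\gamma\in(0,+\infty)$, then $v_\gamma$ is a \emph{positive} radial Dirichlet eigenfunction of $-\Delta_g-p\,u_\gamma^{p-1}$ on $B_{R_\gamma}$ with eigenvalue $0$ (the condition $v_\gamma'(0)=0$ being the natural regularity requirement at the pole); being positive it is the principal one, so the bottom of that Dirichlet spectrum is $0$, and by strict domain monotonicity it becomes negative on $B_R$ for $R>R_\gamma$, producing a radial test function that violates \eqref{eq:stab-rad}. Lemma~\ref{l:lambda_1} guarantees that this variational framework is meaningful on small balls.

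It then remains to prove that $\gamma\mapsto R_\gamma$, the first zero of $v_\gamma$ (with $R_\gamma=+\infty$ if $v_\gamma>0$ throughout), is non-increasing; granting this, $u_\beta$ unstable forces $R_\beta<+\infty$, whence $R_\alpha\le R_\beta<+\infty$ and $u_\alpha$ is unstable by the characterisation. To establish the monotonicity I would identify $R_\gamma$ with a limit of intersection abscissae: by the uniform $C^1$ convergence in Lemma~\ref{l:C^1_CDIV}, $(u_\gamma-u_\delta)/(\gamma-\delta)\to v_\gamma$ on compacta as $\delta\to\gamma^-$, and since the zero of $v_\gamma$ at $R_\gamma$ is simple (otherwise $v_\gamma\equiv0$ by ODE uniqueness) the first intersection point $\zeta_{\gamma\delta}$ of $u_\gamma$ and $u_\delta$ satisfies $\zeta_{\gamma\delta}\to R_\gamma$. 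Now I would apply Lemma~\ref{l:intersection} to the nested quadruple $\alpha>\delta_1\ge\beta>\delta_2$ with $\delta_1\in[\beta,\alpha)$ and $\delta_2<\beta$ (the required positivity on the relevant compact intervals holding by continuous dependence): it gives $\zeta_{\alpha\delta_1}\le\zeta_{\beta\delta_2}$, and letting $\delta_1\to\alpha^-$, $\delta_2\to\beta^-$ yields exactly $R_\alpha\le R_\beta$.

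The main obstacle is that one cannot compare $u_\alpha$ and $u_\beta$ pointwise: the two solutions cross (their first intersection $\zeta_{\alpha\beta}$ is finite by Lemma~\ref{l:intersection}), so the potentials $u_\alpha^{p-1}$ and $u_\beta^{p-1}$ are not ordered on the whole half-line and a direct Sturm comparison, or a direct transplantation of the destabilising test function from $u_\beta$ to $u_\alpha$, fails. The device that circumvents this is precisely to transfer the question to the Jacobi fields $v_\gamma$ and to read off the ordering of their first zeros from Lemma~\ref{l:intersection} in the limit $\delta_i\to\gamma_i$, rather than from any pointwise inequality between $u_\alpha$ and $u_\beta$. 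A secondary point requiring care is the reduction to globally positive solutions, namely that $u_\alpha>0$ on $[0,+\infty)$ forces $u_\beta>0$ there too: were $u_\beta$ to vanish first at some $Z_\beta$, Lemma~\ref{l:intersection} with $\alpha_4=0$ would place the crossing of $u_\alpha$ and $u_\beta$ at $\zeta_{\alpha\beta}\le Z_\beta$ and keep $u_\alpha\le u_\beta$ beyond it, forcing $u_\alpha$ to vanish no later than $Z_\beta$ and contradicting its positivity.
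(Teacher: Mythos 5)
Your overall architecture is sound and genuinely different from the paper's proof, but one step fails as written: the reduction asserting that $u_\alpha>0$ on $[0,+\infty)$ forces $u_\beta>0$ on $[0,+\infty)$. You argue that if $u_\beta$ had a first zero $Z_\beta$, then Lemma \ref{l:intersection} with $\alpha_4=0$ gives $\zeta_{\alpha\beta}\le Z_\beta$ and that $u_\alpha$ would stay below $u_\beta$ from $\zeta_{\alpha\beta}$ up to $Z_\beta$. The first assertion is correct, but the second is unsupported: Lemma \ref{l:intersection} controls only \emph{first} intersection points, and nothing in $(H_1)$--$(H_3)$ prevents $u_\alpha$ and $u_\beta$ from crossing back before $Z_\beta$. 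Indeed, ``at most one intersection'' statements in this paper (Lemma \ref{Uv}) require much stronger hypotheses (subcriticality, $u\in H^1(M)$, the $\Lambda$-property). The good news is that the gap is repairable inside your own framework, without ever proving positivity of $u_\beta$: if $u_\beta$ has a first zero $Z_\beta$, apply Lemma \ref{l:intersection} to the quadruple $\alpha>\delta_1\ge\beta>0$ (i.e.\ $\alpha_3=\beta$, $\alpha_4=0$, $R_0=Z_\beta$), which is legitimate for $\delta_1$ close to $\alpha$ by continuous dependence; this gives $\zeta_{\alpha\delta_1}\le\zeta_{\beta 0}=Z_\beta$ for all such $\delta_1$, and your limit identification then forces $R_\alpha\le Z_\beta<+\infty$ (if $R_\alpha>Z_\beta$ the difference quotients would be uniformly positive on $[0,Z_\beta]$, contradicting $\zeta_{\alpha\delta_1}\le Z_\beta$), so $u_\alpha$ is unstable by your characterization. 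In this case the instability of $u_\beta$ is not even needed, being automatic from Lemma \ref{derivative}.

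With that repair your proof is complete, and it takes a different route from the paper's. The paper splits according to whether $u_\alpha$ and $u_\beta$ intersect: if not (and both are positive), instability transfers by direct domination $u_\alpha>u_\beta$, hence $u_\alpha^{p-1}>u_\beta^{p-1}$; if they do intersect, the paper produces, via Lagrange's theorem and Lemma \ref{l:C^1_CDIV}, zeros $\rho_k$ of the Jacobi fields $v_{\sigma_k}$ with $\sigma_k\uparrow\alpha$, and then runs a weak-compactness argument in $H^1(M)$ (normalized truncations $w_k\rightharpoonup w$, with $\rho_k$ bounded away from $0$ and $+\infty$) to build a nontrivial kernel element of the linearized operator, contradicting stability of $u_\alpha$ via its variational characterization and ODE uniqueness. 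You instead isolate the classical shooting criterion — a globally positive solution is stable if and only if its Jacobi field $v_\gamma$ never vanishes; your ground-state substitution for the ``if'' direction and your principal-eigenvalue/strict-domain-monotonicity argument for the ``only if'' direction are both correct (the latter could equally be closed by the paper's own final argument: the extension of $v_\gamma\chi_{[0,R_\gamma]}$ by zero would be a minimizer, hence a global solution of the linearized ODE vanishing on a half-line, hence identically zero) — and then prove the ordering $R_\alpha\le R_\beta$ of first zeros by passing to the limit in Lemma \ref{l:intersection}. This confines all compactness to bounded balls and makes the limit object $v_\alpha$ itself rather than a weak limit of normalized functions; the price is the care needed in the identification $\zeta_{\gamma\delta}\to R_\gamma$ (correctly reduced by you to simplicity of zeros of $v_\gamma$) and in verifying the positivity hypotheses of Lemma \ref{l:intersection}, which is exactly where your write-up slipped.
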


\begin{proof}
We assume $u_\alpha$ positive otherwise the statement follows by
Lemma \ref{derivative}.\par First suppose that $u_\alpha$ and
$u_\beta$ have no intersection points. If they are both positive
the conclusion is obvious. If $u_\beta$ changes sign, we reach a
contradiction by Lemma \ref{l:intersection} with
$\alpha_1=\alpha$, $\alpha_2=\alpha_3=\beta$ and $\alpha_4=0$.\par
Next we assume that $u_\alpha$ and $u_\beta$ have at least one
intersection point. Let $\zeta_{\alpha\beta}$ be the first zero of
the function $u_\alpha-u_\beta$. By \eqref{cauchy} we deduce that
$u_\alpha'(\zeta_{\alpha\beta})<u_\beta'(\zeta_{\alpha\beta})$ so
that there exists $\delta>0$ such that $u_\alpha(r)<u_\beta(r)$
for any $r\in (\zeta_{\alpha\beta},\zeta_{\alpha\beta}+\delta)$.
By continuous dependence on the initial datum we deduce that there
exists $\overline\alpha\in (\beta,\alpha)$ such that for any
$\gamma\in [\overline\alpha,\alpha]$ we have
$u_\gamma(r)<u_\beta(r)$ for any $r\in
(\zeta_{\alpha\beta}+\delta/2,\zeta_{\alpha\beta}+\delta)$.

By Lemma \ref{l:intersection} we have that $u_\alpha$ and $u_\gamma$ admit at least one intersection point and moreover
\begin{equation}
\zeta_{\alpha\gamma}\le \zeta_{\overline\alpha\beta}<+\infty \qquad \text{for any } \gamma \in (\overline\alpha,\alpha) \, .
\end{equation}
Let us note that as above one can show that for any $\gamma\in (\overline\alpha,\alpha)$, $u_\alpha<u_\gamma$ in a arbitrarily right neighborhood of
$\zeta_{\alpha\gamma}$.

Let $\{\gamma_k\}\subset [\overline\alpha,\alpha)$ be a sequence such that $\gamma_k\uparrow \alpha$.

Then for any $k$ there exists $r_k\in (\zeta_{\alpha\gamma_k},\zeta_{\overline\alpha\beta}+1)$ such that
\begin{equation*}
\frac{u(\alpha,r_k)-u(\gamma_k,r_k)}{\alpha-\gamma_k}<0
\end{equation*}
and by Lagrange Theorem and Lemma \ref{l:C^1_CDIV} we deduce that there exists $\sigma_k\in (\gamma_k,\alpha)$  such that
\begin{equation*}
v_{\sigma_k}(r_k)=v(\sigma_k,r_k)=\frac{\partial u}{\partial \alpha}(\sigma_k,r_k)=\frac{u(\alpha,r_k)-u(\gamma_k,r_k)}{\alpha-\gamma_k}<0 \, .
\end{equation*}
On the other hand for any $k$, $v(\sigma_k,0)=1>0$ so that there exists $\rho_k\in (0,r_k)$ such that
$v(\sigma_k,\rho_k)=0$. This shows that
\begin{equation} \label{eq:k}
\begin{cases}
-\Delta_g v_{\sigma_k}=p|u_{\sigma_k}|^{p-1} v_{\sigma_k} & \qquad \text{in } B_{\rho_k} \\
v_{\sigma_k}=0 & \qquad \text{on } \partial B_{\rho_k} \, .
\end{cases}
\end{equation}
By the definitions of $r_k$ and $\rho_k$ we easily deduce that
\begin{equation*}
\rho_k\le \zeta_{\overline\alpha\beta}+1 \qquad \text{for any }
k\in\N \, .
\end{equation*}
Multiplying both sides of the above equation by $v_{\sigma_k}$ and
integrating by parts we obtain
\begin{equation*}
\int_{B_{\rho_k}} |\nabla_g v_{\sigma_k}|_g^2 \,
dV_g=\int_{B_{\rho_k}} p|u_{\sigma_k}|^{p-1} v_{\sigma_k}^2 dV_g
\, .
\end{equation*}
We want to show that the sequence $\{\rho_k\}$ is also bounded
away from zero. Since $v_{\sigma_k}\in H^1_0(B_{\rho_k})$ and
$\sigma_k<\alpha$, by Lemma \ref{l:stima-alpha} we have
\begin{align*}
0=\int_{B_{\rho_k}} |\nabla_g v_{\sigma_k}|_g^2 \,
dV_g-\int_{B_{\rho_k}} p|u_{\sigma_k}|^{p-1} v_{\sigma_k}^2 dV_g
\ge (\lambda_1(B_{\rho_k})-p\alpha^{p-1}) \int_{B_{\rho_k}}
v_{\sigma_k}^2 dV_g
\end{align*}
and hence $\lambda_1(B_{\rho_k})\le p\alpha^{p-1}$ for any
$k\in\N$. Therefore if we assume by contradiction that
$\ds{\liminf_{k\to +\infty} \rho_k=0}$ then by Lemma
\ref{l:lambda_1} $\ds{\limsup_{k\to +\infty}
\lambda_1(B_{\rho_k})=+\infty}$, a contradiction.

Then we may define $\ds{\rho_\infty=\liminf_{k\to +\infty}
\rho_k\in (0,+\infty)}$ and the sequence $\{w_k\}\subset H^1(M)$
\begin{equation*}
w_k(x):=
\begin{cases}
\frac{\displaystyle{v_{\sigma_k}(x)}}{\ds{\|v_{\sigma_k}\|_{H^1(M)}}} & \qquad \text{if } x\in B_{\rho_k} \\
0 & \qquad \text{if } x\in M\setminus B_{\rho_k} \, .
\end{cases}
\end{equation*}

Then for any $k$, $w_k$ satisfies problem \eq{eq:k} and
\begin{equation*} 
\int_{M} |\nabla_g w_k|_g^2 \, dV_g-\int_{M} p|u_{\sigma_k}|^{p-1} w_k^2\, dV_g=0  \, .
\end{equation*}
Moreover $\{w_k\}$ is bounded in $H^1(M)$ and hence up to a
subsequence we assume that there exists $w\in H^1(M)$ such that
$w_k\rightharpoonup w$ weakly in $H^1(M)$.

Let $\varphi\in C^\infty_c(B_{\rho_\infty})$ such that for any $k$
large enough $\text{supp}\, \varphi\subset B_{\rho_k}$. Then
\begin{equation*}
\int_{B_{\rho_\infty}} \langle\nabla_g w_k, \nabla_g
\varphi\rangle_g \, dV_g=p\int_{B_{\rho_\infty}}
|u_{\sigma_k}|^{p-1} w_k\varphi \, dV_g \, .
\end{equation*}
Passing to the limit as $k\to +\infty$ and taking into account
that by compact embedding $H^1(B_{\rho_\infty})\subset
L^2(B_{\rho_\infty})$ $w_k\to w$ strongly in
$L^2(B_{\rho_\infty})$, and that by continuity from the initial
data, $u_{\sigma_k}\to u_{\alpha}$ uniformly on compact sets,  we
obtain
\begin{equation} \label{eq:id-var}
\int_{B_{\rho_\infty}} \langle\nabla_g w, \nabla_g
\varphi\rangle_g \, dV_g=p\int_{B_{\rho_\infty}}
|u_{\alpha}|^{p-1} w\varphi \, dV_g  \qquad \text{for any }
\varphi\in C^\infty_c(B_{\rho_\infty}) \, .
\end{equation}
By density, the previous identity holds for any $\varphi\in
H^1_0(B_{\rho_\infty})$.

We claim that $w\in H^1_0(B_{\rho_\infty})$. Up to another
subsequence we may assume that $w_k\to w$ almost everywhere in $M$
with respect to the volume measure $V_g$. But up to a subsequence,
$\rho_k\to \rho_\infty$ so that for almost every $P\in M\setminus
\overline{B}_{\rho_\infty}$, $w_k(P)=0$ for any $k$ large enough.
This proves that $w\equiv 0$ almost everywhere in $M\setminus
\overline{B}_{\rho_\infty}$ and since $w\in H^1(M)$ then $w\in
H^1_0(B_{\rho_\infty})$.

Since $\text{supp}\, w_k\subseteq
B_{\zeta_{\overline\alpha\beta}+1}$ for any $k$, by compact
embedding $H^1(B_{\zeta_{\overline\alpha\beta}+1}) \subset
L^2(B_{\zeta_{\overline\alpha\beta}+1})$, we have that $w_k\to w$ in
$L^2(M)$. Together with \eqref{eq:id-var} and the fact that
$\text{supp} \, w_k\subset B_{\rho_k}$, $\text{supp} \, w\subset
B_{\rho_\infty}$, this implies
\begin{align*}
\lim_{k\to +\infty} & \int_M |\nabla_g w_k|_g^2 \, dV_g=
\lim_{k\to +\infty} \int_{B_{\rho_k}} |\nabla_g w_k|_g^2 \, dV_g =
\lim_{k\to+\infty} \int_{B_{\rho_k}} p|u_{\sigma_k}|^{p-1} w_k^2
\, dV_g\\
& =\lim_{k\to+\infty} \int_{B_{\zeta_{\overline\alpha\beta}+1}}
p|u_{\sigma_k}|^{p-1} w_k^2 \, dV_g
=\int_{B_{\zeta_{\overline\alpha\beta}+1}} p|u_{\alpha}|^{p-1} w^2
\, dV_g =\int_{B_{\rho_\infty}} p|u_{\alpha}|^{p-1} w^2 \, dV_g\\
& =\int_{B_{\rho_\infty}} |\nabla_g w|^2 \, dV_g=\int_M |\nabla_g
w|^2 \, dV_g \, .
\end{align*}
The last identity together with the weak convergence yields
$w_k\to w$ strongly in $H^1(M)$. In particular, since
$\|w_k\|_{H^1(M)}=1$ for any $k\in\N$, then $\|w\|_{H^1(M)}=1$ and
hence $w\not\equiv 0$. Summarizing we have found a nontrivial
function $w\in H^1(M)$ satisfying
\begin{equation*}
\int_M |\nabla_g w|^2 \, dV_g-\int_{M} p|u_{\alpha}|^{p-1} w^2 \,
dV_g=0 \, .
\end{equation*}
Suppose now by contradiction that $u_\alpha$ is stable. Then
\begin{equation*}
\int_{M} |\nabla_g \varphi|_g^2 \, dV_g-\int_{M}
p|u_{\alpha}|^{p-1} \varphi^2\, dV_g\ge 0 \qquad \text{for any }
\varphi\in C^\infty_c(M)
\end{equation*}
and by density the previous inequality holds for any $\varphi\in
H^1(M)$.

This means that $w\in H^1(M)$ is a minimizer of
\begin{equation*}
\inf_{v\in H^1(M)\setminus \{0\}} \frac{\int_{M} |\nabla_g v|_g^2
\, dV_g}{\int_{M} p|u_{\alpha}|^{p-1} v^2\, dV_g} \, .
\end{equation*}
In particular $w\in H^1(M)$ is a solution of the equation
\begin{equation*}
-\Delta_g w=p|u_\alpha|^{p-1} w \qquad \text{in } M
\end{equation*}
and by standard regularity theory $w\in C^2(M)$. In particular $w$
is a classical solution of the ordinary differential equation
\begin{equation*}
-w''(r)-(n-1)\frac{\psi'(r)}{\psi(r)} w'(r)=p|u_\alpha(r)|^{p-1}
w(r) \qquad (r>0) \, .
\end{equation*}
But $w(r)=0$ for any $r>\rho_\infty$ and hence by uniqueness of
the Cauchy problem we infer $w(r)=0$ for any $r>0$ so that
$w\equiv 0$ in $M$, a contradiction.
\end{proof}
Next we define
$$
\alpha_0:=\sup\{\alpha\ge 0:u_\beta \text{ is stable for any }
\beta\in(0,\alpha)\} \, .
$$
By Lemma \ref{stabile} we know that $\alpha_0\in (0,+\infty]$ and
by Lemma \ref{l:unstable} we have that $u_\alpha$ is stable for
any $\alpha\in [0,\alpha_0)$ and it is unstable for any
$\alpha>\alpha_0$ whenever $\alpha_0<+\infty$. In the next lemma
we prove that the set
$$
\mathcal S:=\{\alpha\ge 0: u_\alpha \text{ is stable} \}
$$
is a closed interval.

\begin{lem} \label{l:ordering-0}
Let $\psi$ satisfy $(H_1)-(H_3)$. Then the set $\mathcal S$ is a
closed interval.
\end{lem}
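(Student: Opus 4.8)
The plan is to reduce the statement to a single closedness assertion and then settle that assertion by a limiting argument based on continuous dependence on the initial datum. Recall that, as already observed just before the statement, Lemma \ref{stabile} gives $\alpha_0\in(0,+\infty]$ together with the stability of $u_\alpha$ for every $\alpha\in[0,\alpha_0)$, while Lemma \ref{l:unstable} forces $u_\alpha$ to be unstable for every $\alpha>\alpha_0$ whenever $\alpha_0<+\infty$. Consequently
$$
[0,\alpha_0)\subseteq \mathcal S\subseteq[0,\alpha_0]\,.
$$
If $\alpha_0=+\infty$ this already yields $\mathcal S=[0,+\infty)$, a closed interval, and there is nothing more to prove.

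It therefore remains to treat the case $\alpha_0<+\infty$, where the only open point is whether the endpoint belongs to $\mathcal S$, i.e.\ whether $u_{\alpha_0}$ is stable. First I would fix an arbitrary radial test function $\chi\in C_c^{\infty}(M)$ and let $R>0$ be such that $\mathrm{supp}\,\chi\subset\overline{B}_R$. By the standard continuous dependence of the solution of the Cauchy problem \eqref{cauchy} on its initial datum, uniform on the compact interval $[0,R]$ (the very fact exploited in Lemma \ref{l:C^1_CDIV}), one has $u_\alpha\to u_{\alpha_0}$ uniformly on $[0,R]$ as $\alpha\to\alpha_0^-$. Since $t\mapsto|t|^{p-1}$ is continuous, this gives $|u_\alpha|^{p-1}\to|u_{\alpha_0}|^{p-1}$ uniformly on $[0,R]$, and hence
$$
\int_0^{+\infty}|u_\alpha(r)|^{p-1}\chi^2(r)\,\psi^{n-1}(r)\,dr\longrightarrow\int_0^{+\infty}|u_{\alpha_0}(r)|^{p-1}\chi^2(r)\,\psi^{n-1}(r)\,dr
$$
as $\alpha\to\alpha_0^-$, the gradient term being independent of $\alpha$.

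Next I would pass to the limit in the radial stability inequality \eqref{eq:stab-rad}, which holds for every $\alpha\in[0,\alpha_0)$ because $u_\alpha$ is stable there. The limit preserves the inequality, so \eqref{eq:stab-rad} holds for $u_{\alpha_0}$ and for every radial $\chi\in C_c^{\infty}(M)$; by the radial characterization of stability this shows that $u_{\alpha_0}$ is stable, that is $\alpha_0\in\mathcal S$. Combined with the inclusions above, this gives $\mathcal S=[0,\alpha_0]$, which is a closed interval.

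The whole argument is essentially forced once the inclusions $[0,\alpha_0)\subseteq\mathcal S\subseteq[0,\alpha_0]$ are established; the only point requiring genuine care is the closedness of $\mathcal S$ at the endpoint $\alpha_0$. This presents no serious obstacle, precisely because every admissible test function has compact support, so the potential term converges by uniform convergence of $u_\alpha$ on compact sets and no control of $u_\alpha$ uniform over all of $M$ is ever needed.
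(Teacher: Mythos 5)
Your proposal is correct and is essentially the paper's own argument: the paper proves that the set of unstable values $\alpha$ is open by fixing a compactly supported test function and using continuous dependence on the initial datum (uniform convergence of $u_\alpha$ on compact sets) to pass the strict instability inequality to nearby $\beta$, which is exactly the contrapositive of your passage to the limit in the stability inequality at $\alpha_0$. The only cosmetic differences are that you use the radial characterization \eqref{eq:stab-rad} and the one-sided limit $\alpha\to\alpha_0^-$, while the paper uses general test functions $\varphi\in C^\infty_c(M)$ and two-sided neighborhoods; the essential ingredient---compact support of the test function combined with uniform convergence of $u_\alpha$ on compact sets---is identical.
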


\begin{proof} We have just shown above that $\mathcal S$ is an
interval. It remains to show that if $\alpha_0<+\infty$ then
$\alpha_0\in \mathcal S$. We prove that
$[0,+\infty)\setminus\mathcal S$ is open. Let $\alpha\in
[0,+\infty)\setminus\mathcal S$ so that $u_\alpha$ is unstable.
Hence there exists $\varphi\in C^\infty_c(M)$ such that
\begin{equation} \label{eq:contr}
\int_M |\nabla_g \varphi|_g^2 \, dV_g-\int_M p|u_\alpha|^{p-1}
\varphi^2 dV_g<0 \, .
\end{equation}
We claim that there exists $\delta>0$ such that
\begin{equation*} 
\int_M |\nabla_g \varphi|_g^2 \, dV_g-\int_M p|u_\beta|^{p-1}
\varphi^2 dV_g<0
\end{equation*}
for any $\beta\in (\alpha-\delta,\alpha+\delta)$ or in other words
$[0,+\infty)\setminus\mathcal S$ is open.

Suppose by contradiction that there exists a sequence
$\{\alpha_k\}\subset [0,+\infty)$ such that $\alpha_k\to \alpha$
and
\begin{equation} \label{eq:open-k}
\int_M |\nabla_g \varphi|_g^2 \, dV_g-\int_M p|u_{\alpha_k}|^{p-1}
\varphi^2 dV_g\ge 0 \, .
\end{equation}
Since the $\text{supp} \, \varphi$ is compact by continuous
dependence on the initial data we have that $u_{\alpha_k}\to
u_\alpha$ uniformly in any compact set of $M$. Passing to the
limit in \eqref{eq:open-k} as $k\to +\infty$ we obtain
\begin{equation*}
\int_M |\nabla_g \varphi|_g^2 \, dV_g-\int_M p|u_\alpha|^{p-1}
\varphi^2 dV_g\ge 0
\end{equation*}
in contradiction with \eqref{eq:contr}.
\end{proof}

The estimate $\alpha_0\ge \left(p^{-1} \lambda_1(M)
\right)^{1/(p-1)}$ follows immediately from Lemma \ref{stabile}.
It remains to prove that the inequality is strict under some
additional assumptions. First we prove the following

\begin{lem} \label{l:ex-minimizer} Let $\psi$ satisfy $(H_1)-(H_3)$ and \eqref{alternatives}. Then for any
$\alpha>0$
\begin{equation*}
\Lambda_1(M,\alpha):=\inf_{v\in H^1(M)\setminus \{0\}}
\frac{\int_M |\nabla_g v|_g^2 \, dV_g}{\int_M p|u_\alpha|^{p-1}
v^2 \, dV_g}
\end{equation*}
admits a minimizer.
\end{lem}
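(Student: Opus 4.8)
The plan is to apply the direct method of the calculus of variations, the only delicate point being compactness of a minimizing sequence, which we recover from the positivity of $\lambda_1(M)$ together with the decay of the weight $p|u_\alpha|^{p-1}$ at infinity. First I would fix $\alpha>0$ and, since the Rayleigh quotient defining $\Lambda_1(M,\alpha)$ is $0$-homogeneous, select a minimizing sequence $\{v_k\}\subset H^1(M)$ normalized by $\int_M p|u_\alpha|^{p-1} v_k^2\, dV_g=1$, so that $\int_M |\nabla_g v_k|_g^2\, dV_g\to \Lambda_1(M,\alpha)$. By Lemma \ref{eq:lambda_1>0} the Poincar\'e inequality $\int_M |\nabla_g v|_g^2\, dV_g\ge \lambda_1(M)\int_M v^2\, dV_g$ holds, whence the normalization forces $\{v_k\}$ to be bounded in $H^1(M)$; up to a subsequence we may thus assume $v_k\rightharpoonup v$ weakly in $H^1(M)$.

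The main step is to show that the denominator passes to the limit, i.e. that $\int_M p|u_\alpha|^{p-1} v_k^2\, dV_g\to \int_M p|u_\alpha|^{p-1} v^2\, dV_g$. Here I would crucially exploit that, under assumption \eqref{alternatives}, the solution $u_\alpha$ vanishes at infinity (see Proposition \ref{global}, formula \eqref{serrin} and Theorems \ref{asym sub}--\ref{asym}), so that the weight $w:=p|u_\alpha|^{p-1}$ — which is bounded by $p\alpha^{p-1}$ thanks to Lemma \ref{l:stima-alpha} — satisfies $w(r)\to 0$ as $r\to+\infty$. Given $\eps>0$, choose $R>0$ with $w(r)<\eps$ for $r\ge R$, and split
\[
\left|\int_M w\, v_k^2\, dV_g-\int_M w\, v^2\, dV_g\right|\le \int_{B_R} w\,|v_k^2-v^2|\, dV_g+\int_{M\setminus B_R} w\,(v_k^2+v^2)\, dV_g.
\]
The second integral is bounded by $\eps\,\sup_k\bigl(\|v_k\|_{L^2(M)}^2+\|v\|_{L^2(M)}^2\bigr)$, which is finite by the $H^1$-bound; the first integral tends to $0$ because the embedding $H^1(B_R)\subset L^2(B_R)$ is compact, so $v_k\to v$ strongly in $L^2(B_R)$ and hence $v_k^2\to v^2$ in $L^1(B_R)$, while $w$ is bounded on $B_R$. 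Letting $k\to+\infty$ and then $\eps\to 0$ yields the claimed convergence, and in particular $\int_M p|u_\alpha|^{p-1} v^2\, dV_g=1$, so that $v\not\equiv 0$.

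To conclude I would invoke the weak lower semicontinuity of the Dirichlet energy, giving $\int_M |\nabla_g v|_g^2\, dV_g\le \liminf_k \int_M |\nabla_g v_k|_g^2\, dV_g=\Lambda_1(M,\alpha)$. Since the denominator of $v$ equals $1$, the quotient evaluated at $v$ is at most $\Lambda_1(M,\alpha)$; being also at least $\Lambda_1(M,\alpha)$ by definition of the infimum, $v$ is a minimizer. The only genuine obstacle is the tail estimate on $M\setminus B_R$: without the decay of $u_\alpha$ guaranteed by \eqref{alternatives}, the functional $v\mapsto\int_M p|u_\alpha|^{p-1}v^2\, dV_g$ need not be weakly continuous and the minimizing sequence could lose mass at infinity, while the positivity of $\lambda_1(M)$ is precisely what rules out the complementary loss of $L^2$-mass and provides the $H^1$-bound in the first place.
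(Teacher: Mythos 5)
Your proof is correct and follows essentially the same route as the paper's: a normalized minimizing sequence, $H^1(M)$-boundedness via the Poincar\'e inequality from Lemma \ref{eq:lambda_1>0}, weak convergence combined with the compact embedding $H^1(B_R)\subset L^2(B_R)$ on geodesic balls, control of the tail term via the decay of $u_\alpha$ guaranteed by \eqref{alternatives}, and conclusion by weak lower semicontinuity of the Dirichlet energy. The only cosmetic difference is that you bound the tail by $\eps$ times the $L^2$-norms of $v_k$ and $v$, whereas the paper expresses the same bound as $\eps/\lambda_1(M)$ times the $H^1(M)$-norms.
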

\begin{proof}
Let $\{v_k\}\subset H^1(M)$ be a minimizing sequence for
$\Lambda_1(M,\alpha)$ such that
\begin{equation*}
\int_M p|u_\alpha|^{p-1} v_k^2 \, dV_g=1 \, .
\end{equation*}

Then $\{v_k\}$ is bounded in $H^1(M)$ and hence up to a
subsequence there exists $v\in H^1(M)$ such that $v_k
\rightharpoonup v$ weakly in $H^1(M)$. By compact embedding
$H^1(B_r)\subset L^2(B_r)$ we have that $v_k\to v$ strongly in
$L^2(B_r)$ for any $r>0$.

By the assumptions of this lemma, combined with Proposition \ref{global}, formula \eq{serrin}, Theorems \ref{asym sub} and \ref{asym}, we have that $u_\alpha(r)\to 0$ as $r\to
+\infty$. Hence for any $\eps>0$ we may choose $R_\eps>0$ such
that $p|u_\alpha(r)|^{p-1}<\eps$ for any $r>R_\eps$. Hence we
obtain
\begin{align*}
& \left| \int_M p|u_\alpha|^{p-1} v_k^2 \, dV_g -\int_M
p|u_\alpha|^{p-1} v^2 \, dV_g \right|  \leq \left|
\int_{B_{R_\eps}} p|u_\alpha|^{p-1} v_k^2 \, dV_g
-\int_{B_{R_\eps}} p|u_\alpha|^{p-1} v^2 \, dV_g \right|\\
& \qquad + \left| \int_{M\setminus B_{R_\eps}} p|u_\alpha|^{p-1}
v_k^2 \, dV_g -\int_{M\setminus B_{R_\eps}} p|u_\alpha|^{p-1} v^2
\, dV_g \right| \\
& \leq \left| \int_{B_{R_\eps}} p|u_\alpha|^{p-1} v_k^2 \, dV_g
-\int_{B_{R_\eps}} p|u_\alpha|^{p-1} v^2 \, dV_g \right|
+\frac{\eps}{\lambda_1(M)} \,
(\|v_k\|^2_{H^1(M)}+\|v\|^2_{H^1(M)}) \, .
\end{align*}
Passing to the limit as $k\to +\infty$ we obtain
\begin{align*}
& \limsup_{k\to +\infty} \left| \int_M p|u_\alpha|^{p-1} v_k^2 \,
dV_g -\int_M p|u_\alpha|^{p-1} v^2 \, dV_g \right| \le
\frac{2\Lambda_1(M,\alpha)}{\lambda_1(M)} \, \eps \qquad \text{for
any } \eps>0 \, .
\end{align*}
Hence,
$$
\lim_{k\to +\infty} \int_M p|u_\alpha|^{p-1} v_k^2 \, dV_g=\int_M
p|u_\alpha|^{p-1} v^2 \, dV_g \, .
$$
This shows that $v\neq 0$ and that, by the lower semicontinuity of
the $H^1(M)$-norm, $v$ is a minimizer for $\Lambda_1(M,\alpha)$.
\end{proof}

\begin{lem} \label{l:estimate-alpha0}
Let $\psi$ satisfy $(H_1)-(H_3)$ and \eqref{alternatives}. Then
$\alpha_0>\left(\frac{\lambda_1(M)}{p}\right)^{\frac{1}{p-1}}$.
\end{lem}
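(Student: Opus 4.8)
The plan is to set $\alpha_*:=\left(\frac{\lambda_1(M)}{p}\right)^{1/(p-1)}$ and recall that Lemma~\ref{stabile} already gives $\alpha_0\geq\alpha_*$; the whole point is therefore to rule out the equality $\alpha_0=\alpha_*$. I would do this by establishing two facts: first that $u_{\alpha_*}$ is \emph{strictly} stable, in the sense that
\[
\Lambda_1(M,\alpha_*)=\inf_{v\in H^1(M)\setminus\{0\}}\frac{\int_M|\nabla_g v|_{g}^2\,dV_g}{\int_M p|u_{\alpha_*}|^{p-1}v^2\,dV_g}>1,
\]
and then that strict stability survives small increases of $\alpha$, so that $u_\alpha$ stays stable for $\alpha$ in a right neighbourhood of $\alpha_*$, whence $\alpha_0>\alpha_*$.

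For the strict inequality I would exploit Lemma~\ref{l:ex-minimizer}: under $(H_1)$--$(H_3)$ and \eqref{alternatives} the quotient $\Lambda_1(M,\alpha_*)$ is attained at some $v\in H^1(M)\setminus\{0\}$. The decisive extra ingredient is the \emph{strict} pointwise bound $|u_{\alpha_*}(r)|<\alpha_*$ for every $r>0$: the Lyapunov function $F(r)=\tfrac12|u_{\alpha_*}'(r)|^2+\tfrac1{p+1}|u_{\alpha_*}(r)|^{p+1}$ satisfies $F'=-(n-1)\tfrac{\psi'}{\psi}(u_{\alpha_*}')^2\le 0$ and, since $u_{\alpha_*}'(r)<0$ on a right neighbourhood of $0$ by \eqref{cauchy}, is in fact strictly decreasing there, so that $F(r)<F(0)=\tfrac1{p+1}\alpha_*^{p+1}$ and hence $|u_{\alpha_*}(r)|<\alpha_*$ for all $r>0$. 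As the pole $o$ has measure zero, this yields $|u_{\alpha_*}|<\alpha_*$ a.e., and using $p\alpha_*^{p-1}=\lambda_1(M)$, the characterisation \eqref{lambda1} and $v\not\equiv 0$,
\[
\int_M p|u_{\alpha_*}|^{p-1}v^2\,dV_g<p\alpha_*^{p-1}\int_M v^2\,dV_g=\lambda_1(M)\int_M v^2\,dV_g\le\int_M|\nabla_g v|_{g}^2\,dV_g,
\]
so $\Lambda_1(M,\alpha_*)>1$. The existence of a genuine minimiser is essential: a mere infimum could equal $1$ in the limit.

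To pass from $\alpha_*$ to nearby larger values I would argue by perturbation. Writing $Q_\alpha(\varphi):=\int_M|\nabla_g\varphi|_{g}^2-p\int_M|u_\alpha|^{p-1}\varphi^2$ and $\Lambda_*:=\Lambda_1(M,\alpha_*)>1$, the definition of $\Lambda_*$ gives the quantitative bound $Q_{\alpha_*}(\varphi)\ge(1-\Lambda_*^{-1})\int_M|\nabla_g\varphi|_{g}^2$. Hence
\[
Q_\alpha(\varphi)\ge(1-\Lambda_*^{-1})\int_M|\nabla_g\varphi|_{g}^2-p\int_M\bigl(|u_\alpha|^{p-1}-|u_{\alpha_*}|^{p-1}\bigr)\varphi^2\,dV_g,
\]
and, using $|u_\alpha|^{p-1}-|u_{\alpha_*}|^{p-1}\le|u_\alpha|^{p-1}$ and the Poincaré inequality $\int_M\varphi^2\le\lambda_1(M)^{-1}\int_M|\nabla_g\varphi|_{g}^2$, it suffices to show $\varepsilon_\alpha:=p\,\sup_{r\ge 0}\bigl|\,|u_\alpha(r)|^{p-1}-|u_{\alpha_*}(r)|^{p-1}\bigr|\to 0$ as $\alpha\downarrow\alpha_*$. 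Then $Q_\alpha\ge\bigl[(1-\Lambda_*^{-1})-\varepsilon_\alpha\lambda_1(M)^{-1}\bigr]\int_M|\nabla_g\varphi|_{g}^2\ge 0$ for $\alpha$ close enough to $\alpha_*$, i.e.\ $u_\alpha$ is stable and $\alpha_0>\alpha_*$.

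The main obstacle is exactly the uniform-in-$\alpha$ convergence $\varepsilon_\alpha\to 0$. On compact $r$-intervals it is a consequence of continuous dependence on $\alpha$; the delicate point is the tail $r\to+\infty$, where one needs $\sup_{\alpha\in[\alpha_*,\alpha_*+\delta]}|u_\alpha(r)|\to 0$ uniformly. This is precisely where \eqref{alternatives} enters: by Proposition~\ref{global}, formula \eqref{serrin} and Theorems~\ref{asym sub}--\ref{asym}, every solution vanishes at infinity, and the positive ones share the \emph{$\alpha$-independent} leading decay rate of case (iii) there, which provides the required equicontinuous control at infinity. Equivalently, this uniform tightness is what renders $\alpha\mapsto\Lambda_1(M,\alpha)$ lower semicontinuous at $\alpha_*$; if preferred I would instead argue by contradiction, assuming $\alpha_0=\alpha_*$, taking $\alpha_k\downarrow\alpha_*$ with $\Lambda_1(M,\alpha_k)<1$ and minimisers $v_k$ (Lemma~\ref{l:ex-minimizer}) normalised by $\int_M p|u_{\alpha_k}|^{p-1}v_k^2=1$, extracting a weak $H^1(M)$ limit $v$, and using the uniform tail decay to exclude loss of mass, so that $\Lambda_1(M,\alpha_*)\le\liminf_k\Lambda_1(M,\alpha_k)\le 1$, contradicting $\Lambda_1(M,\alpha_*)>1$.
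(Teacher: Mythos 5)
Your overall architecture coincides with the paper's: invoke Lemma \ref{l:ex-minimizer} to obtain a minimizer for $\Lambda_1(M,\alpha_*)$, prove the strict inequality $\Lambda_1(M,\alpha_*)>1$, and then show that stability survives a small increase of $\alpha$. Your proof of $\Lambda_1(M,\alpha_*)>1$ is correct and in fact a little more elementary than the paper's: you obtain strictness from the pointwise bound $|u_{\alpha_*}(r)|<\alpha_*$ for $r>0$ (the Lyapunov function being strictly decreasing near the origin), whereas the paper analyses the equality case and shows that $\Lambda_1(M,\alpha_*)=1$ would force the minimizer to solve simultaneously $-\Delta_g w=\lambda_1(M)w$ and the linearized equation, which is impossible. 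Either route works, and both genuinely need the existence of the minimizer, as you note.

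The genuine gap is in the second half, at precisely the step you flag as ``the main obstacle'': the uniform-in-$\alpha$ tail control, i.e. that $\sup_{r\ge R}|u_\alpha(r)|$ can be made small with $R$ chosen \emph{independently} of $\alpha\in[\alpha_*,\alpha_*+\delta]$ (equivalently, that $\varepsilon_\alpha\to0$). You justify this by saying that the positive solutions ``share the $\alpha$-independent leading decay rate of case (iii)'' of Theorems \ref{asym sub}--\ref{asym}, giving ``equicontinuous control at infinity''. This does not follow: those theorems are pointwise-in-$\alpha$ asymptotic statements, $\lim_{r\to+\infty}\bigl(\int_0^r\psi/\psi'\,ds\bigr)^{1/(p-1)}u_\alpha(r)=\bigl(\tfrac{n-1}{p-1}\bigr)^{1/(p-1)}$, with no uniformity in $\alpha$ whatsoever; the radius beyond which the asymptotic regime sets in could a priori blow up as $\alpha\downarrow\alpha_*$, which is exactly the scenario you must exclude. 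The same unproven uniformity is what your fallback compactness argument needs in order to ``exclude loss of mass'', so it suffers from the identical gap. The correct device --- the one the paper uses, and which you already deployed in the first half --- is the monotone Lyapunov function $F_\alpha(r)=\tfrac12|u_\alpha'(r)|^2+\tfrac1{p+1}|u_\alpha(r)|^{p+1}$: since $F_\alpha$ is nonincreasing in $r$, it suffices to make it small at a \emph{single} radius; fix $R_\varepsilon$ with $F_{\alpha_*}(R_\varepsilon)<\varepsilon$ (possible because \eqref{alternatives} forces $u_{\alpha_*}\to0$ and $u_{\alpha_*}'\to0$ at infinity), and continuous dependence on the initial datum at the one radius $R_\varepsilon$ then gives $F_\alpha(R_\varepsilon)<\varepsilon$, hence $F_\alpha(r)<\varepsilon$ for all $r\ge R_\varepsilon$ and all $\alpha$ near $\alpha_*$. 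With this replacement your quantitative perturbation argument closes correctly (and your fallback contradiction argument becomes essentially the paper's own proof).
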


\begin{proof} Define
$\overline\alpha:=\left(\frac{\lambda_1(M)}{p}\right)^{\frac{1}{p-1}}$.
We claim that $\Lambda(M,\overline\alpha)>1$. To see this, by
Lemma \ref{l:ex-minimizer} we introduce a minimizer $w\in H^1(M)$
of $\Lambda(M,\overline\alpha)$. By Poincar\'e inequality and the
fact that, by Lemma \ref{l:stima-alpha}, $u_{\overline\alpha}\le
\overline\alpha$, we have
\begin{align*}
& \Lambda_1(M,\overline\alpha)= \frac{\int_M |\nabla_g w|_g^2 \,
dV_g}{\int_M p|u_{\overline\alpha}|^{p-1} w^2 \, dV_g} \ge
\frac{\int_M |\nabla_g w|_g^2 \, dV_g}{\lambda_1(M) \int_M w^2 \,
dV_g}\ge 1 \, .
\end{align*}
If assume by contradiction that $\Lambda_1(M,\overline\alpha)=1$
then the inequalities above are equalities and hence $w\in H^1(M)$
is a minimizer for $\lambda_1(M)$. Hence, it solves the equation
$$
-\Delta_g w=\lambda_1(M)w \qquad \text{in } M
$$
and this contradicts the fact that $w$ solves
$$
-\Delta_g
w=\Lambda_1(M,\overline\alpha)|u_{\overline\alpha}|^{p-1} w \qquad
\text{in } M \, .
$$
This completes the proof of the claim. Let us consider a sequence
$\{\alpha_k\}$ such that $\alpha_k \downarrow \overline\alpha$. We
prove that for any large $k$, $\Lambda_1(M,\alpha_k)>1$.

If we proceed by contradiction, we may assume that
$\Lambda_1(M,\alpha_k)\le 1$ for any large $k$.

Let $\{w_k\}\subset H^1(M)$ be a sequence of minimizers for
$\Lambda_1(M,\alpha_k)$ such that $\int_M p |u_{\alpha_k}|^{p-1}
w_k^2 \, dV_g=1$.

Then $\{w_k\}$ is bounded in $H^1(M)$ and up to a subsequence we
may assume that there exists $\overline w\in H^1(M)$ such that
$w_k\rightharpoonup \overline w$ weakly in $H^1(M)$.

For any $\alpha>0$, consider the Lyapunov function
$$
F_\alpha(r):=\frac 12 |u_\alpha'(r)|^2+\frac 1{p+1}
|u_\alpha(r)|^{p+1} \qquad \text{for any } r>0 \, .
$$
For any $\eps>0$ let $R_\eps>0$ be such that
$$
F_{\overline\alpha}(R_\eps)<\eps \, .
$$
We recall that as in the proof of Lemma \ref{l:ex-minimizer} we
have $\displaystyle{\lim_{r\to +\infty} u_{\alpha}(r)=\lim_{r\to +\infty}
u'_{\alpha}(r)=0}$, for any $\alpha>0$.

Since $u_{\alpha_k}(r)\to u_{\overline\alpha}(r)$ and
$u'_{\alpha_k}(r)\to u'_{\overline\alpha}(r)$ for any $r>0$, there
exists $\overline k$ such that
$$
F_{\alpha_k}(R_\eps)<\eps \qquad \text{for any } k>\overline k \,
.
$$
But we know that for any $\alpha>0$ the function $F_\alpha$ is
nonincreasing and hence
$$
F_{\alpha_k}(r)<\eps \qquad \text{for any } r\ge R_\eps \, ,
\text{for any } k>\overline k \, ,
$$
so that
\begin{equation*}
p|u_{\alpha_k}(r)|^{p-1}\le p [(p+1)\eps]^{\frac{p-1}{p+1}} \qquad
\text{for any } r\ge R_\eps \, , \text{for any } k>\overline k \,
.
\end{equation*}
Therefore
\begin{align*}
& \left| \int_M p|u_{\alpha_k}|^{p-1} w_k^2 \, dV_g -\int_M
p|u_{\overline\alpha}|^{p-1} \overline w^2 \, dV_g \right| \le
\left| \int_{B_{R_\eps}} p|u_{\alpha_k}|^{p-1} w_k^2 \, dV_g
-\int_{B_{R_\eps}} p|u_{\overline\alpha}|^{p-1}  w_k^2 \, dV_g
\right|
\\
& \qquad +\left| \int_{B_{R_\eps}} p|u_{\overline\alpha}|^{p-1}
w_k^2 \, dV_g -\int_{B_{R_\eps}} p|u_{\overline\alpha}|^{p-1}
\overline w^2
\, dV_g \right|\\
& \qquad +\left| \int_{M\setminus B_{R_\eps}}
p|u_{\alpha_k}|^{p-1} w_k^2 \, dV_g -\int_{M\setminus B_{R_\eps}}
p|u_{\overline\alpha}|^{p-1} \overline w^2 \, dV_g \right| \\
& \le \sup_{B_{R_\eps}}
|p|u_{\alpha_k}|^{p-1}-p|u_{\overline\alpha}|^{p-1}|
\int_{B_{R_\eps}} w_k^2 \, dV_g+\left| \int_{B_{R_\eps}}
p|u_{\overline\alpha}|^{p-1} w_k^2 \, dV_g -\int_{B_{R_\eps}}
p|u_{\overline\alpha}|^{p-1} \overline w^2 \, dV_g \right|\\
& \qquad + p [(p+1)\eps]^{\frac{p-1}{p+1}} \int_{M\setminus
B_{R_\eps}} w_k^2 \, dV_g+p [(p+1)\eps]^{\frac{p-1}{p+1}}
\int_{M\setminus B_{R_\eps}} \overline w^2 \, dV_g \, .
\end{align*}
By strong convergence $w_k\to \overline w$ in $L^2(B_{R_\eps})$,
uniform convergence $u_{\alpha_k}\to u_\alpha$ in $B_{R_\eps}$,
Poincar\'e inequality, weak lower semicontinuity of the
$H^1(M)$-norm and the fact that $\Lambda(M,\alpha_k)\leq 1$, we
obtain
\begin{align*}
\limsup_{k\to +\infty} \left| \int_M p|u_{\alpha_k}|^{p-1} w_k^2
\, dV_g -\int_M p|u_{\overline\alpha}|^{p-1} \overline w^2 \, dV_g
\right| \le \frac{2}{\lambda_1(M)} \, p
[(p+1)\eps]^{\frac{p-1}{p+1}} \qquad \text{for any } \eps>0 \, .
\end{align*}
This proves that
\begin{equation*}
\lim_{k\to +\infty}  \int_M p|u_{\alpha_k}|^{p-1} w_k^2 \, dV_g
=\int_M p|u_{\overline\alpha}|^{p-1} \overline w^2 \, dV_g \, .
\end{equation*}
Therefore using again the weak lower semicontinuity of the
$H^1(M)$-norm, we obtain
\begin{align*}
& 1<\Lambda_1(M,\overline\alpha)\le \frac{\int_M |\nabla_g
\overline w|_g^2 \, dV_g}{\int_M p|u_{\overline\alpha}|^{p-1}
\overline w^2 \, dV_g} \le \liminf_{k\to +\infty} \frac{\int_M
|\nabla_g w_k|_g^2 \, dV_g}{\int_M p|u_{\alpha_k}|^{p-1} w_k^2 \,
dV_g}= \liminf_{k\to +\infty} \Lambda_1(M,\alpha_k) \, ,
\end{align*}
a contradiction. This proves that $\Lambda_1(M,\alpha_k)>1$ for
any large $k$.

In particular for any large $k$ and any $\varphi\in C^\infty_c(M)$
we have
\begin{align*}
& \int_M |\nabla_g \varphi|_g^2 \, dV_g\ge \Lambda_1(M,\alpha_k)
\int_M p|u_{\alpha_k}|^{p-1} \varphi^2 \, dV_g \ge \int_M
p|u_{\alpha_k}|^{p-1} \varphi^2 \, dV_g \, .
\end{align*}
We found a sequence of values $\alpha_k>\overline\alpha$ such that
$u_{\alpha_k}$ is stable. This completes the proof of the lemma.
\end{proof}

textit{End of the proof of Theorems
\ref{stability-1}-\ref{stability-2}.} The proof of Theorem
\ref{stability-1} simply follows by combining Lemma
\ref{l:ordering-0} with Lemma \ref{stabile} and Lemma \ref{blow}.
The estimate from below on $\alpha_0$ follows from Lemma
\ref{l:estimate-alpha0}.

\subsection{Proof of Theorem \ref{t:ordering}}
Let $\alpha,\beta\in \mathcal S$ with $\alpha>\beta$. We want to
prove that $u_\alpha(r)>u_\beta(r)>0$ for any $r>0$. Suppose by
contradiction that there exists $\overline r>0$ such that
$u_\alpha(\overline r)<u_\beta(\overline r)$. By Lagrange Theorem
and Lemma \ref{l:C^1_CDIV} we deduce that there exists $\sigma\in
(\beta,\alpha)$ such that
\begin{equation*}
v_\sigma(\overline r)=v(\sigma,\overline r)=\frac{\partial
u}{\partial \alpha}(\sigma,\overline r)=\frac{u(\alpha,\overline
r)-u(\beta,\overline r)}{\alpha-\beta}<0
\end{equation*}
and proceeding as in the proof of Lemma \ref{l:unstable} we find
$\rho\in (0,\overline r)$ such that
\begin{equation*}
\begin{cases}
-\Delta_g v_\sigma=p|u_\sigma|^{p-1} v_\sigma & \qquad \text{in }
B_\rho \\
v_\sigma=0 & \qquad \text{on } \partial B_\rho \, .
\end{cases}
\end{equation*}
Testing the above problem with $v_\sigma\in H^1_0(B_\rho)$, we
obtain
\begin{equation*}
\int_{B_\rho} |\nabla_g v_\sigma|_g^2 \, dV_g-\int_{B_\rho}
p|u_\sigma|^{p-1} v_\sigma^2 \, dV_g=0 \, .
\end{equation*}
Next we define $w_\sigma\in H^1(M)$ as the trivial extension of
$v_\sigma$ outside $B_\rho$ in such a way that
\begin{equation*}
\int_{M} |\nabla_g w_\sigma|_g^2 \, dV_g-\int_{M}
p|u_\sigma|^{p-1} w_\sigma^2 \, dV_g=0 \, .
\end{equation*}
But $\sigma\in [0,\alpha_0]$ and hence by Lemma \ref{l:ordering-0}
$u_\sigma$ is stable. Therefore $w_\sigma$ is a minimizer of the
problem
\begin{equation*}
\inf_{v\in H^1(M)\setminus \{0\}} \frac{\int_{M} |\nabla_g v|_g^2
\, dV_g}{\int_{M} p|u_{\sigma}|^{p-1} v^2\, dV_g}
\end{equation*}
and proceeding as in the proof of Lemma \ref{l:unstable} we arrive
to a contradiction.

\subsection{Proof of Theorem \ref{outside-compact-set}}
By Lemma \ref{eq:lambda_1>0} we have $\lambda_1(M)>0$. By Proposition \ref{global}, \eq{serrin}, Theorem
\ref{asym sub} and Theorem \ref{asym} we get the existence of
$R>0$ such that $p|u(r)|^{p-1} \leq \lambda_1(M)$ for every $r>R$.
Let now $B_R$ be the geodesic ball of radius $R$ centered at $o$.
From what just remarked and \eq{lambda1}, inequality \eq{stabRad}
holds for every $\psi\in C_c^{\infty}(M\setminus K)$ and for every
compact $K$ such that $B_{R}\subset K$. In particular, $u$ is
stable outside a compact set.

\subsection{Proof of Proposition \ref{p:liouville-type}}
Since $u$ is stable, from \eq{stabRad} we have
\begin{equation}\label{chi}
\int_{0}^{+\infty} (\chi'(r))^2\,\psi^{n-1}(r) dr-
p\,\int_{0}^{+\infty} |u(r)|^{p-1} \chi^2(r)\,\psi^{n-1}(r)\,
dr\geq 0\,,
\end{equation}
for every radial function $\chi\in C_c^{\infty}(M)$.\par
Inequality \eq{chi} holds for every $\chi\in H^1\cap
L^{\infty}(M)$ with compact support in $M$. Next, we choose
$\chi(r)=u(r)\eta(r)$ with $\eta \in C^1_c(0,+\infty)$ in \eq{chi}
and we get
$$\int_{0}^{+\infty} (u'(r))^2(\eta(r))^2\,\psi^{n-1}(r)\, dr+\int_{0}^{+\infty} (u(r))^2(\eta'(r))^2\,\psi^{n-1}(r)\,dr$$
$$
+\int_{0}^{+\infty} u'(r)u(r)(\eta^2(r))'\,\psi^{n-1}(r)\, dr\geq
p\, \int_{0}^{+\infty} |u(r)|^{p+1} \eta^2(r)\,\psi^{n-1}(r)\, dr
\, .
$$
An integration by parts and \eqref{cauchy} yield
\begin{equation}\label{key ineq}
\int_{0}^{+\infty} (u(r))^2(\eta'(r))^2\,\psi^{n-1}(r)\,dr\geq
(p-1)\,\int_{0}^{+\infty} |u(r)|^{p+1} \eta^2(r)\,\psi^{n-1}(r)\,
dr\,,
\end{equation}
for every radial function $\eta\in C_c^{\infty}(M)$.\par For
$R>0$, let now $\eta_{R}(r)=\eta(r/R)$, where $\eta(r)\in
C^1([0,+\infty))$ is such that $\eta(r)=1$ for $0\leq r<1$ and
$\eta(r)=0$ for $r\geq 2$. Taking $\eta_R$ as test function in
\eq{key ineq}, we get
$$\frac{\|\eta'\|_{L^{\infty}(1,2)}}{R^2}\int_{R}^{2R} (u(r))^2\,\psi^{n-1}(r)\, dr\geq (p-1)\,\int_{0}^{2R} |u(r)|^{p+1} \eta^2(r/R)\,\psi^{n-1}(r)\, dr$$
$$\geq (p-1)\,\int_{0}^{R} |u(r)|^{p+1}\,\psi^{n-1}(r)\, dr\,.$$
As $R\rightarrow +\infty$, recalling that $\int_{0}^{+\infty}
(u(r))^2\,\psi^{n-1}(r)\, dr<+\infty$, we finally conclude that
$$ \int_{0}^{+\infty} |u(r)|^{p+1}\,\psi^{n-1}(r)\, dr\,=0\,.$$
Hence, $u\equiv 0$ in $(0,+\infty)$.

\bibliographystyle{amsplain} 
\end{document}